\DeclareMathOperator{\Ad}{Ad}
\DeclareMathOperator{\cInd}{c-Ind}
\DeclareMathOperator{\depth}{depth}
\DeclareMathOperator{\FT}{FT}
\DeclareMathOperator{\GL}{GL}
\DeclareMathOperator{\Hom}{Hom}
\DeclareMathOperator{\Ind}{Ind}
\DeclareMathOperator{\Inf}{Inf}
\DeclareMathOperator{\Irr}{Irr}
\DeclareMathOperator{\Lie}{Lie}
\DeclareMathOperator{\pInd}{pInd}
\DeclareMathOperator{\pr}{pr}
\DeclareMathOperator{\Sp}{Sp}
\DeclareMathOperator{\Tr}{Tr}
\newcommand{\bbB}{\mathbb{B}}
\newcommand{\bbG}{\mathbb{G}}
\newcommand{\bbM}{\mathbb{M}}
\newcommand{\bbT}{\mathbb{T}}
\newcommand{\bbU}{\mathbb{U}}
\newcommand{\bbW}{\mathbb{W}}
\newcommand{\C}{\mathbb{C}}
\newcommand{\F}{\mathbb{F}}
\newcommand{\Q}{\mathbb{Q}}
\newcommand{\R}{\mathbb{R}}
\newcommand{\Z}{\mathbb{Z}}
\newcommand{\bfG}{\mathbf{G}}
\newcommand{\bfH}{\mathbf{H}}
\newcommand{\bfJ}{\mathbf{J}}
\newcommand{\bfM}{\mathbf{M}}
\newcommand{\bfT}{\mathbf{T}}
\newcommand{\bfU}{\mathbf{U}}
\newcommand{\bfV}{\mathbf{V}}
\newcommand{\bfZ}{\mathbf{Z}}
\newcommand{\x}{\mathbf{x}}
\newcommand{\y}{\mathbf{y}}
\newcommand{\cB}{\mathcal{B}}
\newcommand{\cF}{\mathcal{F}}
\newcommand{\cFT}{\mathcal{FT}}
\newcommand{\cL}{\mathcal{L}}
\newcommand{\cO}{\mathcal{O}}
\newcommand{\cR}{\mathcal{R}}
\newcommand{\mfg}{\mathfrak{g}}
\newcommand{\mfj}{\mathfrak{j}}
\newcommand{\mfp}{\mathfrak{p}}
\newcommand{\mft}{\mathfrak{t}}
\newcommand{\bmfg}{\boldsymbol{\mathfrak{g}}}
\newcommand{\bmfj}{\boldsymbol{\mathfrak{j}}}
\newcommand{\bmft}{\boldsymbol{\mathfrak{t}}}
\newcommand{\sfQ}{\mathsf{Q}}
\newcommand{\AFMO}{\mathrm{AFMO}}
\newcommand{\der}{\mathrm{der}}
\newcommand{\FKS}{\mathrm{FKS}}
\newcommand{\KY}{\mathrm{KY}}
\newcommand{\nilp}{\mathrm{nilp}}
\newcommand{\nreg}{\mathrm{nreg}}
\newcommand{\nvreg}{\mathrm{nvreg}}
\newcommand{\ram}{\mathrm{ram}}
\newcommand{\red}{\mathrm{red}}
\newcommand{\reg}{\mathrm{reg}}
\newcommand{\sep}{\mathrm{sep}}
\renewcommand{\ss}{\mathrm{ss}}
\newcommand{\unip}{\mathrm{unip}}
\newcommand{\ur}{\mathrm{ur}}
\newcommand{\vreg}{\mathrm{vreg}}
\newcommand{\Yu}{\mathrm{Yu}}
\newcommand{\cc}{{}^{\circ}}
\newcommand{\from}{\colon}
\newcommand{\Ga}{\mathbb{G}_{\mathrm{a}}}
\newcommand{\ol}{\overline}
\newcommand{\Qlb}{\overline{\Q}_{\ell}}
\newcommand{\mapsfrom}{\mathrel{\reflectbox{$\mapsto$}}}
\newcommand{\dashover}[2][\mathop]{#1{\mathpalette\df@over{{\dashfill}{#2}}}}
\newcommand{\fillover}[2][\mathop]{#1{\mathpalette\df@over{{\solidfill}{#2}}}}
\newcommand{\df@over}[2]{\df@@over#1#2}
\newcommand\df@@over[3]{%
  \vbox{
    \offinterlineskip
    \ialign{##\cr
      #2{#1}\cr
      \noalign{\kern1pt}
      $\m@th#1#3$\cr
    }
  }%
}
\newcommand{\dashfill}[1]{%
  \kern-.5pt
  \xleaders\hbox{\kern.5pt\vrule height.4pt width \dash@width{#1}\kern.5pt}\hfill
  \kern-.5pt
}
\newcommand{\dash@width}[1]{%
  \ifx#1\displaystyle
    2pt
  \else
    \ifx#1\textstyle
      1.5pt
    \else
      \ifx#1\scriptstyle
        1.25pt
      \else
        \ifx#1\scriptscriptstyle
          1pt
        \fi
      \fi
    \fi
  \fi
}
\newcommand{\solidfill}[1]{\leaders\hrule\hfill}
\theoremstyle{plain}
\newtheorem{thm}{Theorem}[section]
\newtheorem*{thm*}{Theorem}
\newtheorem{prop}[thm]{Proposition}
\newtheorem{lem}[thm]{Lemma}
\newtheorem{cor}[thm]{Corollary}
\newtheorem{conj}[thm]{Conjecture}
\newtheorem{quest}[thm]{Question}
\theoremstyle{definition}
\newtheorem{defn}[thm]{Definition}
\newtheorem{definition}[thm]{Definition}
\theoremstyle{remark}
\newtheorem{rem}[thm]{Remark}
\newtheorem*{claim*}{Claim}
\newtheorem{remark}[thm]{Remark}
\numberwithin{equation}{section}
\title{Green functions \\ for positive-depth Deligne--Lusztig induction}
\author{Charlotte Chan}
\address{Department of Mathematics, University of Michigan, 2074 East Hall, 530 Church Street, Ann Arbor, MI 48105, USA.}
\email{charchan@umich.edu}
\author{Masao Oi}
\address{Department of Mathematics, National Taiwan University, Astronomy Mathematics Building 5F, No.\ 1, Sec.\ 4, Roosevelt Rd., Taipei 10617, Taiwan}
\email{masaooi@ntu.edu.tw}
\begin{document}

\begin{abstract}
    Under a largeness assumption on the size of the residue field, we give an explicit description of the positive-depth Deligne--Lusztig induction of unramified elliptic pairs $(\bfT,\theta)$. When $\theta$ is regular, we show that positive-depth Deligne--Lusztig induction gives a geometric realization of Kaletha's Howe-unramified regular $L$-packets. This is obtained as an immediate corollary of a very simple ``litmus test'' characterization theorem which we foresee will have interesting future applications to small-$p$ constructions. We next define and analyze Green functions of two different origins: Yu's construction (algebra) and positive-depth Deligne--Lusztig induction (geometry). Using this, we deduce a comparison result for arbitrary $\theta$ from the regular setting. As a further application of our comparison isomorphism, we prove the positive-depth Springer hypothesis in the $0$-toral setting and use it to give a geometric explanation for the appearance of orbital integrals in supercuspidal character formulae.
\end{abstract}


\maketitle


\tableofcontents

\newpage

\section{Introduction}\label{sec:intro}

The representation theory of finite groups of Lie type play a central role in the representation theory of $p$-adic groups, and as such, Deligne--Lusztig varieties are important objects which arise in a wide range of geometric contexts related to the Langlands program. In recent years, there has been a surge of interest in geometric realizations of Langlands phenomena, especially where strong conditions on level structures can be relaxed; one particular developing area is on establishing a Deligne--Lusztig theory extending the classical depth-zero setting to arbitrary integral depth. The first definition of \textit{positive-depth Deligne--Lusztig induction} is due to Lusztig \cite{Lus04}, who defined a functor
\begin{equation*}
    R_{\bbT_r}^{\bbG_r} \from \Z[\Irr(\bbT_r(\F_q))] \to \Z[\Irr(\bbG_r(\F_q))]
\end{equation*}
where $\bbG_r$ and $\bbT_r$ denote the $r$th jet schemes
of a connected reductive group $\bbG$ and a maximal torus $\bbT \subset \bbG$ defined over $\F_q$. This was extended to the mixed-characteristic setting by Stasinski \cite{Sta09} and further generalized in the context of parahoric subgroups of $p$-adic groups by the first author and Ivanov \cite{CI21-RT}. This functor recovers classical Deligne--Lusztig induction when $r = 0$. 

In general, much less is known about the $r>0$ setting, but there have been many rapid developments \cite{Sta09,CS17,CI21-RT,CO25,CS23,DI24,Cha24,INT24,Nie24}. These works either study endomorphism properties (e.g., for what characters $\theta$ is $R_{\bbT_r}^{\bbG_r}(\theta)$ irreducible?) as in \cite{Sta09,CI21-RT,DI24,INT24,Cha24} or study $R_{\bbT_r}^{\bbG_r}$ from the perspective of explicit algebraic constructions \cite{CS17,CO25,CS23,Nie24,IN25} (e.g., what is its relationship to G\'erardin \cite{Ger75-LNM}, or more generally Yu \cite{Yu01}?). These works study $R_{\bbT_r}^{\bbG_r}$ under at least one of the following  constraints: Coxeter or elliptic torus, specific choice of the Borel, genericity conditions on $\theta$. 

Let $\bfT$ be an unramified elliptic maximal torus of a connected reductive group $\bfG$ defined over a non-archimedean local field $F$ with residual characteristic $p$ and residual cardinality $q$. Let $\theta$ be a smooth character of $\bfT(F)$. 
In the present paper, we resolve for large $q$, the explicit comparison between $R_{\bbT_r}^{\bbG_r}(\theta)$ and types for representations of $p$-adic groups. We  describe exhaustion results for Kim--Yu types \cite{KY17} (see Sections \ref{subsec:types}, \ref{subsec:geom types}, and \ref{subsec:KY exhaust}), but for the purpose of the introduction, let us focus on supercuspidal types \cite{Yu01}. To obtain our results, we enact a three-step plan:
\begin{quote}
\begin{enumerate}[label={Step \arabic*.}]
    \item Prove a characterization theorem for representations associated to $\theta$ with trivial Weyl group stabilizer (Theorem \ref{thm:intro litmus})
    \item Compare $R_{\bbT_r}^{\bbG_r}(\theta)$ for $\theta$ with trivial Weyl group stabilizer and (a twist of) Kaletha's regular supercuspidal representations (Theorem \ref{thm:intro compare}).
    \item Analyze character formulae using Green functions to deduce the result for arbitrary $\theta$ (Theorem \ref{thm:intro exhaust}).
\end{enumerate}
\end{quote}
Step 1 requires us to make a largeness assumption on $q$ due to the analytic nature of the statement. Step 2 requires us to make a non-badness assumption on $p$ in order to guarantee the validity of Kaletha's construction of \textit{regular supercuspidal representations} \cite{Kal19}.

In some sense, we can think of this entire paper as justification for the applicability of our characterization theorem (Theorem \ref{thm:intro compare}). One comment is that the bound we give (\textit{Henniart's inequality} \eqref{eq:Henniart}) is sufficient but not necessary in general; in Section \ref{sec:small q}, we discuss some techniques that can be applied in small-$q$ settings. As an additional application of our results, we prove \textit{Springer's hypothesis} for a class of positive-depth representations (Section \ref{sec:springer}); we discuss this more at the end of the introduction.

Let us now expand on the three steps above, beginning with our comparison result for \textit{regular} $\theta$---those with trivial stabilizer in the Weyl group.


\begin{thm}[Theorems \ref{thm:reg comparison}]\label{thm:intro compare}
    Assume $p \neq 2$ is not bad for $\bfG$ and that $q$ is sufficiently large. For any regular $\theta$,
    \begin{equation*}
        \cInd(R_{\bbT_r}^{\bbG_r}(\theta)) \cong \pi_{(\bfT,\theta \cdot \varepsilon^{\ram})}^{\Yu},
    \end{equation*}
    where $\varepsilon^{\ram}$ is a quadratic character associated to $(\bfT,\theta)$ and $\pi_{(\bfT,\theta \cdot \varepsilon^{\ram})}^{\Yu}$ is the supercuspidal representation of $\bfG(F)$ arising from Kaletha's reparametrization of Yu's construction. In particular, the assignment
    \begin{equation*}
        (\bfT,\theta) \mapsto \cInd(R_{\bbT_r}^{\bbG_r}(\theta))
    \end{equation*}
    is well behaved with respect to the local Langlands correspondence (see Theorem \ref{thm:geom packet}).
\end{thm}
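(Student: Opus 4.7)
The plan is to deduce Theorem~\ref{thm:intro compare} as a direct consequence of the litmus-test characterization theorem (Theorem~\ref{thm:intro litmus}): the strategy is to argue that $\cInd(R_{\bbT_r}^{\bbG_r}(\theta))$ satisfies precisely the hypotheses which uniquely characterize $\pi_{(\bfT,\theta\cdot\varepsilon^{\ram})}^{\Yu}$ among irreducible admissible representations of $\bfG(F)$. Concretely, I would first verify that $\cInd(R_{\bbT_r}^{\bbG_r}(\theta))$ is a well-defined irreducible supercuspidal representation; regularity of $\theta$ is essential here, as the endomorphism results of \cite{CI21-RT,DI24,Cha24} imply that $R_{\bbT_r}^{\bbG_r}(\theta)$ is an irreducible cuspidal representation of the parahoric quotient, whereupon standard type-theoretic arguments promote the compact induction to an irreducible supercuspidal of $\bfG(F)$.

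Second, I would compute the character of $\cInd(R_{\bbT_r}^{\bbG_r}(\theta))$ on the class of test elements appearing in the statement of the litmus test, by a Lefschetz fixed-point argument applied to the positive-depth Deligne--Lusztig variety, together with the Weyl-integration/Mackey-type decomposition of compact induction. The largeness hypothesis $q \gg 0$ (Henniart's inequality \eqref{eq:Henniart}) enters here to ensure that the resulting sum over Weyl elements collapses, leaving a single dominant term governed by $\theta$ whose value can be read off directly. On the other side, Kaletha's explicit character formula for $\pi_{(\bfT,\theta')}^{\Yu}$ on the same test elements yields an analogous expression. Matching the two gives the isomorphism, provided the input character to Kaletha's side is twisted by $\varepsilon^{\ram}$; this is precisely the content of the theorem.

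The main obstacle will be the bookkeeping of the quadratic twist $\varepsilon^{\ram}$. Positive-depth Deligne--Lusztig induction produces signs coming from the \'etale-cohomological construction (Jordan decomposition and the Frobenius action on $\bfT$), whereas Yu's algebraic construction as refined by Kaletha involves its own toral quadratic characters built from the root system. I expect that carrying out the comparison term-by-term over the root data of $\bfT \subset \bfG$---paying close attention to the $\Gal$-orbit structure of ramified roots, which is what the superscript $\ram$ refers to---will show that the discrepancy between the Deligne--Lusztig sign and Kaletha's rectifier is exactly $\varepsilon^{\ram}$. Once this identification is made, the claimed compatibility with the local Langlands correspondence follows automatically, since Kaletha's parametrization of Howe-unramified regular $L$-packets is already indexed by pairs $(\bfT,\theta')$ up to conjugacy, and the assignment $(\bfT,\theta) \mapsto \cInd(R_{\bbT_r}^{\bbG_r}(\theta))$ factors through his bijection via the fixed quadratic twist $\theta \mapsto \theta \cdot \varepsilon^{\ram}$.
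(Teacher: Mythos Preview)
Your overall strategy---invoke the characterization theorem to force an isomorphism---is what the paper does, but several steps in your execution are off.

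First, the comparison happens at the \emph{parahoric} level, not at the level of $\bfG(F)$. Theorem~\ref{thm:intro litmus} characterizes irreducible representations of $G_{\x,0}$, not supercuspidals of $G$. The paper shows $|R_{\bbT_r}^{\bbG_r}(\theta)| \cong {}^\circ\tau_\Psi^{\FKS}$ as $TG_{\x,0}$-representations (both sides being known to be irreducible there), and only \emph{then} compactly induces. Your order---first prove $\cInd(R_{\bbT_r}^{\bbG_r}(\theta))$ is an irreducible supercuspidal, then compare at the $G$-level---has a gap: irreducibility of the compact induction is not ``standard type theory'' unless you already know $R_{\bbT_r}^{\bbG_r}(\theta)$ is a supercuspidal type, which is precisely what is being proved. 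The endomorphism results you cite give irreducibility of $\pm R_{\bbT_r}^{\bbG_r}(\theta)$ at the parahoric level only.

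Second, you have mislocated $\varepsilon^{\ram}$. The geometric side carries no extra sign: Proposition~\ref{prop:geom vreg} gives the clean formula $\Theta_{R_{\bbT_r}^{\bbG_r}(\theta)}(\gamma)=\sum_w \theta^w(\gamma)$ on very regular $\gamma$. It is the \emph{algebraic} side where $\varepsilon^{\ram}$ appears, via the Adler--DeBacker--Spice character computation (Proposition~\ref{prop:AS-vreg}): the untwisted Yu representation has character $\sum_w \varepsilon^{\ram}[\theta]({}^w\gamma)\theta({}^w\gamma)$, so matching forces the FKS twist, i.e.\ input $(\bfT,\theta\cdot\varepsilon^{\ram})$ to Yu. No term-by-term root analysis is needed; the two character formulas are already in final form and one simply reads off the discrepancy.

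Third, Henniart's inequality does not make the Weyl sum ``collapse to a single dominant term''---both sides retain the full sum $\sum_w \theta^w$. The inequality enters only in the \emph{uniqueness} step of Theorem~\ref{thm:intro litmus}: it ensures that very regular elements occupy enough of $G_{\x,0}$ that agreement of characters there (plus irreducibility) forces isomorphism.
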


The first introduction of the quadratic character $\varepsilon^{\ram}$ is due to DeBacker--Spice \cite{DS18}, where they prove that for Howe-unramified 0-toral supercuspidal representations---the easiest class of supercuspidal representations of arbitrary positive depth---Adler's construction \cite{Adl98} (which generalizes to \cite{Yu01}) is not well behaved with respect to the local Langlands correspondence \textit{unless} one twists the parametrization by $\varepsilon^{\ram}$. The question of how $\varepsilon^{\ram}$ is related to the functor $R_{\bbT_r}^{\bbG_r}$ was first investigated by the authors in \cite{CO25}---there, we established Theorem \ref{thm:intro compare} in exactly the same context as \cite{DS18} (see \cite[Theorem 7.2, Theorem 8.2]{CO25}). This result pioneered a number of directions, one of which was the revelation that positive-depth Deligne--Lusztig induction seemed compatible with Langlands phenomena on the nose, without requiring any external twists.

In \cite{Kal19}, Kaletha established a remarkable reparametrization of a large proportion of the supercuspidal representations constructed by Yu. Kaletha's parametrization relies on establishing that any $\theta$ with trivial Weyl group stabilizer has a \textit{Howe factorization}, which then can be used to construct a datum to which Yu associates a supercuspidal representation; these are \textit{regular supercuspidal representations}. Kaletha extends DeBacker--Spice's definition of $\varepsilon^{\ram}$ and constructs $L$-packets of regular supercuspidal $L$-packets, which for unramified $\bfT$ have the same shape as the $L$-packets established by DeBacker--Spice: the elements are parametrized by $(\bfT,\theta \cdot \varepsilon^{\ram})$. Stability and endoscopic character identities for these $L$-packets was recently established (under additional hypothesis on $F$) by Fintzen--Kaletha--Spice \cite{FKS23}. Moreover, Fintzen--Kaletha--Spice define a twisted Yu construction which exactly encodes $\varepsilon^{\ram}$. The contribution of Theorem \ref{thm:intro compare} is then that positive-depth Deligne--Lusztig induction gives a geometric realization of Howe-unramified regular supercuspidal $L$-packets.

Although Theorem \ref{thm:intro compare} specializes to \cite[Theorem 7.2, 8.2]{CO25}, the methodology in \cite{CO25} relies more on fine representation-theoretic structure than the methodology in the present paper (see Remark \ref{rem:toral comparison} for more comments). The methodology in the present paper is \textit{analytic} in nature. Theorem \ref{thm:intro compare} is in fact obtained as a corollary of the following characterization criterion, which is completely independent of representation-theoretic constructions. Let $G_{\x,0}$ be the parahoric subgroup of $\bfG(F)$ determined by $\bfT \hookrightarrow \bfG$.

\begin{thm}[Theorem \ref{thm:vreg characterization}]\label{thm:intro litmus}
    Assume $q$ is sufficiently large. For any regular $\theta$, there exists at most one irreducible $G_{\x,0}$-representation $\pi$ of finite depth whose character satisfies
    \begin{equation*}
        \Theta_\pi(\gamma) = c \cdot \sum_{w} \theta^w(\gamma) \qquad \text{for all very regular $\gamma\in G_{\x,0}$}.
    \end{equation*}
\end{thm}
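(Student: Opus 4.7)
The plan is to show that any two irreducible finite-depth representations $\pi_{1}, \pi_{2}$ of $G_{\x,0}$ that both satisfy the character formula must in fact be isomorphic, by computing the inner product $\langle \Theta_{\pi_{1}}, \Theta_{\pi_{2}}\rangle_{G_{\x,0}}$ and verifying it is strictly positive. Since both characters agree with $c\cdot\sum_{w}\theta^{w}$ on the very regular locus $G_{\x,0}^{\vreg}$, the ``bulk'' of this inner product is forced to coincide with the $L^{2}$-mass of $c\cdot\sum_{w}\theta^{w}$ on $G_{\x,0}^{\vreg}$; the problem thus reduces to bounding the contribution from the complement $G_{\x,0}\setminus G_{\x,0}^{\vreg}$.

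First I would reduce to a common finite quotient. Both $\pi_{i}$ have finite depth, and the character formula on the very regular locus should pin this depth in terms of the depth of $\theta$: if $\pi_{i}$ had depth much larger than $\theta$'s, then $\Theta_{\pi_{i}}$ would vary nontrivially along cosets of a deep congruence subgroup on which the right-hand side is constant, yielding a contradiction. This lets me assume both representations factor through some common finite quotient $\bbG_{r}(\F_{q})$. Next I would split
\[
\langle \Theta_{\pi_{1}},\Theta_{\pi_{2}}\rangle_{G_{\x,0}}
=\int_{G_{\x,0}^{\vreg}} \Theta_{\pi_{1}}(g)\overline{\Theta_{\pi_{2}}(g)}\,dg
+\int_{G_{\x,0}\setminus G_{\x,0}^{\vreg}} \Theta_{\pi_{1}}(g)\overline{\Theta_{\pi_{2}}(g)}\,dg.
\]
The first integral equals $|c|^{2}\int_{G_{\x,0}^{\vreg}}\bigl|\textstyle\sum_{w}\theta^{w}(g)\bigr|^{2}\,dg$, and the regularity of $\theta$ ensures this main term does not vanish---in fact its size can be computed from the orthogonality of the Weyl-twisted torus characters.

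The hard part will be bounding the error integral over the non-very-regular locus. I would use the trivial bound $|\Theta_{\pi_{i}}(g)|\le\dim\pi_{i}$ together with \emph{Henniart's inequality} \eqref{eq:Henniart} (or a refinement thereof); combined with the estimate that the Haar measure of $G_{\x,0}\setminus G_{\x,0}^{\vreg}$ is $O(q^{-1})$ relative to the full measure (by counting non-regular points after reduction to the residue field), this gives an error bound that is dominated by the main term for $q$ sufficiently large. Once the inner product is known to be strictly positive, character orthogonality on the finite quotient $\bbG_{r}(\F_{q})$ implies $\pi_{1}\cong\pi_{2}$.

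The main obstacle is the effectiveness of this last step: one needs explicit, uniform estimates on both $\dim\pi_{i}$ (which rely on the depth bound being tight enough to make dimensions polynomial in $q$ of controlled degree) and on the measure of the non-very-regular locus, with constants compatible with the size of the main term. This is precisely where the ``$q$ sufficiently large'' hypothesis enters, and calibrating these bounds is the essential content of Henniart's inequality. A secondary subtlety is ensuring that ``very regular'' is exactly the right notion: it must be small enough that the character formula even makes sense (so that $\gamma$ lies in a conjugate of $\bfT$) yet large enough that its complement is negligible.
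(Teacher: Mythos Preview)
Your overall framework---split the inner product into the very-regular and non-very-regular contributions and show the former dominates---is exactly the paper's strategy. However, your proposed bound on the non-very-regular term has a fatal gap. The trivial estimate $|\Theta_{\pi_i}(g)|\le\dim\pi_i$ yields an error of order
\[
\frac{1}{|\bbG_r(\F_q)|}\cdot|\bbG_r(\F_q)_{\nvreg}|\cdot\dim\pi_1\cdot\dim\pi_2,
\]
and nothing in the hypothesis bounds $\dim\pi_i$: the character formula constrains $\Theta_{\pi_i}$ only on the very regular locus, so a priori $\dim\pi_i$ could be as large as $|\bbG_r(\F_q)|^{1/2}$, making this bound of order $|\bbG_r(\F_q)_{\nvreg}|$ rather than $o(1)$. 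Your attempted depth-bounding argument does not rescue this---even granting that both $\pi_i$ factor through $\bbG_r(\F_q)$ for $r$ the depth of $\theta$, the dimensions of irreducible $\bbG_r(\F_q)$-representations still range over many orders of magnitude in $q$, and you have no mechanism to pin down which one occurs.

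The missing idea is Cauchy--Schwarz combined with irreducibility. Since $\langle\pi_i,\pi_i\rangle=1$ and the two characters agree on $G_{\x,0,\vreg}$, one has
\[
\langle\pi_1,\pi_1\rangle_{\nvreg}=\langle\pi_2,\pi_2\rangle_{\nvreg}=1-\langle\pi_1,\pi_1\rangle_{\vreg},
\]
and Cauchy--Schwarz gives $|\langle\pi_1,\pi_2\rangle_{\nvreg}|\le\langle\pi_1,\pi_1\rangle_{\nvreg}$. The entire problem thus reduces to the single inequality $\langle\pi_1,\pi_1\rangle_{\vreg}>\tfrac12$, which is a concrete computation involving only $\theta$ and the torus combinatorics---no dimension estimates enter at all. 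This is precisely what Henniart's inequality \eqref{eq:Henniart} secures. (Incidentally, no depth comparison is needed to pass to a common finite quotient: both $\pi_i$ are assumed to have \emph{some} finite depth, so one simply takes $r$ large enough for both.)
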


Here, \textit{very regular} essentially means that the image of $\gamma$ is regular semisimple in the reductive quotient of $G_{\x,0}$ and $c$ is a sign constant. 
The proof of Theorem \ref{thm:intro litmus} is analytic, short, and simple: the key ingredient is the Cauchy--Schwarz inequality. 
In fact, our approach is heavily inspired by Henniart's arguments in \cite[Section 2.6]{Hen92}, in which he discusses the case of $\GL_{n}$. Over the last few years, the authors have investigated generalizations of Henniart's characterization arguments (see Remark \ref{rem:litmus}); in short, our finding is that Henniart's arguments remain remarkably effective even in much broader contexts, i.e., for any unramified $p$-adic reductive group.

The requirement on $q$ is governed by \textit{Henniart's inequality} \eqref{eq:Henniart}. Up to a first approximation, one can think of this as a requirement on the proportion of regular elements in the finite-field torus associated to $\bfT$. (When $\bfG$ is split and the parahoric associated to $\bfT$ is hyperspecial, this is true on the nose: one can replace the denominator of the left-hand side of \eqref{eq:Henniart} with the number of non-regular elements of $\bbG_0(\F_q)$ which lie in $\bbT_0(\F_q)$.) As such, the strictness of this largeness condition depends on the nature of the chosen torus $\bfT$; for example, when $\bfT$ is Coxeter, the constraint is actually very mild (see Table \ref{table:Henniart}). In general, one can calculate what Henniart's inequality \eqref{eq:Henniart} demands on $q$---by hand for classical types and by using L\"ubeck's tables \cite{Lub19} for exceptional types.

The depth-zero version of Theorem \ref{thm:intro litmus} is one of the key steps in establishing a characterization theorem of this type for regular supercuspidal representations; this was done by the authors in the recent paper \cite{CO23-sc}. Because of this, it is of interest to understand the nature of the failures of Theorem \ref{thm:intro litmus} for small $q$, especially in the depth-zero setting. In Section \ref{sec:small q} we investigate this setting. In Section \ref{subsec:G2}, we present a case-by-case analysis of every small-$q$ setting excluded by the largeness condition required for Theorem \ref{thm:intro litmus} in the case of $G_{2}$. This analysis revealed the following remarkable observation: that small-$q$ violations of Theorem \ref{thm:intro litmus} seem to exclusively come from \textit{unipotent} representations. In Section \ref{sec:rss and unip}, we explain this phenomenon with a general fact: for certain $\theta$, one can formulate a characterization theorem for non-unipotent representations \textit{for all $q$} (Theorem \ref{thm:litmus with unip}).

Let us return now to the setting of Theorem \ref{thm:intro litmus}. It is worth noting that Theorem \ref{thm:intro litmus} works equally well for \textit{all} $p$; in particular, this includes characters $\theta$ for $\bfT(F)$ which do not have a Howe factorization. The potential for new supercuspidal representations to arise in this setting has been discussed in \cite[Section 7.4]{CO25} ($p=2$) and more generally in \cite{INT24} (specifically in the interesting setting of having no Howe factorization). As progress develops on both sides---understanding $R_{\bbT_r}^{\bbG_r}(\theta)$ for non-Howe-factorizable $\theta$ (e.g., the recent work of \cite{INT24}) and constructing supercuspidal representations in arbitrary representations (e.g., the recent work of Fintzen--Schwein \cite{FS25})---Theorem \ref{thm:intro litmus} will continue to be a powerful tool in establishing comparison isomorphisms for ``100\% 
of the time.'' 

Up to this point, we have explained that Theorem \ref{thm:intro litmus} implies Theorem \ref{thm:intro compare}; these are results for $\theta$ which have trivial Weyl group stabilizer. In Sections \ref{sec:DL} and \ref{sec:theta general}, we drop this assumption and consider $\theta$ in general. 

In Section \ref{sec:DL} we finally come to the namesake of this paper. 
Section \ref{sec:DL} is devoted giving a character formula for $R_{\bbT_r}^{\bbG_r}(\theta)$ in the spirit of the Deligne--Lusztig character formula: the formula is given in terms of $\theta$ and a \textit{positive-depth Green function} $Q_{\bbT_r}^{\bbG_r}(\theta_+)$---the restriction of the character of $R_{\bbT_r}^{\bbG_r}(\theta)$ to the unipotent elements. We establish, using the inner product formula for \cite{Cha24}, orthogonality relations for positive-depth Green functions. This structural result may be of independent interest. In the present paper, we give two applications of the study of positive-depth Green functions, which we now describe.

We establish in Section \ref{sec:Yu green} that in fact certain virtual representations ${}^\circ \tau_{(\bfT,\theta \cdot \epsilon)}$ arising from Yu's construction \textit{also} have character formulae in terms of intrinsically defined Green functions $\sfQ_{\bbT_r}^{\bbG_r}(\theta_+)$. Our result (Theorem \ref{thm:green Yu}) seems to be the first to investigate structure of this type. 

In our context, Theorem \ref{thm:green Yu} has the immediate implication that the coincidence of $R_{\bbT_r}^{\bbG_r}(\theta)$ and ${}^\circ \tau_{(\bfT,\theta \cdot \epsilon)}$ for arbitrary $\theta$ immediately follows from the coincidence of their unipotent restrictions $Q_{\bbT_r}^{\bbG_r}(\theta_+)$ and $\sfQ_{\bbT_r}^{\bbG_r}(\theta_+)$. On the other hand, for large $q$, the latter identity is a direct consequence of Theorem \ref{thm:intro compare}! (See Theorem \ref{thm:Q comparison}.) In other words, studying Green functions allows us to deduce results for arbitrary $\theta$ from results for $\theta$ with trivial Weyl group stabilizer (see Theorem \ref{thm:gen comparison}). 
This logical structure to obtain results for all $\theta$ from results for regular $\theta$ was inspired by Lusztig's work \cite{Lus90}. 
In particular, we obtain:

\begin{thm}[Corollary \ref{cor:inner product depth zero}]\label{thm:intro exhaust}
    Assume $q$ is sufficiently large. Let $\Psi = (\vec \bfG, \vec \phi, \vec r, \x, \rho_0)$ be a cuspidal Yu datum where $\x$ corresponds to an unramified elliptic maximal torus $\bfT$. Then
    \begin{equation*}
        \langle \pi_\Psi^{\Yu-\FKS}, R_{\bbT_r}^{\bbG_r}(\theta) \rangle = \langle \rho_{0}, R_{\bbT_0}^{\bbG_0^0}(\phi_{-1}) \rangle,
    \end{equation*}
    where $\theta = \prod_{i=-1}^d (\phi_i)|_{T_0}$. Here, $\pi_\Psi^{\Yu-\FKS}$ denotes the supercuspidal obtained from the Fintzen--Kaletha--Spice twist of Yu's construction.
\end{thm}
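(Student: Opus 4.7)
The plan is to deduce this by composing the Green-function comparison of Section \ref{sec:Yu green} with Frobenius reciprocity, thereby peeling off the positive-depth layers from both sides of the pairing until a purely depth-zero statement remains.

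The first step is to replace $R_{\bbT_r}^{\bbG_r}(\theta)$, as a virtual character of $G_{\x,0}$, by the Yu-side virtual character ${}^\circ\tau_{(\bfT,\theta\cdot\epsilon)}$. The positive-depth Deligne--Lusztig character formula from Section \ref{sec:DL} expresses the former as a sum of $\theta$-values at semisimple parts weighted by the Green function $Q_{\bbT_r}^{\bbG_r}(\theta_+)$, while Theorem \ref{thm:green Yu} expresses the latter analogously in terms of the Yu Green function $\sfQ_{\bbT_r}^{\bbG_r}(\theta_+)$. Theorem \ref{thm:Q comparison} (a direct consequence of Theorem \ref{thm:intro compare} applied to regular $\theta$) then gives $Q = \sfQ$ for sufficiently large $q$, and hence $R_{\bbT_r}^{\bbG_r}(\theta) = {}^\circ\tau_{(\bfT,\theta\cdot\epsilon)}$ as virtual characters of $G_{\x,0}$. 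This is the crucial move that converts a question about the ``geometric'' DL side into one about the ``algebraic'' Yu side, where the Yu-FKS construction of $\pi_\Psi^{\Yu-\FKS}$ is transparent.

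The second step is to compute $\langle \pi_\Psi^{\Yu-\FKS}, {}^\circ\tau_{(\bfT,\theta\cdot\epsilon)}\rangle$ directly. Writing $\pi_\Psi^{\Yu-\FKS} = \cInd_K^{\bfG(F)}\tilde\kappa_\Psi$ for the standard Yu inducing datum $\tilde\kappa_\Psi$ on some open compact-mod-center $K \supset G_{\x,0}$, and interpreting the pairing via compact induction of the parahoric-level virtual character, Frobenius reciprocity collapses it to $\langle \tilde\kappa_\Psi|_{G_{\x,0}}, {}^\circ\tau_{(\bfT,\theta\cdot\epsilon)}\rangle_{G_{\x,0}}$ up to Mackey double-coset contributions that vanish by the standard positive-depth intertwining arguments (cuspidality of $\rho_0$ plus genericity of the $\phi_i$). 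Inside the parahoric, the positive-depth layers of $\tilde\kappa_\Psi$ and of ${}^\circ\tau_{(\bfT,\theta\cdot\epsilon)}$ are both controlled by the characters $\phi_0,\dots,\phi_d$; these cancel using the identity $\theta = \prod_{i=-1}^d \phi_i|_{T_0}$ together with the orthogonality of Yu Green functions, leaving precisely the depth-zero pairing $\langle \rho_0, R_{\bbT_0}^{\bbG_0^0}(\phi_{-1})\rangle$ inside the reductive quotient $\bbG_0(\F_q)$.

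The main obstacle is the careful calibration of twists: the DeBacker--Spice character $\varepsilon^{\ram}$ appearing in Theorem \ref{thm:intro compare}, the Fintzen--Kaletha--Spice twist built into $\pi_\Psi^{\Yu-\FKS}$, and the sign $\epsilon$ in the definition of ${}^\circ\tau$ must all align so as to cancel cleanly. This alignment is the design purpose of the FKS formalism, but tracking it character-by-character in the present setup requires substantial bookkeeping. A secondary but essential ingredient is the orthogonality of positive-depth Green functions established in Section \ref{sec:DL}, without which the layer-by-layer cancellation would be heuristic rather than rigorous; it is precisely this orthogonality that reduces the positive-depth contributions to a single nontrivial factor at depth zero.
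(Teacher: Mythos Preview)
Your first step is correct and matches the paper: the Green-function comparison (Theorem \ref{thm:Q comparison}) yields the virtual-character identity $R_{\bbT_r}^{\bbG_r}(\theta) \cong (-1)^{r(\bfG^0)-r(\bfT)+r(\bfT,\theta)}\,\cc\tau^{\FKS}_{(\bfT,\theta)}$ (Theorem \ref{thm:gen comparison}), and this is indeed the whole content of Sections \ref{sec:DL}--\ref{sec:Yu green}.

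Your second step over-complicates the remaining deduction and misidentifies its mechanism. First, the pairing in the precise statement (Corollary \ref{cor:inner product depth zero}) is at the parahoric level $TG_{\x,0}$, between $\cc\tau_\Psi^{\FKS}$ and $R_{\bbT_r}^{\bbG_r}(\theta)$; there is no compact induction to $G$, no Mackey analysis over $G$, and no need to argue vanishing of off-diagonal double-coset contributions via cuspidality or genericity. Second, once Theorem \ref{thm:gen comparison} is in hand, the reduction to depth zero is immediate from the \emph{definition} of $\cc\tau^{\FKS}_{(\bfT,\theta)}$ in Section \ref{subsec:Yu green}: write $R_{\bbT_0}^{\bbG^0_0}(\phi_{-1}) = \sum_j m_j \rho_{0,j}$, form $\Psi_j$ by attaching $\rho_{0,j}$ to the \emph{same} clipped datum $(\vec\bfG,\vec\phi,\vec r,\x)$ as $\Psi$, and set $\cc\tau^{\FKS}_{(\bfT,\theta)} = (-1)^{r(\bfG^0)-r(\bfT)}\sum_j m_j \cc\tau^{\FKS}_{\Psi_j}$. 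The positive-depth factor $\kappa_+\otimes\epsilon$ is then literally identical on both sides of each $\langle \cc\tau_\Psi^{\FKS}, \cc\tau_{\Psi_j}^{\FKS}\rangle$, and standard Yu intertwining theory gives this as $\langle \rho_0, \rho_{0,j}\rangle$. There is no layer-by-layer cancellation mediated by Green-function orthogonality; Proposition \ref{prop:Q-inner-prod} is a byproduct of Theorem \ref{thm:Cha24} rather than an input to this corollary, and is not used here. Your flagged worry about twist calibration is likewise moot at this stage: the FKS twist is packaged into $\cc\tau^{\FKS}$ from the outset, so no separate $\varepsilon^{\ram}$ bookkeeping is required.
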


Ultimately, one would want to have a more satisfying answer as to why a theorem like Theorem \ref{thm:intro compare} holds, without a ``cheat'' criterion like Theorem \ref{thm:intro litmus}. The present paper has been in preparation for the last few years, much of this delay due to awaiting the resolution of the results in \cite{Cha24}. In the meantime, there have been rapid developments of a different flavor towards understanding $R_{\bbT_r}^{\bbG_r}(\theta)$'s relation to algebraic constructions. These methods are geometric in nature, and were first pioneered by the very interesting recent work of Chen--Stasinski \cite{CS23} (sequel to \cite{CS17}). This was followed by Nie's subsequent generalization \cite{Nie24} of this work using some structural methods of \cite{Cha24} and the very recent work of Ivanov--Nie \cite{IN25}. In particular, Nie \cite{Nie24} proved, with no assumption on $q$, that $R_{\bbT_r}^{\bbG_r}(\theta)$ exhausts all Howe-unramified supercuspidal types. At present, works in this direction do not quite pin down the appearance of the twist $\varepsilon^{\ram}$.


We end this introduction with an application of Theorem \ref{thm:intro compare} to computing character formulae of supercuspidal representations. 
The study of Green functions for finite groups of Lie type is a crucial ingredient in establishing stability and endoscopic character identities for depth-zero supercuspidal representations \cite{KV06,DR09}. \textit{Springer's hypothesis}, proved by Kazhdan \cite{Kaz77} by elementary methods and in \cite[Appendix A]{KV06} using character sheaves, posits that Green functions can be expressed as the Fourier transform of the delta function of the coadjoint orbit of a(ny) semisimple element of the relevant (dual) Lie algebra. Assume for the rest of this introduction that $F$ has characteristic $0$ and large residue characteristic to guarantee convergence of the exponential map. Using $p$-adic harmonic analysis, DeBacker--Reeder \cite{DR09} use the Springer hypothesis to compute the character formula for depth-zero supercuspidal representations; the orbital integrals that appear exactly come from the aforementioned delta functions.

Fintzen--Kaletha--Spice establish for positive-depth regular supercuspidal representations an analogous character formula \cite[Theorem 4.3.5]{FKS23}, which is based on the preceding works by Adler--DeBacker \cite{AS09}, DeBacker--Spice \cite{DS18}, and Spice \cite{Spi18, Spi21}.
The orbital integrals that appear there now come from positive-depth data; their role in the character formula is established using the classical Springer hypothesis together with positive-depth algebraic methods. It is natural to ask: can this formula be obtained using a \textit{positive-depth Springer's hypothesis}? In Section \ref{sec:springer}, we prove this for $0$-toral characters $\theta$ (Theorem \ref{thm:pos Springer}); this proof relies on the recent construction of character sheaves on parahoric subgroups due to Bezrukavnikov and the first author \cite{BC24}. We then use $p$-adic harmonic analysis (Section \ref{subsec:DR}), generalizing DeBacker--Reeder's depth-zero context, to then obtain a \textit{geometric} proof of the character formula for $0$-toral supercuspidal representations. We remark that the constraint on $0$-toral characters only comes into play due to the fact that the computation of the trace-of-Frobenius of the relevant character sheaf is at present only written down for $0$-toral $\theta$ \cite[Theorem 10.9]{BC24}.

\subsection*{Acknowledgements}

The authors thank Jessica Fintzen and Ju-Lee Kim for helpful discussions about Kim--Yu types, Meinolf Geck for helpful discussions about Frank L\"ubeck's database of information on semisimple classes, and Yakov Varshavsky for helpful discussions about Springer's hypothesis. 
The authors also thank Guy Henniart for his encouragement.
The first author was partially supported by NSF grants DMS-1802905, DMS-2101837, NSF grant DMS-2401114, and a Sloan Research Fellowship. She also thanks the Sydney Mathematical Research Institute for excellent working conditions. 
The second author was partially supported by JSPS KAKENHI Grant Number JP20K14287 and JP24K16899, Hakubi Project at Kyoto University, and the Yushan Young Fellow Program, Ministry of Education, Taiwan. This paper resolves several open threads the authors began discussing in 2019; we gratefully thank the many conferences and climbing gyms that have supported our collaboration.

\section{Notation and assumptions}\label{sec:notations}

Let $F$ be a non-archimedean local field with finite residue field $\cO_F/\mfp_F \cong \F_q$ of prime characteristic $p$, where we write $\cO_{F}$ and $\mfp_{F}$ for the ring of its integers and the maximal ideal, respectively.
We let $F^{\ur}$ denote the maximal unramified extension of $F$.
We write $\Gamma$ for the absolute Galois group of $F$.

For an algebraic variety $\bfJ$ over $F$, we denote the set of its $F$-valued points by $J$.
When $\bfJ$ is an algebraic group, we write $\bfZ_{\bfJ}$ for its center.
We use the bold fraktur letter for the Lie algebra of an algebraic group, e.g., $\bmfj=\Lie\bfJ$.
Similarly to the group case, we denote the set of $F$-valued points of $\bmfj$ by $\mfj$.

For a connected reductive group  $\bfJ$ over $F$, let $\cB(\bfJ,F)$ (resp.\ $\cB^{\red}(\bfJ,F)$) denote the enlarged (resp.\ reduced) Bruhat--Tits building of $\bfJ$ over $F$.
For a point $\x\in\cB(\bfJ,F)=\cB^{\red}(\bfJ,F)\times X_{\ast}(\bfZ_{\bfJ})_{\R}$, we write $\bar{\x}$ for the image of $\x$ in $\cB^{\red}(\bfJ,F)$, and $J_{\bar{\x}}$ for the stabilizer of $\bar{\x}$ in $J$.
We define $\widetilde{\R}$ to be the set $\R\sqcup\{r+\mid r\in\R\}\sqcup\{\infty\}$ with a natural order.
Then, for each point $\x\in\cB(\bfJ,F)$, the Moy--Prasad filtration $\{J_{\x,r}\}_{r\in\widetilde{\R}_{\geq0}}$ and also $\{\mfj_{\x,r}\}_{r\in\widetilde{\R}_{\geq0}}$ for the Lie algebra $\mfj$ of $\bfJ$ are defined.
For any $r,s\in\widetilde{\R}_{\geq0}$ satisfying $r<s$, we write $J_{\x,r:s}$ (resp.\ $\mfj_{\x,r:s}$) for the quotient $J_{\x,r}/J_{\x,s}$ (resp.\ $\mfj_{\x,r}/\mfj_{\x,s}$).

Suppose that $\bfT$ is a tamely ramified maximal torus of $\bfJ$.
By fixing a $T$-equivariant embedding of $\cB(\bfT,F)$ into $\cB(\bfJ,F)$, we may regard $\cB(\bfT,F)$ as a subset of $\cB(\bfJ,F)$.
Then, for any point $\x\in\cB(\bfJ,F)$, the property that ``$\x$ belongs to the image of $\cB(\bfT,F)$'' does not depend on the choice of such an embedding (see the second paragraph of \cite[Section 3]{FKS23} for details).
For any point $\x\in\cB(\bfJ,F)$ which belongs to $\cB(\bfT,F)$, we have $T_{\mathrm{b}}\subset G_{\x}$, where $T_{\mathrm{b}}$ denotes the maximal bounded subgroup of $T$.
When $\bfT$ is elliptic in $\bfJ$, the image of $\cB(\bfT,F)$ in $\cB^{\red}(\bfJ,F)$ consists of only one point.
When the image of a point $\x\in\cB(\bfJ,F)$ in $\cB^{\red}(\bfJ,F)$ coincides with this point (or, equivalently, $\x$ belongs to $\cB(\bfT,F)$), we say that $\x$ is associated with $\bfT$.
Note that, in this case, we have $T \subset J_{\bar{\x}}$.

For any element $x$ of a group $H$, we let ${}^{x}(-)$ or $(-)^{x}$ denote the conjugation by $y$.
For example, for any $x,y\in H$, we put ${}^{x}y\coloneqq xyx^{-1}$ and ${y}^{x}\coloneqq x^{-1}yx$; for any $x\in H$ and a subset $H'\subset H$, we put ${}^{x}H'\coloneqq xH'x^{-1}$ and $H'^{x}\coloneqq x^{-1}H'x$; for any representation $\rho$ of a subgroup $H'$ of $H$ and $x\in H$, we define a representation ${}^{x}\rho$ of ${}^{x}H'$ (resp.\ $\rho^{x}$ of $H'{}^{x}$) by ${}^{x}\rho({}^{x}y)\coloneqq \rho(y)$ (resp.\ $\rho^{x}(y^{x})\coloneqq \rho(y)$).
For any group $H$ and its subgroups $H_{1}$ and $H_{2}$, we put
\[
N_{H}(H_{1},H_{2})\coloneqq \{n\in H \mid {}^{n}H_{1}\subset H_{2}\}.
\]
When $H_{1}=H_{2}$, we shortly write $N_{H}(H_{1})\coloneqq N_{H}(H_{1},H_{2})$.

For any two virtual representations $\rho_{1}, \rho_{2}$ of any finite group $G$, we let $\langle\rho_{1},\rho_{2}\rangle_{G}$ denote the scalar product of $\rho_{1}$ and $\rho_{2}$, i.e.,
\[
\langle\rho_{1},\rho_{2}\rangle_{G}
\coloneqq
\frac{1}{|G|}\sum_{g\in G}\Theta_{\rho_{1}}(g)\cdot\overline{\Theta_{\rho_{2}}(g)}.
\]
We often simply write $\langle\rho_{1},\rho_{2}\rangle$ when the group $G$ is clear from the context.

\subsection{Assumptions on $p$ and $q$}\label{subsec:assumptions}

In this paper, we impose several assumptions on $p$ and $q$ depending on the context.
These assumptions will be explicitly stated each time, at the risk of being repetitive.
But for the sake of clarity, we also summarize them here.
\begin{enumerate}
    \item (Assumption on $p$.) Whenever we appeal to the theory of algebraic construction of supercuspidal representations, we assume that $p$ is odd and not bad for $\bfG$. When we discuss regular supercuspidal representations, we additionally assume $p \nmid |\pi_1(\bfG_{\der})|$ and $p \nmid |\pi_1(\widehat \bfG_{\der})|$ (which are implied by the non-badness of $p$ unless a component of type $A_n$ is present). This is to guarantee the existence of a Howe factorization for all $\theta$ and simplifies our exposition on regular supercuspidal representations, but it is not necessary. In general, our methods hold for any tame pair $(\bfT,\theta)$ that has a Howe factorization; we call such pairs \textit{Howe-factorizable}.

    \item (Assumption on $p$.) In Section \ref{sec:springer}, we assume that $p$ is sufficiently large so that a logarithm map is available; see Section \ref{subsec:log} for details.
    \item (Assumption on $q$.) In Theorem \ref{thm:vreg characterization}, we assume that $q$ is sufficiently large so that the Henniart inequality \eqref{eq:Henniart} holds. This theorem is crucial for our comparison results between algebraic and geometric constructions of supercuspidal representations, hence we always assume this in the context of comparison results.
    \item (Assumption on $q$.) In Theorem \ref{thm:pos Springer}, we assume that $q$ is large so that a maximal torus $\bbT_0$ of a connected reductive group $\bbG_0$ possesses at least one regular semisimple element. This assumption is much weaker than the previous one.
    \item (Assumption on $q$.) In Section \ref{sec:theta general}, we assume that $q$ is sufficiently large so that a maximal torus $\bbT_0$ of a connected reductive group $\bbG_0$ possesses at least one regular character. This is equivalent to the existence of a regular semisimple element in the dual torus of $\bbT_0$, hence the resulting restriction on $q$ is very close to the previous one, but not exactly the same; see Remark \ref{rem:reg-depth-zero}.
\end{enumerate}

\section{Very regular elements and characterization of parahoric representations}\label{sec:litmus}

In this section, we let $\bfG$ be a connected reductive group over $G$ and fix an unramified elliptic maximal torus $\bfT$ of $\bfG$.
Let $\x\in\cB(\bfG,F)$ be a point associated to $\bfT$.
Note that then we have $TG_{\x,0}=Z_{\bfG}G_{\x,0}$ (\cite[Lemma 7.1.1]{Kal11-0}).

Recall the notion of \textit{very regular elements}, as introduced in \cite[Definition 4.2]{CO25}.

\begin{definition}
    A regular semisimple element $\gamma \in TG_{\x,0}$ is \textit{unramified very regular} if
    \begin{enumerate}[label=\textbullet]
        \item the connected centralizer $\bfT_\gamma$ of $\gamma$ in $\bfG$ is an unramified maximal torus, and
        \item $\alpha(\gamma) \not\equiv 1 \pmod{\mathfrak{p}_{\overline{F}}}$ for any root $\alpha$ of $\bfT_\gamma$ in $\bfG$.
    \end{enumerate}
\end{definition}

Let $T_{0}=T\cap G_{\x,0}$ be the parahoric subgroup of $T$.
Let $\bbG_{0}$ denote the reductive quotient of the special fiber of the parahoric subgroup scheme of $G$ associated to $\x$; then $\bbG_{0}(\F_{q})\cong G_{\x,0:0+}$ (see \cite[Section 3.2]{MP96}).
If we similarly define $\bbT_{0}$, then $\bbT_{0}$ is an elliptic maximal torus of $\bbG_{0}$ (see \cite[Section 3.2]{CO25}).
Note that the Weyl groups $W_{G_{\x,0}}(T_{0})$ and $W_{\bbG_{0}}(\bbT_{0})$ are naturally identified through the reduction map $G_{\x,0}\twoheadrightarrow G_{\x,0:0+}\cong\bbG_{0}(\F_{q})$ (see \cite[Lemma 3.4.10 (2)]{Kal19}).
In fact, more generally, we have the following identifications for any unramified very regular element $\gamma\in G_{\x,0}$:
\[
    W_{G_{\x,0}}(T_{\gamma,0},T_{0})
    \xrightarrow{\cong}
    W_{G_{\x,0:r+}}(T_{\gamma,0:r+},T_{0:r+})
    \xrightarrow{\cong}
    W_{\bbG_{0}(\F_q)}(\bbT_{\gamma,0},\bbT_{0}),
\]
where $\bbT_{\gamma,0}$ denotes the elliptic maximal torus determined by $\bfT_{\gamma}$ in the same way as $\bbT_{0}$.
(Whenever these sets are nonempty, $\gamma$ must be elliptic.)
Here, both the maps are natural ones induced by the reductions maps $G_{\x,0}\twoheadrightarrow G_{\x,0:r+} \twoheadrightarrow G_{\x,0:0+}$.
It is explained that the composition of these two maps is bijective in \cite[Lemma 6.22]{CO23-sc}; the same proof can be applied to showing that the first map is bijective.
We also note that $W_{G_{\x,0}}(T_{\gamma,0},T_{0})$ equals $W_{G_{\x,0}}(\bfT_{\gamma},\bfT)$.

We write $G_{\x,0,\vreg}$ (resp.\ $T_{0,\vreg}$) for the set of unramified very regular elements of $G_{\x,0}$ (resp.\ $T_{0}$).
Let $\bbT_{0}(\F_{q})_{\vreg}$ be the image of $T_{0,\vreg}$ under the reduction map $T_{0}\twoheadrightarrow T_{0:0+}\cong\bbT_{0}(\F_{q})$.
We put $\bbT_{0}(\F_{q})_{\nvreg}:=\bbT_{0}(\F_{q})\smallsetminus\bbT_{0}(\F_{q})_{\vreg}$. We point out that the elements of $\bbT_0(\F_q)_{\vreg}$ are regular semisimple in $\bbG_0(\F_q)$, but in general it is possible for $\bbT_0(\F_q)_{\vreg}$ to be strictly smaller than the set of regular elements in $\bbT_0(\F_q)$.

The main result of this section is an elementary characterization of certain irreducible representations of parahoric subgroups.

\begin{thm}\label{thm:vreg characterization}
    Let $\theta \from T_0 \to \C^\times$ be a smooth character with trivial $W_{G_{\x,0}}(\bfT)$-stabilizer. Assume that $q$ is large enough so that
    \begin{equation}\tag{$*$}\label{eq:Henniart}
        \frac{|\bbT_{0}(\F_{q})|}{|\bbT_{0}(\F_{q})_{\nvreg}|} > 2\cdot|W_{\bbG_{0}}(\bbT_{0})(\F_{q})|.
    \end{equation}
    Then there exists at most one irreducible smooth representation $\pi$ of $G_{\x,0}$ whose character satisfies
    \begin{equation*}
        \Theta_\pi(\gamma) = c \cdot \sum_{w \in W_{G_{\x,0}}(\bfT_{\gamma},\bfT)} \theta^w(\gamma) \qquad \text{for all $\gamma \in G_{\x,0,\vreg}$,}
    \end{equation*}
    for some sign constant $c \in \{\pm 1\}$.
\end{thm}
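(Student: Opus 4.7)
My plan is to argue by contradiction, using Cauchy--Schwarz in the spirit of \cite[Section 2.6]{Hen92}. Any smooth representation of $G_{\x,0}$ of finite depth factors through a finite quotient $\bar G := G_{\x,0}/G_{\x,s}$ for some $s$, so I can carry out all character inner products on a single finite group through which both candidates $\pi_1, \pi_2$ descend to irreducible representations. Suppose two non-isomorphic irreducibles $\pi_1 \not\cong \pi_2$ satisfy the identity with sign constants $c_1, c_2 \in \{\pm 1\}$. Form the virtual character $\chi := \Theta_{\pi_1} - c_1 c_2 \Theta_{\pi_2}$, which vanishes identically on the image of $G_{\x,0,\vreg}$, and for which $\langle \chi, \chi\rangle_{\bar G} = 2$ by orthogonality of distinct irreducible characters. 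Hence the entire contribution to $\langle \chi, \chi\rangle_{\bar G}$ comes from the complement of $\bar G^{\vreg}$.

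The key is then to bound $\frac{1}{|\bar G|}\sum_{g \in \bar G^{\nvreg}}|\Theta_{\pi_i}(g)|^2$ from above for each $i$. By the character identity, $\Theta_{\pi_i}$ vanishes on vreg elements $\gamma$ whose centralizing torus $\bfT_\gamma$ is not $G_{\x,0}$-conjugate to $\bfT$ (so $W_{G_{\x,0}}(\bfT_\gamma,\bfT) = \emptyset$). The remaining vreg elements lie in $\bar G$-conjugacy classes meeting $T_{0,\vreg}$; each such class intersects $T_{0,\vreg}$ in a Weyl orbit of cardinality $|W|$, where $W := W_{G_{\x,0}}(\bfT)$, and has size $|\bar G|/|T_0|$ since the centralizer of a vreg element in $\bar G$ is the image of $T_0$. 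Bookkeeping then yields
\begin{equation*}
    \frac{1}{|\bar G|}\sum_{g \in \bar G^{\vreg}}|\Theta_{\pi_i}(g)|^2 \;=\; \frac{1}{|W| \cdot |T_0|}\sum_{\gamma \in T_{0,\vreg}}\Bigl|\sum_{w \in W}\theta^w(\gamma)\Bigr|^2.
\end{equation*}
The trivial-stabilizer hypothesis on $\theta$, together with character orthogonality on $T_0$, gives $\sum_{\gamma \in T_0}|\sum_w \theta^w(\gamma)|^2 = |W|\cdot|T_0|$, while the trivial bound $|\sum_w\theta^w|^2 \leq |W|^2$ on $T_{0,\nvreg}$ yields
\begin{equation*}
    \frac{1}{|\bar G|}\sum_{g \in \bar G^{\vreg}}|\Theta_{\pi_i}(g)|^2 \;\geq\; 1 - \frac{|W|\cdot|T_{0,\nvreg}|}{|T_0|},
\end{equation*}
and so (subtracting from $\langle \Theta_{\pi_i},\Theta_{\pi_i}\rangle = 1$) $\frac{1}{|\bar G|}\sum_{g \in \bar G^{\nvreg}}|\Theta_{\pi_i}(g)|^2 \leq |W|\cdot|T_{0,\nvreg}|/|T_0|$.

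Applying the elementary Cauchy--Schwarz bound $|a - c_1 c_2 b|^2 \leq 2|a|^2 + 2|b|^2$ to $\chi$ and summing over $\bar G^{\nvreg}$ now gives
\begin{equation*}
    2 \;=\; \frac{1}{|\bar G|}\sum_{g \in \bar G^{\nvreg}}|\chi(g)|^2 \;\leq\; \frac{4|W|\cdot|T_{0,\nvreg}|}{|T_0|},
\end{equation*}
equivalently $|T_0|/|T_{0,\nvreg}| \leq 2|W|$. Since very-regularity is detected on the reductive quotient, this ratio coincides with $|\bbT_0(\F_q)|/|\bbT_0(\F_q)_{\nvreg}|$, and $W$ is identified with $W_{\bbG_0}(\bbT_0)(\F_q)$ via the reduction $G_{\x,0} \twoheadrightarrow \bbG_0(\F_q)$; this is a direct contradiction of \eqref{eq:Henniart}.

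The main technical point is the displayed class-function identity: one must verify that for vreg $\bar\gamma \in \bar G$ the centralizer $Z_{\bar G}(\bar\gamma)$ really is the image of $T_0$, rather than being enlarged by $G_{\x,s}$-coincidences of the commutator. This follows from analyzing $\Ad(\bar\gamma)$ on the associated graded pieces $G_{\x,r:s}$ between $G_{\x,0+}$ and $G_{\x,s}$, using that the roots $\alpha(\gamma)$ are units modulo $\mathfrak{p}$. Once this combinatorial bookkeeping is in place, the entire argument reduces to a single character orthogonality relation on $T_0$ together with a single application of Cauchy--Schwarz.
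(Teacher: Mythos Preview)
Your argument is correct and follows essentially the same Henniart-style strategy as the paper: both reduce to the bound $\langle\pi_i,\pi_i\rangle_{\vreg}\geq 1-|W|\cdot|T_{0,\nvreg}|/|T_0|$, obtained by rewriting the very-regular contribution as $\frac{1}{|W|\,|\bar T|}\sum_{t\in\bar T_{\vreg}}\bigl|\sum_w\theta^w(t)\bigr|^2$ and invoking the regularity of $\theta$, and both land on exactly the threshold \eqref{eq:Henniart}; the paper packages the endgame as Cauchy--Schwarz on $\langle\pi,\pi'\rangle_{\nvreg}$ rather than your pointwise bound on $|\chi|^2$, but these are equivalent. One caution on your ``main technical point'': the centralizer $Z_{\bar G}(\bar\gamma)$ need not equal the image of $T_0$ in general (regular semisimple does not imply strongly regular in $\bbG_0$), only that it lies in $N_{\bar G}(\bar T)$---the paper sidesteps this by directly showing each fiber of $(g,t)\mapsto{}^g t$ has size $|N_{\bar G}(\bar T)|$ via \cite[Lemma~5.1]{CO25}, but your displayed identity is unaffected since the possible $\mathrm{Stab}_W(\bar t)$ factor cancels between the conjugacy-class size and the Weyl-orbit size.
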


\begin{proof}
    Assume that $\pi,\pi'$ are irreducible smooth representations of $G_{\x,0}$ such that
    \begin{align*}
        \Theta_\pi(\gamma) &= c \cdot \sum_{w \in W_{G_{\x,0}}(\bfT_{\gamma},\bfT)} \theta^w(\gamma) \qquad \text{for all $\gamma \in G_{\x,0,\vreg}$,}\\
        \Theta_{\pi'}(\gamma) &= c' \cdot \sum_{w \in W_{G_{\x,0}}(\bfT_{\gamma},\bfT)} \theta^w(\gamma) \qquad \text{for all $\gamma \in G_{\x,0,\vreg}$,}
    \end{align*}
    where the two sign constants $c,c'$ \textit{a priori} may not agree. Our task is to prove $\pi \cong \pi'$.
    Since $\pi,\pi'$ are both irreducible, this is equivalent to proving 
    \begin{equation}\label{eq:nonzero}
        \langle \pi, \pi' \rangle \neq 0.
    \end{equation}
    Here, note that all $\theta$, $\pi$, and $\pi'$ are irreducible smooth representations of compact groups, hence we can choose $r\in\R_{\geq0}$ such that they factors through the quotient by the depth $(r+)$-th filtration.
   Thus we are choosing the smallest such $r$ and taking the inner product in the finite group $G_{\x,0:r+}$, i.e., 
   \[
   \langle\pi,\pi'\rangle=\frac{1}{|G_{\x,0:r+}|}\sum_{\gamma\in G_{\x,0:r+}}\Theta_{\pi}(\gamma)\cdot\overline{\Theta_{\pi'}(\gamma)}.
   \]
   We let $(G_{\x,0:r+})_{\vreg}$ be the image of $G_{\x,0,\vreg}$ and $(G_{\x,0:r+})_{\nvreg}:=G_{\x,0:r+}\smallsetminus(G_{\x,0:r+})_{\vreg}$.
   If we define 
   \[
   \langle\pi,\pi'\rangle_{\mathrm{(n)vreg}}
   :=
   \frac{1}{|G_{\x,0:r+}|}\sum_{\gamma\in (G_{\x,0:r+})_{\mathrm{(n)vreg}}}\Theta_{\pi}(\gamma)\cdot\overline{\Theta_{\pi'}(\gamma)},\quad
   \]
   then obviously we have $\langle \pi, \pi' \rangle = \langle \pi, \pi' \rangle_{\nvreg} + \langle \pi, \pi' \rangle_{\vreg}$.
   Hence, to prove \eqref{eq:nonzero}, it suffices to show
    \begin{equation}\label{eq:inequality}
        |\langle \pi, \pi' \rangle_{\nvreg}| < |\langle \pi, \pi' \rangle_{\vreg}|.
    \end{equation}

    To this end, we begin by estimating $|\langle \pi, \pi' \rangle_{\nvreg}|$. By the Cauchy--Schwarz inequality, we have
    \begin{equation*}
        |\langle \pi, \pi' \rangle_{\nvreg}| \leq \langle \pi, \pi \rangle_{\nvreg}^{\frac{1}{2}} \cdot \langle \pi', \pi' \rangle_{\nvreg}^{\frac{1}{2}}.
    \end{equation*}
    By assumption, we have $\langle \pi, \pi \rangle = 1 = \langle \pi', \pi' \rangle$ and $\langle \pi, \pi \rangle_{\vreg} = \langle \pi', \pi' \rangle_{\vreg}$. 
    It therefore follows that $\langle \pi, \pi \rangle_{\nvreg} = \langle \pi', \pi' \rangle_{\nvreg}.$ So we obtain the inequality
    \begin{equation*}
        |\langle \pi, \pi' \rangle_{\nvreg}| \leq \langle \pi, \pi \rangle_{\nvreg}.
    \end{equation*}
    To show \eqref{eq:inequality}, it is enough to show that $\langle \pi, \pi \rangle_{\vreg} > \frac{1}{2}$; indeed, if this inequality holds, then $\langle \pi, \pi \rangle_{\nvreg} < \frac{1}{2}$ and
    \begin{equation*}
        |\langle \pi, \pi' \rangle_{\nvreg}| \leq \langle \pi, \pi \rangle_{\nvreg} < \tfrac{1}{2} < \langle \pi, \pi \rangle_{\vreg} = |\langle \pi, \pi' \rangle_{\vreg}|.
    \end{equation*}
    Hence the theorem follows from the following claim:
    \begin{equation}\label{eq:claim}
        \text{If \eqref{eq:Henniart} holds, then $\langle \pi, \pi \rangle_{\vreg} > \tfrac{1}{2}$}.
    \end{equation}
    
    By the assumption on $\Theta_{\pi}$, we have
    \[
     \langle \pi, \pi \rangle_{\vreg}
     =
     \frac{1}{|G_{\x,0:r+}|} \sum_{\gamma \in (G_{\x,0:r+})_{\vreg}} \sum_{w,w' \in W_{G_{\x,0}}(\bfT_{\gamma},\bfT)} \theta^{w}(\gamma) \cdot \overline{\theta^{w'}(\gamma)}.
    \]
    Note that the index set of the first sum can be replaced with 
    \[
    (G_{\x,0:r+})'_{\vreg}
    :=
    \{\gamma\in(G_{\x,0:r+})_{\vreg} \mid \text{conjugate to an element of $(T_{0:r+})_{\vreg}$}\}.
    \]
    By this definition, we have a surjective map
    \[
    G_{\x,0:r+}\times (T_{0:r+})_{\vreg}\twoheadrightarrow (G_{\x,0:r+})'_{\vreg}\colon (g,t)\mapsto {}^{g}t.
    \]
    We claim that each fiber of this map is of order $|N_{G_{\x,0:r+}}(T_{0:r+})|$.
    Indeed, fix $(g,t)\in G_{\x,0:r+}\times (T_{0:r+})_{\vreg}$ and let us take another element $(g',t')\in G_{\x,0:r+}\times (T_{0:r+})_{\vreg}$ satisfying ${}^{g}t={}^{g'}t'$.
    Then we have ${}^{g^{-1}g'}t'=t$.
    In other words, if we take lifts $g_0, g'_0\in G_{\x,0}$ of $g,g'\in G_{\x,0:r+}$ and also lifts $t_0, t'_0\in T_{0,\vreg}$ of $t,t'\in (T_{0:r+})_\vreg$, then there exists an element $g_{r+}\in G_{\x,r+}$ satisfying ${}^{g_0^{-1}g'_0}t'_0=t_0g_{r+}$.
    By \cite[Lemma 5.1]{CO25}, there exists $k_{r+}\in G_{\x,r+}$ satisfying ${}^{k_{r+}}t_0=t_0g_{r+}$ (the case where $r=0$ is discussed in \textit{loc.\ cit.}, but the same argument works).
    Hence we get ${}^{g_0^{-1}g'_0}t'_0={}^{k_{r+}}t_0$, which implies that $k_{r+}^{-1}g_0^{-1}g'_0\in N_{G_{\x,0}}(\bfT)$ since both $t'_0$ and $t_0$ are regular semisimple elements of $T$.
    Thus we get $g'\in g\cdot N_{G_{\x,0:r+}}(T_{0:r+})$.
    Conversely, for any $n\in N_{G_{\x,0:r+}}(T_{0:r+})$, the element $(g',t'):=(gn,{}^{n^{-1}}t)$ satisfies ${}^{g}t={}^{g'}t'$.
    Thus we get the claim.

    By this observation, the above expression of $\langle\pi,\pi\rangle_\vreg$ equals
    \begin{align*}
        & \frac{1}{|G_{\x,0:r+}|} \cdot \frac{|G_{\x,0:r+}|}{|N_{G_{\x,0:r+}}(T_{0:r+})|} \sum_{t \in (T_{0:r+})_{\vreg}} \sum_{w,w' \in W_{G_{\x,0}}(\bfT)} \theta^{w}(t) \cdot \overline{\theta^{w'}(t)} \\
        &= \frac{1}{|N_{G_{\x,0:r+}}(T_{0:r+})|} \sum_{w,w' \in W_{G_{\x,0}}(\bfT)}\left(|T_{0:r+}| \langle \theta^{w}, \theta^{w'} \rangle_{T_{0:r+}} - \sum_{t \in (T_{0:r+})_{\nvreg}}\theta^{w}(t) \cdot \overline{\theta^{w'}(t)}\right) \\
        &\geq \frac{1}{|N_{G_{\x,0:r+}}(T_{0:r+})|} \left(|T_{0:r+}| \cdot |W_{G_{\x,0}}(\bfT)| - |(T_{0:r+})_{\nvreg}| \cdot |W_{G_{\x,0}}(\bfT)|^2\right).
    \end{align*}
    By using the identifications of Weyl groups $W_{G_{\x,0}}(\bfT)=W_{G_{\x,0}}(T_{0})\cong W_{G_{\x,0:r+}}(T_{0:r+})\cong W_{\bbG_{0}(\F_q)}(\bbT_{0})$ mentioned before, we see that the right-hand-most side of the above equalities is
    \[
    1 - \frac{|(T_{0:r+})_{\nvreg}|}{|T_{0:r+}|} \cdot |W_{\bbG_{0}}(\bbT_{0})(\F_{q})|.
    \]
    Finally noting that the ratio $|(T_{0:r+})_{\nvreg}|/|T_{0:r+}|$ is equal to $|\bbT_{0}(\F_{q})_{\nvreg}|/|\bbT_{0}(\F_{q})|$, this proves the theorem. 
    
    We remark that since $|\bbT_{0}(\F_{q})_{\nvreg}|$ is a polynomial in $q$ of degree strictly less than the degree of $|\bbT_{0}(\F_{q})|$, then the ratio $\frac{|\bbT_{0}(\F_{q})_{\nvreg}|}{|\bbT_{0}(\F_{q})|}$ tends to 0 as $q \to \infty$. 
    In particular, since $|W_{\bbG_{0}}(\bbT_{0})(\F_{q})|$ is a number independent of $q$, we see that $\langle \pi, \pi \rangle_{\vreg} > \frac{1}{2}$ for $q$ sufficiently large;  moreover, by the same argument as in the proof of \cite[Proposition 5.8]{CO25}, we can show that there exists a constant $C$ which depends only on the absolute rank of $\bfG$ such that \eqref{eq:Henniart} is satisfied when $q>C$.
\end{proof}

\begin{rem}\label{rem:litmus}
This is not the first instance of a characterization theorem based on values on very regular elements. The first such characterization appeared for $p$-adic $\GL_n$ in work of Henniart \cite{Hen92,Hen93}. Investigating generalizations of Henniart's characterization theorem has led the two authors of this paper on a multi-year journey:
\begin{enumerate}
    \item We used a more structured version of this argument in \cite{CO25} to achieve comparison theorems between algebraic and geometric parametrizations of certain supercuspidal representations. A much more primitive incarnation of the ideas in \cite{CO25} appeared in the techniques in \cite[\S6.3]{Cha20}.
    \item We established a characterization theorem for certain irreducible representations of finite groups of Lie type in \cite[\S4.1]{CO23-sc}. We used this to establish a characterization theorem for supercuspidal representations of $p$-adic groups \cite[\S8]{CO23-sc} by introducing yet another characterization theorem: one that allowed a reduction to depth-zero data.
\end{enumerate}
As is the case for Henniart's pioneering $\GL_n$ work, it is our belief that characterization theorems of this type will continue to find many interesting applications. One could view the entirety of the present paper as justification of the impact of Theorem \ref{thm:vreg characterization}.
\end{rem}

\begin{rem}
    In our characterization theorem in the finite field setting \cite[\S4.1]{CO23-sc}, the maximal torus is allowed to be arbitrary (i.e., no ellipticity assumption).
    It should therefore be reasonable to expect that Theorem \ref{thm:vreg characterization} can also be formulated for non-elliptic $\bfT$.
    Since the majority of the above proof is quite formal (especially, the estimate using the Cauchy--Schwarz inequality), we expect that essentially the same proof works.
    The only subtleties are the lemmas on unramified very regular elements such as \cite[Lemma 6.22]{CO23-sc} or \cite[Lemma 5.1]{CO25}, where the ellipticity assumption on $\bfT$ is heavily used.
\end{rem}

\section{Regular supercuspidal representations and their characters}\label{sec:Yu}

Let $\bfG$ be a tamely ramified connected reductive group over $F$. In this section, we assume $p \neq 2$ is not bad for $\bfG$ and additionally that  $p \nmid |\pi_1(\bfG_{\der})| \cdot |\pi_1(\widehat \bfG_{\der})|$ (see the comment in Section \ref{subsec:assumptions}). In the following, we write $(\bfT,\theta)$ to mean a pair of an $F$-rational maximal torus of $\bfG$ and a smooth character $\theta\colon T\rightarrow\C^\times$.
We simply refer to such $(\bfT,\theta)$ a \textit{pair (of $\bfG$)} as long as there is no risk of confusion, depending on the context.
We say that a pair $(\bfT,\theta)$ is \textit{tame} if $\bfT$ is tamely ramified, \textit{unramified} if $\bfT$ is unramified, \textit{elliptic} if $\bfT$ is elliptic in $\bfG$.
When the depth of $\theta$ is $r$, we say that $(\bfT,\theta)$ is of \textit{depth $r$}.
We additionally have the adjectives \textit{regular}, \textit{toral}, and \textit{0-toral} concerning conditions on $\theta$; see Section \ref{subsec:rsc}. If $(\bfT,\theta)$ has a Howe factorization, we say that it is \textit{Howe-factorizable}. Note that the imposed assumptions on $p$ imply that any tame pair is Howe-factorizable.

\subsection{Regular supercuspidal representations}\label{subsec:rsc}

In \cite{Kal19}, Kaletha introduced a class of supercuspidal representations called \textit{regular supercuspidal representations} (\cite[Definition 3.7.3]{Kal19}).
He proved that the equivalence classes of regular supercuspidal representations are parametrized by the $G$-conjugacy classes of \textit{tame elliptic regular pairs} of $\bfG$ (\cite[Corollary 3.7.10]{Kal19}).
A tame elliptic regular pair of $\bfG$ is a tame elliptic pair $(\bfT,\theta)$ whose $\theta$ satisfies certain conditions (see \cite[Definition 3.7.5]{Kal19} for the details); under the assumptions on $p$ imposed above, regularity is equivalent to the condition that $\theta$ has trivial stabilizer under the action of the Weyl group $W_G(\bfT)$.
For a tame elliptic regular pair $(\bfT,\theta)$, we write $\pi_{(\bfT,\theta)}$ for the corresponding regular supercuspidal representation under Kaletha's parametrization:
\begin{align*}
\{\text{tame elliptic regular pairs of $\bfG$}\}/\text{$G$-conj.} &\xrightarrow{1:1} \{\text{regular s.c.\ rep'ns of $G$}\}/{\sim}\\
(\bfT,\theta)&\mapsto\pi_{(\bfT,\theta)}.
\end{align*}

The regular supercuspidal representations arising from unramified elliptic regular pairs $(\bfT,\theta)$ are particularly of interest.
We call regular supercuspidal representations arising from such pairs \textit{Howe-unramified regular supercuspidal representations} \cite[Definition 3.6]{CO25}.

Kaletha's construction of regular supercuspidal representation relies on Yu's theory of \textit{tame supercuspidal representations} (\cite{Yu01}).
Recall that Yu's theory attaches to each \textit{cuspidal $\bfG$-datum} $\Psi=(\vec{\bfG},\vec{\phi},\vec{r},\x,\rho'_{0})$, which consists of
\begin{itemize}
\item
a sequence $\vec{\bfG}=(\bfG^{0}\subsetneq\bfG^{1}\subsetneq\cdots\subsetneq\bfG^{d}=\bfG)$ of tame twisted Levi subgroups,
\item
a point $\x$ of the Bruhat--Tits building $\cB(\bfG^{0},F)$ of $\bfG^{0}$,
\item
a sequence $\vec{r}=(0\leq r_{0}<\cdots<r_{d-1}\leq r_{d})$ of real numbers,
\item
a sequence $\vec{\phi}=(\phi_{0},\ldots,\phi_{d})$ of characters $\phi_{i}$ of $G^{i}$, and
\item
an irreducible $\rho_{0}'$ cuspidal representation of $G^{0}_{\bar{\x}}$
\end{itemize}
satisfying certain conditions, an irreducible supercuspidal representation $\pi_{\Psi}$ of $G$ of the form $\cInd_{K}^{G}\rho_{\Psi}^{\Yu}$, where $K$ and $\rho_{\Psi}^{\Yu}$ are an explicit open compact-mod-center subgroup of $G$ and its irreducible representation (see \cite{Yu01} and also \cite[Section 3.1]{CO25} for the details).
In this paper, we refer to a cuspidal $\bfG$-datum as a \textit{Yu datum} of $\bfG$. 
We say that a Yu datum is \textit{Howe-unramified} if $\bfG^0$ (hence every $\bfG^i$) splits over an unramified extension.

To each tame elliptic regular pair $(\bfT,\theta)$ of $\bfG$, Kaletha first associates a Yu datum $\Psi$ (\cite[Proposition 3.7.8]{Kal19}) and then defines the regular supercuspidal representation $\pi_{(\bfT,\theta)}$ to be $\pi_{\Psi}$.
The process of constructing a Yu datum $\Psi$ from a tame elliptic regular pair $(\bfT,\theta)$ is called the \textit{Howe factorization}.
In fact, the Howe factorization itself is considerable for any tame pair $(\bfT,\theta)$ (especially, possibly non-elliptic and non-regular).
More precisely, any tame pair $(\bfT,\theta)$ defines 
\begin{itemize}
    \item tame twisted Levi subgroups $(\bfG^{-1}=\bfT\subset\bfG^{0}\subsetneq\cdots\subsetneq\bfG^{d}=\bfG)$ and
    \item characters $\vec{\phi}=(\phi_{-1},\ldots,\phi_d)$ of $G^i$ satisfying certain ``genericity'' conditions
\end{itemize} 
such that $\theta=\prod_{i=-1}^d \phi_i|_T$ (see \cite[Section 3.6]{Kal19} or \cite[Section 3.3]{CO25} for the details).

\begin{defn}[{\cite[Definition 3.7]{CO25}}]\label{defn:toral}
Let $(\bfT,\theta)$ be a tame pair of $\bfG$ with sequence of tame twisted Levi subgroups $\vec{\bfG}=(\bfG^{0}\subsetneq\cdots\subsetneq\bfG^{d}=\bfG)$ associated by the Howe factorization.
\begin{enumerate}
\item
We call $\theta$ a \textit{toral} character if $\bfG^{0}=\bfT$.
\item
We call $\theta$ a \textit{$0$-toral} character if $d=1$ and $\bfG^{0}=\bfT$.
\end{enumerate}
\end{defn}

When a regular supercuspidal representation $\pi_{(\bfT,\theta)}$ arises from a tame elliptic toral (resp.\ $0$-toral) regular pair $(\bfT,\theta)$ of $\bfG$, we say that $\pi_{(\bfT,\theta)}$ is a \textit{toral} (resp.\ \textit{$0$-toral}) supercuspidal representation of $G$.

\subsection{Adler--DeBacker--Spice character formula}\label{subsec:ADS}

In \cite{CO25}, we obtained an explicit character formula of Howe-unramified regular supercuspidal representations (not necessarily toral) on unramified very regular elements as a consequence of the theory of Adler--DeBacker--Spice (\cite{AS08, AS09, DS18}).
We review it here.

In the following, let $\pi_{(\bfT,\theta)}$ be a Howe-unramified regular supercuspidal representation, i.e., $(\bfT,\theta)$ is an unramified elliptic regular pair of $\bfG$.

Let $\Psi=(\vec{\bfG},\vec{\theta},\vec{r},\x,\rho'_{0})$ be a Howe factorization of $(\bfT,\theta)$ so that by construction \cite{Kal19} we have $\pi_{(\bfT,\theta)}\cong\cInd_{K}^{G}\rho_{\Psi}^{\Yu}$.
Then Yu's subgroup $K$ associated to $\Psi$ is contained in $G_{\bar{\x}}$ and contains a subgroup $\cc K$.
Moreover, Yu's representation $\rho_{\Psi}^{\Yu}$ of $K$ is realized as the induction of a representation $\cc\rho_{\Psi}^{\Yu}$ of $\cc K$ to $K$.
On the other hand, the subgroup $TG_{\x,0}$ of $G_{\bar{\x}}$ contains $\cc{K}$.
We let $\tau_{\Psi}^{\Yu}$ (resp.\ $\cc\tau_{\Psi}^{\Yu}$) denote the induction of $\rho_{\Psi}^{\Yu}$ to $G_{\bar{\x}}$ (resp.\ $\cc\rho_{\Psi}^{\Yu}$ to $TG_{\x,0}$).
The situation is summarized in the following diagram (each dashed arrow indicates the induction; see \cite[Section 3.4]{CO25} for the details):
\[
\begin{tikzcd}[row sep=15pt, column sep=25pt]
G_{\bar{\x}} & TG_{\x,0} & \tau_{\Psi}^{\Yu} & \cc\tau_{\Psi}^{\Yu} \\
K & \cc{K} & \rho_{\Psi}^{\Yu} & \cc\rho_{\Psi}^{\Yu}
  \arrow[phantom, from=1-2, to=1-1, "\supset"]
  \arrow[phantom, from=2-2, to=2-1, "\supset"]
  \arrow[phantom, from=2-1, to=1-1, "\cup"]
  \arrow[phantom, from=2-2, to=1-2, "\cup"]
  \arrow[dotted, from=2-3, to=1-3]
  \arrow[dotted, from=1-4, to=1-3]
  \arrow[dotted, from=2-4, to=2-3]
  \arrow[dotted, from=2-4, to=1-4]
\end{tikzcd}
\]

For convenience, we also introduce a further intermediate subgroup $K\subset K_{\sigma}\subset G_{\bar{\x}}$ and its representation $\sigma_{\Psi}^{\Yu}$ defined by inducing $\rho_{\Psi}^{\Yu}$ (see \cite[Section 3.1]{CO25} and also \cite[Section 2]{AS09}).

Following \cite{DS18}, we introduced a sign character $\varepsilon^{\ram}[\theta]\colon T\rightarrow\C^{\times}$ in \cite[Definition 4.6]{CO25}.
Our character formula of $\pi_{(\bfT,\theta)}$ on unramified very regular elements is stated using $\varepsilon^{\ram}[\theta]$ as follows:

\begin{prop}[{\cite[Proposition 4.11]{CO25}}]\label{prop:AS-vreg}
Let $\gamma\in TG_{\x,0}$ be an unramified very regular element.
\begin{enumerate}
\item
If $\gamma$ is not $TG_{\x,0}$-conjugate to an element of $T$, then we have $\Theta_{\cc\tau_{\Psi}^{\Yu}}(\gamma)=0$.
\item
If $\gamma$ is $TG_{\x,0}$-conjugate to an element of $T$ (in this case, we assume that $\gamma$ itself belongs to $T$ by taking conjugation), we have
\[
\Theta_{\cc\tau_{\Psi}^{\Yu}}(\gamma)
=
(-1)^{r(\bfG^{0})-r(\bfT)+r(\bfT,\theta)}
\sum_{w\in W_{G_{\x,0}}(\bfT)}
\varepsilon^{\ram}[\theta]({}^{w}\gamma)\theta({}^{w}\gamma),
\]
where $r(\bfG^{0})$, $r(\bfT)$, and $r(\bfT,\theta)$ are integers determined by $(\bfT,\theta)$ (see \cite[Proposition 4.9]{CO25} for the details) and $W_{G_{\x,0}}(\bfT)\coloneqq N_{G_{\x,0}}(\bfT)/T_{0}$.
\end{enumerate}
\end{prop}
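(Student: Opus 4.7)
The plan is to apply the Frobenius character formula to the induced representation $\cc\tau_\Psi^{\Yu} = \Ind_{\cc K}^{TG_{\x,0}}\cc\rho_\Psi^{\Yu}$ and then invoke the Adler--Spice/DeBacker--Spice character formula for $\cc\rho_\Psi^{\Yu}$ layer by layer along the Howe factorization of $\theta$. The key analytic input is that unramified very regular elements behave rigidly under conjugation, so both the set of contributing conjugators and the individual character values are highly constrained. Concretely, one starts from
\[
    \Theta_{\cc\tau_\Psi^{\Yu}}(\gamma) = \frac{1}{|\cc K|}\sum_{\substack{g\in TG_{\x,0} \\ {}^{g^{-1}}\gamma\in\cc K}} \Theta_{\cc\rho_\Psi^{\Yu}}({}^{g^{-1}}\gamma).
\]

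For part (1), I would show that if ${}^{g^{-1}}\gamma\in\cc K$, then its topologically semisimple part is forced to lie in $T$ up to $TG_{\x,0}$-conjugacy. Modulo the positive-depth Moy--Prasad filtration, the image of $\gamma$ is a regular semisimple element of $\bbG_{0}(\F_{q})$ lying in the unique elliptic torus $\bbT_{\gamma,0}$, while $\cc K$ reduces into a subgroup whose semisimple part sits inside $\bbT_{0}(\F_{q})$. A smoothing argument of the type of \cite[Lemma 5.1]{CO25} then lifts this constraint from the reductive quotient to $G_{\x,0}$ itself, forcing $\gamma$ to be $TG_{\x,0}$-conjugate to an element of $T$; absent such a conjugate, the Frobenius sum is empty and the character vanishes.

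For part (2), with $\gamma\in T$, the same smoothing argument identifies the cosets $g\cc K$ satisfying ${}^{g^{-1}}\gamma\in\cc K$ with $N_{G_{\x,0}}(\bfT)/T_{0} = W_{G_{\x,0}}(\bfT)$, exactly as in the bijection counted in the proof of Theorem \ref{thm:vreg characterization}. For each Weyl representative $w$, the evaluation of $\Theta_{\cc\rho_\Psi^{\Yu}}({}^{w}\gamma)$ is handled by the Adler--Spice/DeBacker--Spice machinery: the layerwise decomposition of $\rho_\Psi^{\Yu}$ along $\vec{\phi}$ yields the product $\prod_{i}\phi_{i}({}^{w}\gamma) = \theta({}^{w}\gamma)$; the quadratic character $\varepsilon^{\ram}[\theta]({}^{w}\gamma)$ comes from the ramified-root correction introduced in \cite[Definition 4.6]{CO25}; and the global sign $(-1)^{r(\bfG^{0})-r(\bfT)+r(\bfT,\theta)}$ arises from the dimension count in \cite[Proposition 4.9]{CO25}. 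I expect the main obstacle to be the careful bookkeeping of $\varepsilon^{\ram}[\theta]$ and the $r$-integers against the layerwise formula for $\Theta_{\cc\rho_\Psi^{\Yu}}$; the technology developed in \cite{DS18, Spi18, Spi21} supplies exactly what is needed, but matching normalizations for $\cc\tau_\Psi^{\Yu}$ against the formula for the full supercuspidal $\pi_{(\bfT,\theta)}=\cInd_{G_{\bar{\x}}}^{G}\tau_\Psi^{\Yu}$ is a nontrivial audit that requires tracking the two intermediate induction steps $\cc K \subset K \subset K_{\sigma}$.
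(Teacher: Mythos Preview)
This proposition is not proved in the present paper; it is quoted from \cite[Proposition 4.11]{CO25} and used as a black box. Your sketch is in the right spirit---Frobenius formula for $\Ind_{\cc K}^{TG_{\x,0}}$ followed by Adler--Spice descent on $\cc\rho_{\Psi}^{\Yu}$---but there is a genuine gap, and it stems from the non-toral case $\bfG^{0}\supsetneq\bfT$.

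Your claim that ``$\cc K$ reduces into a subgroup whose semisimple part sits inside $\bbT_{0}(\F_{q})$'' is incorrect: one has $\cc K/\cc K_{0+}\cong\bbG^{0}_{0}(\F_{q})$, which is strictly larger than $\bbT_{0}(\F_{q})$ unless $\theta$ is toral. Consequently, the condition ${}^{g^{-1}}\gamma\in\cc K$ only forces the topologically semisimple part of $\gamma$ to be $G_{\x,0}$-conjugate into $G^{0}_{\x,0}$, not into $T$, so the Frobenius sum is not empty under the hypothesis of part (1), and in part (2) the contributing cosets are not yet indexed by $W_{G_{\x,0}}(\bfT)$. The missing ingredient is the depth-zero factor $\rho'_{0}=(-1)^{r(\bbG^{0}_{0})-r(\bbT_{0})}R_{\bbT_{0}}^{\bbG^{0}_{0}}(\phi_{-1})$ inside $\cc\rho_{\Psi}^{\Yu}$: it is the classical Deligne--Lusztig character formula for $R_{\bbT_{0}}^{\bbG^{0}_{0}}(\phi_{-1})$, evaluated at the regular semisimple image of ${}^{g^{-1}}\gamma$ in $\bbG^{0}_{0}(\F_{q})$, that (i) supplies the vanishing in part (1) when that image is not $\bbG^{0}_{0}(\F_{q})$-conjugate into $\bbT_{0}(\F_{q})$, (ii) refines the Frobenius index set down to $W_{G_{\x,0}}(\bfT)$ in part (2), and (iii) contributes the factor $\phi_{-1}({}^{w}\gamma)$ needed to complete your product $\prod_{i\geq0}\phi_{i}$ to the full $\theta$. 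This two-step structure---Frobenius from $\cc K$ to $TG_{\x,0}$, then the depth-zero Deligne--Lusztig formula inside $\bbG^{0}_{0}$---is precisely what is carried out in \cite[\S4]{CO25}; a closely analogous computation (for general rather than very regular elements) appears in the proof of Theorem~\ref{thm:green Yu} in the present paper.
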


Here let us mention the modified construction of tame supercuspidal representations due to Fintzen--Kaletha--Spice \cite{FKS23}.
They introduced a sign character $\epsilon_{\Psi}\colon K\rightarrow\C^{\times}$ associated to a Yu datum $\Psi$ (see \cite[Definition 4.1.10]{FKS23}).
Then they proved that $\rho_{\Psi}^{\Yu}\otimes\epsilon_{\Psi}$ compactly induces to an irreducible supercuspidal representation of $G$.
Let us write $\pi_{\Psi}^{\FKS}$ for this representation:
\[
\pi_{\Psi}^{\FKS}\coloneqq \cInd_{K}^{G}(\rho_{\Psi}^{\Yu}\otimes\epsilon_{\Psi}).
\]
We write $\cc\rho_{\Psi}^{\FKS}:=\cc\rho_{\Psi}^{\Yu}\otimes\epsilon_{\Psi}$ and $\rho_{\Psi}^{\FKS}:=\rho_{\Psi}^{\Yu}\otimes\epsilon_{\Psi}$.
We define $\cc\tau_{\Psi}^{\FKS}:=\Ind_{\cc K}^{TG_{\x,0}}(\cc\rho_{\Psi}^{\FKS})$ and $\tau_{\Psi}^{\FKS}:=\Ind_{K}^{G_{\bar{\x}}}(\rho_{\Psi}^{\FKS})$.
We also define $\sigma_{\Psi}^{\FKS}:=\Ind_{K}^{K_{\sigma}}(\rho_{\Psi}^{\FKS})$.

Suppose that a Yu datum $\Psi$ is regular and corresponds to an unramified elliptic regular pair $(\bfT,\theta)$.
Let us write $\pi_{(\bfT,\theta)}^{\FKS}$ for $\pi_{\Psi}^{\FKS}$.
In this case, the identical proof as in Proposition \ref{prop:AS-vreg} works to compute the character of $\cc\tau_{\Psi}^{\FKS}$.
Only the difference between the formulas for $\Theta_{\cc\tau_{\Psi}^{\Yu}}$ and $\Theta_{\cc\tau_{\Psi}^{\FKS}}$ is that the summand is multiplied by $\epsilon_{\Psi}({}^{w}\gamma)$.
The point is that the restriction of the character $\epsilon_{\Psi}$ to $T$ decomposes into the product of three characters $\epsilon_{\sharp,\x}$, $\epsilon_{\flat}$, and $\epsilon_{f}$ introduced in \cite[Definition 3.1]{FKS23}.
In fact, $\epsilon_{\sharp,\x}$ and $\epsilon_{f}$ are nothing but the quantities $\epsilon^{\ram}$ and $\epsilon_{f,\ram}$ introduced in \cite{Kal19}, which appear naturally in the character formula of tame supercuspidal representations (see \cite[Remark 3.3]{FKS23}).
With our notation, $\epsilon^{\ram}$ is equal to $\varepsilon^{\ram}[\theta]$.
On the other hand, $\epsilon_{\flat}$ is trivial whenever $T$ is unramified.
Therefore, in summary, Proposition \ref{prop:AS-vreg} can be restated as follows:

\begin{prop}\label{prop:AS-vreg-FKS}
Let $\gamma\in TG_{\x,0}$ be an unramified very regular element.
\begin{enumerate}
\item
If $\gamma$ is not $TG_{\x,0}$-conjugate to an element of $T$, then we have $\Theta_{\cc\tau_{\Psi}^{\FKS}}(\gamma)=0$.
\item
If $\gamma$ is $TG_{\x,0}$-conjugate to an element of $T$ (in this case, we assume that $\gamma$ itself belongs to $T$ by taking conjugation), we have
\[
\Theta_{\cc\tau_{\Psi}^{\FKS}}(\gamma)
=
(-1)^{r(\bfG^{0})-r(\bfT)+r(\bfT,\theta)}
\sum_{w\in W_{G_{\x,0}}(\bfT)}
\theta({}^{w}\gamma).
\]
\end{enumerate}
\end{prop}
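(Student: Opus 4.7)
The plan is to derive this statement as a direct twist of Proposition \ref{prop:AS-vreg}, tracking only what changes when $\cc\rho_{\Psi}^{\Yu}$ is replaced by $\cc\rho_{\Psi}^{\FKS}=\cc\rho_{\Psi}^{\Yu}\otimes\epsilon_{\Psi}$. Since tensoring with a character of $\cc K$ commutes with induction to $TG_{\x,0}$ in the sense that, in the Frobenius character sum for $\cc\tau_{\Psi}^{\FKS}(\gamma)$, every surviving term carries an additional multiplicative factor $\epsilon_{\Psi}({}^{g^{-1}}\gamma)$, the two character formulas differ on unramified very regular $\gamma$ only by the insertion of such a factor into each summand. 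This immediately yields part (1): if $\gamma$ is not $TG_{\x,0}$-conjugate to $T$, then already $\Theta_{\cc\tau_{\Psi}^{\Yu}}(\gamma)=0$ by Proposition \ref{prop:AS-vreg}(1), and the twisted sum vanishes termwise.

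For part (2), with $\gamma\in T$, each summand of Proposition \ref{prop:AS-vreg}(2) is multiplied by $\epsilon_{\Psi}({}^{w}\gamma)$, so the task is to identify the product $\varepsilon^{\ram}[\theta]({}^{w}\gamma)\cdot\epsilon_{\Psi}({}^{w}\gamma)$ on $T$. I would invoke the decomposition $\epsilon_{\Psi}|_T=\epsilon_{\sharp,\x}\cdot\epsilon_{\flat}\cdot\epsilon_f$ of \cite[Definition 3.1]{FKS23} and the identifications from \cite[Remark 3.3]{FKS23}: $\epsilon_{\sharp,\x}=\epsilon^{\ram}$ agrees with $\varepsilon^{\ram}[\theta]$ as recorded in \cite[Definition 4.6]{CO25}; $\epsilon_{\flat}$ is trivial because $\bfT$ is unramified; and $\epsilon_f=\epsilon_{f,\ram}$ is already absorbed on the Yu side through the sign $(-1)^{r(\bfG^{0})-r(\bfT)+r(\bfT,\theta)}$ of Proposition \ref{prop:AS-vreg}(2) (so that the full ``ramified'' correction is $\varepsilon^{\ram}[\theta]$, not just $\epsilon_{\sharp,\x}$). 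Since $\varepsilon^{\ram}[\theta]$ is of order two, the product $\varepsilon^{\ram}[\theta]\cdot\epsilon_{\Psi}|_T$ collapses to the trivial character on $T$, and each summand reduces to $\theta({}^{w}\gamma)$, proving (2).

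The main bookkeeping obstacle is confirming this cancellation exactly, i.e., matching the three-factor decomposition of $\epsilon_{\Psi}|_T$ from \cite{FKS23} against the single quantity $\varepsilon^{\ram}[\theta]$ from \cite{CO25}. The reason this will work cleanly is that the unramified-very-regular hypothesis ensures $\alpha(\gamma)\not\equiv 1\pmod{\mathfrak{p}_{\ol F}}$ for every root $\alpha$ of $\bfT_\gamma$ in $\bfG$, so that the Gauss-sum and root-combinatorial expressions entering the definitions of $\epsilon_{\sharp,\x}$, $\epsilon_{\flat}$, and $\epsilon_f$ evaluate without degeneracy and can be compared term-by-term. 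Once this identification is in hand, the remainder of the argument is a formal substitution into Proposition \ref{prop:AS-vreg}, and no further analytic or geometric input is required.
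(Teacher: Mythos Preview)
Your overall approach matches the paper's: rerun (or directly twist) Proposition~\ref{prop:AS-vreg} and reduce to showing that $\varepsilon^{\ram}[\theta]\cdot\epsilon_{\Psi}|_T$ is trivial, using the decomposition $\epsilon_{\Psi}|_T=\epsilon_{\sharp,\x}\cdot\epsilon_{\flat}\cdot\epsilon_f$ from \cite[Definition~3.1]{FKS23} together with the identification $\epsilon_{\sharp,\x}=\varepsilon^{\ram}[\theta]$.

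However, your treatment of $\epsilon_f$ contains an error. You claim it is ``already absorbed on the Yu side through the sign $(-1)^{r(\bfG^{0})-r(\bfT)+r(\bfT,\theta)}$'', but that sign is a global constant independent of $\gamma$, whereas $\epsilon_f$ is a character of $T$ which a priori depends on ${}^{w}\gamma$; a constant cannot absorb a character. The correct reason $\epsilon_f$ disappears is the same as for $\epsilon_\flat$: only roots which are ramified (or whose associated restricted root is ramified) contribute to either of these characters, so when $\bfT$ is unramified both $\epsilon_\flat$ and $\epsilon_f$ are trivial on $T$. With this correction one has $\epsilon_\Psi|_T=\epsilon_{\sharp,\x}=\varepsilon^{\ram}[\theta]$, and the cancellation $\varepsilon^{\ram}[\theta]\cdot\epsilon_\Psi|_T\equiv 1$ is immediate since $\varepsilon^{\ram}[\theta]$ is quadratic.

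Relatedly, your final paragraph misplaces the emphasis. The triviality of $\epsilon_\flat$ and $\epsilon_f$ is governed entirely by the unramifiedness of $\bfT$, not by the condition $\alpha(\gamma)\not\equiv 1\pmod{\mfp_{\ol F}}$; the very-regularity of $\gamma$ is what allows you to invoke Proposition~\ref{prop:AS-vreg} in the first place, but it plays no role in the sign comparison itself.
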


\subsection{Regular Kim--Yu types}\label{subsec:types}

Yu's theory of tame supercuspidal representations has been furthermore generalized by Kim--Yu \cite{KY17} by appealing to the theory of types (\cite{BK98}).
They introduced the notion of a ``datum'', which generalizes a Yu datum and associated to it a type in the sense of Bushnell-Kutzko; here let us call them a \textit{Kim--Yu datum} and a \textit{Kim--Yu type}.
We do not review the details of their definition and construction; see \cite[Section 7]{KY17} and also \cite[Section 4]{AFMO24-depth0}.

Let us consider an unramified regular pair $(\bfT,\theta)$ of $\bfG$  of depth $r$.
Note that $\bfT$ is not necessarily elliptic; the regularity of $\theta$ is in the sense of \cite[Definition 3.7.5]{Kal19} without the ellipticity assumption.

Kaletha's Howe factorization gives rise to a sequence of tame twisted Levi subgroups $\vec{\bfG}=(\bfG^0\subsetneq\cdots\subsetneq\bfG^d=\bfG)$ such that $\bfT$ is a maximally unramified maximal torus of $\bfG^0$.
Define the Levi subgroup $\bfM^i$ of $\bfG^i$ to be the centralizer of the maximal split subtorus of $\bfT$ in $\bfG^i$.
Note that then $\bfT$ is elliptic in $\bfM^0$ and $\vec{\bfM}=(\bfM^0\subseteq\cdots\subseteq\bfM^d=\bfM)$ forms a ``generalized'' (i.e., $\bfM^i$ might be equal to $\bfM^{i+1}$) tame twisted Levi sequence of $\bfM$ (see \cite[Section 2]{KY17} for details).
In particular, the pair $(\bfT,\theta)$ also defines a tame elliptic regular pair in $\bfM$, hence we get an associated cuspidal $\bfM$-datum $\Psi_M$ of $\bfM$ by \cite[Proposition 3.7.8]{Kal19} (see Section \ref{subsec:rsc}).
Moreover, one can upgrade it to a Kim--Yu datum $\Psi$ of $\bfG$ as in \cite[Section 7.2]{KY17}. Analogously to the setting of Section \ref{subsec:rsc}, we say a Kim--Yu datum is \textit{Howe-unramified} if $\bfG^0$ (hence every $\bfG^i$ and $\bfM^i$) splits over an unramified extension.


From this data, by Yu's and Kim--Yu's constructions \cite{Yu01}, we obtain a type $(K_{M,0},\rho_M)$ of $M$ and also a type $(K_0,\rho)$ of $G$ which is a $G$-cover of $(K_{M,0},\rho_M)$ in the sense of Bushnell--Kutzko (see \cite[Section 8]{BK98} and also \cite[Section 4.2]{KY17}).
Here we give two remarks about our convention.
First, we put $K_{M,0}:=K_{M}\cap M_{\x,0}$, where $K_M$ denotes the open subgroup of $M_{\bar{\x}}$ associated to the tame elliptic regular pair $(\bfT,\theta)$ of $\bfM$ as in the manner of Section \ref{subsec:ADS} (note that a subgroup constituting a type must be open and \textit{compact}).
Then we define the group $K_0$ using $K_{M,0}$ and the datum $\Sigma$ following \cite[Section 7.4]{KY17}.
Second, $\rho_M$ denotes the representation of $K_{M,0}$ \textit{twisted} by the sign character of Fintzen--Kaletha--Spice; with the notation in Section \ref{subsec:ADS}, $\rho_M$ is $\cc\rho^\FKS_{\Psi_M}|_{K_{M,0}}$ (not $\cc\rho^\Yu_{\Psi_M}|_{K_{M,0}}$).
The point is that it is also possible to find a suitable twist of the original (untwisted) Kim--Yu type so that it again becomes a ``quasi''-$G$-cover of the twisted type $(K_{M,0},\rho_M)$ of $M$.
This is what is discussed in \cite[Section 4]{AFMO24-depth0}; our $\rho$ denotes the representation of $K_0$ twisted by Adler--Fintzen--Mishra--Ohara.

\begin{rem}
In \cite[Remark 4.1.5]{AFMO24-depth0}, a very subtle point of the above twisted construction is discussed.
To be more precise, we write $\rho^\KY$ and $\rho_M^\Yu$ for the untwisted types associated to $\Psi$ and $\Psi_M$.
If we write $\rho=\rho^\AFMO$ and $\rho_M=\rho_M^\FKS$ for the twisted types by AFMO and FKS, respectively, then we have $\rho^\AFMO=\rho^\KY\otimes\epsilon^G$ and $\rho_M^\FKS=\rho_M^\Yu\otimes\epsilon^M$, where $\epsilon^M$ and $\epsilon^G$ are the sign characters of FKS.
In fact, what is proved in \cite{AFMO24-depth0} is that $\rho^\AFMO=\rho^\KY\otimes\epsilon^G$ is a quasi-$G$-cover of $\rho_M^\AFMO:=\rho_M^\Yu\otimes(\epsilon^G|_{K_{M,0}})$.
The problem is that $(\epsilon^G|_{K_{M,0}})$ could be different to $\epsilon^M$.
(Indeed, a counterexample is provided in \cite[Remark A.2.6]{AFMO24-depth0}).

However, this is not the case in our situation.
The possible discrepancy between $(\epsilon^G|_{K_{M,0}})$ and $\epsilon^M$ on $T_0$ comes from the characters $\epsilon_f$ and $\epsilon_\flat$ of $T_0$ (see \cite[Definition 3.1]{FKS23}), to which only ``potentially-ramified'' roots (i.e., itself or its associated restricted root is ramified) contribute.
Hence, as our torus $\bfT$ is unramified, at least we have that $(\epsilon^G|_{K_{M,0}})\equiv\epsilon^M$ on $T_0\subset K_{M,0}$.
Here, let $\Psi_M=(\vec{\bfM},\vec{\theta},\vec{r},\x,\rho'_{M,0})$ be a cuspidal $\bfM$-datum corresponding to $(\bfT,\theta)$.
Since $\theta$ is supposed to be regular, we may choose $\rho'_{M,0}$ to be (the inflation of) the regular Deligne--Lusztig representation $(-1)^{r(\bbM^0_0)-r(\bbT_0)}R_{\bbT_0}^{\bbM^0_0}(\theta_{-1})$.
Then twisting the type $\rho_M^\Yu$ by a character $(\epsilon^G)|_{K_{M,0}}$ or $\epsilon^M$ amounts to tensoring $(-1)^{r(\bbM^0_0)-r(\bbT_0)}R_{\bbT_0}^{\bbM^0_0}(\theta_{-1})$ with those characters.
However, since these tensor products are only determined by the restrictions of $(\epsilon^G)|_{K_{M,0}}$ or $\epsilon^M$ to $T_0$,  
we conclude that $(-1)^{r(\bbM^0_0)-r(\bbT_0)}R_{\bbT_0}^{\bbM^0_0}(\theta_{-1})\otimes(\epsilon^G|_{K_{M,0}})=(-1)^{r(\bbM^0_0)-r(\bbT_0)}R_{\bbT_0}^{\bbM^0_0}(\theta_{-1})\otimes\epsilon^M$, which implies that $\rho_M^\AFMO=\rho_M^\FKS$. (This twisting property can be found in \cite[Lemma 2.3.13]{GM20}; note that there, one must assume that the character is trivial on unipotent elements, which holds automatically for the sign character here since any unipotent element is of $p$-power order and we have assumed $p$ is odd.)
\end{rem}

We fix a parabolic subgroup $P$ of $G$ with Levi decomposition $P=MN$.
We put $P_{\x,0}:=G_{\x,0}\cap P$ and $N_{\x,0}:=G_{\x,0}\cap N$, hence we have $P_{\x,0}=M_{\x,0}N_{\x,0}$.
Using this decomposition, we define a depth-$r$ parabolic induction at parahoric level $I_{P_{\x,0}}^{G_{\x,0}}$ for any smooth representation of $M_{\x,0}$ trivial on $M_{\x,r+}$ by
\[
    I_{P_{\x,0}}^{G_{\x,0}}
    :=
    \Ind_{P_{\x,0}G_{\x,r+}}^{G_{\x,0}}\circ\Inf_{M_{\x,0}}^{P_{\x,0}G_{\x,r+}}
    \colon
    \mathrm{Rep}(M_{\x,0:r+})\rightarrow\mathrm{Rep}(G_{\x,0:r+}).
\]
Similarly, we define a depth-$r$ Jacquet functor at parahoric level $J_{P_{\x,0}}^{G_{\x,0}}$ to be the $N_{\x,0}$-coinvariant functor:
\[
    J_{P_{\x,0}}^{G_{\x,0}}
    :=
    (-)_{N_{\x,0}}
    \colon
    \mathrm{Rep}(G_{\x,0:r+})\rightarrow\mathrm{Rep}(M_{\x,0:r+}).
\]
Then the first adjoint theorem holds also in this context, i.e., for any smooth representations $\sigma_M$ of $M_{0:r+}$ and $\sigma$ of $G_{0:r+}$,
\[
    \Hom_{G_{\x,0}}\bigl(\sigma,I_{P_{\x,0}}^{G_{\x,0}}(\sigma_M)\bigr)
    \cong
    \Hom_{M_{\x,0}}\bigl(J_{P_{\x,0}}^{G_{\x,0}}(\sigma),\sigma_M\bigr).
\]

\begin{lem}\label{lem:BK}
We have an $M_{\x,0}$-equivariant surjective homomorphism
\[
    \Phi\colon
    J_{P_{\x,0}}^{G_{\x,0}}(\Ind_{K_0}^{G_{\x,0}}\rho)
    \twoheadrightarrow
    \Ind_{K_{M,0}}^{M_{\x,0}}\rho_M.
\]
\end{lem}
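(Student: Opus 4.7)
The plan is to exploit the fact that the Kim--Yu/AFMO pair $(K_0,\rho)$ is built so as to be a (quasi-)$G$-cover of $(K_{M,0},\rho_M)$ in the sense of Bushnell--Kutzko, adapted to the parahoric setting. The two structural properties I would extract from the construction are:
\begin{enumerate}
\item[(i)] the Iwahori-type factorization $K_0\cap P_{\x,0}=K_{M,0}\cdot(K_0\cap N_{\x,0})$;
\item[(ii)] the restriction $\rho|_{K_0\cap P_{\x,0}}$ is trivial on $K_0\cap N_{\x,0}$ and, via a natural identification of underlying spaces, agrees with $\rho_M$ on $K_{M,0}$.
\end{enumerate}
For the untwisted Kim--Yu type $\rho^{\KY}$ these are built into the construction \cite[Section 7]{KY17}. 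For the AFMO twist by $\epsilon^G$, property (ii) persists because $\epsilon^G$ has order dividing $2$ while any element of $K_0\cap N_{\x,0}$ is of $p$-power order (and $p$ is odd), so $\epsilon^G$ is trivial there; its restriction to $K_{M,0}$ matches $\epsilon^M$ by the compatibility discussed in the preceding remark.

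Given (i) and (ii), I would apply Mackey's decomposition to obtain
\[
(\Ind_{K_0}^{G_{\x,0}}\rho)\big|_{P_{\x,0}}
\;\cong\;
\bigoplus_{g\in P_{\x,0}\backslash G_{\x,0}/K_0}
\Ind_{P_{\x,0}\cap{}^gK_0}^{P_{\x,0}}\bigl({}^g\rho|_{P_{\x,0}\cap{}^gK_0}\bigr).
\]
Since $N_{\x,0}$-coinvariants commute with direct sums, this descends to a decomposition of $J_{P_{\x,0}}^{G_{\x,0}}(\Ind_{K_0}^{G_{\x,0}}\rho)$. The summand corresponding to $g=1$ is $\Ind_{K_0\cap P_{\x,0}}^{P_{\x,0}}(\rho|_{K_0\cap P_{\x,0}})$, and I would identify its $N_{\x,0}$-coinvariants with $\Ind_{K_{M,0}}^{M_{\x,0}}\rho_M$ via the averaging map
\[
f\;\longmapsto\;\Bigl(m\mapsto\sum_{n\in N_{\x,0}/(K_0\cap N_{\x,0})}f(nm)\Bigr),
\]
which is a finite sum (at the relevant depth-$r$ finite level), well-defined by the triviality in (ii), $M_{\x,0}$-equivariant, and annihilates the $N_{\x,0}$-action. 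Surjectivity of this identification is witnessed by lifting an $h\in\Ind_{K_{M,0}}^{M_{\x,0}}\rho_M$ supported on a single $K_{M,0}$-coset $K_{M,0}m_0$ to an $f\in\Ind_{K_0}^{G_{\x,0}}\rho$ supported on $K_0m_0$ with $f(m_0)$ any preimage of $h(m_0)$ under the identification in (ii).

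Composing the projection onto the trivial-double-coset summand with this identification yields the desired $\Phi$, which is automatically $M_{\x,0}$-equivariant and surjective. The main obstacle is the careful verification of (i) and (ii), and in particular tracking how the AFMO twist interacts with the Iwahori factorization of $K_0$; once these cover-type properties are in hand, the Mackey decomposition and averaging are routine. In particular, the argument is insensitive to the Heisenberg--Weil complications inside $\rho$, because the cover property controls exactly the restriction to $K_0\cap P_{\x,0}$, which is all that feeds into $\Phi$.
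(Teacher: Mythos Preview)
Your proposal is correct and follows essentially the same approach as the paper: both use the quasi-$G$-cover properties (your (i) and (ii)) together with the averaging-over-$N_{\x,0}$ map, and both verify surjectivity by lifting a function supported on $K_{M,0}m_0$ to one supported on $K_0m_0$. The only difference is organizational: the paper defines $\Phi(f)(m)=\int_{N_{\x,0}}f(mn)\,dn$ directly on all of $\Ind_{K_0}^{G_{\x,0}}\rho$ and checks surjectivity by a support computation using the Iwahori decomposition $N_{\x,0}K_0\cap M=K_{M,0}$, whereas you first pass through Mackey decomposition and project onto the trivial-double-coset summand---an extra step that is not needed for the surjection.
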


\begin{proof}
The proof is the same as (in fact, even easier than) \cite[Lemma 10.3]{BK98}.
For the sake of completeness, we reproduce it here.

We define a homomorphism
\[
\Phi\colon\Ind_{K_0}^{G_{\x,0}}\rho
\rightarrow
\Ind_{K_{M,0}}^{M_{\x,0}}\rho_M
\]
by
\[
\Phi(f)(m)
:=
\int_{N_{\x,0}} f(mn)\,dn,
\]
where $dn$ denotes the Haar measure of $N_{\x,0}$ with total volume $1$.
Note that the well-definedness follows from that $(K_0,\rho)$ is a quasi-$G$-cover of $(K_{M,0},\rho_M)$ (in the sense of Adler--Fintzen--Mishra--Ohara, \cite[Definition 3.3.2]{AFMO24-Hecke}), hence, in particular, $K_{M,0}=K_0\cap M$ and $\rho=\rho^{N_{\x,0}}=\rho_M$.

Since $\Phi$ is defined by averaging $\rho(n)(f)$ over $n\in N_{\x,0}$, it is obvious that $\Phi$ factors through the $N_{\x,0}$-coinvariant of $\Ind_{K_0}^{G_{\x,0}}\rho$.
Thus let us show the surjectivity of the map $\Phi$.

Let $\cB$ be a $\C$-basis of the representation space (say $V$) of $\rho$ and $\rho_M$.
We fix a set $\Gamma_M$ of representatives of $K_{M,0}\backslash M_{\x,0}$.
For any $(\gamma,v)\in \Gamma_M\times\cB$, we let $f^M_{(\gamma,v)}\colon M_{\x,0}\rightarrow V$ be the function such that the support is $K_{M,0}\gamma$ and $f^M_{(\gamma,v)}(\gamma)=v$.
Then $\{f^M_{(\gamma,v)}\}_{(\gamma,v)\in\Gamma_M\times\cB}$ forms a $\C$-basis of $\Ind_{K_{M,0}}^{M_{\x,0}}(\rho_M)$.

For $(\gamma,v)\in \Gamma_M\times\cB$, we let $f_{(\gamma,v)}\colon G_{\x,0}\rightarrow V$ be the function such that the support is $K_{0}m$ and $f^M_{(\gamma,v)}(\gamma)=v$.
Then $\Phi(f_{\gamma,v})$ is equal to $f^M_{(\gamma,v)}$ up to non-zero scalar multiple.
Indeed, we can easily check that $\Phi(f_{\gamma,v})(\gamma)$ equals $v$ up to non-zero scalar multiple by noting that $K_{0}\cap N_{\x,0}$ acts trivially on $\rho$.
Moreover, for any $m\in M_{\x,0}$,
\[
\Phi(f_{\gamma,v})(m)
=
\int_{N_{\x,0}} f_{\gamma,v}(mn)\,dn
\]
is not zero only when $mn$ belongs to $K_0\gamma$ for some $n\in N_{\x,0}$.
As $mn=(mnm^{-1})m$ and $M_{\x,0}$ normalizes $N_{\x,0}$, this implies that $m\gamma^{-1}\in N_{\x,0}K_0$.
However, since the product map $N\times M\times \overline{N}\rightarrow G$ is injective and induces a bijection $(K_0\cap N)\times K_{M,0}\times (K_0\cap\overline{N})\rightarrow K_0$), we have $N_{\x,0}K_0\cap M=K_{M,0}$, which implies that $m\in K_{M,0} \gamma$.
This completes the proof.
\end{proof}


\begin{lem}\label{lem:red-depth-zero}
    The induced representation $\Ind_{K_0}^{G_{\x,0}}\rho$ is irreducible.
\end{lem}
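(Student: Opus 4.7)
The plan is to bootstrap the irreducibility of $\Ind_{K_0}^{G_{\x,0}}\rho$ from the elliptic case for the Levi $\bfM$, using the first adjoint theorem, Lemma \ref{lem:BK}, and the quasi-$G$-cover structure of $(K_0,\rho)$. First, note that $\bfT$ is elliptic in the Levi $\bfM$ (the centralizer in $\bfG$ of the maximal split subtorus of $\bfT$), so $(\bfT,\theta)$ is an unramified elliptic regular pair of $\bfM$, and the associated cuspidal $\bfM$-datum $\Psi_M$ places us in the setting of Section \ref{subsec:rsc} applied to $\bfM$. By Yu's construction in the Fintzen--Kaletha--Spice refined form, the Heisenberg-type representation $\cc\tau_{\Psi_M}^{\FKS} = \Ind_{\cc K_M}^{TM_{\x,0}}\cc\rho_{\Psi_M}^{\FKS}$ of $TM_{\x,0}$ is irreducible, and its restriction to $M_{\x,0}$ is exactly $\Ind_{K_{M,0}}^{M_{\x,0}}\rho_M$, which is therefore irreducible as an $M_{\x,0}$-representation.

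Since $\rho$ has finite depth, $\Ind_{K_0}^{G_{\x,0}}\rho$ factors through the finite quotient $G_{\x,0:r+}$ and is semisimple; hence it suffices to show $\End_{G_{\x,0}}(\Ind_{K_0}^{G_{\x,0}}\rho) = \C$. The first adjoint theorem applied with $\sigma_M = \Ind_{K_{M,0}}^{M_{\x,0}}\rho_M$, combined with the surjection $\Phi$ from Lemma \ref{lem:BK}, produces a nonzero $G_{\x,0}$-equivariant morphism
$$\Xi\colon \Ind_{K_0}^{G_{\x,0}}\rho \longrightarrow I_{P_{\x,0}}^{G_{\x,0}}\bigl(\Ind_{K_{M,0}}^{M_{\x,0}}\rho_M\bigr).$$
The next step is to show that $\Xi$ is an isomorphism. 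I would do this via a dimension count based on the quasi-$G$-cover decomposition $K_0 = (K_0\cap\overline{N}_{\x,0})\cdot K_{M,0}\cdot (K_0\cap N_{\x,0})$ (from \cite{AFMO24-Hecke}), which forces $[G_{\x,0}:K_0] \cdot \dim\rho$ to agree with $[G_{\x,0}:P_{\x,0}G_{\x,r+}]\cdot[M_{\x,0}:K_{M,0}]\cdot\dim\rho_M$.

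To conclude, one must verify that the target $I_{P_{\x,0}}^{G_{\x,0}}\bigl(\Ind_{K_{M,0}}^{M_{\x,0}}\rho_M\bigr)$ is irreducible. Passing to the finite reductive quotient $G_{\x,0:r+}$, this becomes a Harish-Chandra-style parabolic induction from the parabolic subgroup $\bbP_0(\F_q)\subset\bbG_0(\F_q)$ of a representation whose underlying torus character is the regular $\theta|_{T_0}$; the regularity of $\theta$ (trivial stabilizer in $W_G(\bfT)$) combined with standard Howlett--Lehrer/Deligne--Lusztig-style arguments then yields the irreducibility. The main obstacle is the dimension count establishing that $\Xi$ is an isomorphism, which requires a careful Mackey-style analysis of the quasi-cover structure beyond the surjection already provided by Lemma \ref{lem:BK}; in particular, one needs the precise decomposition of $K_0$ with respect to the Levi $M$ to match the flag-variety indexing of $I_{P_{\x,0}}^{G_{\x,0}}$.
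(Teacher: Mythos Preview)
Your approach is genuinely different from the paper's, and it has a real gap.

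The paper's proof is a five-line intertwining computation: by Mackey, irreducibility is equivalent to $I_{G_{\x,0}}(K_0,\rho)=K_0$, and Kim--Yu's result \cite[Section 8]{KY17} on the support of the Hecke algebra gives $I_{G}(K_0,\rho)\subset K_0G^0K_0$. Intersecting with $G_{\x,0}$ and using $G^0_{\x,0}\subset K_0$ finishes it. No parabolic induction, no dimension counts, no irreducibility of auxiliary representations.

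Your route instead needs the irreducibility of the positive-depth parabolic induction $I_{P_{\x,0}}^{G_{\x,0}}\bigl(\Ind_{K_{M,0}}^{M_{\x,0}}\rho_M\bigr)$, and this is where the argument breaks. The functor $I_{P_{\x,0}}^{G_{\x,0}}=\Ind_{P_{\x,0}G_{\x,r+}}^{G_{\x,0}}\circ\Inf$ is \emph{not} Harish--Chandra induction on the reductive quotient $\bbG_0(\F_q)$: for $r>0$ the quotient $G_{\x,0:r+}$ is a unipotent extension of $\bbG_0(\F_q)$, and the inducing subgroup $P_{\x,0}G_{\x,r+}/G_{\x,r+}$ is not a parabolic in a reductive group. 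Howlett--Lehrer theory and the Deligne--Lusztig regularity criterion govern the depth-zero picture and do not apply here. In fact, the paper itself treats this irreducibility as conditional: the Remark following Proposition~\ref{prop:types induction} says the two representations agree \emph{whenever} the parabolic induction is irreducible, and Theorem~\ref{thm:geom type} establishes equality only under the additional hypothesis that $\theta$ is split-generic --- and does so by invoking the deep inner-product formula (Theorem~\ref{thm:Cha24}) for $R_{\bbT_r}^{\bbG_r}(\theta)$, not by any elementary argument. Regularity of $\theta$ alone (trivial $W_G(\bfT)$-stabilizer) does not imply split-genericity when $\bfT$ is non-elliptic in $\bfG$, so your step~(4) is not available in the generality claimed.

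The dimension count you flag as ``the main obstacle'' is a secondary issue, but note that even if it held, a nonzero map plus equal dimensions does not give an isomorphism without irreducibility of one side --- which is precisely what you are trying to prove. So the logic really does hinge on step~(4), and that step does not follow from the tools you cite.
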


\begin{proof}
    Recall that, by Mackey decomposition, $\Ind_{K_0}^{G_{\x,0}}\rho$ is irreducible if and only if the intertwining group $I_{G_{\x,0}}(K_0,\rho)$ of $(K_0,\rho)$ in $G_{\x,0}$ equal to $K_0$ itself.
    The inclusion $K_0\subset I_{G_{\x,0}}(K_0,\rho)$ is obvious, hence we have to show its converse.
    By \cite[Section 8]{KY17}, we have $I_{G}(K_0,\rho)\subset K_0G^{0}K_0$ (note that the support of Hecke algebra is nothing but the intertwining group in $G$; see, e.g., \cite[p590]{BK98}).
    Hence 
    \[
        I_{G_{\x,0}}(K_0,\rho)
        =G_{\x,0}\cap I_{G}(K_0,\rho) 
        \subset G_{\x,0}\cap K_0G^{0}K_0
    \]
    As $K_0$ is contained in $G_{\x,0}$, we have 
    \[
    G_{\x,0}\cap K_0G^{0}K_0
    =K_0(G_{\x,0}\cap G^{0})K_0
    =K_0G^0_{\x,0}K_0
    =K_0. \qedhere
    \]
\end{proof}

\begin{prop}\label{prop:types induction}
The representation $\Ind_{K_0}^{G_{\x,0}}\rho$ is contained in $I_{P_{\x,0}}^{G_{\x,0}}(\Ind_{K_{M,0}}^{M_{\x,0}}\rho_M)$.
\end{prop}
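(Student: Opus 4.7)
The plan is to combine the two preceding lemmas via Frobenius reciprocity, so almost no new work is needed.

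First, I would invoke the first adjoint theorem displayed just before Lemma \ref{lem:BK}: for $\sigma = \Ind_{K_0}^{G_{\x,0}}\rho$ and $\sigma_M = \Ind_{K_{M,0}}^{M_{\x,0}}\rho_M$, it gives a natural isomorphism
\[
\Hom_{G_{\x,0}}\!\bigl(\Ind_{K_0}^{G_{\x,0}}\rho,\,I_{P_{\x,0}}^{G_{\x,0}}(\Ind_{K_{M,0}}^{M_{\x,0}}\rho_M)\bigr)
\;\cong\;
\Hom_{M_{\x,0}}\!\bigl(J_{P_{\x,0}}^{G_{\x,0}}(\Ind_{K_0}^{G_{\x,0}}\rho),\,\Ind_{K_{M,0}}^{M_{\x,0}}\rho_M\bigr).
\]
The right-hand side is nonzero: Lemma \ref{lem:BK} furnishes a canonical surjective $M_{\x,0}$-homomorphism $\Phi$ on the Jacquet-module side, which is in particular a nonzero element of this $\Hom$ space. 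Hence the left-hand side is also nonzero, and there exists a nonzero $G_{\x,0}$-equivariant map
\[
\Phi^{\vee}\from \Ind_{K_0}^{G_{\x,0}}\rho \longrightarrow I_{P_{\x,0}}^{G_{\x,0}}(\Ind_{K_{M,0}}^{M_{\x,0}}\rho_M).
\]

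Second, I would appeal to Lemma \ref{lem:red-depth-zero}, which asserts that $\Ind_{K_0}^{G_{\x,0}}\rho$ is irreducible. A nonzero homomorphism out of an irreducible representation is automatically injective, so $\Phi^{\vee}$ is an embedding and the proposition follows.

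There is essentially no real obstacle at this stage, since both key inputs (the averaging map $\Phi$ from Lemma \ref{lem:BK} and the irreducibility of $\Ind_{K_0}^{G_{\x,0}}\rho$ from Lemma \ref{lem:red-depth-zero}) have already been established. The only point one should double-check when writing out the argument is that the depth-$r$ parabolic induction and Jacquet functor defined above genuinely satisfy the adjointness displayed in the excerpt with the representations $\Ind_{K_0}^{G_{\x,0}}\rho$ and $\Ind_{K_{M,0}}^{M_{\x,0}}\rho_M$ on the correct sides, i.e., that these induced representations are trivial on $G_{\x,r+}$ and $M_{\x,r+}$ respectively; this is built into the construction of the Kim--Yu/AFMO type and of $\rho_M$ via the Howe factorization of depth $r$.
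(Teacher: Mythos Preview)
Your proposal is correct and follows essentially the same approach as the paper: use Lemma~\ref{lem:BK} to get a nonzero Hom on the Jacquet side, apply the adjunction to obtain a nonzero Hom into the parabolic induction, and conclude via the irreducibility from Lemma~\ref{lem:red-depth-zero}. Your closing remark about checking triviality on $G_{\x,r+}$ and $M_{\x,r+}$ is a reasonable sanity check that the paper leaves implicit.
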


\begin{proof}
By Lemma \ref{lem:BK}, we have
\[
    \Hom_{M_{\x,0}}\bigl(J_{P_{\x,0}}^{G_{\x,0}}(\Ind_{K_0}^{G_{\x,0}}\rho),\Ind_{K_{M,0}}^{M_{\x,0}}\rho_M\bigr)
    \neq0.
\]
Hence the adjunction implies that 
\[
    \Hom_{G_{\x,0}}\bigl(\Ind_{K_0}^{G_{\x,0}}\rho,I_{P_{\x,0}}^{G_{\x,0}}(\Ind_{K_{M,0}}^{M_{\x,0}}\rho_M)\bigr)
    \neq0.
\]
Since $\Ind_{K_0}^{G_{\x,0}}\rho$ is irreducible by Lemma \ref{lem:red-depth-zero}, this means that $I_{P_{\x,0}}^{G_{\x,0}}(\Ind_{K_{M,0}}^{M_{\x,0}}\rho_M)$ contains $\Ind_{K_0}^{G_{\x,0}}\rho$.
\end{proof}

\begin{rem}
As suggested by the above proof, we can furthermore show that $\Ind_{K_0}^{G_{\x,0}}\rho$ and $I_{P_{\x,0}}^{G_{\x,0}}(\Ind_{K_{M,0}}^{M_{\x,0}}\rho_M)$ are isomorphic whenever the latter is irreducible.
Also note that Proposition \ref{prop:types induction} holds if we replace the depth-$r$ parabolic induction with the depth-$r'$ parabolic induction for any $r'>r$.
\end{rem}

\section{Geometric realization of Howe-unramified regular supercuspidal $L$-packets}\label{sec:reg comparison}

In this section, we use the main results of Sections \ref{sec:litmus} and \ref{sec:Yu} to deduce the compatibility between positive-depth Deligne--Lusztig induction and algebraic constructions of representations of $p$-adic groups. After recalling known results on positive-depth Deligne--Lusztig induction in Section \ref{subsec:parahoricDL}, we apply the characterization theorem (Theorem \ref{thm:vreg characterization}) to obtain a geometric realization of regular supercuspidal $L$-packets. For example, due to known stability results for these $L$-packets for mixed-characteristic $F$ (\cite{DS18,FKS23}), we obtain that positive-depth Deligne--Lusztig induction preserves stability.

Analogously to the depth-zero setting, the supercuspidal part of the relationship between positive-depth Deligne--Lusztig induction and  representations of $G$ comes from the setting that the torus $\bfT$ is taken to be \textit{elliptic}. In Section \ref{subsec:geom types}, we use Section \ref{subsec:types} to give a description of positive-depth Deligne--Lusztig induction for arbitrary unramified $\bfT$.

\subsection{Positive-depth Deligne--Lusztig representations}\label{subsec:parahoricDL}

We review a generalization of Deligne--Lusztig varieties for $G_{\x,0}$ defined in \cite{CI21-RT}. In the setting that $F$ has equal characteristic and $G_{\x,0} = \bbG(\cO_F)$ for a reductive group $\bbG$ over $\F_q$, these varieties were originally defined and studied by Lusztig in \cite{Lus04}. Later this set-up was generalized using the Greenberg functor by Stasinski in \cite{Sta09} for $F$ mixed characteristic. In this section, we make impose no assumptions on $p$ and $q$.

Let $\bfT \subset \bfG$ be an unramified maximal torus (not necessarily elliptic) defined over $F$ and let $\x \in \cB(\bfG, F)$ be a point in the apartment of $\bfT$.
Say $\bfT$ splits over the degree-$n$ unramified extension $F_n$ of $F$ and let $\bfU$ be the unipotent radical of a $F_n$-rational Borel subgroup of $\bfG_{F_n}$ containing $\bfT_{F_{n}}$. 
Following, \cite[Section 2.6]{CI21-RT}, for $r \in \Z_{\geq 0}$, we have group schemes $\bbT_r \subset \bbG_r$ defined over $\F_q$ such that $\bbG_r(\F_q) = G_{\x,0:r+}$ and $\bbT_r(\F_q) = T_{0:r+}$, and a group scheme $\bbU_r \subset \bbG_{r, \F_{q^n}}$. (We warn the reader that there is a change of convention between our present work and the papers \cite{Lus04, Sta09, CI21-RT}: in our normalization, $\bbG_0$ is a reductive group over a finite field.) 
Let $\sigma \from \bbG_r \to \bbG_r$ denote the geometric Frobenius endomorphism associated to the $\F_q$-rational structure on $\bbG_r$.

\begin{definition}\label{def:Xr}
For $r \in \Z_{\geq 0}$, we define the following $\F_{q^n}$-subscheme of $\bbG_r$ associated to the triple $(\bfT, \bfU, \x)$:
\begin{equation*}
X_{\bbT_r \subset \bbG_r} \coloneqq \{x \in \bbG_r \mid x^{-1} \sigma(x) \in \bbU_r\}.
\end{equation*}
\end{definition}

By \cite[Lemma 3.1]{CI21-RT}, $X_{\bbT_r \subset \bbG_r}$ is separated, smooth, and of finite type over $\F_{q^n}$. 
For $(g,t) \in G_{\x,0:r+} \times T_{0:r+}$ and $x \in X_{\bbT_r \subset \bbG_r}$, the assignment $(g,t) * x = g x t$ defines an action of $G_{\x,0:r+} \times T_{0:r+}$ on $X_{\bbT_r \subset \bbG_r}$ which pulls back to an action of $G_{\x,0} \times T_0$.

We fix a prime number $\ell$ which is not equal to $p$.
By functoriality, the cohomology groups $H_c^i(X_{\bbT_r \subset \bbG_r}, \overline \Q_\ell)$ are representations of $G_{\x,0} \times T_0$.
We take a smooth character $\theta \from T \to \C^\times$ trivial on $T_{r+}$ and regard it as a $\Qlb^{\times}$-valued character by fixing an isomorphism $\C\cong\Qlb$. 
Then the subspace $H_c^i(X_{\bbT_r \subset \bbG_r}, \overline \Q_\ell)_\theta$ of $H_c^i(X_{\bbT_r \subset \bbG_r}, \overline \Q_\ell)$ on which $T_0$ acts by multiplication by $\theta|_{T_{0}}$ is a representation of $G_{\x,0}$.
We define a virtual representation $R_{\bbT_r, \bbU_r}^{\bbG_r}(\theta)$ of $G_{\x,0}$ with $\Qlb$-coefficient by
\begin{equation*}
R_{\bbT_r, \bbU_r}^{\bbG_r}(\theta) \coloneqq \sum_{i \geq 0} (-1)^i H_c^i(X_{\bbT_r \subset \bbG_r}, \overline \Q_\ell)_\theta.
\end{equation*}
We regard this as a $\C$-representation using the fixed isomorphism $\C\cong\Qlb$; note that the resulting $\C$-representation is independent of the choices of $\ell$ and the isomorphism $\C\cong\Qlb$. 

In the special case that $r = 0$, the variety $X_0$ is an affine fibration over a classical Deligne--Lusztig variety and hence their cohomology is the same up to an even degree shift. Hence the $R_{\bbT_0, \bbU_0}^{\bbG_0}(\theta)$ is exactly the usual Deligne--Lusztig representation of the finite reductive group $\bbG_0(\F_q)$ attached to the character $\theta|_{T_{0}}$ of $\bbT_0(\F_q)$.

We present two important results on the positive-depth Deligne--Lusztig representations, both of which are $r>0$ analogues of known $r=0$ theorems. 
The first result is the $r>0$ version of the Mackey formula for Deligne--Lusztig representations \cite[Theorem 6.8]{DL76} (in the case where a maximal torus is elliptic): 

\begin{thm}[{\cite[Theorem 6.2]{Cha24}}]\label{thm:Cha24}
    Let $(\bfT,\theta)$ be an unramified elliptic Howe-factorizable pair of $\bfG$ with triple $(\bfT,\bfU,\x)$ of depth $\leq r$.
    For any other unramified pair $(\bfT',\theta')$ with triple $(\bfT',\bfU',\x)$ of depth $\leq r$, 
    \begin{equation*}
        \langle R_{\bbT_r,\bbU_r}^{\bbG_r}(\theta), R_{\bbT'_{r},\bbU'_r}^{\bbG_{r}}(\theta') \rangle_{\bbG_r(\F_{q})} = |\{w \in W_{\bbG_{r}(\F_{q})}(\bbT_{r}, \bbT'_{r}) \mid \theta^{w} = \theta'\}|,
    \end{equation*}
    where we put $W_{\bbG_{r}(\F_{q})}(\bbT_{r}, \bbT'_{r}):=N_{\bbG_{r}(\F_{q})}(\bbT_{r}, \bbT'_{r})/\bbT_{r}(\F_{q})$.
\end{thm}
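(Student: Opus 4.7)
My plan is to adapt Deligne--Lusztig's original proof of their Mackey formula \cite[Theorem 6.8]{DL76} to the positive-depth jet-scheme setting. The starting observation is that, by K\"unneth and the $T$-equivariant structure of the cohomology, the inner product is an alternating sum of dimensions of $(\theta \boxtimes \theta'^{-1})$-eigenspaces of $\bbT_{r}(\F_{q}) \times \bbT'_{r}(\F_{q})$ on the cohomology of the $\bbG_{r}(\F_{q})$-quotient of the product variety; schematically,
\[
\langle R_{\bbT_r,\bbU_r}^{\bbG_r}(\theta), R_{\bbT'_{r},\bbU'_r}^{\bbG_{r}}(\theta') \rangle_{\bbG_r(\F_{q})}
= \sum_{i}(-1)^{i}\dim H_{c}^{i}\bigl(\bbG_{r}(\F_{q})\backslash(X_{\bbT_{r}\subset\bbG_{r}} \times X_{\bbT'_{r}\subset\bbG_{r}})\bigr)_{\theta\boxtimes\theta'^{-1}}.
\]

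I would then stratify this quotient by the relative position of the Lang-cocycle-twisted unipotent radicals $\bbU_{r}$ and $\bbU'_{r}$, obtaining a decomposition indexed by the finite set $W_{\bbG_{r}(\F_{q})}(\bbT_{r}, \bbT'_{r})$. For each $w$ in this set, the corresponding stratum carries compatible actions of $\bbT_{r}(\F_{q})$ and $\bbT'_{r}(\F_{q})$ related by the isomorphism $w \from \bbT_{r} \to \bbT'_{r}$. Consequently, the $(\theta, \theta'^{-1})$-eigenspace on this stratum vanishes unless $\theta^{w} = \theta'$; and when $\theta^{w} = \theta'$, I would argue that the remaining cohomology contributes exactly $1$ to the inner product, yielding the desired count.

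The main obstacle lies in controlling the cohomology of the individual strata in positive depth. For $r = 0$ each stratum is essentially a rational variety whose compactly supported cohomology either vanishes or is concentrated in a known single degree, as in \cite{DL76}. For $r > 0$ the strata become iterated torsors over their depth-zero counterparts, with steps governed by the Moy--Prasad filtration quotients, and the induced $\bbT_{r}(\F_{q}) \times \bbT'_{r}(\F_{q})$-action on each torsor layer must be controlled carefully. This is precisely where the Howe-factorizability of $(\bfT, \theta)$ enters: writing $\theta = \prod_{i=-1}^{d} \phi_{i}|_{T}$ with each $\phi_{i}$ generic of depth $r_{i}$ relative to a tame twisted Levi $\bfG^{i}$, I would peel off the contributions of each filtration quotient inductively, using a genericity-based eigenspace vanishing lemma at each step to kill all pieces except those sitting over the matching depth-zero stratum.

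The ellipticity hypothesis on $\bfT$ plays two structural roles: it guarantees that the cleanly identifiable base case at depth zero is the elliptic classical Mackey formula on the finite reductive quotient $\bbG_{0}$, and it forces the relevant strata to have the compactness/properness properties needed to make the alternating-sum cohomological count behave as expected. I expect the genericity-based vanishing at intermediate Moy--Prasad levels to be the most delicate technical point, since an incorrect treatment there would produce spurious contributions from Weyl elements that do not intertwine $\theta$ and $\theta'$ and destroy the clean count on the right-hand side.
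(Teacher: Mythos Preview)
The paper does not prove this theorem: it is stated with an explicit citation to \cite[Theorem 6.2]{Cha24} and used as a black box throughout. There is therefore no in-paper proof to compare your proposal against.

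That said, your outline is broadly in the right spirit and matches the strategy of the cited source: one does express the inner product as an Euler characteristic of the diagonal quotient of $X_{\bbT_r\subset\bbG_r}\times X_{\bbT'_r\subset\bbG_r}$, stratify, and then use the Howe factorization to reduce the computation through the Moy--Prasad layers. Your identification of ellipticity (cf.\ the split-genericity condition in Remark~\ref{rem:scalar product}) and Howe-factorizability as the two structural inputs is accurate. However, your description ``stratify by relative position indexed by $W_{\bbG_r(\F_q)}(\bbT_r,\bbT'_r)$'' is too optimistic as stated: the natural stratification of the quotient is by $\bbU_r(\overline{\F}_q)\backslash\bbG_r(\overline{\F}_q)/\bbU'_r(\overline{\F}_q)$, which for $r>0$ is far larger than a finite Weyl set, and collapsing it to the Weyl-type index is precisely the hard work. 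The actual argument in \cite{Cha24} proceeds by an inductive comparison $R_{\bbT_r}^{\bbG_r}(\theta)\leftrightarrow r_{\bbT_r}^{\bbG_r}(\theta)$ along the twisted-Levi chain (see \eqref{eq:modified DL induction} and the surrounding discussion in Section~\ref{subsec:conjecture}), rather than by a direct depth-$r$ Bruhat stratification; the genericity of each $\phi_i$ is what makes each intermediate Lusztig induction step behave cleanly. So your plan names the correct ingredients but underestimates the distance between the naive stratification and the final answer.
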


\begin{rem}\label{rem:scalar product}
\begin{enumerate}
\item
Note that it is assumed that $(\bfT,\theta)$ is ``split-generic'' in \cite[Theorem 6.2]{Cha24}, but this is automatically satisfied when $\bfT$ is elliptic in $\bfG$ (see \cite[Definition 6.1]{Cha24}).
\item
One of the remarkable points of Theorem \ref{thm:Cha24} is that the case where $p$ is small can be equally handled as long as we assume the Howe-factorizability.
However, when we later apply this theorem (proof of Theorem \ref{thm:reg comparison}), we assume that $p$ is large so that the algebraic construction and description of supercuspidal representations (Section \ref{sec:Yu}) work.
Thus any unramified pair $(\bfT,\theta)$ is automatically Howe-factorizable as mentioned before.
\end{enumerate}
\end{rem}

We note here that the fact that we chose to leave out $\bbU_r$ in the notation $X_{\bbT_r \subset \bbG_r}$ but include it in the notation $R_{\bbT_r, \bbU_r}^{\bbG_r}$ is quite misleading: while the geometry of $X_{\bbT_r \subset \bbG_r}$ genuinely depends on the choice of $\bbU_r$, a consequence of Theorem \ref{thm:Cha24} is that its $\theta$-Euler characteristic $R_{\bbT_r,\bbU_r}^{\bbG_r}(\theta)$ does not depend on this choice:


\begin{cor}\label{cor:Cha24}
Assume that $(\bfT,\theta)$ is an unramified elliptic Howe-factorizable pair of $\bfG$ of depth $\leq r$.
Then we have $R_{\bbT_r,\bbU_{r}}^{\bbG_r}(\theta)\cong R_{\bbT_r,\bbU'_{r}}^{\bbG_r}(\theta)$ for any triple $(\bfT,\bfU', \x)$.
\end{cor}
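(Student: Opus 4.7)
The plan is to exploit the Mackey-type inner product formula (Theorem \ref{thm:Cha24}) together with the standard trick that an integral virtual character with zero norm must vanish.

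Concretely, I would form the difference
\[
D := R_{\bbT_r,\bbU_r}^{\bbG_r}(\theta) - R_{\bbT_r,\bbU'_r}^{\bbG_r}(\theta)
\]
as an element of the Grothendieck group of $\bbG_r(\F_q)$-representations and compute $\langle D, D \rangle$. Expanding yields four pairings of the form $\langle R_{\bbT_r,\bbU^{(i)}_r}^{\bbG_r}(\theta), R_{\bbT_r,\bbU^{(j)}_r}^{\bbG_r}(\theta) \rangle$ with $\bbU^{(i)}, \bbU^{(j)} \in \{\bbU_r, \bbU'_r\}$. Theorem \ref{thm:Cha24} applies to each of them, since on both slots we are using the same pair $(\bfT,\theta)$, which is by hypothesis unramified, elliptic and Howe-factorizable. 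The formula provided by Theorem \ref{thm:Cha24} evaluates in each case to
\[
N := |\{w \in W_{\bbG_r(\F_q)}(\bbT_r) \mid \theta^w = \theta\}|,
\]
a quantity depending only on the pair $(\bfT,\theta)$ and manifestly independent of the choice of unipotent radical. Hence all four terms coincide, and $\langle D, D \rangle = N - N - N + N = 0$.

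Since $D$ is an integer linear combination of irreducible characters, $\langle D, D \rangle = 0$ forces $D = 0$, proving the corollary. There is no real obstacle here: the argument is essentially formal, directly modeled on the classical depth-zero proof that $R_{\bbT}^{\bbG}(\theta)$ is independent of the choice of Borel. The only point requiring care is the verification that Theorem \ref{thm:Cha24} may indeed be applied with the same elliptic Howe-factorizable pair in both slots, which is immediate from the hypotheses.
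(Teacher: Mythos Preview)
Your argument is correct and is precisely the standard deduction the paper has in mind: the corollary is stated without proof immediately after Theorem \ref{thm:Cha24}, and the intended justification is exactly the ``compute $\langle D,D\rangle$ via the Mackey formula'' trick you wrote out, mirroring the classical depth-zero argument of Deligne--Lusztig. There is nothing to add.
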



From now on, we write $R_{\bbT_r}^{\bbG_r}(\theta)$ for $R_{\bbT_r,\bbU_r}^{\bbG_r}(\theta)$ whenever $(\bfT,\theta)$ is unramified elliptic Howe-factorizable.

The second result in the $r=0$ setting is a special case of the Deligne--Lusztig character formula \cite[Theorem 4.2]{DL76}:

\begin{prop}[{\cite[Theorem 1.2]{CI21-RT}}]
\label{prop:geom vreg}
Let $(\bfT,\theta)$ be an unramified pair of depth $\leq r$.
Let $\gamma \in G_{\x,0}$ be an unramified very regular element. 
If $\gamma$ is not $G_{\x,0}$-conjugate to an element of $T$, then $\Theta_{R_{\bbT_{r},\bbU_{r}}^{\bbG_r}(\theta)}(\gamma)=0$. 
If $\gamma$ is $G_{\x,0}$-conjugate to an element of $T$ (in this case, we assume that $\gamma$ itself belongs to $T$ by conjugating), we have
\begin{equation*}
\Theta_{R_{\bbT_{r},\bbU_{r}}^{\bbG_r}(\theta)}(\gamma) 
= 
\sum_{w \in W_{G_{\x,0}}(\bfT)} \theta^{w}(\gamma).
\end{equation*}
\end{prop}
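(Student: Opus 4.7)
The plan is to compute the character via the Lefschetz trace formula, exploiting the rigidity of unramified very regular elements to reduce the fixed-point count to a Weyl-group sum. This is the parahoric-level analogue of the classical Deligne--Lusztig character formula restricted to regular semisimple elements.

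Since $\bbT_r(\F_q) = T_{0:r+}$ acts on the cohomology of $X_{\bbT_r \subset \bbG_r}$ by right multiplication, extracting the $\theta$-isotypic part via the character projector $\tfrac{1}{|\bbT_r(\F_q)|}\sum_t \theta(t)^{-1} t$ yields
\begin{equation*}
    \Theta_{R_{\bbT_r, \bbU_r}^{\bbG_r}(\theta)}(\gamma)
    =
    \frac{1}{|\bbT_r(\F_q)|} \sum_{t \in \bbT_r(\F_q)} \theta(t)^{-1} \cdot L(\bar\gamma, t),
\end{equation*}
where $\bar\gamma$ is the image of $\gamma$ in $\bbG_r(\F_q)$ and $L(\bar\gamma, t) := \sum_i (-1)^i \Tr\bigl((\bar\gamma, t)^{\ast} \mid H_c^i(X_{\bbT_r \subset \bbG_r}, \Qlb)\bigr)$. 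Because $(\bar\gamma, t)$ acts with finite order on a separated scheme of finite type, the Lefschetz trace formula identifies $L(\bar\gamma, t)$ with the number of geometric points $x$ satisfying both $x^{-1}\bar\gamma x = t^{-1}$ and $x^{-1}\sigma(x) \in \bbU_r$.

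The key step is a rigidity statement. Since the further reduction of $\bar\gamma$ to $\bbG_0(\F_q)$ is regular semisimple with all root values distinct modulo $\mathfrak{p}_{\ol F}$, a lifting argument---analogous to the Weyl-group bijections recalled at the start of Section \ref{sec:litmus} and in the spirit of \cite[Lemma 5.1]{CO25}---shows that any geometric $x$ with $x^{-1}\bar\gamma x \in \bbT_r$ must lie in $N_{\bbG_r}(\bbT_r) \cdot \bbU_r$. Consequently, if $\gamma$ is not $G_{\x, 0}$-conjugate to any element of $T$, the fixed locus is empty for every $t$, and $\Theta_R(\gamma) = 0$.

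In the case $\gamma \in T$, the fixed locus stratifies as a disjoint union over $w \in W_{\bbG_r(\F_q)}(\bbT_r) = W_{G_{\x, 0}}(\bfT)$, each stratum being (the $\F_{q^n}$-points of) a Lang-type torsor for a connected pro-$p$ subgroup of $\bbU_r$; Lang--Steinberg surjectivity produces a clean count on each piece with $t$ forced to equal the corresponding $w$-conjugate of $\bar\gamma^{-1}$. Inserting this back into the projector formula collapses the sum over $t$ and produces $\sum_{w \in W_{G_{\x, 0}}(\bfT)} \theta^w(\gamma)$, matching the claim. The principal technical obstacle is the rigidity lemma: upgrading the regular-semisimple Weyl-group control from the reductive quotient $\bbG_0(\F_q)$ to the jet-group $\bbG_r(\F_q)$ is precisely where the unramified very regular hypothesis (rather than mere regular semisimplicity) is essential, and the proof runs in parallel to the identifications of normalizer cosets appearing in the proof of Theorem \ref{thm:vreg characterization}.
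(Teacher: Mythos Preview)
Your approach is correct in outline and matches the direct argument in the cited source \cite{CI21-RT}. One inaccuracy in the fixed-locus description: the very regular hypothesis forces the centralizer of $\bar\gamma$ in $\bbG_r$ (over $\ol\F_q$) to be exactly $\bbT_r$, so $x^{-1}\bar\gamma x \in \bbT_r$ already gives $x \in N_{\bbG_r}(\bbT_r)$ on the nose, not merely $N_{\bbG_r}(\bbT_r)\cdot\bbU_r$. Combined with $x^{-1}\sigma(x)\in\bbU_r$ and $N_{\bbG_r}(\bbT_r)\cap\bbU_r=\{1\}$, this forces $x\in N_{\bbG_r}(\bbT_r)(\F_q)$; the fixed locus for each $t$ is then the \emph{finite} set $\bigsqcup_{w:\,\bar\gamma^w=t^{-1}}\dot w\,\bbT_r(\F_q)$, not a positive-dimensional Lang torsor inside $\bbU_r$. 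With this correction the point count goes through exactly as you indicate.

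The paper itself does not reprove the statement directly but, in the Remark following Theorem~\ref{thm:geom-char-formula}, shows how it drops out of the general positive-depth character formula: for very regular $\gamma=su$ the connected centralizer $\bfG_s$ is already a torus, so the Green function $Q_{\bbT_r}^{\bbG_{r,s}}(\theta_+)$ degenerates to $\theta_+$ and the sum over $x$ with ${}^xs\in\bbT_r$ collapses to the Weyl-group sum. Your route avoids building the Green-function machinery, trading it for a direct rigidity argument; the paper's route subsumes that rigidity into the structural proof of Theorem~\ref{thm:geom-char-formula} (via the Deligne--Lusztig fixed-point formula and Lemma~\ref{lem:(s,t) fixed}).
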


\subsection{Geometric Howe-unramified regular supercuspidal types}\label{subsec:reg comparison}

Let $(\bfT,\theta)$ be an unramified regular elliptic pair of $\bfG$.
We assume that $p \neq 2$ is not bad for $\bfG$ and $p \nmid |\pi_1(\bfG_{\der})| \cdot |\pi_1(\widehat \bfG_{\der})|$. Assume additionally that $q$ is large enough so that \eqref{eq:Henniart} holds.
Note that then $\theta$ is automatically Howe-factorizable; let $\Psi$ be a Yu datum corresponding to $(\bfT,\theta)$.
The main theorem of this sections is:

\begin{thm}\label{thm:reg comparison}
We have an isomorphism of $TG_{\x,0}$-representations
\[
\cc\tau^{\FKS}_{\Psi}
\cong
(-1)^{r(\bfG^0) - r(\bfT) + r(\bfT,\theta)} R_{\bbT_r}^{\bbG_r}(\theta),
\]
where $R_{\bbT_r}^{\bbG_r}(\theta)$ is extended to a representation of $TG_{\x,0}=Z_{\bfG}G_{\x,0}$ by demanding that $Z_{\bfG}$ act by $\theta|_{Z_{\bfG}}$. 
In particular, the compact induction of $(-1)^{r(\bfG^0) - r(\bfT) + r(\bfT,\theta)} R_{\bbT_r}^{\bbG_r}(\theta)$ to $G$ is irreducible supercuspidal and we have
\[
\pi_{\Psi}^{\FKS}
\cong
\cInd_{TG_{\x,0}}^{G}((-1)^{r(\bfG^0) - r(\bfT) + r(\bfT,\theta)} R_{\bbT_r}^{\bbG_r}(\theta)).   
\]
\end{thm}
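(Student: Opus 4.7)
The plan is to apply the characterization theorem (Theorem \ref{thm:vreg characterization}) to identify both $\cc\tau^{\FKS}_{\Psi}$ and $(-1)^{e} R_{\bbT_r}^{\bbG_r}(\theta)$, with $e \coloneqq r(\bfG^0) - r(\bfT) + r(\bfT,\theta)$, as the same irreducible $G_{\x,0}$-representation. I will then upgrade this to a $TG_{\x,0}$-isomorphism by matching central characters on $\bfZ_{\bfG}$, and finally deduce the supercuspidal identity via transitivity of compact induction. The hypotheses of Theorem \ref{thm:vreg characterization} are in force: $\theta$ has trivial $W_{G_{\x,0}}(\bfT)$-stabilizer by regularity, and the large-$q$ assumption provides Henniart's inequality \eqref{eq:Henniart}.

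First I would verify irreducibility of both candidates and match their characters on $G_{\x,0,\vreg}$. On the geometric side, Theorem \ref{thm:Cha24} applied to $(\bfT,\theta)=(\bfT',\theta')$ together with regularity gives $\langle R_{\bbT_r}^{\bbG_r}(\theta), R_{\bbT_r}^{\bbG_r}(\theta)\rangle = 1$, so $R_{\bbT_r}^{\bbG_r}(\theta) = \varepsilon \cdot \pi_R$ for some $\varepsilon \in \{\pm 1\}$ and an irreducible $G_{\x,0}$-representation $\pi_R$. On the algebraic side, $\cc\tau^{\FKS}_{\Psi}$ is irreducible as part of Yu's construction recalled in Section \ref{subsec:ADS}. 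Propositions \ref{prop:geom vreg} and \ref{prop:AS-vreg-FKS} then give matching character shapes
\[
\Theta_{\pi_R}(\gamma) = \varepsilon \sum_{w \in W_{G_{\x,0}}(\bfT_{\gamma},\bfT)} \theta^{w}(\gamma), \qquad \Theta_{\cc\tau^{\FKS}_{\Psi}}(\gamma) = (-1)^{e} \sum_{w \in W_{G_{\x,0}}(\bfT_{\gamma},\bfT)} \theta^{w}(\gamma),
\]
for $\gamma \in G_{\x,0,\vreg}$, with both vanishing when $\gamma$ is not $G_{\x,0}$-conjugate into $T$.

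Theorem \ref{thm:vreg characterization} then forces $\pi_R \cong \cc\tau^{\FKS}_{\Psi}$ as $G_{\x,0}$-representations; comparing values on any element of $T_{0,\vreg}$ (which is nonempty once \eqref{eq:Henniart} holds) yields $\varepsilon = (-1)^{e}$. This gives the claimed isomorphism at the $G_{\x,0}$-level. Since $TG_{\x,0} = \bfZ_{\bfG} G_{\x,0}$ and the central character of $\cc\rho^{\FKS}_{\Psi}$ on $\bfZ_{\bfG}$ is $\theta|_{\bfZ_{\bfG}}$ (read off from the Howe factorization $\theta = \prod_i \phi_i|_{T}$ on the algebraic side, and imposed by definition on the geometric side), this promotes to a $TG_{\x,0}$-isomorphism. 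The final claim follows from transitivity of compact induction: since $\rho^{\FKS}_{\Psi} = \Ind_{\cc K}^{K} \cc\rho^{\FKS}_{\Psi}$ and $\cc\tau^{\FKS}_{\Psi} = \Ind_{\cc K}^{TG_{\x,0}} \cc\rho^{\FKS}_{\Psi}$,
\[
\cInd_{TG_{\x,0}}^{G} \cc\tau^{\FKS}_{\Psi} = \cInd_{\cc K}^{G} \cc\rho^{\FKS}_{\Psi} = \cInd_{K}^{G} \rho^{\FKS}_{\Psi} = \pi_{\Psi}^{\FKS},
\]
which is irreducible supercuspidal by the Fintzen--Kaletha--Spice construction.

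The main obstacle is the irreducibility and sign-determination on the geometric side: bridging the virtual character $R_{\bbT_r}^{\bbG_r}(\theta)$ to an honest irreducible representation rests on Theorem \ref{thm:Cha24}, the positive-depth inner-product formula from \cite{Cha24}. With that input and the two very-regular character formulas in hand, the remainder of the argument is a formal matching exercise funneled through the litmus test Theorem \ref{thm:vreg characterization}.
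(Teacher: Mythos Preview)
Your proposal is correct and follows essentially the same route as the paper's proof: irreducibility of $R_{\bbT_r}^{\bbG_r}(\theta)$ via Theorem~\ref{thm:Cha24}, matching very-regular character values via Propositions~\ref{prop:geom vreg} and~\ref{prop:AS-vreg-FKS}, then applying Theorem~\ref{thm:vreg characterization} and reading off the sign. The only cosmetic difference is that the paper notes the common $Z_{\bfG}$-central character $\theta|_{Z_{\bfG}}$ before invoking Theorem~\ref{thm:vreg characterization} and states the conclusion directly at the $TG_{\x,0}$-level, whereas you first conclude at the $G_{\x,0}$-level and then promote; also, for the sign determination you should note (as the paper implicitly uses) that $\sum_{w}\theta^{w}$ cannot vanish identically on $T_{0,\vreg}$, since $\langle\pi,\pi\rangle_{\vreg}>\tfrac12$ in the proof of Theorem~\ref{thm:vreg characterization}.
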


\begin{proof}
    By Theorem \ref{thm:Cha24}, either $R_{\bbT_r}^{\bbG_r}(\theta)$ or $-R_{\bbT_r}^{\bbG_r}(\theta)$ is an irreducible representation of $G_{\x,0}$. 
    Write $|R_{\bbT_r}^{\bbG_r}(\theta)| = c \cdot R_{\bbT_r}^{\bbG_r}(\theta)$ ($c \in \{\pm 1\}$) for this representation. By Proposition \ref{prop:geom vreg}, for all unramified very regular $\gamma \in G_{\x,0}$,
    \begin{equation*}
        \Theta_{|R_{\bbT_r}^{\bbG_r}(\theta)|}(\gamma)
        =
        c\cdot\sum_{w \in W_{G_{\x,0}}(\bfT_\gamma,\bfT)} \theta^w(\gamma).
    \end{equation*}
    On the other hand, by Proposition \ref{prop:AS-vreg}, for all unramified very regular $\gamma \in G_{\x,0}$,
    \begin{equation*}
        \Theta_{\cc \tau_{\Psi}^{\FKS}}(\gamma)
        =
        (-1)^{r(\bfG^0)-r(\bfT)+r(\bfT,\theta)}\cdot \sum_{w \in W_{G_{\x,0}}(\bfT_\gamma,\bfT)} \theta^w(\gamma).
    \end{equation*}
    Note that both representations $|R_{\bbT_{r}}^{\bbG_{r}}(\theta)|$ and $\cc\tau_{\Psi}^{\FKS}$ have $Z_{\bfG}$-central character $\theta|_{Z_{\bfG}}$.
    Thus, by Theorem \ref{thm:vreg characterization}, we must have the following isomorphism of irreducible $TG_{\x,0}$-representations
    \begin{equation*}
        |R_{\bbT_r}^{\bbG_r}(\theta)| \cong \cc\tau_{\Psi}^{\FKS}.
    \end{equation*}
    In particular, we see that $c = (-1)^{r(\bfG^0) - r(\bfT) - r(\bfT,\theta)}$, and statement of the theorem follows by taking the compact induction of the above isomorphism.
\end{proof}

\begin{remark}\label{rem:toral comparison}
In the case that $\theta$ is $0$-toral, Theorem \ref{thm:reg comparison} is slightly weaker than \cite[Theorem 8.2]{CO25} in that the condition on $q$ required for Theorem \ref{thm:reg comparison} is stronger: in the \textit{op.\ cit.}, we need only assume that $q$ is large enough so that $T_{0,\vreg}$ generates $T_{0}$ as a group, and for this it is enough to assume 
\begin{equation*}
    \frac{|\bbT_0(\F_{q})|}{|\bbT_0(\F_{q})_{\nvreg}|} > 2.
\end{equation*}
On the other hand, the methodology in \cite{CO25} uses the $(\bfT,\bfG)$-genericity assumption on $\theta$ quite seriously. It is an interesting question to determine how the methods in \textit{op.\ cit.\ }can be refined to relax the required inequality \eqref{eq:Henniart}; this would likely yield a comparison result with a weaker requirement on $q$. The authors indeed investigated this for some time, but in the end opted to proceed using the simple characterization Theorem \ref{thm:vreg characterization}. 
\end{remark}

\subsection{Geometric Howe-unramified regular supercuspidal $L$-packets}\label{subsec:geom packets}

Let $(\bfT,\theta)$ be an unramified elliptic regular pair. 
We assume that $p \neq 2$ is not bad for $\bfG$ and $p \nmid |\pi_1(\bfG_{\der})| \cdot |\pi_1(\widehat \bfG_{\der})|$. Assume that $q$ is large enough so that \eqref{eq:Henniart} holds. 

In this subsection, we discuss the implications of Theorem \ref{thm:reg comparison} from the perspective of the Langlands program. Our discussion will take place in the setting of Kaletha's regular supercuspidal $L$-packets as constructed in \cite{Kal19} and reformulated in \cite{FKS23}. We very briefly review this. Following \textit{op.\ cit.}, we assume that $F$ has characteristic zero.

Let $\varphi \from W_F \to {}^L G$ be a regular supercuspidal parameter \cite[Definition 5.2.3]{Kal19}. Kaletha proves \cite[Proposition 5.2.7]{Kal19} that one can associate to $\varphi$ a \textit{regular supercuspidal $L$-packet datum}---a quadruple $(\bfT, \widehat j, \chi, \theta)$ where:
\begin{enumerate}[label=\textbullet]
    \item $\bfT$ is an $F$-rational torus whose dimension equals to the absolute rank of $\bfG$,
    \item $\widehat j \from \widehat \bfT \hookrightarrow \widehat \bfG$ is an embedding whose $\widehat \bfG$-conjugacy class is $\Gamma$-stable,
    \item $\chi$ is a minimally ramified $\chi$-data for the set of roots of $\bfT$ in $\bfG$ (see \cite[Definition 4.6.1]{Kal19}),
    \item $\theta \from T \to \C^\times$ is a character
\end{enumerate}
satisfying several conditions.
(See \cite[Section 5]{Kal19} and also \cite[Section 8]{CO25} for more details).
We assume further that $\varphi$ is \textit{Howe-unramified}: assume that $\bfT$ splits over an unramified extension of $F$. There is then a canonical choice for $\chi$. 

The $L$-packet corresponding to $\varphi$ is then indexed by $G$-conjugacy classes of $F$-rational embeddings $j \from \bfT \hookrightarrow \bfG$ which are \textit{admissible for $\widehat j$} in the sense of \cite[Section 5.1]{Kal19}. In \cite[Section 5.3]{Kal19}, Kaletha associates to each quintuple $(\bfT,\widehat j, \chi, \theta, j)$ a regular supercuspidal representation: the one arising from Yu's construction from a Yu datum determined by a(ny) Howe factorization of $(j\bfT, (j\theta)')$, where $j\bfT:=j(\bfT)\subset\bfG$ and $(j\theta)' \from jT \to \C^\times$ is the character $(j\theta)'(j(t)) = \theta(t) \cdot \varepsilon^{\ram}[j\theta](j(t))$, where $\varepsilon^{\ram}[j\theta]$ is the sign character discussed in Section \ref{subsec:ADS} ($j\theta$ is the pull back of the character $\theta$ along $j$). 
In summary, Kaletha's $L$-packet for $\varphi$ is the set
\begin{equation*}
    \{\pi_{(j\bfT, (j\theta)')}\}_j
\end{equation*}
where $j$ ranges over $G$-conjugacy classes of $F$-embeddings $\bfT\hookrightarrow\bfG$ admissible for $\widehat j$. This can be repackaged using the twisted Yu-construction introduced in \cite{FKS23} since $\pi_{(j\bfT, j\theta)}^{\FKS} \cong \pi_{(j\bfT, (j\theta)')}$ (see Section \ref{subsec:ADS}). 
Therefore the $L$-packet corresponding to $\varphi$ is 
\begin{equation*}
    \{\pi_{(j\bfT, j\theta)}^{\FKS}\}_j.
\end{equation*}

The key contribution of Theorem \ref{thm:reg comparison} in this context is that positive-depth Deligne--Lusztig induction realizes the \textit{correct} parametrization of regular supercuspidal representations. The first revelation of this type established the comparison between positive-depth Deligne--Lusztig induction and the twisted parametrization of regular supercuspidal representations in the $0$-toral (Definition \ref{defn:toral}) setting---this is one of the main contributions of \cite{CO25}. Theorem \ref{thm:reg comparison} allows the entire discussion of \cite[Section 8]{CO25} to hold for arbitrary Howe-unramified regular supercuspidal $L$-packets.

\begin{thm}\label{thm:geom packet}
    Let $\varphi$ be a Howe-unramified regular supercuspidal parameter and denote by $(\bfT, \widehat j, \chi, \theta)$ its associated regular supercuspidal $L$-packet datum of depth $r$. 
    Then the $L$-packet for $\varphi$ is 
    \begin{equation*}
        \{\cInd_{jT \cdot G_{\x_j,0}}^{G}(|R_{\bbT_{j,r}}^{\bbG_{j,r}}(j\theta)|)\}_j,
    \end{equation*}
    where
    \begin{enumerate}[label=\textbullet]
        \item $j$ varies over $G$-conjugacy classes of $F$-rational $\widehat j$-admissible embeddings $\bfT\hookrightarrow\bfG$, 
        \item $\x_j\in\cB(\bfG,F)$ is a point associated with the unramified elliptic maximal torus $j\bfT$,
        \item $\bbT_{j,r}$ and $\bbG_{j,r}$ are the algebraic groups defined in Subsection \ref{subsec:parahoricDL} associated to $\x_j$.
    \end{enumerate}
\end{thm}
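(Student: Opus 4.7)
The plan is to deduce Theorem \ref{thm:geom packet} as an essentially immediate consequence of Theorem \ref{thm:reg comparison} applied term-by-term to Kaletha's $L$-packet. As recalled in the paragraphs preceding the theorem statement, Kaletha's $L$-packet for the Howe-unramified regular supercuspidal parameter $\varphi$ is given by
\[
\{\pi_{(j\bfT,\,j\theta)}^{\FKS}\}_j,
\]
where $j$ ranges over $G$-conjugacy classes of $F$-rational $\widehat j$-admissible embeddings $\bfT \hookrightarrow \bfG$, and where the passage from Kaletha's original parametrization $(\bfT, (j\theta)')$ to the $\FKS$-twist $(j\bfT, j\theta)$ is exactly what absorbs the ramified sign character $\varepsilon^{\ram}[j\theta]$ (see Section \ref{subsec:ADS}).

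First, I would fix such a $j$ and verify that the pair $(j\bfT, j\theta)$ satisfies the hypotheses of Theorem \ref{thm:reg comparison}. The Howe-unramified assumption on $\varphi$ means that $\bfT$ splits over an unramified extension of $F$, and hence so does $j\bfT$; ellipticity of $j\bfT$ in $\bfG$ follows from the fact that the dimension of $\bfT$ equals the absolute rank of $\bfG$ combined with the admissibility of $j$. Regularity of $j\theta$ is part of the definition of a regular supercuspidal $L$-packet datum. The standing assumptions on $p$ and $q$ then imply that $(j\bfT, j\theta)$ is Howe-factorizable and that Henniart's inequality \eqref{eq:Henniart} holds, so that Theorem \ref{thm:reg comparison} applies. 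Let $\Psi_j$ be a Yu datum for $(j\bfT, j\theta)$; since $j\bfT$ is elliptic, the associated point $\x_j \in \cB(\bfG,F)$ is unique in $\cB^{\red}(\bfG,F)$.

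Applying Theorem \ref{thm:reg comparison} directly to $(j\bfT, j\theta)$ yields
\[
\pi_{(j\bfT,\,j\theta)}^{\FKS} = \pi_{\Psi_j}^{\FKS} \cong \cInd_{jT \cdot G_{\x_j,0}}^{G}\bigl((-1)^{r(\bfG^{0}_j) - r(j\bfT) + r(j\bfT,j\theta)}\, R_{\bbT_{j,r}}^{\bbG_{j,r}}(j\theta)\bigr).
\]
By construction, $|R_{\bbT_{j,r}}^{\bbG_{j,r}}(j\theta)|$ denotes the signed version of $R_{\bbT_{j,r}}^{\bbG_{j,r}}(j\theta)$ which is a genuine (rather than virtual) irreducible representation of $G_{\x_j,0}$; the proof of Theorem \ref{thm:reg comparison} (specifically its last line) identifies this sign as precisely $(-1)^{r(\bfG^{0}_j) - r(j\bfT) + r(j\bfT,j\theta)}$. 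Substituting and letting $j$ vary over all $G$-conjugacy classes of $\widehat j$-admissible embeddings then recovers exactly the set
\[
\{\cInd_{jT \cdot G_{\x_j,0}}^{G}\bigl(|R_{\bbT_{j,r}}^{\bbG_{j,r}}(j\theta)|\bigr)\}_j.
\]

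There is no real obstacle in this step, as the conceptual work has already been carried out in Theorem \ref{thm:reg comparison} and Kaletha's parametrization. The only thing to be careful about is the bookkeeping of the sign, which matches precisely because the compact induction of $|R_{\bbT_{j,r}}^{\bbG_{j,r}}(j\theta)| \cong \cc\tau_{\Psi_j}^{\FKS}$ is $\pi_{\Psi_j}^{\FKS}$ and not its dual or a twist thereof.
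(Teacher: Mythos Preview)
Your proposal is correct and matches the paper's approach: the paper presents Theorem \ref{thm:geom packet} as an immediate consequence of Theorem \ref{thm:reg comparison} applied term-by-term to the members $\pi_{(j\bfT,j\theta)}^{\FKS}$ of Kaletha's $L$-packet, exactly as you outline. The sign bookkeeping you describe is precisely what the proof of Theorem \ref{thm:reg comparison} establishes, so there is nothing more to add.
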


Under the additional assumption that $p \geq (2+e)n$ where $e$ is the ramification degree of $F$ over $\Q_p$ and $n$ is the dimension of the smallest faithful rational representation of $\bfG$, Fintzen--Kaletha--Spice prove \cite[Theorem 4.4.4]{FKS23} that regular supercuspidal $L$-packets satisfy \textit{stability} and many endoscopic character identities.

\begin{cor}\label{cor:stability}
    Positive-depth Deligne--Lusztig induction induces the stability. 
    That is, 
    \begin{equation*}
        \text{$(j \from \bfT \hookrightarrow \bfG,\theta) \mapsto \cInd_{j\bfT(F) \cdot G_{\x_j,0}}^{\bfG(F)}(|R_{\bbT_{j,r}}^{\bbG_{j,r}}(\theta)|)$}
    \end{equation*}
    induces a map from the set of stable conjugacy classes of unramified elliptic regular pairs to the space of stable distributions.
\end{cor}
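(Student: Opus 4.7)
My plan is to deduce the corollary directly from the packet description in Theorem~\ref{thm:geom packet} combined with the Fintzen--Kaletha--Spice stability theorem \cite[Theorem 4.4.4]{FKS23}. The main observation is that a stable conjugacy class of unramified elliptic regular pairs is precisely the data that parametrizes a Howe-unramified regular supercuspidal $L$-packet, so the virtual character attached to a stable class is, up to the sign $|\cdot|$ and compact induction, exactly the sum over an $L$-packet.

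First, I would recall Kaletha's dictionary between regular supercuspidal $L$-packet data $(\bfT,\widehat j,\chi,\theta)$ and stable conjugacy classes of pairs $(\bfT',\theta')$: fixing $\bfT$ abstractly and an embedding $\widehat j\from \widehat\bfT\hookrightarrow\widehat\bfG$ with $\Gamma$-stable $\widehat\bfG$-conjugacy class (together with the canonical unramified $\chi$-data), the set of $G$-conjugacy classes of $F$-rational $\widehat j$-admissible embeddings $j\from\bfT\hookrightarrow\bfG$ is naturally in bijection with the set of $G$-conjugacy classes of pairs $(j\bfT,j\theta)$ inside a single stable class. This is the content of \cite[Section 5]{Kal19}; once this is set up, a stable class of unramified elliptic regular pairs $(\bfT,\theta)$ corresponds to a unique $(\bfT,\widehat j,\chi,\theta)$ up to the obvious equivalence, and hence to a Howe-unramified regular supercuspidal parameter $\varphi$.

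Next, I would invoke Theorem~\ref{thm:geom packet}: under the running hypotheses on $p$ and $q$, the $L$-packet $\Pi_\varphi$ is
\[
\Pi_\varphi=\bigl\{\cInd_{jT\cdot G_{\x_j,0}}^{G}\bigl(|R_{\bbT_{j,r}}^{\bbG_{j,r}}(j\theta)|\bigr)\bigr\}_{j},
\]
with $j$ running over $\widehat j$-admissible embeddings, i.e., over representatives of the $G$-conjugacy classes inside the fixed stable class. Summing the characters over $\Pi_\varphi$ therefore equals (as a virtual distribution on $G$) the sum
\[
\sum_{(j\bfT,j\theta)}\Theta_{\cInd_{jT\cdot G_{\x_j,0}}^{G}(|R_{\bbT_{j,r}}^{\bbG_{j,r}}(j\theta)|)},
\]
where the index set is exactly the $G$-conjugacy classes within the stable class of $(\bfT,\theta)$. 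By \cite[Theorem 4.4.4]{FKS23}, under the assumption that $F$ has characteristic zero and $p$ is large enough (as stated there), this sum is a stable distribution. Thus the assignment in the statement does descend from stable conjugacy classes to stable distributions, proving the corollary.

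The only genuinely substantive step is the identification in the preceding paragraph: one must verify that the indexing ``$G$-conjugacy classes of $\widehat j$-admissible $j$'' really matches ``$G$-conjugacy classes of pairs inside the stable class,'' which is standard but must be cited carefully from \cite[Section 5.1]{Kal19}. Everything else is formal: Theorem~\ref{thm:geom packet} provides the geometric incarnation of each member of the $L$-packet, and stability is then inherited from \cite{FKS23}. There is no genuine new analytic content here; the point of the corollary is precisely to record that the geometric realization preserves the $L$-packet structure, and hence all properties of $L$-packets (stability, endoscopic character identities) transfer to the geometric side verbatim.
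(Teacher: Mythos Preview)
Your proposal is correct and matches the paper's approach exactly: the paper presents this corollary without an explicit proof, treating it as an immediate consequence of Theorem~\ref{thm:geom packet} together with the Fintzen--Kaletha--Spice stability theorem \cite[Theorem 4.4.4]{FKS23} mentioned in the sentence just before the corollary. Your write-up simply makes explicit the dictionary (from \cite[Section 5]{Kal19}) between stable conjugacy classes of pairs and $L$-packet indexing sets that the paper leaves implicit.
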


\subsection{Geometric Howe-unramified regular Kim--Yu types}\label{subsec:geom types}

Let $(\bfT, \theta)$ be an unramified regular pair of depth $r$ (thus note that $\bfT$ is not necessarily elliptic). 
We assume that $p \neq 2$ is not bad for $\bfG$ and $p \nmid |\pi_1(\bfG_{\der})| \cdot |\pi_1(\widehat \bfG_{\der})|$.
In Section \ref{subsec:types}, we discussed that Kaletha's Howe factorization gives rise to a sequence of twisted Levi subgroups $\vec \bfG = (\bfG^0 \subsetneq \cdots \subsetneq \bfG^d = \bfG)$ together with a sequence $\vec \bfM = (\bfM^0 \subseteq \cdots \subseteq \bfM^d = \bfM)$. 
As described in Section \ref{subsec:types}, one can consider the AFMO twisted Kim--Yu representation $(K_0,\rho)$. 
This representation restricts to a supercuspidal type $(K_{M,0}, \rho_{M})$ which induces to the $M_{\x,0}$-representation $\cc \tau^{\FKS}|_{M_{\x,0}}$. 

Recall that Proposition \ref{prop:types induction} established the relationship between $\Ind_{K_0}^{G_{\x,0}}(\rho)$ and the depth-$r'$ parabolic induction: for any $r' \geq r$,
\begin{equation*}
    \Ind_{K_0}^{G_{\x,0}}(\rho) \subseteq \Ind_{P_{x,0}G_{x,r'+}}^{G_{x,0}}(\Ind_{K_{M,0}}^{M_{\x,0}}(\rho)) = \Ind_{P_{x,0} G_{x,r'+}}^{G_{x,0}}(\cc \tau^{\FKS}).
\end{equation*}
Assume now that $q$ is large enough so that \eqref{eq:Henniart} holds for the pair $(\bfT,\theta)$ where $\bfT$ is viewed as a therefore \textit{elliptic} maximal torus in $\bfM$. Then Proposition \ref{prop:types induction} and  Theorem \ref{thm:reg comparison} together yield the following corollary:

\begin{thm}\label{thm:geom type}
    Retain the set-up above. 
    The restriction to $K_0$ of the $G_{\x,0}$-representation $(-1)^{r(\bfM^0) - r(\bfT) + r(\bfT,\theta)} R_{\bbT_{r'}}^{\bbG_{r'}}(\theta)$ contains the AFMO-twisted Kim--Yu type $\rho$. 
    Furthermore, if $\theta$ is split-generic, then $R_{\bbT_{r}}^{\bbG_{r}}(\theta)$ is a type and
    \begin{equation*}
        (-1)^{r(\bfM^0) - r(\bfT) + r(\bfT,\theta)} R_{\bbT_{r}}^{\bbG_{r}}(\theta) = \Ind_{K_0}^{G_{\x,0}}(\rho).
    \end{equation*}
\end{thm}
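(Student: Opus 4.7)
The plan is to reduce to the elliptic situation inside $\bfM$, where Theorem \ref{thm:reg comparison} applies directly, and then transport the identification back to $\bfG$ using the depth-$r'$ parabolic induction $I_{P_{\x,0}}^{G_{\x,0}}$ introduced in Section \ref{subsec:types}.

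First, view $(\bfT,\theta)$ as a pair in $\bfM$, where by construction $\bfT$ is unramified elliptic. The hypotheses on $p$ and the assumed Henniart inequality \eqref{eq:Henniart} for $\bfM$ are precisely what is needed to apply Theorem \ref{thm:reg comparison} inside $\bfM$, yielding an isomorphism of $TM_{\x,0}$-representations
\[
    \cc\tau^{\FKS}_{\Psi_M} \;\cong\; (-1)^{r(\bfM^0) - r(\bfT) + r(\bfT,\theta)} \, R_{\bbT_r}^{\bbM_r}(\theta).
\]
By construction of $\cc\tau^{\FKS}_{\Psi_M}$ (Section \ref{subsec:ADS}) and of the type $(K_{M,0},\rho_M)$ (Section \ref{subsec:types}), restriction to $M_{\x,0}$ identifies the left-hand side with $\Ind_{K_{M,0}}^{M_{\x,0}}\rho_M$.

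Next, I would apply $I_{P_{\x,0}}^{G_{\x,0}}$ to both sides (inflating from level $r$ to level $r' \geq r$) and invoke the transitivity isomorphism
\[
    I_{P_{\x,0}}^{G_{\x,0}}\bigl(R_{\bbT_{r'}}^{\bbM_{r'}}(\theta)\bigr) \;\cong\; R_{\bbT_{r'}}^{\bbG_{r'}}(\theta),
\]
where the right-hand side is computed using a Borel $\bfT\bfU$ compatible with the chosen parabolic $P = MN$. This is the positive-depth analogue of the classical Lusztig identity $\Ind_{\mathbb{P}}^{\mathbb{G}} R_{\mathbb{T}}^{\mathbb{M}} \cong R_{\mathbb{T}}^{\mathbb{G}}$, and should follow from the natural fibration of $X_{\bbT_{r'} \subset \bbG_{r'}}$ over $X_{\bbT_{r'} \subset \bbM_{r'}}$ induced by the parabolic quotient. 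Combining this with Proposition \ref{prop:types induction} and Frobenius reciprocity yields the desired containment $\rho \subseteq \bigl((-1)^{r(\bfM^0)-r(\bfT)+r(\bfT,\theta)} R_{\bbT_{r'}}^{\bbG_{r'}}(\theta)\bigr)\bigr|_{K_0}$.

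For the second assertion, assume $\theta$ is split-generic. The split-generic formulation of \cite[Theorem 6.2]{Cha24} (cf.\ Remark \ref{rem:scalar product}, where it is observed that the ellipticity hypothesis in Theorem \ref{thm:Cha24} can be relaxed to split-genericity) gives $\langle R_{\bbT_r}^{\bbG_r}(\theta), R_{\bbT_r}^{\bbG_r}(\theta) \rangle = 1$, hence $\pm R_{\bbT_r}^{\bbG_r}(\theta)$ is an irreducible $G_{\x,0}$-representation. Since $\Ind_{K_0}^{G_{\x,0}}\rho$ is irreducible by Lemma \ref{lem:red-depth-zero} and is contained in $(-1)^{r(\bfM^0)-r(\bfT)+r(\bfT,\theta)} R_{\bbT_r}^{\bbG_r}(\theta)$ by the first assertion applied at $r' = r$, the two coincide, and in particular $R_{\bbT_r}^{\bbG_r}(\theta)$ is (up to sign) a type. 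The main obstacle I anticipate is the transitivity step: establishing cleanly that $I_{P_{\x,0}}^{G_{\x,0}}$ intertwines positive-depth Deligne--Lusztig induction for $\bfM$ and for $\bfG$ with the correct signs. Although this is expected by analogy with depth zero, it requires nontrivial geometric input on the structure of the jet-scheme Deligne--Lusztig varieties and is likely the technical heart of the argument.
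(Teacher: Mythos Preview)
Your proposal is correct and follows essentially the same route as the paper: reduce to $\bfM$ via Theorem \ref{thm:reg comparison}, then parabolically induce and invoke Proposition \ref{prop:types induction}, with the split-generic irreducibility coming from \cite[Theorem 6.2]{Cha24} as interpreted in Remark \ref{rem:scalar product}. The transitivity step you flag as the ``main obstacle'' is not a gap: it is precisely \cite[Proposition 3.3, Lemma 3.4]{Cha24}, which the paper cites directly to obtain $R_{\bbT_{r'}}^{\bbG_{r'}}(\theta) = I_{P_{\x,0}}^{G_{\x,0}}\bigl(R_{\bbT_{r'}}^{\bbM_{r'}}(\theta)\bigr)$.
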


\begin{proof}
    By the transitivity of positive-depth Lusztig induction (see \cite[Proposition 3.3]{Cha24}, especially \cite[Lemma 3.4]{Cha24}), we have
    \begin{equation*}
        R_{\bbT_{r'}}^{\bbG_{r'}}(\theta) = \Ind_{P_{\x,0}G_{\x,{r'}+}}^{G_{\x,0}}(\Inf_{\bbM_{r'}(\F_q)}^{M_{\x,0}}(R_{\bbT_{r'}}^{\bbM_{r'}}(\theta))),
    \end{equation*}
    so the first assertion follows from Proposition \ref{prop:types induction} and Theorem \ref{thm:reg comparison}. For the second assertion, we specialize to the case $r' = r$. By Theorem \ref{thm:Cha24} and Remark \ref{rem:scalar product}, $R_{\bbT_r}^{\bbG_r}(\theta)$ is irreducible when $\theta$ is split-generic. Then of course the containment in Proposition \ref{prop:types induction} becomes an equality.
\end{proof}

\subsection{A conjecture}\label{subsec:conjecture}

It seems fitting to mention here our conjecture on realizing regular Kim--Yu types in general. Assume that $p \neq 2$ is not bad for $\bfG$. Assume that $(\bfT,\theta)$ is an unramified Howe-factorizable pair. In \cite{Cha24}, a variation of $R_{\bbT_r}^{\bbG_r}(\theta)$ was introduced: 
\begin{equation}\label{eq:modified DL induction}
    r_{\bbT_r}^{\bbG_r}(\theta) \coloneqq \Inf_{\bbG_{r_{d-1}}^d(\F_q)}^{\bbG_{r_d}^d(\F_q)} (R_{\bbG_{r_{d-1}}^{d-1}}^{\bbG_{r_{d-1}}^d}(r_{\bbT_{r_{d-2}}}^{\bbG_{r_{d-2}}^{d-1}}(\theta_{\leq d-1}))) \otimes \theta_d,
\end{equation}
where $\vec \phi = (\phi_{-1},\ldots,\phi_{d})$ is a Howe factorization for $\theta$, the functor $R_{\bbM_s}^{\bbG_s}$ denotes the depth-$s$ Lusztig induction from a twisted Levi $\bfM$ containing $\bfT$ (\`a la \cite[Section 3.1]{Cha24}), and $\theta_{\leq j} \coloneqq \prod_{i=-1}^{j} \phi_i|_{T_0}$.
A surprising feature of positive-depth Deligne--Lusztig induction $R_{\bbT_r}^{\bbG_r}$ is that when $\bfT$ is elliptic in $\bfG$, then $R_{\bbT_{r'}}^{\bbG_{r'}}(\theta) = R_{\bbT_r}^{\bbG_r}(\theta)$ for any $r' \geq r = \depth(\theta)$ \cite[Corollary 5.3]{Cha24}. From this, it is easy to conclude (see \cite[Proposition 6.4]{Cha24}) that $R_{\bbT_r}^{\bbG_r}(\theta)$ agrees with the variant $r_{\bbT_r}^{\bbG_r}(\theta)$ for $\theta$ split-generic.

\begin{conj}\label{conj:regular type}
    The representation $r_{\bbT_r}^{\bbG_r}(\theta)$ of $G_{\x,0}$ is a linear combination of Bushnell--Kutzko types and its restriction to $K_0$ contains the AFMO-twisted Kim--Yu type associated to $(\bfT,\theta)$.
\end{conj}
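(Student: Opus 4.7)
The plan is to prove the conjecture by a two-stage strategy: first reduce to the case where $\bfT$ is elliptic in $\bfG$ via a parabolic-induction argument patterned on Theorem \ref{thm:geom type}, then proceed by induction on $d$, the length of the Howe factorization of $(\bfT,\theta)$. For the reduction, let $\bfM$ be the Levi subgroup in which $\bfT$ is elliptic, as in Section \ref{subsec:types}. I would verify that the recursively defined representation $r_{\bbT_r}^{\bbG_r}(\theta)$ is compatible with the depth-$r'$ parabolic induction $I_{P_{\x,0}}^{G_{\x,0}}$ from $\bfM$, iterating the transitivity of positive-depth Lusztig induction \cite[Proposition 3.3]{Cha24} at each stage of the recursion. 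Once this is established, the $G$-cover property of the Kim--Yu type $(K_0,\rho)$ over $(K_{M,0},\rho_M)$, combined with the adjunction argument of Proposition \ref{prop:types induction}, propagates the ``contains the AFMO-twisted type'' statement from $\bfM$ to $\bfG$.

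Assuming $\bfT$ is elliptic, I would induct on $d$. The base case $d=0$ reduces, after stripping off the outermost twist by $\phi_0$, to the classical Deligne--Lusztig virtual representation $R_{\bbT_0}^{\bbG_0}(\phi_{-1})$, whose decomposition into depth-zero Bushnell--Kutzko types is classical and whose regular constituent (when present) is precisely the depth-zero AFMO-twisted Kim--Yu type, exactly as in Theorem \ref{thm:reg comparison}. For the inductive step, one peels off the outer $R_{\bbG^{d-1}_{r_{d-1}}}^{\bbG^d_{r_{d-1}}}$ and applies the inductive hypothesis to $r_{\bbT_{r_{d-2}}}^{\bbG^{d-1}_{r_{d-2}}}(\theta_{\leq d-1})$. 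Writing the latter as a $\Z$-linear combination of Kim--Yu types for $\bfG^{d-1}$, the outer Lusztig induction, inflation, and twist by $\phi_d$ should send each summand to a Kim--Yu type for $\bfG$ built from the same data plus one more generic character; this mirrors on the nose the recursive step in the Kim--Yu construction. The identification of the top (``generic'') summand with the AFMO-twisted type then follows from Theorem \ref{thm:reg comparison} applied to each regular constituent, together with Theorem \ref{thm:Cha24} to control the multiplicities via inner products.

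The main obstacle will be the non-regular depth-zero contribution $R_{\bbT_0}^{\bbG^0_0}(\phi_{-1})$: because $\phi_{-1}$ may have a nontrivial Weyl-group stabilizer, this virtual representation is neither irreducible nor cuspidal in general, and one must identify \emph{each} of its constituents as a Bushnell--Kutzko type (cuspidal ones for $\bbG^0_0$ itself, non-cuspidal ones as parabolically induced types from proper Levi subgroups) in a manner that is compatible with the AFMO sign character along the inductive chain. A second subtle point, already flagged in Section \ref{subsec:types}, is the potential discrepancy between $\epsilon^G|_{K_{M,0}}$ and $\epsilon^M$; while this vanishes in the unramified elliptic setting treated here, propagating it consistently along the tower $\bfG^0\subsetneq\cdots\subsetneq\bfG^d$ requires careful bookkeeping at each stage of the recursion. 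Finally, since Theorem \ref{thm:vreg characterization} is the only available uniqueness tool to pin down the AFMO-twisted component among the summands, the Henniart inequality \eqref{eq:Henniart} must be imposed at each intermediate Levi, so the conjecture as stated is most naturally established under a largeness hypothesis on $q$ uniform across the Howe tower; removing this hypothesis (e.g.\ by invoking the geometric methods of \cite{Nie24,IN25}) appears to require genuinely new input.
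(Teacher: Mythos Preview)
The statement you are attempting to prove is labeled a \emph{Conjecture} in the paper and is left open: the paper gives no proof. Section~\ref{subsec:conjecture} introduces $r_{\bbT_r}^{\bbG_r}(\theta)$, states Conjecture~\ref{conj:regular type}, and then immediately states the stronger Conjecture~\ref{conj:Lusztig induction} (on a single step of positive-depth Lusztig induction matching a single step of Yu's construction up to the FKS twist), remarking that the latter implies the former. No argument is offered for either; the paper treats both as directions for future work, noting in the introduction that the geometric methods of \cite{CS23,Nie24,IN25} are the most promising route but ``do not quite pin down the appearance of the twist $\varepsilon^{\ram}$.''

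Your proposal is therefore not comparable to any proof in the paper. As a strategy it is reasonable in outline, and you correctly identify the two genuine obstructions: (i) controlling the non-cuspidal constituents of $R_{\bbT_0}^{\bbG_0^0}(\phi_{-1})$ through the tower, and (ii) the dependence on Theorem~\ref{thm:vreg characterization}, which forces a largeness hypothesis on $q$. But note that your inductive step tacitly assumes exactly what Conjecture~\ref{conj:Lusztig induction} asserts --- that a single application of $R_{\bbG_{r}^{d-1}}^{\bbG_r^d}$ to a Kim--Yu type yields (up to sign and the FKS twist) the next Kim--Yu type --- and this is precisely the open statement the paper isolates as the key missing ingredient. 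So your plan does not reduce the conjecture to known results; it reformulates it.
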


We conjecture the following structural result relating positive-depth Lusztig induction to the construction of Kim--Yu types. This is inspired by Kaletha's Howe factorization \cite{Kal19}, the recent construction of positive-depth character sheaves \cite{BC24}, and the Howe-factorizability of the structure of $R_{\bbT_r}^{\bbG_r}(\theta)$ in terms of successive positive-depth Lusztig inductions \cite{Cha24} as briefly recalled above in \eqref{eq:modified DL induction}.

\begin{conj}\label{conj:Lusztig induction}
    Let $\bfG'$ be a twisted Levi subgroup of $\bfG$ containing $\bfT$.
    Let $\alpha$ be any representation of $\bbG'_{r}(\F_q)$ of depth $<r$ and let $\phi \from \bbG_r'(\F_q) \to \overline \Q_\ell^\times$ be a $(\bfG',\bfG)$-generic character of depth $r$. Then
    \begin{equation*}
        |R_{\bbG_r'}^{\bbG_r}(\alpha \otimes \phi)| = \Ind_{K_0}^{G_{\x,0}}(\epsilon^{G/G'} \otimes \rho_{\alpha \otimes \phi}),
    \end{equation*}
    where $\epsilon^{G/G'}$ is the FKS-twist \cite[Definition 3.1]{FKS23} and $\rho_{\alpha \otimes \phi}$ is the representation constructed in \cite[Section 4]{Yu01} in the context of \cite[Section 7]{KY17}.
\end{conj}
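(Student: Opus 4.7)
The plan is to adapt the litmus-test strategy used to prove Theorem \ref{thm:reg comparison}---characterize both sides by their values on unramified very regular elements, then verify these values match---to the relative setting of the conjecture. The main new ingredient to be established is a generalization of Theorem \ref{thm:vreg characterization} to representations of $\bbG'_r(\F_q)$ of the form $\alpha \otimes \phi$: here $\alpha$ is not required to have trivial Weyl-group stabilizer, but the $(\bfG',\bfG)$-genericity of $\phi$ should supply the rigidity needed. Concretely, genericity of $\phi$ separates distinct $W_{G_{\x,0}}(\bfT)$-orbits of characters at depth $r$, so the cross-term estimate in the Cauchy--Schwarz step of Theorem \ref{thm:vreg characterization} should collapse to a sum indexed only by $W_{G'_{\x,0}}(\bfT)$-conjugates of $\phi$, and \eqref{eq:Henniart} (with a possibly adjusted combinatorial factor accounting for the outer index $[W_{G_{\x,0}}(\bfT):W_{G'_{\x,0}}(\bfT)]$) suffices for large $q$.

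With such a litmus test in hand, I would first compute the character of $|R_{\bbG'_r}^{\bbG_r}(\alpha \otimes \phi)|$ on a very regular $\gamma \in G_{\x,0}$. Using the transitivity formula \cite[Proposition 3.3]{Cha24} for positive-depth Lusztig induction and the character formula of Proposition \ref{prop:geom vreg} applied within $\bbG'_r$, this value should vanish unless $\gamma$ is $G_{\x,0}$-conjugate into $\bfT$, and in the non-vanishing case reduce to $c \cdot \sum_w (\alpha \otimes \phi)^w(\gamma)$ with $w$ running over $W_{G_{\x,0}}(\bfT_\gamma, \bfT)$; the sign $c$ records the parity arising from the Deligne--Lusztig geometry internal to $\bbG'$. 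Next, I would compute the character of $\Ind_{K_0}^{G_{\x,0}}(\epsilon^{G/G'} \otimes \rho_{\alpha \otimes \phi})$ on very regular elements. Yu's representation $\rho_{\alpha \otimes \phi}$ is built from $\alpha$ and $\phi$ via the Heisenberg--Weil machinery of \cite{Yu01}, and its character on very regular $\gamma$ admits a Mackey-type expression directly paralleling Proposition \ref{prop:AS-vreg}---essentially a one-step Adler--DeBacker--Spice formula. The decoration by $\epsilon^{G/G'}$ is precisely the Fintzen--Kaletha--Spice twist that converts the $\varepsilon^{\ram}$-encumbered formula of Proposition \ref{prop:AS-vreg} into the clean formula of Proposition \ref{prop:AS-vreg-FKS}; on unramified elliptic $\bfT$ this is automatic by the vanishing discussion in Section \ref{subsec:ADS}, and after matching the overall sign the two values coincide.

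The hardest step, by a significant margin, is the relative characterization theorem proposed in the first paragraph. Theorem \ref{thm:vreg characterization} crucially assumes that $\theta$ has trivial stabilizer in $W_{G_{\x,0}}(\bfT)$, which need not hold for the restriction of $\alpha \otimes \phi$ to $\bbT_r$; what one really needs is a version in which $W_G$-regularity is replaced by a relative notion driven by the genericity of $\phi$ modulo the $W_{G'}$-stabilizer of $\alpha$. A related purely finite-field characterization has been developed in \cite[\S4.1]{CO23-sc}, and porting that argument to the positive-depth parahoric setting---together with establishing the appropriate analog of Theorem \ref{thm:Cha24} for Lusztig induction from a twisted Levi, so that $|R_{\bbG'_r}^{\bbG_r}(\alpha \otimes \phi)|$ is unambiguously a genuine representation of the correct sign---is where I expect the bulk of the technical work to lie. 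A plausible route is to factor the inner product calculation through the intermediate group $G'_{\x,0}$: the genericity of $\phi$ kills all intertwining in $G_{\x,0}$ not already in $G'_{\x,0}$, reducing the problem to an inner-product statement about $\alpha$ alone, for which one can invoke the finite-field characterization results of \cite{CO23-sc} directly. Once these tools are in place, the matching of character formulae should follow from the bookkeeping already marshalled in Sections \ref{sec:Yu} and \ref{sec:reg comparison}.
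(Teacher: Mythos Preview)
The statement you are attempting to prove is Conjecture~\ref{conj:Lusztig induction}; the paper does not supply a proof, and indeed flags it explicitly as open (Section~\ref{subsec:conjecture}). So there is no ``paper's own proof'' against which to compare your proposal. What you have written is a plausible strategic outline, and you are candid that the relative characterization theorem is unresolved. That honesty is appropriate: that step is the whole conjecture.

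There are, however, genuine obstacles in your plan beyond the ones you flag. First, $\alpha$ is an \emph{arbitrary} representation of $\bbG'_r(\F_q)$, not a character of $\bbT_r(\F_q)$; the expression $\sum_w (\alpha\otimes\phi)^w(\gamma)$ does not have an obvious meaning unless $\alpha$ restricts to $\bfT$ in a controlled way, and the very-regular character formula of Proposition~\ref{prop:geom vreg} does not directly apply to Lusztig induction from a twisted Levi $\bbG'_r$ rather than from $\bbT_r$. You would need a character formula for $R_{\bbG'_r}^{\bbG_r}$ on very regular elements expressed in terms of $\Theta_\alpha$ and $\phi$, and no such formula is established in the paper. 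Second, even granting such a formula, the litmus test of Theorem~\ref{thm:vreg characterization} characterizes \emph{irreducible} representations by their very-regular values; for general $\alpha$ neither side of the conjectured identity need be irreducible, so the Cauchy--Schwarz argument does not transfer. Your suggestion to ``factor through $G'_{\x,0}$'' and invoke \cite{CO23-sc} is in the right spirit, but the finite-field results there are again about characters of tori, not arbitrary $\alpha$. A workable route would more likely proceed via the Green-function machinery of Sections~\ref{sec:DL}--\ref{sec:Yu green}, reducing to regular $\alpha$ first and then bootstrapping---closer in spirit to how the paper handles Theorem~\ref{thm:gen comparison}---but making this precise is exactly what remains open.
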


In the above, it is implicit in the statement that either $R_{\bbG_r'}^{\bbG_r}(\alpha \otimes \phi)$ or $-R_{\bbG_r'}^{\bbG_r}(\alpha \otimes \phi)$ is a genuine representation (not just a virtual representation). Note that Conjecture \ref{conj:Lusztig induction} implies Conjecture \ref{conj:regular type}. In the particular, one can view Conjecture \ref{conj:Lusztig induction} as a refinement of the comparison results in this paper. For example, in the case that $\bfT$ is elliptic, one has $r_{\bbT_r}^{\bbG_r} = R_{\bbT_r}^{\bbG_r}$ \cite[Proposition 6.4]{Cha24}, so Theorem \ref{thm:reg comparison} (and also the later Theorem \ref{thm:gen comparison}) describes the \textit{composition} of Lusztig inductions along a Howe factorization in terms of Yu's construction.

\section{Green functions for positive-depth Deligne--Lusztig induction}\label{sec:DL}

Our aim of this section is to introduce a positive-depth version of Green functions. This will allow us to obtain a character formula for positive-depth Deligne--Lusztig induction analogous to the depth-zero setting \cite[Theorem 4.2]{DL76}. Looking ahead, after establishing the parallel picture in the setting of Yu's construction (Section \ref{sec:theta general}, especially Section \ref{sec:Yu green}), the structure afforded by these Green-function-theoretic character formulae will allow us to wield the results of Section \ref{subsec:reg comparison} to study $R_{\bbT_r}^{\bbG_r}(\theta)$ for arbitrary $\theta$ (Sections \ref{subsec:Yu exhaust}, \ref{subsec:KY exhaust}).

In this section, we assume $\bfT$ is an unramified maximal torus of $\bfG$ and that $\x \in \cB(\bfG,F)$ is associated to $\bfT$. We make no further assumptions: $p$, $q$, $\theta$ are all allowed to be arbitrary.

\subsection{Positive-depth Green functions}\label{sec:green}

In this section, we define a generalization of the Green functions of Deligne--Lusztig \cite[Definition 4.1]{DL76}. We follow the same strategy as in \cite[Section 4]{DL76} to give a character formula for $R_{\bbT_{r},\bbU_{r}}^{\bbG_r}(\theta)$ in terms of Green functions. We will see later (Section \ref{sec:character}) that in the special case that $\bfT$ is elliptic and $\theta$ is $0$-toral, this gives rise to a new proof of the character formula for regular supercuspidal representations.

Let $\bbG_{r}(\F_{q})_{\ss}$ (resp.\ $\bbG_{r}(\F_{q})_{\unip}$) denote the subset of semisimple (resp.\ unipotent) elements of $\bbG_{r}(\F_{q})$, respectively. 
We remark that $g\in \bbG_{r}(\F_{q})$ is semisimple (resp.\ unipotent) if and only if the order of $g$ is prime-to-$p$ (resp.\ $p$-power) since $\F_{q}$ is of characteristic $p>0$.

\begin{lem}\label{lem:ss-lift}
Let $s\in\bbG_{r}(\F_{q})_{\ss}$.
Then there exists a semisimple element $\tilde{s}\in G_{\x,0}$ such that $s$ and $\tilde{s}$ have the same order.
\end{lem}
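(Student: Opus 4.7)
The plan is to combine the topological Jordan decomposition in the compact group $G_{\x,0}$ with the pro-$p$ nature of the kernel of reduction. Writing $n$ for the order of $s$, first I would pick any lift $g \in G_{\x,0}$ of $s$ and consider the topological closure $H \coloneqq \overline{\langle g \rangle}$, which is a compact procyclic abelian group and thus canonically splits as $H = H_{p'} \times H_p$ with $H_{p'}$ pro-prime-to-$p$ and $H_p$ pro-$p$. Writing $g = \tilde s \cdot u$ with $\tilde s \in H_{p'}$ and $u \in H_p$ accordingly, I propose $\tilde s$ as the desired lift.

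The key observation is that the kernel $G_{\x,r+}$ of the reduction $G_{\x,0} \twoheadrightarrow \bbG_r(\F_q)$ is pro-$p$: for $r > 0$ this is because $G_{\x,r+} \subseteq G_{\x,0+}$, and the pro-unipotent radical $G_{\x,0+}$ is pro-$p$; for $r = 0$ one has $G_{\x,r+} = G_{\x,0+}$ directly. Consequently $G_{\x,r+} \cap H_{p'} = 1$, so the reduction map restricts to an injection $H_{p'} \hookrightarrow \bbG_r(\F_q)$. On the other hand, reducing the decomposition $g = \tilde s \cdot u$ gives a commuting factorization $s = \bar{\tilde s} \cdot \bar u$ in $\bbG_r(\F_q)$ into a prime-to-$p$ element and a $p$-element; by uniqueness of the Jordan decomposition in a finite group together with the semisimplicity of $s$, this forces $\bar u = 1$ and $\bar{\tilde s} = s$. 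Combined with the injectivity above, this yields an isomorphism $H_{p'} \xrightarrow{\sim} \langle s \rangle$, so $\tilde s$ has order exactly $n$.

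It remains to confirm that $\tilde s$ is semisimple in $\bfG$. Since $\tilde s$ has finite order $n$ coprime to $p$ and to $\mathrm{char}(F)$, the polynomial $X^n - 1$ is separable over $F$; applied to any finite-dimensional faithful representation of $\bfG$, this shows $\tilde s$ acts diagonalizably over $\overline F$ and hence is semisimple. The argument is essentially formal once the topological Jordan decomposition and the pro-$p$ fact are in hand; I do not anticipate a substantial obstacle, the most delicate point being merely the compatibility of the Jordan decomposition with reduction modulo the pro-$p$ subgroup $G_{\x,r+}$.
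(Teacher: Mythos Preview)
Your proposal is correct and follows essentially the same approach as the paper: lift $s$ arbitrarily, take the topological Jordan decomposition (which you construct by hand via the procyclic closure $H = H_{p'} \times H_p$, while the paper simply cites \cite{Spi08}), and use that the pro-$p$ kernel $G_{\x,r+}$ forces the topologically unipotent part to vanish in the quotient so that $\tilde s$ maps to $s$. You are in fact slightly more explicit than the paper, which does not spell out the step that the orders of $\tilde s$ and $s$ coincide; your injectivity argument $H_{p'} \hookrightarrow \bbG_r(\F_q)$ (hence $H_{p'} \cong \langle s\rangle$) makes this transparent.
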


\begin{proof}
Since we have $\bbG_{r}(\F_{q})=G_{\x,0:r+}$, we can find an element $\tilde{g}\in G_{\x,0}$ whose image in $G_{\x,0:r+}$ is $s$.
We take a topological Jordan decomposition $\tilde{g}=\tilde{s}\tilde{u}$, i.e., $\tilde{s}$ and $\tilde{u}$ are commuting elements such that
\begin{itemize}
\item
$\tilde{s}\in G_{\x,0}$ is an element of finite prime-to-$p$ order, and
\item
$\tilde{u}\in G_{\x,0}$ is an element satisfying $\lim_{n\rightarrow\infty}\tilde{u}^{p^{n}}=1$
\end{itemize}
(the existence of a topological Jordan decomposition is guaranteed by the compactness of $G_{\x,0}$; see \cite{Spi08}).
Here, note that $\tilde{s}\in G_{\x,0}$ is necessarily semisimple since its order is finite and prime-to-$p$.
Also note that the image of $\tilde{u}$ in $G_{\x,0:r+}$ is of finite $p$-power order.
In particular, the images of $\tilde{s}$ and $\tilde{u}$ give the Jordan decomposition of $s$.
As $s$ is semisimple, this implies that the image of $\tilde{s}$ is $s$.
\end{proof}

For any $s\in\bbG_{r}(\F_{q})_{\ss}$, we let $\bbG_{r}^s$ denote the centralizer of $s$ in $\bbG_{r}$, that is, the reduced subscheme of $\bbG_r$ whose $\overline \F_q$-points are given by the centralizer of $s$ in $\bbG_r(\ol{\F}_q)$; let $\bbG_{r,s}$ denote its identity component. 

On the other hand, by letting $\tilde{s}$ be a lift of $s$ to $G_{\x,0}$ as in Lemma \ref{lem:ss-lift}, we also have the centralizer group $\bfG^{\tilde{s}}$ of $\tilde{s}$ taken in $\bfG$.
By applying the construction of \cite[Section 2.6]{CI21-RT} to this closed subgroup $\bfG^{\tilde{s}}$ of $\bfG$, we obtain a smooth closed subgroup scheme $\bbG^{\tilde{s}}_{r}$ of $\bbG_{r}$ defined over $\F_{q}$.

\begin{lem}\label{lem:sr=rs}
    We have $\bbG_r^{\tilde s} = (\bbG_r)^s$ and hence also $\bbG_{\tilde s,r} = \bbG_{r,s}$.
\end{lem}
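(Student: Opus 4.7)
The strategy is to verify equality of the two reduced closed subgroup schemes $\bbG_r^{\tilde s}$ and $(\bbG_r)^s$ of $\bbG_r$ by checking that their geometric points coincide; the ``hence'' statement $\bbG_{\tilde s, r} = \bbG_{r,s}$ will then follow by taking identity components, since the construction $\bfH \rightsquigarrow \bbH_r$ of \cite[Section 2.6]{CI21-RT} preserves them. One inclusion is immediate from functoriality: the reduction map intertwines conjugation by $\tilde s$ on $G_{\x,0}$ with conjugation by $s$ on $\bbG_r(\ol{\F}_q)$, so the image of $G_{\x,0} \cap \bfG^{\tilde s}$ lies in $(\bbG_r)^s(\ol{\F}_q)$, giving $\bbG_r^{\tilde s} \subseteq (\bbG_r)^s$.

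For the reverse inclusion, take $g \in (\bbG_r)^s(\ol{\F}_q)$ and choose any lift $\tilde g \in G_{\x,0}$ (over the relevant unramified cover). Then $[\tilde g, \tilde s]$ maps to the identity in $\bbG_r(\ol{\F}_q)$, and hence lies in $G_{\x,r+}$. The task is to find a correction $h \in G_{\x,r+}$ such that $h\tilde g$ centralizes $\tilde s$ on the nose. This is a standard stepwise argument up the Moy--Prasad filtration: on each graded piece $\mfg_{\x,\lambda:\lambda+}$ with $\lambda \geq r$, the obstruction to further correction lies in $H^1(\langle \tilde s \rangle, \mfg_{\x,\lambda:\lambda+})$. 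By Lemma \ref{lem:ss-lift}, $\tilde s$ has order coprime to $p$, so averaging over $\langle \tilde s \rangle$ annihilates this cohomology and the obstruction vanishes. Iterating through $\lambda$ yields $h\tilde g \in G_{\x,0}^{\tilde s}$ whose image is $g$, establishing $(\bbG_r)^s \subseteq \bbG_r^{\tilde s}$.

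The main subtlety to verify is the compatibility $G_{\x,\lambda}^{\tilde s} = G_{\x,\lambda} \cap \bfG^{\tilde s}$ of Moy--Prasad filtrations, which is needed so that the corrected element actually represents a point of $\bbG_r^{\tilde s}(\ol{\F}_q)$ (rather than merely a point of $\bfG^{\tilde s}(F^{\ur}) \cap G_{\x,0}$ whose image avoids the relevant subgroup scheme). This reduces to verifying $\x \in \cB(\bfG^{\tilde s}, F)$, which follows from $\tilde s \in G_{\x,0}$ by choosing a maximal torus of $\bfG^{\tilde s}$ containing $\tilde s$ whose apartment passes through $\x$, together with the standard tame descent properties of Moy--Prasad filtrations for centralizers of semisimple elements of finite prime-to-$p$ order.
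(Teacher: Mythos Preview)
Your proof is correct and follows essentially the same approach as the paper: both reduce to checking equality on $\ol{\F}_q$-points, observe the easy inclusion $\bbG_r^{\tilde s} \subseteq (\bbG_r)^s$, and establish the reverse by lifting a fixed coset to a genuinely $\tilde s$-fixed element via vanishing of $H^1(\langle \tilde s\rangle, -)$ on a pro-$p$ group acted on by the prime-to-$p$-order element $\tilde s$. The paper packages the lifting in a single step (citing \cite[Section 13.8]{KP23}) rather than your inductive pass through graded pieces, and handles your final paragraph's compatibility issue by directly identifying $\bbG_r^{\tilde s}(\ol{\F}_q)$ with $(\bfG^{\tilde s}(\breve{F})\cap\mathcal{G}_{\x}(\cO_{\breve{F}}))/(\bfG^{\tilde s}(\breve{F})\cap\mathcal{G}_{\x,r+}(\cO_{\breve{F}}))$ from the construction in \cite{CI21-RT}, but the substance is the same.
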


\begin{proof}
It suffices to check that $\bbG^{\tilde{s}}_{r}$ indeed satisfies that $\bbG^{\tilde s}_{r}(\ol{\F}_{q})=\bbG_{r}(\ol{\F}_{q})^{s}$ since this is the condition characterizing $\bbG_{r}^s$.
Let $\breve{F}$ denote the completion of a maximal unramified extension of $F$.
We write $\mathcal{G}_{\x}$ for the parahoric group scheme over $\cO$ associated to $\x$.
Similarly, we write $\mathcal{G}_{\x,r}$ for the $r$-th Moy--Prasad filtration group scheme over $\cO$ associated to $\x$.

By examining the construction of \cite[Section 2.6]{CI21-RT}, we can check that $\bbG^{\tilde{s}}_{r}(\ol{\F}_{q})$ is given by the quotient $\bfG^{\tilde{s}}(\breve{F})\cap\mathcal{G}_{\x}(\cO_{\breve{F}})/\bfG^{\tilde{s}}(\breve{F})\cap\mathcal{G}_{\x,r+}(\cO_{\breve{F}})$.
On the other hand, $\bbG_{r}(\ol{\F}_{q})^{s}$ is given by the set of $s$-fixed points (with respect to the conjugation action) of $\bbG_{r}(\ol{\F}_{q})=\mathcal{G}_{\x}(\cO_{\breve{F}})/\mathcal{G}_{\x,r+}(\cO_{\breve{F}})$.
The former is included in the latter (both are considered to be subgroups of $\mathcal{G}_{\x}(\cO_{\breve{F}})/\mathcal{G}_{\x,r+}(\cO_{\breve{F}})$).
Hence, it is enough to show its converse; that is, any coset $g\mathcal{G}_{\x,r+}(\cO_{\breve{F}})$ fixed by the $\tilde{s}$-conjugation is equal to a coset $g'\mathcal{G}_{\x,r+}(\cO_{\breve{F}})$ whose $g'$ is fixed by the $\tilde{s}$-conjugation.
This follows from a usual argument based on the vanishing of the first cohomology of $\mathcal{G}_{\x,r+}(\cO_{\breve{F}})$ with respect to the action of $\langle\tilde{s}\rangle$, which is valid since $\mathcal{G}_{\x,r+}(\cO_{\breve{F}})$ is a pro-$p$ group such that the subgroup $\mathcal{G}_{\x,r'}(\cO_{\breve{F}})$ for any $r'>r+$ is preserved by the action of $\tilde{s}$, which is of finite prime-to-$p$ order (see, e.g., \cite[Section 13.8]{KP23}).
\end{proof}

For any $(s,t) \in \bbG_{r}(\F_{q})_{\ss} \times \bbT_{r}(\F_{q})_{\ss}$, define a subvariety $X_{\bbT_r \subset \bbG_r}^{(s,t)}$ of $X_{\bbT_r \subset \bbG_r}$ by
\begin{equation*}
X_{\bbT_r \subset \bbG_r}^{(s,t)} \coloneqq \{x \in \bbG_r \mid \text{$x^{-1} \sigma(x) \in \bbU_r$ and $sxt = x$}\}.
\end{equation*}
We put $\bbT_{r}^{+}(\F_{q}):=T_{0+:r+}$.
Then the subgroup $\bbG_{r}^s(\F_{q}) \times \bbT_{r}^{+}(\F_{q})\subset \bbG_{r}(\F_{q}) \times \bbT_{r}(\F_{q})$ stabilizes the subvariety $X_{\bbT_r \subset \bbG_r}^{(s,t)}$.

Note that if $t$ is a semisimple element of $\bbT_{r}(\F_{q})$ with a lift $\tilde{t}\in T_{0}$, then $\bbT_{r}$ and $\bfT$ are contained in $\bbG_{r,t}$ and $\bfG_{\tilde{t}}$, respectively.
Here, since $t$ is semisimple, $\bfG_{\tilde{t}}$ is a connected reductive subgroup of $\bfG$.
Thus, $(\bbT_{r}\subset\bbG_{r,t})$ can be thought of as a pair obtained from $(\bfT\subset\bfG_{\tilde{t}})$ according to the manner of \cite{CI21-RT} by Lemma \ref{lem:sr=rs}.
In particular, it makes sense to consider the associated positive-depth Deligne--Lusztig variety $X_{\bbT_{r}\subset\bbG_{r,t}}$.

\begin{lem}\label{lem:(s,t) fixed}
Let $(s,t) \in \bbG_{r}(\F_{q})_{\ss} \times \bbT_{r}(\F_{q})_{\ss}$. Then
\begin{equation*}
X_{\bbT_r \subset \bbG_r}^{(s,t)} = \bigsqcup_{\substack{g \in \bbG_r(\F_q)/\bbG_{r,t}(\F_q) \\ {}^{g}t = s^{-1}}} X_{\bbT_r \subset \bbG_r}^{(s,t)}(g)
\end{equation*}
where $X_{\bbT_r \subset \bbG_r}^{(s,t)}(g) \coloneqq X_{\bbT_r \subset \bbG_r}^{(s,t)} \cap g \bbG_{r,t}$ for $g \in \bbG_{r}(\F_{q})$. 
Moreover, $x \mapsto g^{-1} x$ defines a $(\bbG_{r,s}(\F_{q}) \times \bbT_{r}^{+}(\F_{q}))$-equivariant isomorphism
\begin{equation*}
X_{\bbT_r \subset \bbG_r}^{(s,t)}(g) \cong X_{\bbT_r \subset \bbG_{r,t}},
\end{equation*}
where $\bbG_{r,s}(\F_{q})$ acts on the right-hand side via the identification $\bbG_{r,s}={}^{g}\bbG_{r,t}\cong\bbG_{r,t}\colon\gamma\mapsto\gamma^{g}$.
\end{lem}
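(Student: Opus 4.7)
The plan is to adapt the classical Deligne--Lusztig fixed-point analysis \cite[proof of Theorem 4.2]{DL76} to the positive-depth jet-scheme setting of \cite{CI21-RT}. For any $x \in X_{\bbT_r \subset \bbG_r}^{(s,t)}(\ol{\F}_q)$, applying $\sigma$ to the relation $sxt = x$ (and using the $\F_q$-rationality of $s, t$) shows that $x^{-1}\sigma(x)$ commutes with $t$; combined with $x^{-1}\sigma(x) \in \bbU_r$, one obtains $x^{-1}\sigma(x) \in \bbU_r \cap \bbG_r^t$. The key input for the decomposition is the containment $\bbU_r \cap \bbG_r^t \subseteq \bbG_{r,t}$, which is a jet-scheme analog of the classical fact that the intersection of a unipotent radical with the centralizer of a semisimple element is generated by the root subgroups $\bfU_\alpha$ with $\alpha(\tilde t) = 1$ and therefore lies in the connected centralizer.

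Granted this containment, Lang's theorem applied to the connected group $\bbG_{r,t}$ produces $y \in \bbG_{r,t}(\ol{\F}_q)$ with $y^{-1}\sigma(y) = x^{-1}\sigma(x)$; then $g := xy^{-1}$ is $\F_q$-rational (since $g^{-1}\sigma(g) = y(y^{-1}\sigma(y))\sigma(y)^{-1} = 1$), lies in the coset $x\bbG_{r,t}$, and satisfies ${}^g t = {}^x t = s^{-1}$ because $y$ centralizes $t$. This yields the coset decomposition, with disjointness automatic from the coset structure. For the isomorphism $X_{\bbT_r \subset \bbG_r}^{(s,t)}(g) \cong X_{\bbT_r \subset \bbG_{r,t}}$ via $x \mapsto g^{-1}x$, setting $y := g^{-1}x \in \bbG_{r,t}$ one computes $y^{-1}\sigma(y) = x^{-1}\sigma(x) \in \bbU_r \cap \bbG_{r,t}$; identifying $\bbU_r \cap \bbG_{r,t}$ with the unipotent radical of the induced Borel of $\bbG_{r,t}$ containing $\bbT_r$, we conclude $y \in X_{\bbT_r \subset \bbG_{r,t}}$, and the inverse $y \mapsto gy$ is evidently algebraic.

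Equivariance is then a direct calculation: for $(\gamma, u) \in \bbG_{r,s}(\F_q) \times \bbT_r^+(\F_q)$, one has $g^{-1}(\gamma x u) = (g^{-1}\gamma g) \cdot (g^{-1}x) \cdot u = \gamma^g \cdot y \cdot u$, which matches the prescribed action after observing that $\bbG_{r,s} = {}^g\bbG_{r,t}$ (a consequence of $s = {}^g t^{-1}$). The main obstacle in the whole argument is the containment $\bbU_r \cap \bbG_r^t \subseteq \bbG_{r,t}$ in the jet-scheme setting, which does not follow immediately from the classical assertion since $\bfG^{\tilde t}$ need not be connected in general. The expected approach is to combine Lemma \ref{lem:sr=rs} (identifying $\bbG_r^t$ with the jet scheme of $\bfG^{\tilde t}$ for a suitable lift $\tilde t \in T_0$) with a Moy--Prasad filtration analysis, reducing the claim to a root-subgroup computation on the associated graded where the classical fact applies directly.
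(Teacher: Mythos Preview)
Your proposal is correct and follows essentially the same route as the paper's proof: show $x^{-1}\sigma(x)$ commutes with $t$, conclude it lies in $\bbG_{r,t}$, apply Lang's theorem there to produce the $\F_q$-rational coset representative $g$, and then verify the isomorphism and equivariance by direct calculation.

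The one point of divergence is the containment you flag as the ``main obstacle,'' namely that a unipotent $u \in \bbG_r^t$ must lie in the identity component $\bbG_{r,t}$. You propose to handle this via Lemma~\ref{lem:sr=rs} together with a Moy--Prasad filtration argument reducing to the associated graded. The paper instead dispatches this in one line by citing \cite[8.10]{Ste68}: in any connected linear algebraic group, every unipotent element of the centralizer of a semisimple element lies in the identity component of that centralizer. This applies directly to the connected smooth affine $\F_q$-group $\bbG_r$, so there is no need to pass to the $p$-adic group $\bfG^{\tilde t}$ or to unpack the root structure; in particular your concern that $\bfG^{\tilde t}$ may be disconnected is bypassed entirely. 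Your filtration approach would also succeed, but it is more work than necessary.
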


\begin{proof}
If $x \in X_{\bbT_r \subset \bbG_r}^{(s,t)}$, then we have $sxt = x$ and $x^{-1}\sigma(x) = u$ for some $u \in \bbU_r$. Hence we have $xu = \sigma(x) = s\sigma(x)t = sxut = xt^{-1} u t$, which implies that $u \in \bbG_{r}^t$. 
By \cite[8.10]{Ste68}, this furthermore implies that $u \in \bbG_{r,t}$.
Since the Lang map is surjective, there exists a $z \in \bbG_{r,t}$ such that $z^{-1}\sigma(z) = u$. 
Let $g \coloneqq x z^{-1}$ so that we now have $x \in g \bbG_{r,t}$. 
We have $\sigma(g) = \sigma(xz^{-1}) = \sigma(x) \sigma(z)^{-1} = x u \cdot u^{-1} z^{-1} = xz^{-1} = g$ so that $g \in \bbG_r(\F_q)$. Furthermore, we have $gz = x = sxt = sgzt = sgtz$, which implies that ${}^{g}t=s^{-1}$.
This proves that $x \in X_{\bbT_r \subset \bbG_r}^{(s,t)}(g)$ for some $g \in \bbG_r(\F_q)$ such that ${}^{g}t = s^{-1}$.
In other words, we get 
\[
X_{\bbT_r \subset \bbG_r}^{(s,t)}
=
\bigcup_{\substack{g \in \bbG_r(\F_q)/\bbG_{r,t}(\F_q) \\ {}^{g}t=s^{-1}}} X_{\bbT_r \subset \bbG_r}^{(s,t)}(g)
\]
The fact that the $X_{\bbT_r \subset \bbG_r}^{(s,t)}(g)$ are disjoint for $g$ varying over representatives of $\bbG_r(\F_q)/\bbG_{r,t}(\F_q)$ is clear, so this proves the disjoint union assertion. 

We now prove the last assertion in the lemma. Fix a $g \in \bbG_r(\F_q)$ with ${}^{g}t=s^{-1}$. 
It is clear that we have a map 
\begin{equation*}
X_{\bbT_r \subset \bbG_r}^{(s,t)}(g) \to X_{\bbT_r \subset \bbG_{r,t}}\colon x \mapsto g^{-1} x.
\end{equation*}
The $(\bbG_{r,s}(\F_{q}) \times \bbT_{r}^{+}(\F_{q}))$-equivariance holds since $\gamma \in \bbG_{r,s}(\F_{q})$ implies $\gamma^{g} \in \bbG_{r,t}(\F_{q})$, and so for any $t_+ \in \bbT_{r}^{+}(\F_{q})$ we have $\gamma x t \mapsto g^{-1}(\gamma x t) = \gamma^{g}g^{-1} x t$. To see that we have an isomorphism, we just need to see that for any $z \in X_{\bbT_r \subset \bbG_{r,t}}$ and any $g \in \bbG_r(\F_q)$, we have $gz \in X_{\bbT_r \subset \bbG_r}^{(s,t)}(g)$.
 Indeed, we have $(gz)^{-1} \sigma(gz) = z^{-1}\sigma(z) \in \bbU_r\cap\bbG_{r,t}$ and $sgzt = sgtz = gt^{-1} \cdot tz = gz$.
 (Here, note that $\bbU_r\cap\bbG_{r,t}$ is a subgroup associated to the unipotent radical $\bfU\cap\bfG_{\tilde{t}}$ of a Borel subgroup of $\bfG_{\tilde{t}}$. The fact that $\bfU\cap\bfG_{\tilde{t}}$ is the unipotent radical of a Borel subgroup of $\bfG_{\tilde{t}}$ follows from that $\tilde{t}$ belongs to $\bfT$.)
\end{proof}

\subsection{Character formula for positive-depth Deligne--Lusztig induction}

Now we introduce the positive-depth Green function and establish a full character formula for $R_{\bbT_{r},\bbU_{r}}^{\bbG_{r}}(\theta)$.

\begin{defn}\label{def:pos depth Green}
For a character $\theta_{+} \from \bbT_{r}^{+}(\F_{q}) \to \overline \Q_\ell^\times$, we define the \textit{twisted positive-depth Green function} associated to $\theta_{+}$ by
\begin{equation*}
Q_{\bbT_r, \bbU_r}^{\bbG_r}(\theta_{+}) \from \bbG_{r}(\F_{q})_{\unip} \to \overline \Q_\ell; \quad u \mapsto \frac{1}{|\bbT_{r}(\F_{q})|} \sum_{t_+ \in \bbT_{r}^{+}(\F_{q})} \theta_{+}^{-1}(t_+) \Tr((u,t_+) ; H_c^*(X_{\bbT_r \subset \bbG_r}, \overline \Q_\ell)),
\end{equation*}
where $H_c^*(X_{\bbT_r \subset \bbG_r}, \overline \Q_\ell)$ denotes the alternating sum of $H_c^i(X_{\bbT_r \subset \bbG_r}, \overline \Q_\ell)$.
Observe that if $r=0$, then $\theta_{+} = \mathbbm{1}$ and $Q_{\bbT_0, \bbU_0}^{\bbG_0}(\mathbbm 1)$ is a classical Green function in the sense of \cite[Definition 4.1]{DL76}.
\end{defn}

\begin{thm}\label{thm:geom-char-formula}
Let $\theta \from \bbT_{r}(\F_{q}) \to \overline \Q_\ell^\times$ be any character and write $\theta_{+} = \theta|_{\bbT_{r}^{+}(\F_{q})}$.
\begin{enumerate}
\item
 For $\gamma \in \bbG_{r}(\F_{q})$ with Jordan decomposition $\gamma = su$,
\begin{align*}
\Theta_{R_{\bbT_r, \bbU_r}^{\bbG_r}(\theta)}(\gamma) &= \frac{1}{|\bbG_{r,s}(\F_q)|} \sum_{\substack{x \in \bbG_r(\F_q) \\ s^{x} \in \bbT_r(\F_q)}} {}^{x}\theta(s) \cdot Q_{{}^{x}\bbT_r, {}^{x}\bbU_r \cap \bbG_{r,s}}^{\bbG_{r,s}}({}^{x}\theta_{+})(u)\\
&= \frac{1}{|\bbG_{r,s}(\F_q)|} \sum_{\substack{x \in\bbG_r(\F_q) \\ {}^{x}s \in \bbT_r(\F_q)}} \theta^{x}(s) \cdot Q_{\bbT^{x}_r, \bbU^{x}_r \cap \bbG_{r,s}}^{\bbG_{r,s}}(\theta^{x}_{+})(u).
\end{align*}
\item
We have $Q_{\bbT_{r},\bbU_{r}}^{\bbG}(\theta_{+})=\Theta_{R_{\bbT_r, \bbU_r}^{\bbG_r}(\theta)}|_{\bbG_{r}(\F_{q})_{\unip}}$.
In particular, by Corollary \ref{cor:Cha24}, $Q_{\bbT_{r},\bbU_{r}}^{\bbG}(\theta_{+})$ is independent of the choice of $\bbU_{r}$ provided that $\bfT$ is elliptic in $\bfG$.
\end{enumerate}
\end{thm}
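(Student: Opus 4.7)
The plan is to imitate the Deligne--Lusztig character formula argument \cite[Theorem 4.2]{DL76}, with Lemma \ref{lem:(s,t) fixed} playing the role of the structural input needed to handle positive depth. Starting from the definition
\[
\Theta_{R_{\bbT_r, \bbU_r}^{\bbG_r}(\theta)}(\gamma) = \frac{1}{|\bbT_r(\F_q)|} \sum_{t \in \bbT_r(\F_q)} \theta^{-1}(t) \Tr((\gamma, t); H_c^*(X_{\bbT_r \subset \bbG_r})),
\]
the natural first step is to Jordan-decompose both $\gamma = su$ and $t = t_s t_u$; here $t_u$ automatically lies in $\bbT_r^+(\F_q)$ because $\bbT_0(\F_q)$ has order prime to $p$ while $\bbT_r^+(\F_q)$ is pro-$p$. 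This matches the factorization $\theta^{-1}(t) = \theta^{-1}(t_s)\theta_+^{-1}(t_u)$ and realizes $(s, t_s)\cdot(u, t_u)$ as the Jordan decomposition of $(\gamma,t)$ on $X_{\bbT_r\subset\bbG_r}$.

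I would then apply the standard Jordan-decomposition identity in $\ell$-adic cohomology \cite[Proposition 3.3]{DL76} to obtain
\[
\Tr((\gamma, t); H_c^*(X_{\bbT_r \subset \bbG_r})) = \Tr((u, t_u); H_c^*(X_{\bbT_r \subset \bbG_r}^{(s, t_s)})),
\]
and feed in Lemma \ref{lem:(s,t) fixed} to break the fixed-point variety into pieces, each $(\bbG_{r, s}(\F_q) \times \bbT_r^+(\F_q))$-equivariantly isomorphic to $X_{\bbT_r \subset \bbG_{r, t_s}}$ and indexed by cosets $g \in \bbG_r(\F_q)/\bbG_{r, t_s}(\F_q)$ with ${}^{g} t_s = s^{-1}$.

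What remains is a re-parametrization: substituting $x = g^{-1}$ collapses the double sum over $(t_s, g)$ into a single sum over $x \in \bbG_r(\F_q)/\bbG_{r, s}(\F_q)$ subject to ${}^{x} s \in \bbT_r(\F_q)$, with $t_s$ determined as $({}^{x} s)^{-1}$. Conjugating by $x^{-1}$ transports the pair $(\bbT_r, \bbU_r \cap \bbG_{r, t_s})$ inside $\bbG_{r, t_s}$ to $(\bbT_r^x, \bbU_r^x \cap \bbG_{r, s})$ inside $\bbG_{r, s}$, and the inner sum over $t_u$ becomes exactly $Q_{\bbT_r^x, \bbU_r^x \cap \bbG_{r, s}}^{\bbG_{r, s}}(\theta_+^x)(u)$. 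Absorbing the coset count $|\bbG_{r, s}(\F_q)|$ into the prefactor yields the second identity in (1); the first then follows by the substitution $x \mapsto x^{-1}$.

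For part (2), I would rerun the argument with $\gamma = u$ (i.e., $s = 1$). The decisive observation is that $X_{\bbT_r \subset \bbG_r}^{(1, t_s)}$ is empty whenever $t_s \neq 1$, simply because the condition $x t_s = x$ forces $t_s = 1$, so the Lefschetz trace formula kills every $t_s \neq 1$ summand and only the $t_s = 1$ term survives, which is $Q_{\bbT_r, \bbU_r}^{\bbG_r}(\theta_+)(u)$ by definition. Combined with Corollary \ref{cor:Cha24}, this also gives the asserted independence of $\bbU_r$ in the elliptic case. I expect the chief delicate point to be the conjugation bookkeeping, and in particular the verification that the $(u, t_u)$-action genuinely lands in the identity component $\bbG_{r, s}$ rather than only in the full centralizer $\bbG_r^s$; this is however guaranteed by the Steinberg-type argument already invoked in the proof of Lemma \ref{lem:(s,t) fixed}.
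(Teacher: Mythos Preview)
Your proposal is correct and, for part (1), follows the same line as the paper: apply the Deligne--Lusztig fixed-point formula to pass to $X_{\bbT_r\subset\bbG_r}^{(s,t_s)}$, invoke Lemma \ref{lem:(s,t) fixed} to decompose it into copies of $X_{\bbT_r\subset\bbG_{r,t_s}}$, and re-index. Your flagged concern about $u$ lying in the identity component $\bbG_{r,s}$ is exactly the point handled by the Steinberg result \cite[8.10]{Ste68} cited in the proof of Lemma \ref{lem:(s,t) fixed}; the paper, like you, leaves this implicit in the body of the argument.

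For part (2) there is a small difference worth noting. You argue directly that $X_{\bbT_r\subset\bbG_r}^{(1,t_s)}=\varnothing$ for $t_s\neq1$, so only the $t_s=1$ summand survives and one reads off $Q_{\bbT_r,\bbU_r}^{\bbG_r}(\theta_+)(u)$ from the definition. The paper instead observes, from the formula in (1) specialized to $s=1$, that $\Theta_{R_{\bbT_r,\bbU_r}^{\bbG_r}(\theta')}(u)$ depends only on $\theta'|_{\bbT_r^+(\F_q)}$, and then averages over the $|\bbT_r(\F_q)|/|\bbT_r^+(\F_q)|$ extensions $\theta'$ of $\theta_+$ to match the normalization in Definition \ref{def:pos depth Green}. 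Your route is slightly cleaner; both are short and rest on the same underlying vanishing.
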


\begin{proof}
We first show (1).
The second equality is obtained just by inverting the index $x$ of the sum, so our task is to establish the first equality.
For any $t \in \bbT_{r}(\F_{q})$, write $t = t_{0} \cdot t_+$ for the Jordan decomposition of $t$, hence $t_{0} \in \bbT_{r}(\F_{q})_{\ss}$ and $t_+ \in \bbT_{r}(\F_{q})_{\unip}=\bbT_{r}^{+}(\F_{q})$.
By the Deligne--Lusztig fixed point formula \cite[Theorem 3.2]{DL76}, we have
\begin{equation*}
\Tr\bigl((\gamma,t); H_c^*(X_{\bbT_r \subset \bbG_r}, \overline \Q_\ell)\bigr) 
= 
\Tr\bigl((u, t_+) ; H_c^*(X_{\bbT_r \subset \bbG_r}^{(s,t_{0})}, \overline \Q_\ell)\bigr).
\end{equation*}
Using this together with Lemma \ref{lem:(s,t) fixed}, we have
\begin{align*}
\Theta_{R_{\bbT_r, \bbU_r}^{\bbG_r}(\theta)}(\gamma)
&= \frac{1}{|\bbT_{r}(\F_{q})|} \sum_{t \in \bbT_{r}(\F_{q})} \theta(t)^{-1} \Tr\bigl((\gamma,t); H_c^*(X_{\bbT_r \subset \bbG_r}, \overline \Q_\ell)\bigr) \\
&= \frac{1}{|\bbT_{r}(\F_{q})|} \sum_{t \in \bbT_{r}(\F_{q})} \theta(t)^{-1} \Tr\bigl((u,t_+) ; H_c^*(X_{\bbT_r \subset \bbG_r}^{(s,t_{0})}, \overline \Q_\ell)\bigr) \\
&= \frac{1}{|\bbT_{r}(\F_{q})|} \sum_{t \in \bbT_{r}(\F_{q})} \theta(t_{0})^{-1}\cdot\theta(t_{+})^{-1} \\
&\qquad\qquad\cdot\frac{1}{|\bbG_{r,t_{0}}(\F_q)|}\sum_{\substack{x \in \bbG_{r}(\F_{q}) \\ {}^{x}t_{0}= s^{-1}}} \Tr\bigl((u^{x}, t_+) ; H_c^*(X_{\bbT_r \subset \bbG_{r,t_{0}}}, \overline \Q_\ell)\bigr). 
\end{align*}
Since the internal sum is zero unless there exists $x\in\bbG_{r}(\F_{q})$ satisfying $t_{0} = (s^{x})^{-1}$, this becomes
\begin{align*}
\frac{1}{|\bbT_{r}(\F_{q})| \cdot |\bbG_{r,s}(\F_q)|} \sum_{\substack{x \in \bbG_{r}(\F_{q}) \\ s^{x} \in \bbT_{r}(\F_{q})}} \theta(s^{x}) \cdot\!\!\!\!
\sum_{t_+ \in \bbT_{r}^{+}(\F_{q})} \theta(t_+)^{-1} \Tr\bigl((u^{x}, t_+) ; H_c^*(X_{\bbT_r \subset \bbG_{r,t_{0}}}, \overline \Q_\ell)\bigr).
\end{align*}
As ${}^{x}\bbG_{r,t_{0}} = \bbG_{r,s}$, the map $g \mapsto {}^{x}g$ gives an isomorphism $X_{\bbT_r \subset \bbG_{r,t_{0}}} \cong X_{{}^{x}\bbT_r \subset \bbG_{r,s}}$ which is equivariant with respect to the actions of $(\bbG_{r,t_{0}}(\F_{q}) \times \bbT_{r}(\F_{q}))$ and $(\bbG_{r,s}(\F_{q}) \times \bbT_{r}(\F_{q}))$.
So we get
\begin{align}\label{e:u,t+}
\nonumber
&\frac{1}{|\bbT_{r}(\F_{q})| \cdot |\bbG_{r,s}(\F_q)|} \sum_{\substack{x \in \bbG_{r}(\F_{q}) \\ s^{x} \in \bbT_{r}(\F_{q})}} \theta(s^{x}) \cdot \sum_{{}^{x}t_+ \in {}^{x}\bbT_{r}^{+}(\F_{q})} {}^{x}\theta({}^{x}t_{+})^{-1} \Tr((u,{}^{x}t_+); H_c^*(X_{{}^{x}\bbT_r \subset \bbG_{r,s}}, \overline \Q_\ell))\\
&= \frac{1}{|\bbG_{r,s}(\F_q)|} \sum_{\substack{x \in \bbG_{r}(\F_{q}) \\ s^{x} \in \bbT_{r}(\F_{q})}} {}^{x}\theta(s) \cdot Q_{{}^{x}\bbT_r, {}^{x}\bbU_r \cap \bbG_{r,s}}^{\bbG_{r,s}}({}^{x} \theta_{+})(u).
\end{align}

We next show (2).
First note that
\begin{equation}\label{e:+ proj}
\frac{|\bbT_{r}(\F_{q})|}{|\bbT_{r}^{+}(\F_{q})|}\cdot
Q_{\bbT_r, \bbU_r}^{\bbG_{r}}(\theta_{+})(u)
=
\frac{1}{|\bbT_{r}^{+}(\F_{q})|} \sum_{t_+ \in \bbT_{r}^{+}(\F_{q})} \theta_{+}^{-1}(t_+) \Tr((u,t_+) ; H_c^*(X_{\bbT_r \subset \bbG_r}, \overline \Q_\ell))
\end{equation}
is the trace of the action of $u$ on the subspace of $H_c^*(X_{\bbT_r \subset \bbG_r}, \overline \Q_\ell)$ wherein $\bbT_{r}^{+}(\F_{q})$ acts by the character $\theta_{+}$.
Therefore this is equal to the trace of $u$ on $\bigoplus_{\theta'} R_{\bbT_r, \bbU_r}^{\bbG_r}(\theta')$ where the sum ranges over all characters $\theta'$ of $\bbT_{r}(\F_{q})$ such that $\theta'|_{\bbT_{r}^{+}(\F_{q})} = \theta_{+}$ (there are $|T_{0:0+}|=|\bbT_{r}(\F_{q})/\bbT_{r}^{+}(\F_{q})|$ such characters). 
On the other hand, by considering \eqref{e:u,t+} in the case that $s=1$, we see that the trace of $u$ on $R_{\bbT_r, \bbU_r}^{\bbG_r}(\theta')$ depends only on the restriction of $\theta'$ to $\bbT_{r}^{+}(\F_{q})$.
Hence we see that
\[
\frac{|\bbT_{r}(\F_{q})|}{|\bbT_{r}^{+}(\F_{q})|}\cdot
Q_{\bbT_r, \bbU_r}^{\bbG_{r}}(\theta_{+})(u)
=
\frac{|\bbT_{r}(\F_{q})|}{|\bbT_{r}^{+}(\F_{q})|}\cdot
\Theta_{R_{\bbT_r, \bbU_r}^{\bbG_r}(\theta)}(u),
\]
which implies the desired equality.
\end{proof}

From now on, we simply write $Q_{\bbT_{r}}^{\bbG}(\theta_{+})$ for $Q_{\bbT_{r},\bbU_{r}}^{\bbG}(\theta_{+})$ whenever $\bfT$ is elliptic in $\bfG$.

\begin{remark}
We explain how Theorem \ref{thm:geom-char-formula} implies Proposition \ref{prop:geom vreg}. 
If $\gamma \in G_{\x,0}$ is unramified very regular with topological Jordan decomposition $\gamma = su$, then $\bfG_{s} = \bfT_s$ is a torus so that $x \in G_{\x,0}$ satisfies ${}^{x}s \in T$ if and only if $x \in N_{G_{\x,0}}(\bfG_{s},\bfT)=N_{\bbG_{r}(\F_{q})}(\bbG_{r,s},\bbT_{r})$. 
If $N_{\bbG_{r}(\F_{q})}(\bbG_{r,s},\bbT_{r}) = \varnothing$, then $\Theta_{R_{\bbT_r}^{\bbG_r}(\theta)}(\gamma) = 0$, and otherwise 
\begin{align*}
\Theta_{R_{\bbT_r}^{\bbG_r}(\theta)}(\gamma) 
&= \frac{1}{|\bbG_{r,s}(\F_q)|} \sum_{x \in N_{\bbG_{r}(\F_{q})}(\bbG_{r,s},\bbT_{r})} \theta^{x}(s) \cdot \theta_{+}^{x}(u) \\
&= \sum_{w \in N_{\bbG_{r}(\F_{q})}(\bbG_{r,s},\bbT_{r})/\bbT_{r}(\F_{q})} \theta^{w}(\gamma).
\end{align*} 
\end{remark}

\begin{rem}\label{rem:geom-char-formula}
When $\bfT$ is elliptic, hence we know the independence of $R^{\bbG_r}_{\bbT,\bbU_r}(\theta)$ of $\bfU$, Theorem \ref{thm:geom-char-formula} (1) can be rewritten as follows:
\[
    \Theta_{R_{\bbT_r, \bbU_r}^{\bbG_r}(\theta)}(\gamma)
    = \sum_{\substack{x \in\bbG_r(\F_q)/\bbG_{r,s}(\F_q) \\ {}^{x}s \in \bbT_r(\F_q)}} \theta({}^{x}s) \cdot Q_{\bbT_r}^{\bbG_{r,{}^{x}s}}(\theta_{+})({}^{x}u).
\]
\end{rem}

%

\subsection{Orthogonality for positive-depth Green functions}

We next establish an inner product formula for positive-depth Green functions associated to elliptic $\bfT$ (Proposition \ref{prop:Q-inner-prod}) analogous to \cite[Theorem 6.9]{DL76}.

The following lemma can be easily checked:
\begin{lem}\label{lem:index-set-rewrite}
Let $\bfT$ and $\bfT'$ be elliptic maximal tori in $\bfG$.
Let $s\in\bbG_{r}(\F_{q})_{\ss}$.
The map $(g,g',n_1) \mapsto (g,g^{\prime-1}n_{1}g, n_{1})$ defines a bijection between the sets
\begin{align*}
\Biggl\{(g,g',n_1) \in \bbG_r(\F_{q}) \times \bbG_r(\F_{q}) \times \bbG_r(\F_{q}) \,\Bigg\vert\, 
\begin{array}{l}
s^{g} \in \bbT_{r}(\F_{q}), s^{g'} \in \bbT'_{r}(\F_{q})\\
n_1 \in N_{\bbG_{r,s}(\F_q)}({}^{g}\bbT_{r}, {}^{g'}{\bbT'}_{r})
\end{array}\Biggr\}, \\
\{(g,n,n_1) \in \bbG_r(\F_{q}) \times N_{\bbG_r(\F_{q})}(\bbT_r, \bbT_r') \times \bbG_{r,s}(\F_q) \mid {s}^{g} \in \bbT_{r}(\F_{q})\}
\end{align*}
and its inverse is given by $(g,n_{1}gn^{-1},n_{1})\mapsfrom(g,n,n_{1})$.
\end{lem}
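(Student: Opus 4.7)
The plan is to verify directly that the displayed formula defines a bijection by checking three things: the forward map lands in the second set, the proposed inverse lands in the first set, and the two compositions are the identity. There is no substantive obstacle; the entire argument is bookkeeping with the conjugation conventions ${}^{x}y = xyx^{-1}$ and $y^{x} = x^{-1}yx$ from Section \ref{sec:notations}.

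For the forward direction, I would take $(g, g', n_1)$ in the first set and set $n \coloneqq g'^{-1} n_1 g$. The condition $s^{g} \in \bbT_r(\F_q)$ is preserved tautologically, and I need only check $n \in N_{\bbG_r(\F_q)}(\bbT_r, \bbT_r')$. Using the hypothesis $n_1 \cdot {}^{g}\bbT_r \cdot n_1^{-1} \subset {}^{g'}\bbT_r'$, a direct computation gives
\begin{equation*}
    n \bbT_r n^{-1} = g'^{-1}\bigl(n_1 \cdot {}^{g}\bbT_r \cdot n_1^{-1}\bigr) g' \subset g'^{-1} \cdot {}^{g'}\bbT_r' \cdot g' = \bbT_r'.
\end{equation*}

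For the reverse direction, I would take $(g, n, n_1)$ in the second set and set $g' \coloneqq n_1 g n^{-1}$. Two conditions require verification. Since $n_1 \in \bbG_{r,s}(\F_q)$ centralizes $s$, I compute
\begin{equation*}
    s^{g'} = n g^{-1} n_1^{-1} s n_1 g n^{-1} = n \cdot s^{g} \cdot n^{-1},
\end{equation*}
which lies in $\bbT_r'(\F_q)$ because $s^{g} \in \bbT_r(\F_q)$ and $n \in N_{\bbG_r(\F_q)}(\bbT_r, \bbT_r')$. For the normalizer condition, from $n_1 g = g' n$ I get
\begin{equation*}
    n_1 \cdot {}^{g}\bbT_r \cdot n_1^{-1} = g' \cdot (n \bbT_r n^{-1}) \cdot g'^{-1} \subset g' \bbT_r' g'^{-1} = {}^{g'}\bbT_r',
\end{equation*}
and $n_1 \in \bbG_{r,s}(\F_q)$ is unchanged.

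Finally, the two compositions are identity by inspection: in one direction, $g'^{-1} n_1 g \mapsto n_1 g (g'^{-1} n_1 g)^{-1} = g'$; in the other, $n_1 g n^{-1} \mapsto (n_1 g n^{-1})^{-1} n_1 g = n$. The only point requiring any care is that the normalizer sets $N_{H}(H_1, H_2)$ are defined by a one-sided inclusion rather than an equality, but this causes no trouble because the conjugation formulas above automatically respect the asymmetry: the forward map shows ${}^{n}\bbT_r \subset \bbT_r'$, and the inverse correspondingly shows $n_1 \cdot {}^{g}\bbT_r \cdot n_1^{-1} \subset {}^{g'}\bbT_r'$, without ever needing to assert the reverse inclusion.
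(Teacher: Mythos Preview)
Your proof is correct and is exactly the direct verification the paper has in mind: the paper omits the proof entirely, stating only that the lemma ``can be easily checked,'' so your bookkeeping with the conjugation conventions is precisely what is implicitly expected.
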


\begin{prop}\label{prop:Q-inner-prod}
Assume that $\bfT$ and $\bfT'$ are elliptic maximal tori in $\bfG$.
For any characters $\theta_{+} \from \bbT_{r}^{+}(\F_{q}) \to \overline \Q_\ell^\times$ and $\theta_{+}' \from \bbT_{r}^{\prime+}(\F_{q}) \to \overline \Q_\ell^\times$,
\begin{equation*}
\sum_{u \in \bbG_r(\F_{q})_{\unip}} Q_{\bbT_r}^{\bbG_r}(\theta_{+})(u) \overline{Q_{\bbT'_r}^{\bbG_r}(\theta_{+}')(u)} 
= 
\frac{|\bbG_r(\F_{q})|}{|\bbT_{r}(\F_{q})| \cdot |\bbT'_{r}(\F_{q})|} \sum_{\substack{u \in \bbT_{r}^{+}(\F_{q}) \\ n \in N_{\bbG_r(\F_{q})}(\bbT_r, \bbT_r')}} \theta_{+}(u) \overline{\theta_{+}^{\prime n}(u)}.
\end{equation*}
\end{prop}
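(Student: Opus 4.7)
The plan is to compute the double sum
$$A:=\sum_{\theta,\,\theta'}\langle R_{\bbT_r}^{\bbG_r}(\theta),R_{\bbT'_r}^{\bbG_r}(\theta')\rangle_{\bbG_r(\F_q)}$$
in two ways, where $\theta$ ranges over characters of $\bbT_r(\F_q)$ restricting to $\theta_+$ on $\bbT_r^+(\F_q)$ and $\theta'$ analogously restricts to $\theta_+'$. Since $\bbT_r$ is connected commutative over $\F_q$ and $|\bbT_r(\F_q)/\bbT_r^+(\F_q)|=|\bbT_0(\F_q)|$ is coprime to $p$, we have $\bbT_r(\F_q)=\bbT_r(\F_q)_{\ss}\times\bbT_r^+(\F_q)$, so extensions of $\theta_+$ correspond bijectively to characters of $\bbT_r(\F_q)_{\ss}$.

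First, for the geometric evaluation, I will apply Theorem \ref{thm:geom-char-formula}(1) together with orthogonality of characters of the finite abelian group $\bbT_r(\F_q)_{\ss}$. The character formula writes $\Theta_{R_{\bbT_r}^{\bbG_r}(\theta)}(\gamma)$ for $\gamma=su$ as an average of terms involving values of $\theta$ on conjugates of $s$; summing over extensions $\theta$ of $\theta_+$ collapses each such sum via $\sum_{\theta_0}\theta_0(s^x)=|\bbT_r(\F_q)_{\ss}|\cdot\mathbbm{1}_{s=1}$. Combined with the conjugation-invariance of the Green function under $\bbG_r(\F_q)$, this shows that the summed character vanishes off unipotent elements and equals $|\bbT_r(\F_q)_{\ss}|\cdot Q_{\bbT_r}^{\bbG_r}(\theta_+)(u)$ at any unipotent $u$. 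Substituting this identity and its analogue for $\theta'$ into $A$ yields
$$A=\frac{|\bbT_r(\F_q)_{\ss}|\cdot|\bbT'_r(\F_q)_{\ss}|}{|\bbG_r(\F_q)|}\sum_{u\in\bbG_r(\F_q)_{\unip}}Q_{\bbT_r}^{\bbG_r}(\theta_+)(u)\overline{Q_{\bbT'_r}^{\bbG_r}(\theta_+')(u)}.$$

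Second, for the algebraic evaluation, I will apply Theorem \ref{thm:Cha24}, which gives $\langle R_{\bbT_r}^{\bbG_r}(\theta),R_{\bbT'_r}^{\bbG_r}(\theta')\rangle=|\{w\in W_{\bbG_r(\F_q)}(\bbT_r,\bbT'_r):\theta^w=\theta'\}|$. Summing over extensions and counting triples $(\theta,\theta',w)$: the compatibility $\theta^w=\theta'$ together with the prescribed restrictions forces $\theta_+^w=\theta_+'$; conversely, for each such $w$ and each extension $\theta$ of $\theta_+$, the extension $\theta'=\theta^w$ of $\theta_+'$ is uniquely determined by $(\theta,w)$. This gives
$$A=|\bbT_r(\F_q)_{\ss}|\cdot|\{w\in W_{\bbG_r(\F_q)}(\bbT_r,\bbT'_r):\theta_+^w=\theta_+'\}|.$$
Equating the two expressions for $A$, translating the Weyl group count to a count over $n\in N_{\bbG_r(\F_q)}(\bbT_r,\bbT'_r)$ via the $|\bbT_r(\F_q)|$-to-$1$ quotient $N\twoheadrightarrow W$, and using $|\bbT_r^+(\F_q)|=|\bbT_r^{\prime+}(\F_q)|$ (both are Moy--Prasad subquotients of unramified tori of the same rank $\dim\bfT$), will yield the stated formula once we recognize $\sum_{u\in\bbT_r^+(\F_q)}\theta_+(u)\overline{\theta_+^{\prime n}(u)}=|\bbT_r^+(\F_q)|$ precisely when $\theta_+=\theta_+^{\prime n}$.

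The hard part is the application of Theorem \ref{thm:Cha24}, which assumes $(\bfT,\theta)$ to be Howe-factorizable for each extension $\theta$ appearing in the sum. Under the standing hypothesis that $p$ is not bad for $\bfG$ (automatic in every downstream application of this proposition), every tame pair is Howe-factorizable, and the argument proceeds for all extensions simultaneously. Lemma \ref{lem:index-set-rewrite} provides a reindexing of triples $(g,g',n_1)\leftrightarrow(g,n,n_1)$ well-suited to an alternative approach that bypasses Theorem \ref{thm:Cha24} by expanding the inner product directly through the character formula of Theorem \ref{thm:geom-char-formula}(1) and inductively invoking the present proposition in the centralizers $\bbG_{r,s}$; in either route, the final identification of the sums produces the claimed identity.
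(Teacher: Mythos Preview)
Your argument is correct and takes a genuinely different route from the paper's.

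The paper proceeds by induction on $\dim\bfG$, closely following the classical argument of \cite[Theorem~6.9]{DL76}. It fixes a \emph{single} extension $\theta$ of $\theta_+$ (chosen to be trivial on $Z(\bbG_r)(\F_q)_{\ss}$) and likewise a single $\theta'$, then expands $|\bbG_r(\F_q)|\cdot\langle R_{\bbT_r}^{\bbG_r}(\theta),R_{\bbT'_r}^{\bbG_r}(\theta')\rangle$ via the character formula of Theorem~\ref{thm:geom-char-formula}, separates the contribution of central versus non-central semisimple parts, and applies the inductive hypothesis in each $\bbG_{r,s}$ (this is precisely where Lemma~\ref{lem:index-set-rewrite} is used to reindex). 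Only at the very end is Theorem~\ref{thm:Cha24} invoked, once, to cancel the inner-product term.

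By contrast, you average over \emph{all} extensions of $\theta_+$ and $\theta'_+$ at once; character orthogonality on $\bbT_r(\F_q)_{\ss}$ kills everything away from the unipotent locus in a single stroke, and Theorem~\ref{thm:Cha24} is applied to every pair $(\theta,\theta')$. This bypasses the induction and the central/non-central bookkeeping entirely, at the cost of invoking Theorem~\ref{thm:Cha24} more broadly. In practice this cost is illusory: Howe-factorizability of an extension $\theta$ is governed by its positive-depth part $\theta_+$ (the depth-zero factor $\phi_{-1}$ is arbitrary), so once one extension is Howe-factorizable, all are. Thus both arguments rest on the same hypothesis, and yours is materially shorter. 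The paper's inductive route has the mild expository advantage of making the parallel with \cite{DL76} transparent and of putting Lemma~\ref{lem:index-set-rewrite} to work; your route shows that once the Mackey formula (Theorem~\ref{thm:Cha24}) is available at full strength, the orthogonality of Green functions is an immediate corollary.
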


\begin{proof}
We first check that the conclusion holds if $\bfG$ is a torus, i.e., $\bfG=\bfT$. 
In this case, $\bfU$ is trivial, hence the variety $X_{r}$ is equal to $\bbG_{r}^{\sigma}\cong\bbT_{r}(\F_{q})$.
Thus $H_{c}^{\ast}(X_{r},\ol{\Q}_{\ell})$ is just the sum of all characters of $\bbT_{r}(\F_{q})$.
Hence we get
\begin{align*}
Q_{\bbT_r}^{\bbT_r}(\theta_{+})(u)
&=\frac{1}{|\bbT_{r}(\F_{q})|} \sum_{t_+ \in \bbT_{r}^{+}(\F_{q})} \theta_{+}^{-1}(t_+) \Tr((u,t_+) ; H_c^*(X_{\bbT_r \subset \bbG_r}, \overline \Q_\ell))\\
&=\frac{|\bbT_{r}^{+}(\F_{q})|}{|\bbT_{r}(\F_{q})|} \sum_{\begin{subarray}{c}\theta\colon\bbT_{r}(\F_{q})\rightarrow\ol{\Q}_{\ell}^{\times} \\ \theta|_{\bbT_{r}^{+}(\F_{q})}=\theta_{+}\end{subarray}}\theta(u)
=\theta_{+}(u)
\end{align*}
and so
\begin{equation*}
\sum_{u \in \bbT_{r}^{+}(\F_{q})} Q_{\bbT_r}^{\bbT_r}(\theta_{+})(u) \overline{Q_{\bbT_r}^{\bbT_r}(\theta_{+}')(u)} = \sum_{u \in \bbT_{r}^{+}(\F_{q})} \theta_{+}(u) \overline{\theta_{+}'(u)} = \frac{1}{|\bbT_{r}(\F_{q})|} \sum_{\substack{u \in \bbT_{r}^{+}(\F_{q}) \\ n \in \bbT_{r}(\F_{q})}} \theta_{+}(u) \overline{\theta_{+}'({}^{n}u)}.
\end{equation*}

Now we prove the assertion by the induction on the dimension of $\bfG$.
We let $Z(\bbG_{r})$ denote the center of $\bbG_{r}$.
We choose characters $\theta \from \bbT_{r}(\F_{q}) \to \overline \Q_\ell^\times$ and $\theta' \from \bbT'_{r}(\F_{q}) \to \overline \Q_\ell^\times$ such that $\theta|_{\bbT_{r}^{+}(\F_{q})} = \theta_{+}$, $\theta'|_{\bbT_{r}^{\prime+}(\F_{q})} = \theta_{+}'$, and $\theta|_{Z(\bbG_{r})(\F_{q})_{\ss}}=\theta'|_{Z(\bbG_{r})(\F_{q})_{\ss}}=\mathbbm{1}$.
Note that this is possible since $\bbT^{(\prime)}_{r}(\F_{q})$ is a finite abelian group; $\bbT^{(\prime)+}_{r}(\F_{q})$ is its $p$-part and $Z(\bbG_{r})(\F_{q})_{\ss}$ is contained in its prime-to-$p$ part.

For the inductive hypothesis, we assume that the formula holds when $\bfG$ is replaced by $\bfG_{\tilde{s}}$ for any lift $\tilde{s}$ of $s \in \bbG_r(\F_{q})_{\ss} \smallsetminus Z(\bbG_r)(\F_{q})_{\ss}$ as in Lemma \ref{lem:ss-lift}.
Now, by Theorem \ref{thm:geom-char-formula}, we have
\begin{multline*}
|\bbG_{r}(\F_{q})|\cdot\langle R_{\bbT_r}^{\bbG_r}(\theta), R_{\bbT_r'}^{\bbG_r}(\theta') \rangle_{\bbG_r(\F_{q})}\\
=
\sum_{s \in \bbG_r(\F_{q})_{\ss}} \frac{1}{|\bbG_{r,s}(\F_q)|^2} \sum_{\substack{g,g' \in \bbG_r(\F_{q}) \\ s^{g} \in \bbT_{r}(\F_{q}) \\ s^{g'} \in \bbT'_{r}(\F_{q})}} {}^{g}\theta(s) \overline{{}^{g'}\theta'(s)}
 \cdot \sum_{u \in \bbG_{r,s}(\F_q)_{\unip}} Q_{{}^{g}\bbT_{r}}^{\bbG_{r,s}}({}^{g}\theta_{+})(u) \cdot \overline{Q_{{}^{g'}\bbT'_{r}}^{\bbG_{r,s}}({}^{g'}\theta_{+})(u)}.
\end{multline*}
The summation over $s \in \bbG_r(\F_{q})_{\ss}$ can be separated into $s \in Z(\bbG_r)(\F_{q})_{\ss}$ and $s \in \bbG_r(\F_{q})_{\ss} \smallsetminus Z(\bbG_r)(\F_{q})_{\ss}$. 
We first note that the contribution of each $s \in Z(\bbG_r)(\F_{q})_{\ss}$ is given by 
\[
\sum_{u \in \bbG_r(\F_{q})_{\unip}} Q_{\bbT_r}^{\bbG_r}(\theta_{+})(u) \overline{Q_{\bbT'_r}^{\bbG_r}(\theta_{+}')(u)}.
\]
Here, we used that our $\theta$ and $\theta'$ are chosen so that $\theta|_{Z(\bbG_{r})(\F_{q})_{\ss}}=\theta'|_{Z(\bbG_{r})(\F_{q})_{\ss}}=\mathbbm{1}$.

We next compute the contribution of $s\in\bbG_r(\F_{q})_{\ss} \smallsetminus Z(\bbG_r)(\F_{q})_{\ss}$.
Using the inductive hypothesis, it is given by
\[
\frac{1}{|\bbG_{r,s}(\F_q)| \cdot |\bbT_{r}(\F_{q})| \cdot |\bbT'_{r}(\F_{q})|} 
\sum_{\substack{g,g' \in \bbG_r(\F_{q}) \\ s^{g} \in \bbT_{r}(\F_{q}) \\ s^{g'} \in \bbT'_{r}(\F_{q})}} {}^{g}\theta(s) \overline{{}^{g'}\theta'(s)} \cdot\sum_{\substack{u \in \bbT^{g,+}_{r}(\F_{q}) \\ n_1 \in N_{\bbG_{r,s}(\F_q)}(\bbT^{g}_{r}, \bbT^{\prime g'}_{r})}} {}^{g}\theta_{+}(u) \overline{{}^{n_{1}^{-1}g'}\theta_{+}^{\prime}(u)}.
\]
By using Lemma \ref{lem:index-set-rewrite}, this becomes
\begin{align*}
&\frac{1}{|\bbG_{r,s}(\F_q)| \cdot |\bbT_{r}(\F_{q})| \cdot |\bbT'_{r}(\F_{q})|} 
\sum_{\substack{g\in \bbG_r(\F_{q}) \\ n\in N_{\bbG_r(\F_{q})}(\bbT_r, \bbT_r') \\ n_{1}\in\bbG_{r,s}(\F_q) \\ s^{g} \in \bbT_{r}(\F_{q})}} {}^{g}\theta(s) \overline{{}^{n_{1}gn^{-1}}\theta'(s)} \cdot\sum_{u \in \bbT^{g,+}_{r}(\F_{q})} {}^{g}\theta_{+}(u) \overline{{}^{gn^{-1}}\theta_{+}^{\prime}(u)}\\
&=
\frac{1}{|\bbT_{r}(\F_{q})| \cdot |\bbT'_{r}(\F_{q})|} 
\sum_{\substack{g\in \bbG_r(\F_{q}) \\ n\in N_{\bbG_r(\F_{q})}(\bbT_r, \bbT_r') \\ s^{g} \in \bbT_{r}(\F_{q})}} {}^{g}\theta(s) \overline{{}^{gn^{-1}}\theta'(s)} \cdot\sum_{u \in \bbT^{g,+}_{r}(\F_{q})} {}^{g}\theta_{+}(u) \overline{{}^{gn^{-1}}\theta_{+}^{\prime}(u)}\\
&=
\frac{1}{|\bbT_{r}(\F_{q})| \cdot |\bbT'_{r}(\F_{q})|} 
\sum_{\substack{g\in \bbG_r(\F_{q}) \\ n\in N_{\bbG_r(\F_{q})}(\bbT_r, \bbT_r') \\ u \in \bbT^{g,+}_{r}(\F_{q})\\ s^{g} \in \bbT_{r}(\F_{q})}} {}^{g}\theta(su) \overline{{}^{gn^{-1}}\theta'(su)}.
\end{align*}
Thus, the sum over $\bbG_r(\F_{q})_{\ss} \smallsetminus Z(\bbG_r)(\F_{q})_{\ss}$ is 
\begin{align*}
\frac{1}{|\bbT_{r}(\F_{q})| \cdot |\bbT'_{r}(\F_{q})|} 
\sum_{s\in\bbG_r(\F_{q})_{\ss} \smallsetminus Z(\bbG_r)(\F_{q})_{\ss}}
\sum_{\substack{g\in \bbG_r(\F_{q}) \\ n\in N_{\bbG_r(\F_{q})}(\bbT_r, \bbT_r') \\ u \in \bbT^{g,+}_{r}(\F_{q})\\ s^{g} \in \bbT_{r}(\F_{q})}} {}^{g}\theta(su) \overline{{}^{gn^{-1}}\theta'(su)}.
\end{align*}
By furthermore noting that
\[
\{(s,g)\in\bbG_{r}(\F_{q})_{\ss}\times\bbG_{r}(\F_{q}) \mid s^{g}\in\bbT_{r}(\F_{q}) \}\rightarrow\bbT_{r}(\F_{q})_{\ss}\colon (s,g)\mapsto s^{g}
\]
is a surjection such that each fiber is of order $|\bbG_r(\F_{q})|$ and the subset $Z(\bbG_{r})(\F_{q})_{\ss}\times\bbG_{r}(\F_{q})$ is mapped to $Z(\bbG_{r})(\F_{q})_{\ss}$, we see that the above equals 
\[
\frac{|\bbG_{r}(\F_{q})|}{|\bbT_{r}(\F_{q})| \cdot |\bbT'_{r}(\F_{q})|} 
\sum_{s\in\bbT_r(\F_{q})_{\ss} \smallsetminus Z(\bbG_r)(\F_{q})_{\ss}}
\sum_{\substack{n\in N_{\bbG_r(\F_{q})}(\bbT_r, \bbT_r') \\ u \in \bbT^{+}_{r}(\F_{q})}} \theta(su) \overline{\theta^{\prime n}(su)}.
\]
Here, again using that $\theta|_{Z(\bbG_{r})(\F_{q})_{\ss}}=\theta'|_{Z(\bbG_{r})(\F_{q})_{\ss}}=\mathbbm{1}$, we may think of this as 
\[
\frac{|\bbG_{r}(\F_{q})|}{|\bbT_{r}(\F_{q})| \cdot |\bbT'_{r}(\F_{q})|} 
\Biggl(\sum_{\substack{t \in \bbT_{r}(\F_{q})\\n\in N_{\bbG_r(\F_{q})}(\bbT_r, \bbT_r')}} \theta(t) \overline{\theta^{\prime n}(t)}-|Z(\bbG_{r})(\F_{q})_{\ss}|\cdot\sum_{\substack{u \in \bbT^{+}_{r}(\F_{q})\\n\in N_{\bbG_r(\F_{q})}(\bbT_r, \bbT_r')}} \theta_{+}(u)\overline{\theta_{+}^{\prime n}(u)}\Biggr),
\]
which equals
\begin{multline*}
|\bbG_r(\F_{q})| \cdot |\{w \in W_{\bbG_{r}(\F_{q})}(\bbT_r, \bbT_r') \mid \theta=\theta^{\prime n}\}|\\
-\frac{|Z(\bbG_{r})(\F_{q})_{\ss}|\cdot|\bbG_{r}(\F_{q})|}{|\bbT_{r}(\F_{q})| \cdot |\bbT'_{r}(\F_{q})|}\cdot\sum_{\substack{u \in \bbT^{+}_{r}(\F_{q})\\n\in N_{\bbG_r(\F_{q})}(\bbT_r, \bbT_r')}} \theta_{+}(u)\overline{\theta_{+}^{\prime n}(u)}.
\end{multline*}

Therefore, $|\bbG_r(\F_{q})|\cdot\langle R_{\bbT_r}^{\bbG_r}(\theta), R_{\bbT_r'}^{\bbG_r}(\theta') \rangle$ is equal to
\begin{multline*}
|Z(\bbG_{r})(\F_{q})_{\ss}|\cdot\sum_{u \in \bbG_r(\F_{q})_{\unip}} Q_{\bbT_r}^{\bbG_r}(\theta_{+})(u) \overline{Q_{\bbT'_r}^{\bbG_r}(\theta_{+}')(u)}\\
+|\bbG_r(\F_{q})| \cdot |\{w \in W_{\bbG_{r}(\F_{q})}(\bbT_r, \bbT_r') \mid \theta=\theta^{\prime n}\}|\\
-\frac{|Z(\bbG_{r})(\F_{q})_{\ss}|\cdot|\bbG_{r}(\F_{q})|}{|\bbT_{r}(\F_{q})| \cdot |\bbT'_{r}(\F_{q})|}\cdot\sum_{\substack{u \in \bbT^{+}_{r}(\F_{q})\\n\in N_{\bbG_r(\F_{q})}(\bbT_r, \bbT_r')}} \theta_{+}(u)\overline{\theta_{+}^{\prime n}(u)}.
\end{multline*}
However, by Theorem \ref{thm:Cha24}, the second term $|\bbG_r(\F_{q})| \cdot |\{w \in W_{\bbG_{r}(\F_{q})}(\bbT_r, \bbT_r') \mid \theta=\theta^{\prime n}\}|$ is nothing but $|\bbG_r(\F_{q})|\cdot\langle R_{\bbT_r}^{\bbG_r}(\theta), R_{\bbT_r'}^{\bbG_r}(\theta') \rangle_{\bbG_r(\F_{q})}$.
This implies that we necessarily have
\[
\sum_{u \in \bbG_r(\F_{q})_{\unip}} Q_{\bbT_r}^{\bbG_r}(\theta_{+})(u) \overline{Q_{\bbT'_r}^{\bbG_r}(\theta_{+}')(u)}
=\frac{|\bbG_{r}(\F_{q})|}{|\bbT_{r}(\F_{q})| \cdot |\bbT'_{r}(\F_{q})|}\cdot\sum_{\substack{u \in \bbT^{+}_{r}(\F_{q})\\n\in N_{\bbG_r(\F_{q})}(\bbT_r, \bbT_r')}} \theta_{+}(u)\overline{\theta_{+}^{\prime n}(u)}.\qedhere
\]
\end{proof}

\section{Green functions for Yu's construction}\label{sec:Yu green}

We will first establish that in fact certain linear combinations of representations have a character formula in terms of a positive-depth Green function $\sfQ$ defined using the Fintzen--Kaletha--Spice twist of Yu's construction (Theorem \ref{thm:green Yu}). This may be of independent interest. We note that our aim here is not to give an explicit character formula for these representations, but just to illuminate the structural behavior that the character value at an element $g$ with topological Jordan decomposition $g = su$ is determined by behavior on $s$ (coming from the depth-zero part of $\theta$) and on $u$ (determined by the Green function $\sfQ$). 

The significance of Theorem \ref{thm:green Yu} in the present paper is based in the simple observation that for \textit{any} character $\theta$, there exists a \textit{regular} character $\theta'$ for which $\theta|_{T_{0+}} = \theta'|_{T_{0+}}$. This, together with the content of Section \ref{sec:green} (especially Theorem \ref{thm:geom-char-formula}), allows us to deduce an explicit description (Theorem \ref{thm:gen comparison}) of $R_{\bbT_r}^{\bbG_r}(\theta)$ using our comparison result for $R_{\bbT_r}^{\bbG_r}(\theta')$ (Theorem \ref{thm:reg comparison}).

Let $(\bfT,\theta)$ be an unramified elliptic pair (note that $\theta$ need not be regular). 
Assume that $p \neq 2$ is not bad for $\bfG$ and $p \nmid |\pi_1(\bfG_{\der})| \cdot |\pi_1(\widehat \bfG_{\der})|$.

\subsection{FKS--Yu virtual representations}\label{subsec:Yu green}

The assumption on $p$ guarantees the existence of a Howe factorization of $\theta$, which yields an associated \textit{clipped Yu datum} $\dashover{\Psi}:=(\vec{\bfG},\vec{\phi},\vec{r},\x)$ (i.e., a Yu datum without the depth-zero part) as well as a depth-zero character $\phi_{-1}$ of $T$. By construction, $\theta=\prod_{i=-1}^{d}\phi_{i}|_T$.
Note that $\dashover{\Psi}$ depends only on $\theta|_{T_{0+}}$.

Let us consider the Deligne--Lusztig representation $R_{\bbT_0}^{\bbG^0_0}(\phi_{-1})$ associated to $(\bbG^0_0,\bbT_0,\phi_{-1})$.
We write $R_{\bbT_0}^{\bbG^0_0}(\phi_{-1})=\sum_{j=1}^{n}m_j\cdot\rho_{0,j}$ for the irreducible decomposition of $R_{\bbT_0}^{\bbG^0_0}(\phi_{-1})$, hence $m_j\in\Z$ and $\rho_{0,j}$'s are pairwise inequivalent irreducible representations of $\bbG^0_0(\F_q)$.
For each $\rho_{0,j}$, we put $\Psi_j:=(\vec{\bfG},\vec{\phi},\vec{r},\x,\rho_{0,j})$.
Note that this quintuple may not be a Yu datum as $\rho_{0,j}$ may not be cuspidal.
However, we can still apply the (Fintzen--Kaletha--Spice twist of) Yu's construction to $\Psi_j$ to get a representation $\cc\tau^{\FKS}_{\Psi_j}$ of $G_{\x,0}$.
We define a virtual smooth representation $\cc\tau^\FKS_{(\bfT,\theta)}$ of $G_{\x,0}$ by
\[
    \cc\tau^\FKS_{(\bfT,\theta)}
    :=
    (-1)^{r(\bfG^0)-r(\bfT)}\cdot\sum_{j=1}^n m_j\cdot \cc\tau^{\FKS}_{\Psi_j}.
\]

In the following, we loosely identify elements of $G_{\x,0}$ with their images in $\bbG_r(\F_q)$.

\begin{definition}\label{def:green Yu}
    The \textit{positive-depth FKS--Yu Green function} associated to an unramified elliptic pair $(\bfT,\theta)$ is
    \begin{equation*}
        \sfQ_{\bbT_r}^{\bbG_r}(\theta_+) \from \bbG_r(\F_q)_{\unip} \to \C; \qquad u \mapsto \Theta_{\cc{\tau}^\FKS_{(\bfT,\theta)}}(u).
    \end{equation*}
\end{definition}

The next result shows that the character formula of the virtual representation $\cc{\tau}^\FKS_{(\bfT,\theta)}$ can be written in terms of the positive-depth FKS--Yu Green function; that is, it shares the same shape of character formula as the Deligne--Lusztig character formula for positive-depth Deligne--Lusztig induction (Theorem \ref{thm:geom-char-formula}). 

For a topologically semisimple element $s \in T_0$, the connected centralizer of $s$ in the $i$th twisted Levi $\bfG^i$ in the Howe factorization sequence of $\theta$ with respect to $\bfT \hookrightarrow \bfG$ also coincides with the $i$th twisted Levi in the Howe factorization sequence of $\theta$ with respect to $\bfT \hookrightarrow \bfG_s$. Hence the notation $\bfG_s^i$ for this group is unambiguous, and we get a sequence $\vec \bfG_s$. We define $r_s(\bfT,\theta)$ in the same way as $r(\bfT,\theta)$ (\cite[Proposition 4.9]{CO25}), but for $\bfG_s$ (not $\bfG$).

\begin{thm}\label{thm:green Yu} \mbox{}
    Let $(\bfT,\theta)$ any unramified elliptic pair.
    \begin{enumerate}
        \item     For $g \in \bbG_r(\F_q)$ with Jordan decomposition $g = su$, 
        \[
            \Theta_{\cc\tau^{\FKS}_{(\bfT,\theta)}}(g)
            =
            (-1)^{r(\bfG,s,\bfT,\theta)}
            \cdot\sum_{\begin{subarray}{c} x\in \bbG_{r}(\F_q)/\bbG_{s,r}(\F_q) \\{}^{x}s\in \bbT_r(\F_q)\end{subarray}}\theta({}^{x}s)
            \cdot
            \sfQ_{\bbT_r}^{\bbG_{{}^{x}s,r}}(\theta_+)({}^{x}u),
        \]    
        where we put $r(\bfG,s,\bfT,\theta):=r(\bfG^0)-r(\bfG^{0}_s)+r(\bfT,\theta)-r_s(\bfT,\theta)$.
        \item $\sfQ_{\bbT_r}^{\bbG_r}(\theta_+)$ only depends on $\theta_+=\theta|_{T_{0+}}$.
    \end{enumerate}
\end{thm}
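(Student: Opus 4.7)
For part (1), the plan is to apply, for each $j$, the Adler--DeBacker--Spice character formula (in its Fintzen--Kaletha--Spice twisted form, generalizing Proposition \ref{prop:AS-vreg-FKS} beyond the very regular case) to $\cc\tau^{\FKS}_{\Psi_j}$ at $g=su$. Such a formula expresses the character as a sum over $x \in \bbG_r(\F_q)/\bbG_{r,s}(\F_q)$ with ${}^xs \in \bbT_r(\F_q)$, each summand being a product of (i) the positive-depth contribution $\prod_{i\geq 0}\phi_i({}^xs)=(\theta/\phi_{-1})({}^xs)$; (ii) the character $\Theta_{\rho_{0,j}}$ evaluated at the image of ${}^xs$ in the reductive quotient of the centralizer; and (iii) a depth-zero Green-function-like contribution involving ${}^xu$, carrying a sign governed by the twisted Levi sequence $\vec{\bfG}_{{}^xs}$ for the centralizer.

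Taking the linear combination $\sum_j m_j \Theta_{\cc\tau^{\FKS}_{\Psi_j}}$ assembles the factors (ii) into $\Theta_{R_{\bbT_0}^{\bbG^0_0}(\phi_{-1})}$. Applying the classical Deligne--Lusztig character formula \cite[Theorem 4.2]{DL76} to this produces, for each inner summation index $y$ conjugating the semisimple part into $\bbT_0$, the factor $\phi_{-1}({}^ys)$ which combines with (i) to yield $\theta({}^xs)$ on each outer summand, together with a classical Green function $Q_{\bbT_0}^{(\bbG^0_0)_{\bar s}}(\bar u)$. The resulting unipotent contribution for each $x$ is recognized, by Definition \ref{def:green Yu} applied to $(\bfT,\theta)$ viewed inside the centralizer $\bfG_{{}^xs}$ (whose twisted Levi sequence for the Howe factorization is $\vec{\bfG}_{{}^xs}$), as $\sfQ_{\bbT_r}^{\bbG_{{}^xs,r}}(\theta_+)({}^xu)$. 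A careful tally of signs produces exactly $(-1)^{r(\bfG,s,\bfT,\theta)}$, with each of the differences $r(\bfG^0)-r(\bfG^0_s)$ and $r(\bfT,\theta)-r_s(\bfT,\theta)$ accounting for the discrepancy between Deligne--Lusztig induction inside $\bfG^0$ versus $\bfG^0_s$.

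For part (2), directly apply the same ADS/FKS formula at $g=u$ (so $s=1$) for each $\cc\tau^{\FKS}_{\Psi_j}$: the formula gives $\Theta_{\cc\tau^{\FKS}_{\Psi_j}}(u)$ as a product of a sign, $\Theta_{\rho_{0,j}}(\bar u)$ at the image $\bar u$ of $u$ in $\bbG^0_0(\F_q)$ (which is unipotent), and a positive-depth factor depending only on $\dashover\Psi$ and $u$. Summing over $j$ with weights $m_j$ yields $\Theta_{R^{\bbG^0_0}_{\bbT_0}(\phi_{-1})}(\bar u) = Q^{\bbG^0_0}_{\bbT_0}(\bar u)$, the classical Green function on unipotent elements, which is independent of the character $\phi_{-1}$ (a standard feature of Green functions for finite groups of Lie type). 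Since $\dashover\Psi$ also depends only on $\theta_+$, we conclude that $\sfQ_{\bbT_r}^{\bbG_r}(\theta_+)(u) = \Theta_{\cc\tau^\FKS_{(\bfT,\theta)}}(u)$ depends only on $\theta_+$.

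The main obstacle is the rigorous parahoric-level character formula used in Step 1. The published ADS/FKS formulas primarily give values of $\pi_\Psi^{\FKS}$ on all of $G$, whereas we need the parallel descent for $\cc\tau^\FKS_{\Psi_j}$ on $TG_{\x,0}$ at arbitrary topologically semisimple-times-unipotent elements, rather than only unramified very regular ones. Establishing this requires adapting the support-based descent already used in Section \ref{subsec:ADS}, together with careful sign-bookkeeping to produce the decomposition $r(\bfG,s,\bfT,\theta) = r(\bfG^0)-r(\bfG^0_s)+r(\bfT,\theta)-r_s(\bfT,\theta)$.
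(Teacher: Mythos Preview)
Your overall architecture is the same as the paper's: Frobenius formula for the induction from $\cc K$ to $TG_{\x,0}$, then the Deligne--Lusztig character formula on the depth-zero piece, then recognition of the centralizer's Green function. But two points deserve correction.

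First, your factorization (i)--(iii) is miscut. The depth-zero factor $\Theta_{\rho_{0,j}}$ is not evaluated at the semisimple image alone; it is evaluated at the full image ${}^{x}s\cdot ({}^{x}u)_0$ in $\bbG^0_0(\F_q)$, where $({}^{x}u)_0$ is the $G^0_{\x,0}$-component of ${}^{x}u$ with respect to $\cc K = G^0_{\x,0}\,\cc K_{0+}$. Only after summing $\sum_j m_j$ and applying the classical Deligne--Lusztig formula to $R_{\bbT_0}^{\bbG^0_0}(\phi_{-1})$ does this split into $\phi_{-1}$ on the semisimple part and a classical Green function $Q^{\bbG^0_{{}^xs,0}}_{\bbT_0}$ on $({}^{x}u)_0$. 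Separately, the remaining positive-depth factor is $\Theta_{\kappa_+}({}^{x}s\cdot{}^{x}u)\cdot\epsilon_{\Psi_j}({}^{x}s)$, not a ``depth-zero Green-function-like contribution''. The FKS--Yu Green function $\sfQ$ for the centralizer is obtained only after combining \emph{both} unipotent pieces (the classical $Q$ and the descended $\kappa_{s,+}$), together with a nontrivial fiber count rewriting $\{x\in\cc K_{0+}\backslash G_{\x,0}\mid {}^{x}s\in T_0G_{\x,0+}\}$ in terms of $\bbG_r(\F_q)/\bbG_{s,r}(\F_q)$.

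Second, and more importantly, the obstacle you flag is exactly where the content of the proof lies, and your plan to ``adapt the support-based descent'' does not name it. The paper does \emph{not} invoke an off-the-shelf ADS/FKS character formula for $\cc\tau^{\FKS}_{\Psi_j}$. Instead it proves a descent formula directly for the Heisenberg--Weil part $\kappa_+ = \kappa_0\otimes\cdots\otimes\kappa_d$: writing each $\kappa_i$ through the Weil representation $\omega_i$ on $V_{i+1}=J^{i+1}/J^{i+1}_+$, one uses the orthogonal symplectic decomposition $V_{i+1}=V_{s,i+1}\oplus V_{i+1}^{s\neq1}$ along root spaces, the compatibility $\omega_{V\oplus V'}\cong \omega_V\otimes\omega_{V'}$, and Spice's descent for Weil characters \cite[Lemma~8.2.1]{Spi21} to obtain
\[
\Theta_{\kappa_+}(su) = (-1)^{r(\bfT,\theta)-r_s(\bfT,\theta)}\cdot \varepsilon^{\ram}[\theta](s)\cdot \theta_{\geq 0}(s)\cdot \Theta_{\kappa_{s,+}}(u).
\]
This identity (Proposition~\ref{prop:descent}) is the engine of the proof; the sign $r(\bfT,\theta)-r_s(\bfT,\theta)$ and the character $\varepsilon^{\ram}[\theta]$ arise precisely from $\prod_i\Theta_{\omega_i^{s\neq1}}(s\ltimes 1)$, and $\varepsilon^{\ram}[\theta](s)$ then cancels against $\epsilon_{\Psi_j}({}^xs)$. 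Your part~(2) argument is correct and matches the paper's.
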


We discuss the proof of Theorem \ref{thm:green Yu}, and necessary background to state the proof, in the next subsection.

\subsection{Deligne--Lusztig character formula for FKS--Yu virtual representations}\label{subsec:green Yu proof}

We first briefly review the construction of $\cc\tau_{(\bfT,\theta)}^\FKS$ (there are various versions of possible explanations; here, we follow \cite{HM08}).

Associated to $\vec{\bfG}=(\bfG^0\subsetneq\cdots\subsetneq\bfG^d)$ and $\vec{r}=(0\leq r_0<\cdots<r_{d-1}\leq r_d)$, we get ``Yu's subgroup''
\[
\cc{K}^{i}:= G^0_{\x,0}G^1_{\x,s_0}\cdots G^i_{\x,s_{i-1}},
\]
where $s_i:=r_i/2$ for each $i=1,\ldots,d$.
We put $\cc{K}^{0}:=G^0_{\x,0}$.
Thus, by putting $J^{i+1}:=(G^{i},G^{i+1})_{\x,(r_{i},s_{i})}$, we have $\cc{K}^{i+1}=\cc{K}^{i}J^{i+1}$ for $0\leq i \leq d-1$.
The point of this construction is that by appealing to the commutator product on $J^{i+1}$ and considering  the character $\phi_i \from G^i \to \C^\times$ of the sequence $\vec \phi = (\phi_0\ldots,\phi_d)$, the quotient group $V^{i+1}:=J^{i+1}/J^{i+1}_+$ can be equipped with a symplectic structure. Its associated Heisenberg group $H(V_{i+1})$ can be regarded as a quotient of $J^{i+1}/(G^{i},G^{i+1})_{\x,(r_{i}+,s_{i}+)}$. The genericity conditions on the character $\phi_i$ imply that it uniquely determines an irreducible representation $\omega_{i}$ of $\Sp(V_{i+1})\ltimes H(V_{i+1})$ by the Stone--von Neumann theorem.
Since the conjugation action of $\cc{K}^{i}$ on $\cc{K}^{i+1}$ induces a symplectic action on $\cc{K}^{i+1}/\cc{K}^{i}J^{i+1}_+\cong V_{i+1}$, we have a homomorphism $\cc{K}^{i}\ltimes J^{i+1}\rightarrow \Sp(V_{i+1})\ltimes H(V_{i+1})$; we again write $\omega_{i}$ for the pullback of $\omega_{i}$ along this map. 
By inflating $\phi_i|_{\cc{K}^i}$ to $\cc{K}^{i}\ltimes J^{i+1}$, we take the tensor product $\omega_{i}\otimes(\phi_i|_{\cc{K}^i})$ as representations of $\cc{K}^{i}\ltimes J^{i+1}$.
Then, in fact, this representation factors through the multiplication map $\cc{K}^{i}\ltimes J^{i+1}\twoheadrightarrow \cc{K}^{i}J^{i+1}=\cc{K}^{i+1}$. 
This $\cc K^{i+1}$-representation factors through $\cc K^{i+1}/G_{\x,r_{i}+}^{i+1} = \cc K^{i+1} G_{\x,r_i+}/G_{\x,r_i+}$ and therefore can be extended to a representation of $\cc K^{i+1} (G^i, G)_{\x,(r_i+,s_i+)} = \cc K^{i+1} G_{\x,s_i+}  \supseteq \cc K^d$; we set $\cc K \coloneqq \cc K^d$ and define $\kappa_i$ to be the resulting $\cc K$-representation. 
Set $\kappa_d:=\phi_d|_{\cc{K}_d}$ and define the $\cc{K}$-representation
\begin{equation*}
    \kappa_+ \coloneqq \kappa_0 \otimes \cdots \otimes \kappa_d.
\end{equation*}

For each $1 \leq j \leq n$, we define the $\cc{K}$-representation 
\[
    \cc\rho_{\Psi_j}^\FKS\coloneqq \rho_{0,j} \otimes \kappa_+ \otimes \epsilon =  \rho_{0,j}\otimes\kappa_0\otimes\cdots\otimes\kappa_d\otimes\epsilon_{\Psi_j},
\]
where $\rho_{0,j}$ is regarded as a representation of $\cc{K}$ by the inflation and $\epsilon_{\Psi_j}$ is the sign character of Fintzen--Kaletha--Spice (see Section \ref{subsec:ADS}).
Note that $\epsilon$ is determined by the clipped datum $\dashover{\Psi}=(\vec{\bfG},\vec{\phi},\vec{r},\x)$, hence independent of the choice of $\rho_{0,j}$.
Also note that $\epsilon_{\Psi_j} = \varepsilon^\ram[\theta]|_{T_0}$.

We let $\cc{K}_{0+}:=\cc{K}\cap G_{\x,0+}$; note that then $\cc{K}=G^0_{\x,0}\cc{K}_{0+}$.

\begin{lem}\label{lem:separation}
    Let $g\in \cc{K}$ be any element and let $g=su$ be its topological Jordan decomposition.
    Then, after replacing $g$ with its $\cc{K}_{0+}$-conjugate if necessary, we have $s\in G^0_{\x,0}$.
\end{lem}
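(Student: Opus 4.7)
The plan is to exploit the quotient $\cc K/\cc K_{0+}$ to reduce the question to a finite-group lifting problem, and then to promote the resulting lift to a $\cc K_{0+}$-conjugate of $s$ via a Schur--Zassenhaus argument in the pro-$p$ setting.

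First, I would note that since $\cc K = G^0_{\x,0}\cc K_{0+}$ with $G^0_{\x,0}\cap\cc K_{0+} = G^0_{\x,0+}$, the projection $\pi\from\cc K \twoheadrightarrow \cc K/\cc K_{0+}$ is naturally identified with the reduction $G^0_{\x,0}\twoheadrightarrow \bbG^0_0(\F_q)$. Being continuous, $\pi$ carries the topological Jordan decomposition $g = su$ to the Jordan decomposition $\pi(g) = \pi(s)\pi(u)$ in $\bbG^0_0(\F_q)$; in particular $\bar s\coloneqq\pi(s)$ is semisimple in $\bbG^0_0(\F_q)$. Lemma~\ref{lem:ss-lift} applied to the reductive quotient of the parahoric of $\bfG^0$ then yields a topologically semisimple element $\tilde s\in G^0_{\x,0}$ of the same (prime-to-$p$) order as $\bar s$ with $\pi(\tilde s) = \bar s$.

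It remains to show that $s$ and $\tilde s$ are $\cc K_{0+}$-conjugate, for then any $k\in\cc K_{0+}$ witnessing $k s k^{-1} = \tilde s$ gives $kgk^{-1} = \tilde s\cdot kuk^{-1}$, whose topologically semisimple part $\tilde s$ lies in $G^0_{\x,0}$. To produce such a $k$, I would apply the profinite Schur--Zassenhaus theorem to the continuous extension
\[
1 \to \cc K_{0+} \to \pi^{-1}(\langle\bar s\rangle) \to \langle\bar s\rangle \to 1,
\]
in which $\cc K_{0+}$ is pro-$p$ (as a closed subgroup of the Moy--Prasad group $G_{\x,0+}$) and $\langle\bar s\rangle$ is cyclic of prime-to-$p$ order. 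The subgroups $\langle s\rangle$ and $\langle\tilde s\rangle$ of $\pi^{-1}(\langle\bar s\rangle)$ are both continuous splittings of this extension, hence are $\cc K_{0+}$-conjugate. A brief order-count comparing $\pi(ksk^{-1}) = \bar s$ with $\pi(\tilde s^j) = \bar s^j$ then upgrades the conjugacy of these cyclic subgroups to the equality $ksk^{-1} = \tilde s$ itself.

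The main technical obstacle is justifying the profinite Schur--Zassenhaus step, which ultimately rests on the vanishing of $H^{1}(\langle\bar s\rangle,\cc K_{0+})$ for a finite prime-to-$p$ cyclic group acting on a pro-$p$ group. The pro-$p$ property of $\cc K_{0+}$ is immediate from Moy--Prasad theory, while its normality in $\pi^{-1}(\langle\bar s\rangle)$ follows from the normality of $G_{\x,0+}$ in $G_{\x,0}$ together with the formula $\cc K_{0+} = \cc K\cap G_{\x,0+}$. The remainder is a routine cohomological formality, very much in the spirit of the argument used in Lemma~\ref{lem:sr=rs}.
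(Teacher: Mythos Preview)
Your proof is correct. The paper itself gives no argument beyond a one-line citation (``This follows from the topological semisimplicity of $s$ (cf.\ [Oi23-TECR, Lemma~5.7])''), so there is little to compare against; the underlying mechanism you use---that $s$ has finite prime-to-$p$ order while $\cc K_{0+}$ is pro-$p$, so complements to $\cc K_{0+}$ in $\pi^{-1}(\langle\bar s\rangle)$ are conjugate---is exactly the content of the cited lemma, and your Schur--Zassenhaus packaging is a clean and standard way to phrase it.

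One small remark: you don't actually need to invoke Lemma~\ref{lem:ss-lift} to produce $\tilde s$. Since $\cc K = G^0_{\x,0}\cc K_{0+}$, writing $g = g_0 k$ with $g_0\in G^0_{\x,0}$ and $k\in\cc K_{0+}$ and taking the topological Jordan decomposition of $g_0$ inside $G^0_{\x,0}$ already gives you a lift $\tilde s\in G^0_{\x,0}$ of $\bar s$ with finite prime-to-$p$ order. Also, the ``same order'' claim you make is automatic (and does not rely on the statement of Lemma~\ref{lem:ss-lift}): any prime-to-$p$ order element mapping to $\bar s$ through the pro-$p$ kernel $\cc K_{0+}$ must have order exactly $|\bar s|$, since $\langle\tilde s\rangle\cap\cc K_{0+}=\{1\}$. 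These are cosmetic points; the argument stands as written.
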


\begin{proof}
    This follows from the topological semisimplicity of $s$ (cf.\ \cite[Lemma 5.7]{Oi23-TECR}).
\end{proof}

Suppose that $s\in T_0$ is a topologically semisimple element. 
Then, by replacing the clipped datum $(\vec{\bfG},\vec{\phi},\vec{r},\x)$ with $(\vec{\bfG}_s,\vec{\phi},\vec{r},\x)$, we can perform the previous construction to get a representation $\kappa_{s,+}:=\kappa_{s,0}\otimes\cdots\otimes\kappa_{s,d}$ of the group $K_s:=G^{0}_{s,\x,0}G^{1}_{s,\x,s_0}\cdots G^{d}_{s,\x,s_{d-1}}$.

Note that the conjugate action of $T$ preserves each $J^{i+1}$ and $J^{i+1}_+$, hence induces an action on $V_{i+1}$, which is in fact symplectic.
We recall a description of this action due to Adler--Spice (\cite[Proof of Proposition 3.8]{AS09}; see also \cite[Section 6.1]{Oi23-TECR}).
By fixing a finite unramified extension $E$ of $F$ splitting $\bfT$, we put
\[
\bfV_{i+1}\coloneqq \Lie(\bfG^{i},\bfG^{i+1})(E)_{\x,(r_i,s_i):(r_i,s_{i}+)}.
\]
Since 
\[
J^{i+1}/J^{i+1}_{+}
=
(G^{i},G^{i+1})_{\x,(r_i,s_i):(r_i,s_{i}+)}
=
(\bfG^{i},\bfG^{i+1})(E)_{\x,(r_i,s_i):(r_i,s_{i}+)}^{\Gamma},
\]
the exponential map $\Lie(\bfG^{i},\bfG^{i+1})(E)_{\x,(r_i,s_i):(r_i,s_{i}+)}\xrightarrow{\sim}(\bfG^{i},\bfG^{i+1})(E)_{\x,(r_i,s_i):(r_i,s_{i}+)}$ induces an identification
\[
\bfV_{i+1}^{\Gamma}\xrightarrow{\sim}V_{i+1}:=J^{i+1}/J^{i+1}_{+}.
\]
Let $\Phi(\bfG^i,\bfT)$ be the set of roots of $\bfT$ in $\bfG^i$.
For each $\alpha\in \Phi(\bfG^{i+1},\bfT)\smallsetminus \Phi(\bfG^i,\bfT)$, we let $\bfV_{\alpha}$ be the image of $\bmfg_{\alpha}(E)\cap\Lie(\bfG^{i},\bfG^{i+1})(E)_{\x,(r_i,s_i)}$ in $\bfV_{i+1}$, where $\bmfg_\alpha$ denotes the root space of $\alpha$ in $\bmfg$.
Then the root space decomposition $\bmfg\cong\bmft\oplus\bigoplus_{\alpha\in\Phi(\bfG,\bfT)}\bmfg_{\alpha}$ naturally induces a decomposition
\[
\bfV_{i+1}
=
\bigoplus_{\alpha\in \Phi(\bfG^{i+1},\bfT)\smallsetminus \Phi(\bfG^i,\bfT)}\bfV_{\alpha}.
\]
For each $\alpha\in \Phi(\bfG^{i+1},\bfT)\smallsetminus \Phi(\bfG^i,\bfT)$, we put $V_{\alpha}\coloneqq \bfV_{\alpha}^{\Gamma_{\alpha}}$, where $\Gamma_{\alpha}$ denotes the stabilizer of $\alpha$ in $\Gamma$ (note that $\bfV_{\alpha}$ and $V_{\alpha}$ might be zero depending on $\alpha$).
We define a subset $\Xi_{i+1}$ of $\Phi(\bfG^{i+1},\bfT)\smallsetminus \Phi(\bfG^i,\bfT)$ by
\[
\Xi_{i+1}\coloneqq \{\alpha\in\Phi(\bfG^{i+1},\bfT)\smallsetminus \Phi(\bfG^i,\bfT)\mid V_{\alpha}\neq0\}.
\]
We remark that, for any $\alpha\in\Xi_{i+1}$, the space $V_{\alpha}$ is (noncanonically) isomorphic to the residue field $k_{\alpha}$ of $F_{\alpha}:=(F^\sep)^{\Gamma_\alpha}$.
Also note that $\Xi_{i+1}$ is preserved by the action of $\Sigma:=\Gamma\times\{\pm1\}$ on $\Phi(\bfG,\bfT)$.
We write $\dot{\Xi}_{i+1}$ and $\ddot{\Xi}_{i+1}$ for $\Xi_{i+1}/\Gamma$ and $\Xi_{i+1}/\Sigma$, respectively.

For $\Gamma\alpha\in\dot{\Xi}_{i+1}$, we put
\[
\bfV_{\Gamma\alpha}\coloneqq \bigoplus_{\beta\in\Gamma\alpha}\bfV_{\beta}
\quad\text{and}\quad
V_{\Gamma\alpha}\coloneqq \bfV_{\Gamma\alpha}^{\Gamma}.
\]
Then, for any $\Gamma\alpha\in\dot{\Xi}_{i+1}$, we have
\[
V_{\alpha}
\xrightarrow{\sim}
V_{\Gamma\alpha}=\Bigl(\bigoplus_{\beta\in\Gamma\alpha}\bfV_{\beta}\Bigr)^{\Gamma}
\colon X_{\alpha} \mapsto \sum_{\sigma\in\Gamma/\Gamma_{\alpha}}\sigma(X_{\alpha}).
\]
Therefore we get
\begin{align}\label{eq:decomp}
V_{i+1}
\cong
\bigoplus_{\Gamma\alpha\in\dot{\Xi}_{i+1}}V_{\Gamma\alpha}
=
\bigoplus_{\Sigma\alpha\in\ddot{\Xi}_{i+1}}V_{\Sigma\alpha},
\end{align}
where we put $V_{\Sigma\alpha}\coloneqq V_{\Gamma\alpha}\oplus V_{-\Gamma\alpha}$ for asymmetric $\alpha$ (i.e., $-\alpha\notin\Gamma\alpha$) and $V_{\Sigma\alpha}\coloneqq V_{\Gamma\alpha}$ for symmetric $\alpha$ (i.e., $-\alpha\in\Gamma\alpha$).
This decomposition $V_{i+1}\cong \bigoplus_{\Sigma\alpha\in\ddot{\Xi}_{i+1}}V_{\Sigma\alpha}$ is in fact an orthogonal decomposition into symplectic subspaces.

We have a similar description for the groups associated to $(\vec{\bfG}_s,\vec{\phi},\vec{r},\x)$.
We use the subscript ``$s$'' to denote the relevant groups.
For example, we put $V_{s,i+1}:=J_s^{i+1}/J_{s,+}^{i+1}=(G_s^{i},G_s^{i+1})_{\x,(r_i,s_i):(r_i,s_{i}+)}$, $\Xi_{s,i+1}:=\{\alpha\in\Phi(\bfG_s^{i+1},\bfT)\smallsetminus \Phi(\bfG_s^{i},\bfT)\mid V_{\alpha}\neq0\}$, and so on.
Note that $V_{s,i+1}$ is identified with the symplectic subspace of $V_{i+1}$ stabilized by the conjugate action of $s$, or equivalently, the sum of $V_{\Sigma\alpha}$'s for $\Sigma\alpha\in\ddot{\Xi}_{i+1}$ satisfying $\alpha(s)=1$. 
Hence, by putting $V_{i+1}^{s\neq1}$ to be the sum of $V_{\Sigma\alpha}$'s for $\Sigma\alpha\in\ddot{\Xi}_{i+1}$ satisfying $\alpha(s)\neq1$, we get an orthogonal decomposition
\[
    V_{i+1}
    =V_{s,i+1}\oplus V_{i+1}^{s\neq1}
    =\Bigl(\bigoplus_{\begin{subarray}{c}\Sigma\alpha\in\ddot{\Xi}_{i+1}\\ \alpha(s)=1\end{subarray}} V_{\Sigma\alpha}\Bigr)\oplus\Bigl(\bigoplus_{\begin{subarray}{c}\Sigma\alpha\in\ddot{\Xi}_{i+1}\\ \alpha(s)\neq1\end{subarray}} V_{\Sigma\alpha}\Bigr).
\]

We write
\begin{equation*}
    \theta_{\geq 0} \coloneqq \prod_{i=0}^d \phi_i|_{T_0}, \qquad 
    \theta_+:=\theta|_{T_{0+}} = \prod_{i=-1}^d \phi_i|_{T_{0+}} = \prod_{i=0}^d \phi_i|_{T_{0+}} = \theta_{\geq 0}|_{T_{0+}}. 
\end{equation*}

\begin{prop}\label{prop:descent}
    For $g\in \cc{K}$ with topological Jordan decomposition $g=su$ such that $s \in T_0$,
    \[
        \Theta_{\kappa_{+}}(su)
        =
        (-1)^{r(\bfT,\theta)-r_s(\bfT,\theta)}
        \cdot
        \varepsilon^\ram[\theta](s)
        \cdot 
        \theta_{\geq0}(s)
        \cdot
        \Theta_{\kappa_{s,+}}(u).
    \]
\end{prop}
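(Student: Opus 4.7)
The plan is to compute $\Theta_{\kappa_+}(su)$ factor-by-factor via the tensor decomposition $\kappa_+ = \kappa_0 \otimes \cdots \otimes \kappa_d$ of $\cc K$-representations, and for each $0 \leq i \leq d-1$ to apply the Heisenberg--Weil character descent along the $s$-fixed symplectic subspace $V_{s,i+1} \subseteq V_{i+1}$.

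First, since $s,u$ commute and $s \in T_0$ normalizes every subgroup $\cc K^i$ and $J^{i+1}$ entering Yu's construction, one has $\Theta_{\kappa_+}(su) = \prod_{i=0}^{d} \Theta_{\kappa_i}(su)$, with the $i = d$ factor equal to $\phi_d(s)\phi_d(u)$ directly. For $0 \leq i \leq d-1$, letting $\bar s \in \Sp(V_{i+1})$ and $\bar u \in H(V_{i+1})$ denote the natural images, the factorization $\kappa_i = \omega_i \otimes \phi_i|_{\cc K^i}$ (pulled back via $\cc K \to \Sp(V_{i+1}) \ltimes H(V_{i+1})$) gives $\Theta_{\kappa_i}(su) = \phi_i(s)\phi_i(u) \cdot \Theta_{\omega_i}(\bar s \cdot \bar u)$. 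The technical heart is then the Weil-representation character formula: since $s \in \bfT$ acts diagonally on $V_{i+1}$ with respect to the root-space decomposition \eqref{eq:decomp}, with fixed part $V_{s,i+1}$ and orthogonal complement $V_{i+1}^{s \neq 1} = \bigoplus_{\alpha(s) \neq 1} V_{\Sigma\alpha}$, the Howe--Shintani-style descent reads
\[
\Theta_{\omega_i}(\bar s \cdot \bar u) = \gamma_i(s) \cdot \Theta_{\omega_{s,i}}(\bar u_{V_{s,i+1}}),
\]
where $\omega_{s,i}$ is the Heisenberg--Weil representation attached to $(V_{s,i+1}, \phi_i|_{G_s^i})$ and $\gamma_i(s)$ is a product, over those $\Sigma\alpha \in \ddot\Xi_{i+1}$ with $\alpha(s) \neq 1$, of normalized quadratic Gauss sums in $k_\alpha$ evaluated at $\alpha(s)$. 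This descent is the local backbone of the Adler--DeBacker--Spice character formula (\cite{AS09}, \cite[Section 4]{DS18}, \cite[Section 4]{FKS23}).

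The final step is to identify the two structural pieces of $\prod_i \gamma_i(s)$. Each normalized Gauss sum factors as a $\pm 1$-sign times a quadratic residue character value at $\alpha(s)$; by the definition of $\varepsilon^\ram[\phi_i]$ in \cite[Definition 4.6]{CO25} (after \cite[Section 4.2]{DS18}) as the product of precisely these quadratic character contributions, and by multiplicativity along the Howe factorization, the quadratic character parts assemble into $\varepsilon^\ram[\theta](s)$. The accumulated sign, by the normalization of $r(\bfT,\theta)$ in \cite[Proposition 4.9]{CO25}, equals $(-1)^{r(\bfT,\theta) - r_s(\bfT,\theta)}$, since passing from $\bfG$ to $\bfG_s$ removes exactly those $\Sigma$-orbits in $\Phi(\bfG,\bfT) \smallsetminus \Phi(\bfG^0,\bfT)$ whose contributions live in $V_{s,i+1}$ rather than $V_{i+1}^{s \neq 1}$. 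Assembling everything, $\prod_i \phi_i(s) = \theta_{\geq 0}(s)$, and $\prod_i \phi_i(u) \cdot \Theta_{\omega_{s,i}}(\bar u_{V_{s,i+1}})$ reassembles as $\Theta_{\kappa_{s,+}}(u)$ by the analogous construction performed for $\bfG_s$, yielding the stated formula.

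The main obstacle will be the sign bookkeeping: matching the Weil indices produced by the Gauss-sum descent with the integers $r(\bfT,\theta), r_s(\bfT,\theta)$ demands a careful conventions check. Fortunately, the specialization $u = 1$ with $s \in T$ unramified very regular (where $V_{s,i+1} = 0$, so the descent is complete) is exactly what is handled in the proof of \cite[Proposition 4.11]{CO25} (reproduced here as Proposition \ref{prop:AS-vreg}); the task is to verify that the same matching persists under a general topologically semisimple $s \in T_0$ and to extract the mixed-element statement as a standalone structural assertion.
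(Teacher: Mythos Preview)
Your approach is essentially the paper's: factor $\kappa_+ = \kappa_0 \otimes \cdots \otimes \kappa_d$, for each $i<d$ split $V_{i+1} = V_{s,i+1} \oplus V_{i+1}^{s\neq 1}$ orthogonally, apply the Heisenberg--Weil tensor decomposition, and identify the contribution from $V_{i+1}^{s\neq 1}$ as the Adler--DeBacker--Spice sign $(-1)^{r(\bfT,\theta)-r_s(\bfT,\theta)}\varepsilon^{\ram}[\theta](s)$.

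There is, however, one genuine imprecision. You write $\bar u \in H(V_{i+1})$ and $\Theta_{\kappa_i}(su) = \phi_i(s)\phi_i(u)\cdot \Theta_{\omega_i}(\bar s \cdot \bar u)$, but $u$ does \emph{not} land only in the Heisenberg factor: writing $u = u_0 u_1 \cdots u_d$ with $u_0 \in G^0_{\x,0}$ and $u_j \in J^j$, the component $u_0$ acts on $V_{i+1}$ through $\Sp(V_{i+1})$, not $H(V_{i+1})$. (Likewise $\phi_i(u)$ is ill-defined since $u \notin G^i$; the correct factor is $\phi_i(u_0\cdots u_i)$.) The paper handles this as follows: since $u_1,\ldots,u_i$ act trivially on $V_{i+1}$, one is reduced to $\Theta_{\omega_i}(su_0 \ltimes u_{i+1})$; since $s$ and $u_0$ commute, the image of $u_0$ in $\Sp(V_{i+1})$ splits as $(u_0',u_0'') \in \Sp(V_{s,i+1}) \times \Sp(V_{i+1}^{s\neq 1})$; and then Spice's descent formula \cite[Lemma 8.2.1]{Spi21} gives $\Theta_{\omega_i^{s\neq 1}}(su_0'' \ltimes 1) = \Theta_{\omega_i^{s\neq 1}}(s \ltimes 1)$ because $s$ has no nonzero fixed vector on $V_{i+1}^{s\neq 1}$. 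This last step is exactly what validates your descent formula (your $\gamma_i(s)$ is $\Theta_{\omega_i^{s\neq 1}}(s \ltimes 1)$), but it needs to be stated, and the surviving $u_0'$ on the $\Sp(V_{s,i+1})$-side must be carried along so that the pieces reassemble correctly into $\Theta_{\kappa_{s,i}}(u) = \Theta_{\omega_{s,i}}(u_0' \ltimes u_{i+1})\cdot \phi_i(u_0\cdots u_i)$.
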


\begin{proof}
    By recalling that $\cc{K}=G^0_{\x,0}J^1\cdots J^d$, we choose elements $u_0\in G^0_{\x,0}$ and $u_i\in J^i$ for each $1\leq i \leq d$ such that $u=u_0u_1\cdots u_d$.
    Since $\kappa_+=\kappa_0\otimes\cdots\otimes\kappa_d$, we have $\Theta_{\kappa_+}(g)=\prod_{i=0}^{d}\Theta_{\kappa_i}(g)$.
    We compute each $\Theta_{\kappa_i}(g)$.

    Recall that, for each $0\leq i \leq d-1$, $\kappa_i$ is the inflation from $\cc{K}^{i+1}$ to $\cc{K}$ of the descent of $\omega_i\otimes(\phi_i)|_{\cc{K}^i}$ along the map $\cc{K}^{i}\ltimes J^{i+1}\twoheadrightarrow \cc{K}^{i}J^{i+1}=\cc{K}^{i+1}$.
    Hence we have
    \begin{align*}
        \Theta_{\kappa_i}(su)
        =\Theta_{\kappa_i}(su_{0}\cdots u_{i+1})
        &=\Theta_{\omega_i\otimes(\phi_i)|_{\cc{K}^i}}(su_{0}\cdots u_{i}\ltimes u_{i+1})\\
        &=\Theta_{\omega_i}(su_{0}\cdots u_{i}\ltimes u_{i+1})\cdot\phi_i(su_{0}\cdots u_{i}).
    \end{align*}
    As the conjugation action of $\cc{K}_{0+}\cap\cc{K}^{i+1}$ on $V_{i+1}$ is trivial, 
    \[
        \Theta_{\omega_i}(su_{0}\cdots u_{i}\ltimes u_{i+1})
        =
        \Theta_{\omega_i}(su_{0}\ltimes u_{i+1}).
    \]

    Let us write the image of $u_{i+1}\in J^{i+1}$ in $V_{i+1}\cong\bigoplus_{\Sigma\alpha\in\ddot{\Xi}_{i+1}}V_{\Sigma\alpha}$ as $(u_{\alpha})_{\alpha}$.
    Since $su=us$, $(u_{\alpha})_{\alpha}$ is invariant under the $s$-conjugation.
    However, the $s$-conjugation is given by $\alpha(s)$-multiplication on each $V_{\Sigma\alpha}$.
    Hence we necessarily have $u_\alpha=0$ for any $\Sigma\alpha\in\ddot{\Xi}_{i+1}$ such that $s(\alpha)\neq1$.
    In other words, the image of $u_{i+1}\in J^{i+1}$ in $V_{i+1}$ lies in $V_{s,i+1}$.
    Thus the image of $u_{i+1}\in J^{i+1}$ in the Heisenberg quotient $H(V_{i+1})$ lies in $H(V_{s,i+1})$.
    
    On the other hand, the image of $s$ in $\Sp(V_{i+1})$ is contained in the subgroup $\prod_{\alpha\in\ddot{\Xi}_{i+1}}\Sp(V_{\Sigma\alpha})$; the image is given by $(\alpha(s))_{\alpha\in\ddot{\Xi}_{i+1}}$.
    As $u_0$ commutes with $s$, the image of $u_0$ in $\Sp(V_{i+1})$ is contained in $\Sp(V_{s,i+1})\times\Sp(V_{i+1}^{s\neq1})$.
    Let us write $(u'_0,u''_0)$ for this image.

    Now recall the following general fact (see \cite[2.5]{Ger77}): 
    \begin{quote}
        We fix a nontrivial additive character $\psi$ of $\F_p$.
        If $W$ and $W'$ are Heisenberg groups over $\F_p$, then the restriction of the Heisenberg--Weil representation $\omega_{W\oplus W',\psi}$ of $\Sp(W\oplus W')\ltimes H(W\oplus W')$ with central character $\psi$ to $(\Sp(W)\ltimes H(W))\times (\Sp(W')\ltimes H(W'))$ is isomorphic to the tensor product $\omega_{W,\psi}\otimes\omega_{W',\psi}$ of the Heisenberg--Weil representations with central character $\psi$.
    \end{quote}
    Therefore, if we let $\omega_{s,i}$ and $\omega_{i}^{s\neq1}$ denote the Heisenberg--Weil representations associated to the symplectic subspaces $V_{s,i+1}$ and $V_{i+1}^{s\neq1}$, then we have 
    \[
        \Theta_{\omega_i}(su_{0}\ltimes u_{i+1})
        =
        \Theta_{\omega_{s,i}}(u'_{0}\ltimes u_{i+1})
        \cdot
        \Theta_{\omega_{i}^{s\neq1}}(su''_{0}\ltimes 1).
    \]
    Since $g=su$ is the topological Jordan decomposition, the images of $s$ and $u$ in $G^0_{\x,0:0+}$ gives the Jordan decomposition.
    In particular, the product $s\cdot u''_0\in \Sp(V_{i+1}^{s\neq1})$ is also the Jordan decomposition.
    As the $s$-action on $V_{i+1}^{s\neq1}$ does not have any non-zero fixed vector, the descent formula of the character of the Heisenberg--Weil representation \cite[Lemma 8.2.1]{Spi21} implies that $\Theta_{\omega_{i}^{s\neq1}}(su''_{0}\ltimes 1)=\Theta_{\omega_{i}^{s\neq1}}(s\ltimes 1)$.

    In summary, we obtain
    \begin{align*}
        \Theta_{\kappa_i}(su)
        &=
        \Theta_{\omega_{s,i}}(u'_{0}\ltimes u_{i+1})
        \cdot
        \Theta_{\omega_{i}^{s\neq1}}(s\ltimes 1)
        \cdot\phi_i(su_{0}\cdots u_{i})\\
        &=
        \Theta_{\kappa_{s,i}}(u)
        \cdot
        \Theta_{\omega_{i}^{s\neq1}}(s\ltimes 1)
        \cdot\phi_i(s)
    \end{align*}
    for each $0\leq i \leq d-1$.
    As $\kappa_d=\phi_d$, we have $\kappa_d(su)=\phi_d(s)\phi_{d}(u)=\phi_d(s)\kappa_{s,d}(u)$.
    Therefore, we finally get
    \begin{align*}
        \Theta_{\kappa_{+}}(su)
        &=\prod_{i=0}^{d-1}\Bigl(\Theta_{\kappa_{s,i}}(u)
        \cdot
        \Theta_{\omega_{i}^{s\neq1}}(s\ltimes 1)
        \cdot\phi_i(s)\Bigr)\cdot \phi_d(s)\cdot\kappa_{s,d}(u)\\
        &=
        \prod_{i=0}^{d}\Theta_{\kappa_{s,i}}(u)
        \cdot
        \prod_{i=0}^{d-1}\Theta_{\omega_{i}^{s\neq1}}(s\ltimes 1)
        \cdot \prod_{i=0}^{d}\phi_i(s)\\
        &=\Theta_{\kappa_{s,+}}(u)
        \cdot
        \prod_{i=0}^{d-1}\Theta_{\omega_{i}^{s\neq1}}(s\ltimes 1)
        \cdot \theta_{\geq0}(s).
    \end{align*}
        
    Note that $\Theta_{\omega_{i}^{s\neq1}}(s\ltimes 1)$ is exactly the source where the Adler--DeBacker--Spice sign character (for unramified roots) comes from (see \cite[Proposition 3.8]{AS09}, \cite[Proposition 4.21]{DS18}, and also \cite[Section 4.2]{CO25}).
    To be more precise, we have 
    \[
        \prod_{i=0}^{d-1}\Theta_{\omega_{i}^{s\neq1}}(s\ltimes 1)
        =
        (-1)^{r(\bfT,\theta)-r_s(\bfT,\theta)}
        \cdot
        \varepsilon^\ram[\theta](s).
    \]
    Hence we get the asserted identity.
\end{proof}

\begin{proof}[Proof of Theorem \ref{thm:green Yu}]
    We fix $g\in\bbG_r(\F_q)$ with Jordan decomposition $g=su$, or equivalently, $g\in G_{\x,0}$ with a topological Jordan decomposition $g=su$.

    Recall that each $\cc\tau^{\FKS}_{\Psi_j}$ is obtained by inducing $\cc\rho^{\FKS}_{\Psi_j}$ from $\cc K$ to $TG_{\x,0}$.
    Hence, by the Frobenius character formula, we have
    \[
        \Theta_{\cc\tau^{\FKS}_{\Psi_j}}(g)
        =
        \sum_{\begin{subarray}{c} x\in \cc{K}\backslash G_{\x,0} \\{}^{x}g\in \cc{K}\end{subarray}} \Theta_{\cc\rho^{\FKS}_{\Psi_j}}({}^{x}g),
    \]
    which implies that 
    \[
        \Theta_{\cc\tau^{\FKS}_{(\bfT,\theta)}}(g)
        =
        \sum_{\begin{subarray}{c} x\in \cc{K}\backslash G_{\x,0} \\{}^{x}g\in \cc{K}\end{subarray}} \Theta_{\cc\rho^{\FKS}_{(\bfT,\theta)}}({}^{x}g),
    \]
    where $\cc\rho^{\FKS}_{(\bfT,\theta)}$ is defined in a similar manner to $\cc\tau^{\FKS}_{(\bfT,\theta)}$.
    As reviewed above, each $\cc\rho^{\FKS}_{\Psi_j}$ is defined to be $\rho_{0,j}\otimes \kappa_{+}\otimes\epsilon$.
    Thus $\Theta_{\cc\rho^{\FKS}_{(\bfT,\theta)}}$ is expressed as the product of $\Theta_{\rho_{0,j}}$, $\Theta_{\kappa_{+}}$, and $\Theta_{\epsilon}$.

    Suppose that $x\in G_{\x,0}$ satisfies ${}^{x}g\in \cc{K}$.
    Then, by Lemma \ref{lem:separation}, we may suppose (by replacing $x$ with an element of $\cc{K}_{0+}x$ if necessary) that ${}^{x}s\in G^0_{\x,0}$ and ${}^{x}u\in\cc{K}$.
    If we write ${}^{x}u=({}^{x}u)_0\cdot({}^{x}u)_+$ according to the product expression $\cc{K}=G^0_{\x,0}\cc{K}_{0+}$, then we have
    \[
        \Theta_{\cc\rho^{\FKS}_{\Psi_j}}({}^{x}g)
        =
        \Theta_{\rho_{0,j}}({}^{x}s\cdot({}^{x}u)_0)
        \cdot\Theta_{\kappa_{+}}({}^{x}s\cdot{}^{x}u)\cdot\epsilon_{\Psi_j}({}^{x}s\cdot {}^{x}u).
    \]
    This implies that 
    \[
        \Theta_{\cc\rho^{\FKS}_{(\bfT,\theta)}}({}^{x}g)
        =
        (-1)^{r(\bfG^0)-r(\bfT)}\cdot
        \Theta_{R_{\bbT_0}^{\bbG_0}(\phi_{-1})}({}^{x}s\cdot({}^{x}u)_0)
        \cdot\Theta_{\kappa_{+}}({}^{x}s\cdot{}^{x}u)\cdot\epsilon_{\Psi_j}({}^{x}s).
    \]
    Here, we used that $\epsilon_{\Psi_j}$ is quadratic, hence trivial at any topologically unipotent element.
    Note that, the images of ${}^{x}s$ and $({}^{x}u)_0$ under the map $\cc{K}\twoheadrightarrow\bbG^0_0(\F_q)$ gives the Jordan decomposition of the image of ${}^{x}s\cdot ({}^{x}u)_0$.
    Indeed, the orders of ${}^{x}s$ and $({}^{x}u)_0$ are prime-to-$p$ and $p$-power, respectively.
    Moreover, since we have ${}^{x}s\cdot {}^{x}u={}^{x}u\cdot {}^{x}s$ in $\cc{K}$, we get ${}^{x}s\cdot ({}^{x}u)_0=({}^{x}u)_0\cdot {}^{x}s$ in $\cc{K}/\cc{K}_{0+}\cong\bbG^0_{0}(\F_q)$.
    Therefore, by the Deligne--Lusztig character formula, we get
    \[
        \Theta_{R_{\bbT_0}^{\bbG_0}(\phi_{-1})}({}^{x}s\cdot({}^{x}u)_0)
        =
        \sum_{\begin{subarray}{c}y\in \bbG^0_0(\F_q) \\ {}^{y}({}^{x}s)\in \bbT_0(\F_q) \end{subarray}} \phi_{-1}({}^{y}({}^{x}s))\cdot Q^{\bbG_{{}^x s,0}^0}_{\bbT_0^{y}}(({}^{x}u)_0).
    \]
    Hence
    \[
        \Theta_{\cc\tau^{\FKS}_{(\bfT,\theta)}}(g)
        =
        (-1)^{r(\bfG^0)-r(\bfT)}
        \sum_{\begin{subarray}{c} x\in \cc{K}\backslash G_{\x,0} \\{}^{x}g\in \cc{K}\end{subarray}}
        \sum_{\begin{subarray}{c}y\in \bbG^0_0(\F_q) \\ {}^{y}({}^{x}s)\in \bbT_0(\F_q) \end{subarray}} \phi_{-1}({}^{y}({}^{x}s))\cdot Q^{\bbG_{{}^x s,0}^0}_{\bbT_0^{y}}(({}^{x}u)_0)
        \cdot\Theta_{\kappa_{+}}({}^{x}s\cdot{}^{x}u)\cdot\epsilon_{\Psi_j}({}^{x}s).
    \]

    As $\cc{K}=G^0_{\x,0}\cc{K}_{0+}$ and $\cc{K}/\cc{K}_{0+}\cong\bbG^0_0(\F_q)$, the right-hand side equals
    \[
        (-1)^{r(\bfG^0)-r(\bfT)}
        \sum_{\begin{subarray}{c} x\in \cc{K}_{0+}\backslash G_{\x,0} \\{}^{x}s\in \bbT_0(\F_q)\end{subarray}}
         \phi_{-1}({}^{x}s)\cdot Q^{\bbG_{{}^x s,0}^0}_{\bbT_0}(({}^{x}u)_0)
        \cdot\dot{\Theta}_{\kappa_{+}}({}^{x}s\cdot{}^{x}u)
        \cdot\epsilon_{\Psi_j}({}^{x}s),
    \]
    where we regard $s$ as an element of $\bbG_0(\F_q)$ (which naturally contains $\bbG^0_0(\F_q)$ and $\bbT_0(\F_q)$) in the index set and $\dot{\Theta}_{\kappa_{\theta_+}}$ denotes the zero extension of $\Theta_{\kappa_{\theta_+}}$ from $\cc{K}$ to $G_{\x,0}$.
    Thus, in other words, the index set is $\{x\in \cc{K}_{0+}\backslash G_{\x,0} \mid {}^{x}s\in T_{0}G_{\x,0+}\}$.
    To rewrite this set, we consider a natural map
    \[
        \{x\in G_{\x,0} \mid {}^{x}s\in T_{0}\}
        \rightarrow
        \{x\in \cc{K}_{0+}\backslash G_{\x,0} \mid {}^{x}s\in T_{0}G_{\x,0+}\}.
    \]
    By the same argument as in the proof of Lemma \ref{lem:separation}, we can check that this map is surjective.
    To investigate the fibers, let us suppose that $x\in G_{\x,0}$ and $k\in \cc{K}_{0+}$ satisfies ${}^{x}s\in T_0$ and ${}^{kx}s\in T_0$.
    Since ${}^{x}s$ and ${}^{kx}s$ are elements of $T_0$ which are conjugate, there exists an element $w\in W_{G}(\bfT)$ of the Weyl group satisfying ${}^{kx}s={}^{w}({}^{x}s)$, i.e., the $k$-conjugation on ${}^{x}s$ is given by the $w$-conjugation.
    However, since $k$ is pro-unipotent element and we assume $p \neq 2$ is not bad for $\bfG$, $w$ must be trivial.
    Hence $k$ belongs to $\cc{K}_{0+}\cap G_{{}^{x}s}=\cc{K}_{{}^{x}s,0+}$.
    Thus the each fiber is given by $\cc{K}_{{}^{x}s,0+}x=x\cc{K}_{s,0+}$.
    Similarly, we can also check that a natural map
    \[
        \{x\in G_{\x,0} \mid {}^{x}s\in T_{0}\}
        \rightarrow
        \{x\in G_{\x,r+}\backslash G_{\x,0} \mid {}^{x}s\in T_{0}G_{\x,r+}\}
    \]
    is surjective and that each fiber is given by $G_{{}^{x}s,\x,r+}x=xG_{s,\x,r+}$.
    Thus, combining these, we see that a natural map 
    \[
        \{x\in G_{\x,r+}\backslash G_{\x,0} \mid {}^{x}s\in T_{0}G_{\x,r+}\}
        \rightarrow
        \{x\in \cc{K}_{0+}\backslash G_{\x,0} \mid {}^{x}s\in T_{0}G_{\x,0+}\}
    \]
    is surjective and the order of each fiber is $|\cc{K}_{s,0+}/G_{s,\x,r+}|$.
    Hence we obtain 
    \[
        \Theta_{\cc\tau^{\FKS}_{(\bfT,\theta)}}(g)
        =
        \frac{(-1)^{r(\bfG^0)-r(\bfT)}}{|\cc{K}_{s,0+}/G_{s,\x,r+}|}\cdot
        \sum_{\begin{subarray}{c} x\in \bbG_{r}(\F_q) \\{}^{x}s\in \bbT_r(\F_q)\end{subarray}}
         \phi_{-1}({}^{x}s)\cdot Q^{\bbG_{{}^x s,0}^0}_{\bbT_0}(({}^{x}u)_0)
        \cdot\dot{\Theta}_{\kappa_{+}}({}^{x}s\cdot{}^{x}u)
        \cdot\epsilon_{\Psi_j}({}^{x}s).
    \]

    Note that, when $s$ is trivial, the right-hand side is determined only by $Q^{\bbG^0_0}_{\bbT_0}$, $\kappa_+$, and $\epsilon_{\Psi_j}$, all of which are independent of $\phi_{-1}$.
    Thus the positive-depth FKS--Yu Green function $\sfQ_{\bbT_r}^{\bbG_r}(\theta_+)=\Theta_{\cc{\tau}^\FKS_{(\bfT,\theta)}}|_{\bbG_r(\F_q)_\unip}$ depends only on $\theta_+=\theta|_{T_{0+}}$.

    Our remaining task is to rewrite the right-hand side in terms of the positive-depth FKS--Yu Green function $\sfQ_{\bbT_r}^{\bbG_{s,r}}(\theta_+)$ for $(\bfG_s,\bfT,\theta)$.
    Using Proposition \ref{prop:descent}, we obtain
    \begin{align*}
        \Theta_{\cc\tau^{\FKS}_{(\bfT,\theta)}}(g)
        &=
        \frac{(-1)^{r(\bfG^0)-r(\bfT)+r(\bfT,\theta)-r_s(\bfT,\theta)}}{|\cc{K}_{s,0+}/G_{s,\x,r+}|}
        \cdot\sum_{\begin{subarray}{c} x\in \bbG_{r}(\F_q) \\{}^{x}s\in \bbT_r(\F_q)\end{subarray}}
         \theta({}^{x}s)\cdot Q^{\bbG_{{}^x s,0}^0}_{\bbT_0}(({}^{x}u)_0)
        \cdot\dot{\Theta}_{\kappa_{{}^{x}s,+}}({}^{x}u)\\
        &=
        \frac{(-1)^{r(\bfG^0)-r(\bfT)+r(\bfT,\theta)-r_s(\bfT,\theta)}}{|\cc{K}_{s,0+}/G_{s,\x,r+}|}
        \\
        &\qquad\qquad\qquad
        \cdot\sum_{\begin{subarray}{c} x\in \bbG_{r}(\F_q)/\bbG_{s,r}(\F_q) \\{}^{x}s\in \bbT_r(\F_q)\end{subarray}}\theta({}^{x}s)
        \cdot\sum_{y\in\bbG_{{}^{x}s,r}(\F_q)}
        Q^{\bbG_{{}^x s,0}^0}_{\bbT_0}({}^{y}({}^{x}u)_0)
        \cdot\dot{\Theta}_{\kappa_{{}^{x}s,+}}({}^{y}({}^{x}u)).
    \end{align*}
    On the other hand, by applying the computation so far to $(\bfG_{{}^{x}s},\bfT,\theta)$ and the unipotent element ${}^{x}u\in\bbG_{{}^{x}s,r}(\F_q)$, we also obtain
    \begin{align*}
        \sfQ_{\bbT_r}^{\bbG_{{}^{x}s,r}}(\theta_+)({}^{x}u)
        &=
        \frac{(-1)^{r(\bfG_{{}^{x}s}^{0})-r(\bfT)}}{|\cc{K}_{{}^{x}s,0+}/G_{{}^{x}s,\x,r+}|}
        \cdot\sum_{y\in\bbG_{{}^{x}s,r}(\F_q)}
        Q^{\bbG_{{}^x s}^0,0}_{\bbT_0}({}^{y}({}^{x}u)_0)
        \cdot\dot{\Theta}_{\kappa_{{}^{x}s,+}}({}^{y}({}^{x}u))\\
        &=
        \frac{(-1)^{r(\bfG_{s}^{0})-r(\bfT)}}{|\cc{K}_{s,0+}/G_{s,\x,r+}|}
        \cdot\sum_{y\in\bbG_{{}^{x}s,r}(\F_q)}
        Q^{\bbG_{{}^x s, 0}^0}_{\bbT_0}({}^{y}({}^{x}u)_0)
        \cdot\dot{\Theta}_{\kappa_{{}^{x}s,+}}({}^{y}({}^{x}u)).
    \end{align*}
    Therefore, we get
    \[
        \Theta_{\cc\tau^{\FKS}_{(\bfT,\theta)}}(g)
        =
        (-1)^{r(\bfG,s,\bfT,\theta)}
        \cdot\sum_{\begin{subarray}{c} x\in \bbG_{r}(\F_q)/\bbG_{s,r}(\F_q) \\{}^{x}s\in \bbT_r(\F_q)\end{subarray}}\theta({}^{x}s)
        \cdot
        \sfQ_{\bbT_r}^{\bbG_{{}^{x}s,r}}(\theta_+)({}^{x}u).\qedhere
    \]
\end{proof}

\section{Exhaustion for Howe-unramified types}\label{sec:theta general}

We have now defined and analyzed Green functions for positive-depth Deligne--Lusztig induction (Section \ref{sec:DL}) and Green functions for Yu's construction (Section \ref{sec:Yu green}), we are ready to deploy an important application: to extend regular-$\theta$ results to all $\theta$.
The strategy we employ is inspired by Lusztig's work on Green functions \cite{Lus90}. 

Let $(\bfT,\theta)$ be an unramified  pair. 
We assume that $p \neq 2$ is not bad for $\bfG$ and that $(\bfT,\theta)$ is Howe-factorizable (see the comment in Section \ref{subsec:assumptions}). 
As we will be invoking Theorem \ref{thm:vreg characterization} by way of Theorem \ref{thm:reg comparison}, we also assume that $q$ is large enough so that \eqref{eq:Henniart} holds for $\bfT \hookrightarrow \bfM$ (notation as in Section \ref{subsec:types}).
Furthermore, we assume that $q$ is large enough so that there exists a regular (in $\bfG$) depth zero character of $T$.

\begin{rem}\label{rem:reg-depth-zero}
    For the existence of a regular depth zero character of $T$, it is enough to find a character of $\bbT_0$ which is not stabilized by any nontrivial element of $W_{\bbG_0}(\bbT_0)$.
    Any character of $\bbT_0(\F_q)$ determines an $\F_q$-rational semisimple element of the dual torus $\hat{\bbT}_0$ contained in the Langlands dual group $\hat{\bbG}_0$, and vice versa.
    This identification is equivariant with respect to the actions of the Weyl groups $W_{\bbG_0}(\bbT_0)\cong W_{\hat{\bbG}_0}(\hat{\bbT}_0)$.
    Hence, finding a regular character of $\bbT_0(\F_q)$ is equivalent to finding a regular semisimple element of $\hat{\bbT}_0$.
\end{rem}

\subsection{Positive-depth elliptic Deligne--Lusztig induction for arbitrary $\theta$}\label{subsec:Yu exhaust}

Assume that $\bfT \hookrightarrow \bfG$ is elliptic. The comparison between the geometrically constructed (virtual) representations $R_{\bbT_r}^{\bbG_r}(\theta)$ and the algebraically constructed (virtual) representations $\cc \tau_{(\bfT,\theta)}^{\FKS}$ easily follow by combining the results of Sections \ref{subsec:reg comparison}, \ref{sec:green}, and \ref{sec:Yu green}.

\begin{thm}\label{thm:Q comparison}
    For any unramified elliptic pair $(\bfT,\theta)$,
    \begin{equation*}
        \sfQ_{\bbT_r}^{\bbG_r}(\theta_+) = (-1)^{r(\bfG^0) - r(\bfT) + r(\bfT,\theta)} \cdot Q_{\bbT_r}^{\bbG_r}(\theta_+).
    \end{equation*}
\end{thm}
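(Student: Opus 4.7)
The plan is to reduce the general case to the regular one, exploiting the fact that both sides depend only on $\theta_+ := \theta|_{T_{0+}}$.

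First, I would observe that both sides of the desired equality depend on $\theta$ only through $\theta_+$: the right-hand side by construction (Definition~\ref{def:pos depth Green}), and the left-hand side by Theorem~\ref{thm:green Yu}(2). Next, using the assumption on $q$ that guarantees the existence of a regular depth-zero character of $T$, I would choose a regular character $\theta'$ of $T$ with $\theta'|_{T_{0+}} = \theta_+$. Concretely, this is done by retaining the positive-depth components $\phi_0,\ldots,\phi_d$ of a Howe factorization of $\theta$ and replacing only the depth-zero piece $\phi_{-1}$ by a new $\phi'_{-1}$ so that the resulting product is regular.

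For this regular $\theta'$, $R_{\bbT_0}^{\bbG^0_0}(\phi'_{-1})$ is $(-1)^{r(\bfG^0)-r(\bfT)}$ times an irreducible cuspidal representation of $\bbG^0_0(\F_q)$. Therefore the defining sum of $\cc\tau^{\FKS}_{(\bfT,\theta')}$ in Section~\ref{subsec:Yu green} has a single term, and the outer sign $(-1)^{r(\bfG^0)-r(\bfT)}$ cancels the sign of the Deligne--Lusztig representation, yielding $\cc\tau^{\FKS}_{(\bfT,\theta')} = \cc\tau^{\FKS}_{\Psi'}$ for the Yu datum $\Psi'$ attached to $(\bfT,\theta')$. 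Applying Theorem~\ref{thm:reg comparison} to $(\bfT,\theta')$ gives
\[
\cc\tau^{\FKS}_{(\bfT,\theta')} \cong (-1)^{r(\bfG^0)-r(\bfT)+r(\bfT,\theta')}\, R_{\bbT_r}^{\bbG_r}(\theta').
\]
Restricting this character identity to unipotent elements $u \in \bbG_r(\F_q)_{\unip}$ and invoking Definition~\ref{def:green Yu} on the left together with Theorem~\ref{thm:geom-char-formula}(2) on the right produces
\[
\sfQ_{\bbT_r}^{\bbG_r}(\theta'_+) = (-1)^{r(\bfG^0)-r(\bfT)+r(\bfT,\theta')}\, Q_{\bbT_r}^{\bbG_r}(\theta'_+).
\]

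Since $\theta'_+ = \theta_+$ and both Green functions depend only on this restriction, the left and right sides above agree with $\sfQ_{\bbT_r}^{\bbG_r}(\theta_+)$ and $Q_{\bbT_r}^{\bbG_r}(\theta_+)$ respectively, up to the sign $(-1)^{r(\bfG^0)-r(\bfT)+r(\bfT,\theta')}$. To conclude the proof, I would check that $r(\bfT,\theta') = r(\bfT,\theta)$. This is the main---albeit modest---technical point: the integer $r(\bfT,\theta)$, originating from the Adler--DeBacker--Spice sign character for unramified symmetric roots, is determined by the genericity data $\phi_0,\ldots,\phi_d$ and not by $\phi_{-1}$, so it depends only on $\theta_+$. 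This can be read off from its definition in \cite[Proposition~4.9]{CO25}, and is in fact implicit in the proof of Proposition~\ref{prop:descent}. The main obstacle in the argument is precisely this sign bookkeeping rather than any new representation-theoretic input.
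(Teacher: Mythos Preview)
Your proposal is correct and matches the paper's proof essentially line for line: replace $\phi_{-1}$ by a regular depth-zero character to obtain a regular $\theta'$ with $\theta'_+=\theta_+$, apply Theorem~\ref{thm:reg comparison}, and restrict to unipotent elements using Theorem~\ref{thm:green Yu}(2) and Theorem~\ref{thm:geom-char-formula}(2). Your explicit verification that $r(\bfT,\theta')=r(\bfT,\theta)$ (because it depends only on the positive-depth data $\phi_0,\ldots,\phi_d$) is a point the paper leaves implicit by simply writing the sign as $(-1)^{r(\bfG^0)-r(\bfT)+r(\bfT,\theta)}$ without further comment.
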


\begin{proof}
    By the assumption on $q$, we can find a regular depth zero character $\phi_{-1}' \from T \to \C^\times$. 
    Then $\theta' \coloneqq \theta \cdot \phi_{-1}^{-1} \cdot \phi_{-1}'$ defines a regular character of $T$ and $(\phi_{-1}', \phi_0, \ldots, \phi_d)$ is a Howe factorization for $T$. Then for $c = (-1)^{r(\bfG^0) - r(\bfT) + r(\bfT,\theta)}$,
    \begin{align*}
        \sfQ_{\bbT_r}^{\bbG_r}(\theta_+) 
        = \sfQ_{\bbT_r}^{\bbG_r}(\theta_+') 
        &= \Theta_{\cc \tau_{(\bfT, \theta')}^{\FKS}}|_{\bbG_r(\F_q)_{\unip}} \\
        &= c \cdot \Theta_{R_{\bbT_r}^{\bbG_r}(\theta')}|_{\bbG_r(\F_q)_{\unip}}
        = c \cdot Q_{\bbT_r}^{\bbG_r}(\theta_+') = c \cdot Q_{\bbT_r}^{\bbG_r}(\theta_+),
    \end{align*}
    where the equalities hold by Theorem \ref{thm:green Yu} (2), Definition \ref{def:green Yu}, Theorem \ref{thm:reg comparison} (this is the only place the assumption on $q$ is needed), Definition \ref{def:pos depth Green}, and Theorem \ref{thm:geom-char-formula}. 
\end{proof}

\begin{thm}\label{thm:gen comparison}
    For any unramified elliptic pair $(\bfT,\theta)$,
    \[
    \cc\tau^{\FKS}_{(\bfT,\theta)}
    \cong
    (-1)^{r(\bfG^0) - r(\bfT) + r(\bfT,\theta)}\cdot R_{\bbT_r}^{\bbG_r}(\theta).
    \]
\end{thm}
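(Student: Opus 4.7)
The plan is to prove the asserted isomorphism of virtual representations by comparing characters, reducing everything through the two parallel character formulas established in Sections \ref{sec:DL} and \ref{sec:Yu green}, which both have the same ``Deligne--Lusztig'' shape (sum over rational conjugates of the semisimple part, weighted by $\theta$ and a Green function of the connected centralizer). The key linking ingredient is Theorem \ref{thm:Q comparison}, which identifies the two flavors of Green function up to an explicit sign.

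More concretely, let $g \in \bbG_r(\F_q)$ with Jordan decomposition $g = su$ and write $s \in \bbT_r(\F_q)$ representatives (after conjugating). By Remark \ref{rem:geom-char-formula},
\[
    \Theta_{R_{\bbT_r}^{\bbG_r}(\theta)}(g) = \sum_{\substack{x \in \bbG_r(\F_q)/\bbG_{r,s}(\F_q) \\ {}^{x}s \in \bbT_r(\F_q)}} \theta({}^{x}s) \cdot Q_{\bbT_r}^{\bbG_{{}^{x}s,r}}(\theta_+)({}^{x}u),
\]
while Theorem \ref{thm:green Yu} gives the parallel formula for $\Theta_{\cc\tau^{\FKS}_{(\bfT,\theta)}}(g)$ with $Q$ replaced by $\sfQ$ and an overall sign $(-1)^{r(\bfG,s,\bfT,\theta)}$ where $r(\bfG,s,\bfT,\theta)=r(\bfG^0)-r(\bfG^0_s)+r(\bfT,\theta)-r_s(\bfT,\theta)$. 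Applying Theorem \ref{thm:Q comparison} to the elliptic pair $(\bfT,\theta)$ inside $\bfG_{{}^xs}$ yields
\[
    \sfQ_{\bbT_r}^{\bbG_{{}^xs,r}}(\theta_+) = (-1)^{r(\bfG_{{}^xs}^0) - r(\bfT) + r_{{}^xs}(\bfT,\theta)} \cdot Q_{\bbT_r}^{\bbG_{{}^xs,r}}(\theta_+),
\]
so after noting that $s$ and ${}^xs$ are conjugate in $\bbG_r(\F_q)$ by an element normalizing $\bfT$ (they both lie in $\bbT_r(\F_q)$), the numerical invariants $r(\bfG^0_{{}^xs})$ and $r_{{}^xs}(\bfT,\theta)$ agree with $r(\bfG^0_s)$ and $r_s(\bfT,\theta)$. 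Substituting, the signs telescope:
\[
    r(\bfG^0)-r(\bfG^0_s)+r(\bfT,\theta)-r_s(\bfT,\theta) + r(\bfG^0_s)-r(\bfT)+r_s(\bfT,\theta) = r(\bfG^0)-r(\bfT)+r(\bfT,\theta),
\]
which is exactly the sign appearing in the theorem.

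Putting this all together gives $\Theta_{\cc\tau^{\FKS}_{(\bfT,\theta)}}(g) = (-1)^{r(\bfG^0)-r(\bfT)+r(\bfT,\theta)}\cdot\Theta_{R_{\bbT_r}^{\bbG_r}(\theta)}(g)$ for every $g$, which yields the claimed equality in the Grothendieck group. The main obstacle is not the computation itself but verifying that the invariants $r_s(\bfT,\theta)$ and $r(\bfG^0_s)$ depend only on the $\bbG_r(\F_q)$-conjugacy class of $s$ within $\bbT_r(\F_q)$; this reduces to the observation that if $s, {}^xs \in T_0$ with $x \in G_{\x,0}$, then the Weyl-group argument of Section \ref{sec:notations} (together with the standard fact that topologically semisimple elements conjugate in $T_0$ are conjugate by $N_{G_{\x,0}}(\bfT)$) produces a conjugation between $\bfG_s$ and $\bfG_{{}^xs}$ preserving $\bfT$ and the Howe-factorization data. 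Once this is in hand, the entire argument is a formal bookkeeping exercise combining Theorems \ref{thm:geom-char-formula}, \ref{thm:green Yu}, and \ref{thm:Q comparison}; no new analytic input beyond the regular case (Theorem \ref{thm:reg comparison}) is needed, which is precisely the ``reduction-to-regular-$\theta$'' strategy inspired by Lusztig.
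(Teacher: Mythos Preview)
Your argument is correct and is essentially the paper's own proof: both compare characters termwise via Theorems \ref{thm:geom-char-formula}/\ref{thm:green Yu} and then invoke Theorem \ref{thm:Q comparison} inside each centralizer $\bfG_{{}^xs}$, with the signs telescoping exactly as you write. The one point you do not address, which the paper does, is that applying Theorem \ref{thm:Q comparison} to $\bfT\hookrightarrow\bfG_{{}^xs}$ requires Henniart's inequality \eqref{eq:Henniart} for that smaller group; this follows from the inequality for $\bfT\hookrightarrow\bfG$ because passing to a centralizer only shrinks both $|W_{\bbG_0}(\bbT_0)(\F_q)|$ and $|\bbT_0(\F_q)_{\nvreg}|$.
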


\begin{proof}
    To show the claim, it is enough to check that the both-hand sides have the same character at any element $g\in G_{\x,0}$.
    We fix $g\in G_{\x,0}$ and take its topological Jordan decomposition $g=su$.
    By Theorem \ref{thm:green Yu}, 
    \[
        \Theta_{\cc\tau^{\FKS}_{(\bfT,\theta)}}(g)
        =
        (-1)^{r(\bfG,s,\bfT,\theta)}
        \cdot\sum_{\begin{subarray}{c} x\in \bbG_{r}(\F_q)/\bbG_{s,r}(\F_q) \\{}^{x}s\in \bbT_r(\F_q)\end{subarray}}\theta({}^{x}s)
        \cdot
        \sfQ_{\bbT_r}^{\bbG_{{}^{x}s,r}}(\theta_+)({}^{x}u).
    \]
    (Recall that $r(\bfG,s,\bfT,\theta):=r(\bfG^0)-r(\bfG^0_s)+r(\bfT,\theta)-r_s(\bfT,\theta)$.)
    On the other hand, by Theorem \ref{thm:geom-char-formula} and Remark \ref{rem:geom-char-formula}, 
    \[
    \Theta_{R_{\bbT_r}^{\bbG_r}(\theta)}(g)
    = \sum_{\substack{x \in\bbG_r(\F_q)/\bbG_{r,s}(\F_q) \\ {}^{x}s \in \bbT_r(\F_q)}} \theta({}^{x}s) \cdot Q_{\bbT_r}^{\bbG_{{}^{x}s,r}}(\theta_{+})({}^{x}u).
    \]
    The desired identity $\Theta_{\cc\tau^{\FKS}_{(\bfT,\theta)}}(g)=\Theta_{(-1)^{r(\bfG^0) - r(\bfT) + r(\bfT,\theta)}\cdot R_{\bbT_r}^{\bbG_r}(\theta)}(g)$ then follows by applying Theorem \ref{thm:Q comparison} to $(\bfT,\theta)$, viewing $\bfT$ as an unramified elliptic maximal torus in $\bfG_s$. We note that applying Theorem \ref{thm:Q comparison} here requires \eqref{eq:Henniart} to hold for $\bfT \subseteq \bfG_s$, but this is implied by the assumption that \eqref{eq:Henniart} holds for $\bfT \subseteq \bfG$: the right-hand side of the inequality shrinks when passing from $\bfG$ to $\bfG_s$ and the denominator of the left-hand side shrinks when passing from $\bfG$ and $\bfG_s$ (since the roots of $\bfT$ in $\bfG_s$ are also roots in $\bfG$).
\end{proof}

\subsection{Exhaustion for Howe-unramified Yu types}

An immediate corollary of Theorem \ref{thm:gen comparison} and the exhaustiveness of Deligne--Lusztig induction \cite[Corollary 7.7]{DL76} is as follows. 
Let $\Psi=(\vec{\bfG},\vec{\phi},\vec{r},\x,\rho'_0)$ be any Howe-unramified Yu datum.
Then there exists an unramified elliptic maximal torus $\bfT$ of $\bfG^0$ with associated point $\x$ and a representation $\rho_0$ of $TG^0_{\x,0}$ satisying $\Ind_{TG^0_{\x,0}}^{G^0_{\bar{\x}}}\rho_0=\rho'_0$. On the geometric side, for any character $\theta$, recall (e.g.\ Section \ref{subsec:reg comparison}) that the virtual $G_{\x,0}$-representation $R_{\bbT_r}^{\bbG_r}(\theta)$ can be extended to a virtual representation of $ T G_{\x,0} = Z_G G_{\x,0}$ by demanding that $Z_G$ acts by $\theta|_{Z_G}$. 

\begin{cor}\label{cor:inner product depth zero}
    For any depth-zero character $\phi_{-1}\colon T\rightarrow\C^\times$, we define an unramified elliptic pair $(\bfT,\theta)$ by $\theta = \phi_{-1} \cdot \prod_{i=0}^d \phi_i|_{T}$.
    Then 
    \[
        \langle \cc{\tau}_\Psi^{\FKS}, R_{\bbT_r}^{\bbG_r}(\theta) \rangle_{T G_{\x,0}}
        =
        (-1)^{r(\bfT,\theta)}\cdot\langle \rho_0, R_{\bbT_0}^{\bbG_0^0}(\phi_{-1}) \rangle_{T G_{\x,0}^0}.
    \]
    In particular, every Howe-unramified supercuspidal type appears in $R_{\bbT_r}^{\bbG_r}(\theta)$ for some $(\bfT,\theta)$.
\end{cor}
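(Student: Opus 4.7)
The plan is to reduce the identity to a depth-zero intertwining computation using Theorem \ref{thm:gen comparison}. First I would invoke Theorem \ref{thm:gen comparison} together with the definition of $\cc\tau^\FKS_{(\bfT,\theta)}$ from Section \ref{subsec:Yu green}: writing $R_{\bbT_0}^{\bbG_0^0}(\phi_{-1}) = \sum_j m_j \rho_{0,j}$ as the irreducible decomposition and $\Psi_j = (\vec\bfG,\vec\phi,\vec r,\x,\rho_{0,j})$, the two sign factors combine to give
\begin{equation*}
    R_{\bbT_r}^{\bbG_r}(\theta) = (-1)^{r(\bfT,\theta)} \sum_j m_j \cc\tau^\FKS_{\Psi_j}.
\end{equation*}
Pairing with $\cc\tau^\FKS_\Psi$ then yields
\begin{equation*}
    \langle \cc\tau^\FKS_\Psi, R_{\bbT_r}^{\bbG_r}(\theta) \rangle_{TG_{\x,0}} = (-1)^{r(\bfT,\theta)} \sum_j m_j \langle \cc\tau^\FKS_\Psi, \cc\tau^\FKS_{\Psi_j} \rangle_{TG_{\x,0}},
\end{equation*}
so by linearity it suffices to establish the matching identity $\langle \cc\tau^\FKS_\Psi, \cc\tau^\FKS_{\Psi_j} \rangle_{TG_{\x,0}} = \langle \rho_0, \rho_{0,j} \rangle_{TG_{\x,0}^0}$ for each $j$; summing then produces $\sum_j m_j \langle \rho_0, \rho_{0,j}\rangle_{TG_{\x,0}^0} = \langle \rho_0, R_{\bbT_0}^{\bbG_0^0}(\phi_{-1})\rangle_{TG_{\x,0}^0}$ and the formula follows.

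The core analytic step is the matching identity. Both $\cc\tau^\FKS_\Psi$ and $\cc\tau^\FKS_{\Psi_j}$ are obtained by inducing from $\cc K$ to $TG_{\x,0}$ representations of the form $(\text{depth-zero piece}) \otimes \kappa_+ \otimes \epsilon$, sharing the positive-depth factor $\kappa_+ \otimes \epsilon$. My plan is to apply the Mackey double-coset formula for $\cc K \subset TG_{\x,0}$ and to use the genericity of the characters $\vec\phi$ in the Howe factorization---which rigidifies the Heisenberg--Weil factors of $\kappa_+$---to show that all contributions from nontrivial double cosets to the intertwining space vanish. The trivial-coset contribution then disentangles by Frobenius reciprocity into the asserted depth-zero pairing; this kind of genericity-driven Mackey argument is standard in the Yu/Kim/Adler--Fintzen--Mishra--Ohara framework underlying the Hecke algebra isomorphisms for Kim--Yu types. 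For the ``in particular'' assertion, I would appeal to the exhaustion of cuspidal representations of finite reductive groups by elliptic Deligne--Lusztig virtual characters (cf.\ \cite[Corollary 7.7]{DL76}): for any Howe-unramified cuspidal Yu datum $\Psi$, some elliptic pair $(\bbT_0, \phi_{-1})$ has $\langle \rho_0, R_{\bbT_0}^{\bbG_0^0}(\phi_{-1})\rangle \neq 0$; lifting $(\bbT_0, \phi_{-1})$ to an unramified elliptic pair $(\bfT, \phi_{-1})$ of $\bfG^0$ and forming $\theta = \phi_{-1} \cdot \prod_{i=0}^d \phi_i|_T$ then produces a pair $(\bfT,\theta)$ for which the inner product formula forces $\cc\tau^\FKS_\Psi$ (equivalently, the corresponding supercuspidal type) to appear as a constituent of $R_{\bbT_r}^{\bbG_r}(\theta)$.

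The principal obstacle I expect is the Mackey vanishing on nontrivial double cosets. The mechanism itself---genericity of $\vec\phi$ forcing the intertwining to concentrate on the trivial coset---is well-understood, but careful bookkeeping will be needed to track the FKS sign character $\epsilon$ through the Mackey decomposition, and to reconcile the two incarnations of the depth-zero data appearing in the formula (the $\rho_{0,j}$ arising inside $\Psi_j$ as a $\bbG_0^0(\F_q)$-representation inflated through $\cc K \twoheadrightarrow \bbG_0^0(\F_q)$, versus the $\rho_0$ on the right-hand side realized as a representation of $TG_{\x,0}^0$).
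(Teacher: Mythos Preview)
Your approach is correct and follows the same line as the paper: invoke Theorem \ref{thm:gen comparison}, unwind the definition of $\cc\tau^\FKS_{(\bfT,\theta)}$ to reduce to the matching identity $\langle \cc\tau^\FKS_\Psi, \cc\tau^\FKS_{\Psi_j}\rangle_{TG_{\x,0}} = \langle \rho_0, \rho_{0,j}\rangle_{TG^0_{\x,0}}$, and then cite \cite[Corollary 7.7]{DL76} for the exhaustion claim. The only difference is one of emphasis: the paper treats the matching identity as a known black box from Yu's intertwining theory (this is precisely the content of the multiplicity-preserving property of Yu's construction with fixed clipped datum $\dashover{\Psi}$, as in \cite{Yu01} and \cite{HM08}), and so regards the corollary as ``immediate'' from Theorem \ref{thm:gen comparison}; you instead propose to reprove this matching via a Mackey double-coset computation driven by genericity of $\vec\phi$. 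Your route is exactly how that black box is established, so it is not wrong---just more from-scratch than necessary. If you want to streamline, you can replace the Mackey paragraph with a direct citation of the intertwining computations in \cite{Yu01} (or \cite[Theorem 3.26]{HM08}), noting that these go through verbatim when the depth-zero piece $\rho_{0,j}$ is not assumed cuspidal, since only the positive-depth factor $\kappa_+$ enters the genericity argument controlling the nontrivial cosets.
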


\subsection{Exhaustion for Howe-unramified Kim--Yu types} \label{subsec:KY exhaust}

We now relax the ellipticity assumption on $\bfT$ imposed in Section \ref{subsec:Yu exhaust}. 
By Proposition \ref{prop:types induction}, we know that every Howe-unramified type appears as the positive-depth parabolic induction of a Howe-unramified supercuspidal type. By Corollary \ref{cor:inner product depth zero}, we know that any Howe-unramified supercuspidal type appears in $R_{\bbT_r}^{\bbM_r}(\theta)$ for some $(\bfT,\theta)$. This immediately implies:

\begin{thm}\label{eq:Hi exhaustion}
    Let $\Psi$ be a Howe-unramified Kim--Yu datum.
    For any depth-zero character $\phi_{-1}\colon T\rightarrow\C^\times$, we define an unramified elliptic pair $(\bfT,\theta)$ by $\theta = \phi_{-1} \cdot \prod_{i=0}^d \phi_i|_{T}$, where $\phi_i$'s are generic characters contained in $\Psi$.
    Then for each prime $\ell \neq p$, there exists an $i \geq 0$ such that the associated Kim--Yu type $\Ind_{K_0}^{G_{\x,0}}(\rho_\Psi^{\FKS})$ is a summand of $H_c^i(X_{\bbT_r \subset \bbG_r})_\theta$.
\end{thm}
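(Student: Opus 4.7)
The plan is to deduce the statement from three ingredients already developed: Proposition~\ref{prop:types induction}, which sandwiches the Kim--Yu type between the type and a positive-depth parabolic induction of a supercuspidal type on the Levi $\bfM$; Theorem~\ref{thm:gen comparison}, applied on the Levi $\bfM$ where $\bfT$ is elliptic and therefore within reach of our comparison results; and the transitivity of positive-depth Deligne--Lusztig induction from \cite[Proposition 3.3, Lemma 3.4]{Cha24}, already used in the proof of Theorem~\ref{thm:geom type}.

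First I would use Proposition~\ref{prop:types induction} to realize the Kim--Yu type $\Ind_{K_0}^{G_{\x,0}}(\rho_\Psi^{\FKS})$ as a subrepresentation of $I_{P_{\x,0}}^{G_{\x,0}}(\Ind_{K_{M,0}}^{M_{\x,0}}(\rho_M^{\FKS}))$. By the construction recalled in Section~\ref{subsec:types}, $\Ind_{K_{M,0}}^{M_{\x,0}}(\rho_M^{\FKS})$ is precisely $\cc\tau^{\FKS}_{\Psi_M}|_{M_{\x,0}}$, which in our notation is $\cc\tau^{\FKS}_{(\bfT,\theta)}$ for $(\bfT,\theta)$ viewed as an unramified elliptic pair of $\bfM$. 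Since $\bfT$ is elliptic in $\bfM$ and the assumption on $q$ in the preamble to this section was imposed with respect to $\bfT \hookrightarrow \bfM$, Theorem~\ref{thm:gen comparison} applies on the Levi and identifies $\cc\tau^{\FKS}_{(\bfT,\theta)}$ with $\pm R_{\bbT_r}^{\bbM_r}(\theta)$ as virtual $M_{\x,0}$-representations.

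Next I would lift this identification to $\bfG$ via transitivity. The formula
\[
R_{\bbT_r}^{\bbG_r}(\theta) \;=\; \Ind_{P_{\x,0}G_{\x,r+}}^{G_{\x,0}}\Inf_{\bbM_r(\F_q)}^{M_{\x,0}}\bigl(R_{\bbT_r}^{\bbM_r}(\theta)\bigr)
\]
(recorded in the proof of Theorem~\ref{thm:geom type}) is exactly the depth-$r$ parabolic induction $I_{P_{\x,0}}^{G_{\x,0}}$ of Section~\ref{subsec:types}. Combining with the previous step, the Kim--Yu type appears as a subrepresentation of the genuine representation $\pm R_{\bbT_r}^{\bbG_r}(\theta)$ viewed as a virtual sum. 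Since $R_{\bbT_r}^{\bbG_r}(\theta) = \sum_i (-1)^i H_c^i(X_{\bbT_r \subset \bbG_r})_\theta$, a nonzero integer multiplicity in the alternating sum forces at least one cohomology group $H_c^i(X_{\bbT_r \subset \bbG_r})_\theta$ to contain the Kim--Yu type as a genuine summand, which is what is asserted.

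The main obstacle I anticipate is purely bookkeeping: ensuring that the AFMO twist on the Kim--Yu side and the FKS twist on the Yu side of $\bfM$ match so that Proposition~\ref{prop:types induction} and Theorem~\ref{thm:gen comparison} can be chained together without a mismatch of characters. This is precisely the content of the Remark following Proposition~\ref{prop:types induction} (where unramifiedness of $\bfT$ is used to kill the potentially-ramified contributions $\epsilon_f$ and $\epsilon_\flat$), so there is no real issue beyond citing it. Sign considerations do not affect the conclusion, since a virtual appearance with nonzero multiplicity guarantees a genuine appearance in some cohomological degree.
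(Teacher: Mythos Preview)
Your overall architecture matches the paper's: Proposition~\ref{prop:types induction} on the $\bfG$-side, a comparison on the Levi $\bfM$, and transitivity of positive-depth Deligne--Lusztig induction. But there is a genuine gap at the point where you write ``$\cc\tau^{\FKS}_{\Psi_M}|_{M_{\x,0}}$, which in our notation is $\cc\tau^{\FKS}_{(\bfT,\theta)}$''. These two objects are \emph{not} the same in general. The representation $\cc\tau^{\FKS}_{\Psi_M}$ is built from the \emph{specific} depth-zero cuspidal $\rho_0$ sitting inside the Kim--Yu datum $\Psi$, whereas $\cc\tau^{\FKS}_{(\bfT,\theta)}$ (Definition in Section~\ref{subsec:Yu green}) is the \emph{virtual} representation $(-1)^{r(\bfM^0)-r(\bfT)}\sum_j m_j\,\cc\tau^{\FKS}_{\Psi_j}$ obtained by sweeping over \emph{all} irreducible constituents $\rho_{0,j}$ of $R_{\bbT_0}^{\bbM^0_0}(\phi_{-1})$. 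They coincide only when $\pm R_{\bbT_0}^{\bbM^0_0}(\phi_{-1})$ is irreducible and equal to $\rho_0$, i.e., in the regular case already handled by Theorem~\ref{thm:geom type}. For a general Kim--Yu datum $\rho_0$ need not even be a Deligne--Lusztig representation, so your chain ``type $\subset I_P(\cc\tau^{\FKS}_{\Psi_M}) = I_P(\cc\tau^{\FKS}_{(\bfT,\theta)}) = \pm R_{\bbT_r}^{\bbG_r}(\theta)$'' breaks at the first equality.

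The paper closes this gap by invoking Corollary~\ref{cor:inner product depth zero} on the Levi instead of applying Theorem~\ref{thm:gen comparison} directly to the type. That corollary says $\langle \cc\tau^{\FKS}_{\Psi_M}, R_{\bbT_r}^{\bbM_r}(\theta)\rangle = \pm\langle \rho_0, R_{\bbT_0}^{\bbM^0_0}(\phi_{-1})\rangle$, and by Deligne--Lusztig exhaustiveness one can choose $\phi_{-1}$ so that the right-hand side is nonzero. This nonvanishing, together with Proposition~\ref{prop:types induction} and transitivity, forces the Kim--Yu type to appear in some $H_c^i(X_{\bbT_r\subset\bbG_r})_\theta$. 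In other words, you should replace your equality $\cc\tau^{\FKS}_{\Psi_M}=\cc\tau^{\FKS}_{(\bfT,\theta)}$ by the inner-product statement of Corollary~\ref{cor:inner product depth zero}; everything else you wrote then goes through.
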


We remark that a stronger form of exhaustion would be to insist a nonvanishing inner product with the alternating sum of the cohomology groups; i.e., $\langle \Ind_{K_0}^{G_{\x,0}}(\rho_\Psi^{\FKS}), R_{\bbT_r}^{\bbG_r}(\theta) \rangle \neq 0$. However, because $R_{\bbT_r}^{\bbG_r}(\theta)$ is a virtual representation, to prove this nonvanishing would involve additionally guaranteeing that $\Ind_{K_0}^{G_{\x,0}}(\rho_\Psi^{\FKS})$ will not get ``cancelled out'' when taking a signed formal linear combination of representations of the form $I_{P_{\x,0}}^{G_{\x,0}}(\rho')$ for the constituents $\rho'$ of the virtual representation $R_{\bbT_r}^{\bbM_r}(\theta)$. We do not prove this, but we believe this stronger assertion is true. In fact, we think that the better exhaustion result is the nonvanishing inner product with $r_{\bbT_r}^{\bbG_r}(\theta)$ (see \eqref{eq:modified DL induction}). This should essentially follow from Conjecture \ref{conj:Lusztig induction}.

\section{Springer hypothesis for Howe-unramified toral supercuspidal representations}\label{sec:character}\label{sec:springer}

In \cite{DR09}, DeBacker--Reeder established a character formula for regular depth-zero supercuspidal representations (see, especially, \cite[Lemmas 10.0.4 and 12.4.3]{DR09}).
The appearance of orbital integrals in their character formula is explained geometrically by the \textit{Springer hypothesis}: that classical Green functions \`a la Deligne--Lusztig can be re-interpreted as the Fourier transform of delta functions supported on co-adjoint orbits of regular semisimple elements of the dual Lie algebra. For regular supercuspidal representations of positive-depth, character formulas \textit{also} can be written in terms of orbital integrals (e.g.\ see \cite[Theorem 4.28]{DS18}, \cite[Theorem 4.3.5]{FKS23}). In this section, we demonstrate that the comparison theorem (Theorem \ref{thm:reg comparison}) can be utilized to give a \textit{geometric} explanation for the appearance of orbital integrals in such character formulae. For this, we use two main tools: 
\begin{enumerate}
    \item the construction of positive-depth character sheaves and its relationship to positive-depth Deligne--Lusztig induction \cite{BC24}, and 
    \item the positive-depth analogue of DeBacker--Reeder's $p$-adic harmonic-analytic methods passing from parahoric statements to its compact induction.
\end{enumerate}
We carry out (1) in Section \ref{subsec:pos-Springer} and (2) in Section \ref{subsec:DR}, both under the assumption that the regular supercuspidal representation is Howe-unramified and $0$-toral (Definition \ref{defn:toral}). We note that our perspective in this section is to establish proof-of-concept: understanding the relationship between the inherently geometric $R_{\bbT_r}^{\bbG_r}(\theta)$ and the inherently algebraic construction of Adler \cite{Adl98} and Yu \cite{Yu01} yields newfound understanding of character formulae.

\begin{remark}
    Establishing the positive-depth Springer hypothesis reveals yet another method to establish the relationship between $R_{\bbT_r}^{\bbG_r}(\theta)$ and Yu's construction. We outline this here. 
    Following the computations \cite{DS18} for the representation of $G_{\x,0}$ associated to a  Howe-unramified 0-toral pair $(\bfT,\theta)$, we can obtain a character formula identical to the form of Corollary \ref{cor:formula with orbits}. One can then conclude the comparison theorem \cite[Theorem 7.8]{CO25} for $0$-toral characters, weakening the assumption on $q$ to the condition that $\bbT_0(\F_q) \cap \bbG_0(\F_q)_{\reg}$ is nonempty. (Note, however, that an assumption on $p$ required in \cite{DR09}, see Section \ref{subsec:log}.)
\end{remark}

\subsection{Positive-depth Springer hypothesis}\label{subsec:pos-Springer}

The main result of this section is a re-interpretation of the character of the representation $R_{\bbT_r}^{\bbG_r}(\theta)$ in terms of the Fourier transform of an orbital integral (Theorem \ref{thm:pos Springer}). 
The $r=0$ statement is known as the \textit{Springer hypothesis}; it was proved by Kazhdan first in \cite{Kaz77}. 
Below, we adapt Kazhdan--Varshavsky's sheaf-theoretic proof of the Springer hypothesis \cite[Appendix A]{KV06} to the positive-depth setting using:
\begin{enumerate}
    \item Bezrukavnikov's work with the first author on character sheaves on parahoric subgroups \cite{BC24},
    \item positive-depth adaptations of DeBacker--Reeder's analysis of logarithm maps on $p$-adic groups \cite[Appendix B]{DR09}.
\end{enumerate}
In the $r=0$ setting, Kazhdan--Varshavsky construct a family of perverse sheaves interpolating between the two sides (a Deligne--Lusztig induction and the Fourier transform of an orbital integral). We offer an alternative proof here which works equally well for $r=0$ and $r > 0$.

\subsubsection{Logarithm and exponential maps}\label{subsec:log}

Let $n$ be the dimension of the smallest faithful $F$-rational representation of $\bfG$. 
In the following, we assume that $p \geq (2+e)n$, where $e$ is the ramification index of $F/\Q_p$.
Then, by \cite[Lemma B.0.9, Appendix B]{DR09}, there exists a Frobenius-equivariant and $G$-equivariant bijective map
\[
\log\colon \bfG(F^{\ur})_{0+}\rightarrow\bmfg(F^{\ur})_{0+},
\]
where $\bfG(F^{\ur})_{0+}$ (resp.\ $\bmfg(F^{\ur})_{0+}$) denotes the set of topologically unipotent elements of $\bigcup_{\y\in\cB(\bfG,F)}\bfG(F^{\ur})_{\y,0}$ (resp.\ topologically nilpotent elements of $\bigcup_{\y\in\cB(\bfG,F)}\bmfg(F^{\ur})_{\y,0}$).
Moreover, the restriction of $\log$ on $\bfG(F^{\ur})_{0+}\cap \bfG(F^{\ur})_{\x,0}$ induces a $\sigma$-equivariant and $\bbG_{0}$-equivariant isomorphism of $\F_{q}$-varieties
\[
\log\colon (\bbG_{0})_{\unip}\rightarrow(\mfg_{0})_{\nilp},
\]
where $\mfg_{0}$ denotes the Lie algebra of $\bbG_{0}$ over $\F_{q}$, as proved in \cite[Corollary B.6.19]{DR09}.
We can check that the proof of \cite[Corollary B.6.19]{DR09} then holds for $r>0$ with no complications:

\begin{lem}\label{lem:log unipotent}
For any $r\in\R_{>0}$, the above bijective map $\log$ also induces a $\sigma$-equivariant and $\bbG_{r}$-equivariant isomorphism of $\F_{q}$-varieties
\[
\log\colon (\bbG_{r})_{\unip}\rightarrow(\mfg_{r})_{\nilp},
\]
where $\mfg_{r}$ denotes the Lie algebra of $\bbG_{r}$ over $\F_{q}$.
\end{lem}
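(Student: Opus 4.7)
The plan is to adapt the proof of the $r = 0$ case \cite[Corollary B.6.19]{DR09} verbatim to arbitrary $r > 0$, since the essential ingredients are filtration-preserving properties of $\log$ that do not distinguish between the depth-zero and positive-depth quotients. The key observation is that the bijection $\log \colon \bfG(F^{\ur})_{0+} \to \bmfg(F^{\ur})_{0+}$ from \cite[Lemma B.0.9]{DR09} preserves Moy--Prasad depth, so in particular it sends $\mathcal{G}_{\x,r+}(\cO_{\breve{F}}) \cap \bfG(F^{\ur})_{0+}$ bijectively onto the corresponding Lie algebra filtration layer. Consequently, $\log$ descends to a map on the quotient $\bbG_r(\ol{\F}_q) = \mathcal{G}_{\x}(\cO_{\breve{F}})/\mathcal{G}_{\x,r+}(\cO_{\breve{F}})$ with values in $\mfg_r(\ol{\F}_q)$.

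Next, I will identify the image of the topologically unipotent locus. By (the nilpotent analogue of) Lemma \ref{lem:ss-lift}, an element of $\bbG_r(\ol{\F}_q)$ is unipotent if and only if it admits a topologically unipotent lift in $\mathcal{G}_{\x}(\cO_{\breve{F}})$, and similarly on the Lie algebra side for nilpotency. Since $\log$ matches topologically unipotent elements with topologically nilpotent elements bijectively in the $F^{\ur}$-setting, the descended map restricts to a bijection $(\bbG_r)_{\unip}(\ol{\F}_q) \xrightarrow{\sim} (\mfg_r)_{\nilp}(\ol{\F}_q)$. Frobenius equivariance (i.e., $\sigma$-equivariance) and $\bbG_r$-equivariance descend from the corresponding properties of $\log$ on $\bfG(F^{\ur})_{0+}$.

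To promote this to an isomorphism of $\F_q$-varieties, I will use the power-series presentation. Fix a rational embedding $\bfG \hookrightarrow \GL_n$. On the topologically unipotent locus, $\log(1+X) = \sum_{k \geq 1} (-1)^{k-1} X^k/k$ converges, and modulo $\mathcal{G}_{\x,r+}$ only finitely many terms survive, yielding a polynomial expression in the matrix coordinates of $\bbG_r$. The hypothesis $p \geq (2+e)n$ from Section \ref{subsec:log} ensures that the denominators $k$ appearing before truncation are units in $\cO_{\breve{F}}$, so the truncation descends to a morphism of $\F_q$-varieties. An identical analysis applied to $\exp$ produces the inverse morphism. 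The main point is that the truncation bounds depend only on $n$ and $e$, not on $r$, so the depth-zero analysis from \cite[Appendix B]{DR09} transfers unchanged; I anticipate no substantive obstacle beyond this bookkeeping.
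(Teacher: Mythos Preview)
Your approach is the same as the paper's, which simply asserts that the proof of \cite[Corollary B.6.19]{DR09} carries over to $r>0$ without complication and gives no further detail. In that sense your write-up is more thorough than what the paper provides.

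That said, the final paragraph contains an inaccuracy worth fixing. The claim that ``the truncation bounds depend only on $n$ and $e$, not on $r$'' is false: as $r$ grows, more terms of the series $\sum_{k\geq 1}(-1)^{k-1}X^k/k$ survive modulo depth $r+$, so the degree of the truncated polynomial does increase with $r$. Relatedly, the assertion that ``the denominators $k$ appearing before truncation are units'' is only valid for $k<p$; once $r$ is large enough that terms with $k\geq p$ contribute, individual summands $X^k/k$ need not be integral. What actually makes $\log$ descend to a morphism over $\bbW_r$ is that the full (convergent) series lands in the correct integral lattice $\bmfg(F^{\ur})_{\x,s}$ for each $s>0$, as guaranteed by \cite[Lemma B.0.9]{DR09}, and hence so does every truncation modulo $\bmfg(F^{\ur})_{\x,r+}$; the integrality is a property of the partial sums, not of each term separately. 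None of this breaks your argument---for each fixed $r$ one still obtains a polynomial map with polynomial inverse---but the justification should be rephrased accordingly.
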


We let $\exp \from \bmfg(F^{\ur})_{0+} \to \bfG(F^{\ur})_{0+}$ denote the inverse of $\log$.

\subsubsection{Filtration on the dual Lie algebra}

Let $\bfT$ be an unramified elliptic maximal torus of $\bfG$ and $\x\in\cB(\bfG,F)$ a point associated to $\bfT$.
Let $\bmfg^{\ast}$ be the dual to the Lie algebra $\bmfg$ over $F$.
We define a filtration $\{\mfg^{\ast}_{\x,r}\}_{r\in\widetilde{\R}}$ on $\mfg^{\ast}=\bmfg^{\ast}(F)$ by
\[
\mfg^{\ast}_{\x,r}
:=
\{Y^{\ast}\in \mfg^{\ast} \mid \langle Y^{\ast},\mfg(F)_{\x,(-r)+}\rangle \subset \mfp_{F}\}.
\]

Let $\bbW_{r}$ denote the $r$-th truncated Witt ring scheme associated to $F$ when $F$ has characteristic $0$ and the $r$-th jet scheme of $\Ga$ when $F$ has characteristic $p>0$.
Note that, in both cases, we have $\bbW_{r}(\F_{q})=\cO_{F}/\mfp_{F}^{r+}$.
We fix a nontrivial additive character $\psi\colon \cO_{F}/\mfp_{F}^{r+}\rightarrow\C^{\times}$ and consider the associated $\ell$-adic multiplicative sheaf $\mathcal{L}_{\psi}$ on $\bbW_{r}$.
(We also fix an isomorphism $\C\cong\ol{\Q}_{\ell}$, but $\cL_{\psi}$ is independent of this choice.)

Note that the Lie algebra $\mfg_{r}$ of $\bbG_{r}$ is a free $\bbW_{r}$-module.\footnote{We caution that our usage of the symbol $\mfg_{r}$ is different from the standard one, that is, $\mfg_{r}=\bigcup_{\y}\mfg_{\y,r}$.}
We have $\mfg_{r}(\F_{q})\cong\mfg_{\x,0}/\mfg_{\x,r+}$ (this follows from the exactness of the Lie algebra functor in this setting; cf.\ \cite[2.4.3 (a) and 13.3.2]{KP23}).
Let $\mfg^{\ast}_{r}$ be the dual of $\mfg_{r}$, i.e., an affine group scheme over $\F_{q}$ representing the functor $R\mapsto \Hom_{\bbW_{r}(R)}(\mfg_{r}(R), \bbW_{r}(R))$ for $\F_q$-algebras $R$.
Note that hence
\[
\mfg^{\ast}_{r}(\F_{q})
=\Hom_{\cO_{F}/\mfp_{F}^{r+}}(\mfg_{\x,0}/\mfg_{\x,r+}, \cO_{F}/\mfp_{F}^{r+}).
\]
As
\[
\mfg^{\ast}_{\x,0}
=\Hom_{\cO_{F}}(\mfg^{\ast}_{\x,0+},\mfp_{F})
\cong \Hom_{\cO_{F}}(\mfg^{\ast}_{\x,0},\cO_{F})
\]
(note that jumps occur only at integers, so $\mfg^{\ast}_{\x,0+}=\mfg^{\ast}_{\x,1}=\mfp_{F}\cdot \mfg^{\ast}_{\x,0}$), we have a natural reduction map from $\mfg^{\ast}_{\x,0}$ to $\mfg^{\ast}_{r}(\F_{q})$.
Since $\mfg_{\x,0}$ is a free $\cO_{F}$-module, this map is surjective.
Moreover, it can be easily seen that the kernel is $\mfg^\ast_{\x,r+}$.
In other words, we have a natural identification $\mfg^{\ast}_{r}(\F_{q})\cong \mfg^{\ast}_{\x,0}/\mfg^{\ast}_{\x,r+}$.

\subsubsection{Geometric Fourier transform}\label{subsec:geom FT}

We define the Fourier transform between the derived categories $D_{\bbG_r}(\mfg^{\ast}_r)$ and $D_{\bbG_r}(\mfg_r)$ of $\bbG_{r}$-equivariant constructible $\ell$-adic sheaves on $\mfg^{\ast}_{r}$ and $\mfg_{r}$ by
\[
\cFT\colon D_{\bbG_{r}}(\mfg^{\ast}_{r})\rightarrow D_{\bbG_{r}}(\mfg_{r});\quad
\mathcal{F}\mapsto \pr_{2,!}((\pr_{1}^{\ast}\mathcal{F})\otimes(\langle-,-\rangle^{\ast}\mathcal{L}_{\psi}))
\]
\[
\begin{tikzcd}
& \mfg^{\ast}_{r}\times\mfg_{r} \ar{rr}{\langle-,-\rangle} \arrow[dl, "\pr_1" swap] \arrow[dr, "\pr_2"] &&\bbW_{r} \\
\mfg^{\ast}_{r} && \mfg_{r}
\end{tikzcd}
\]
(cf., e.g., \cite[Section III.13]{KW01}).
When $\mathcal{F}\in D_{\bbG_{r}}(\mfg^{\ast}_{r})$ has a Weil structure, i.e., $\mathcal{F}$ is equipped with an isomorphism $\sigma^{\ast}\mathcal{F}\cong\mathcal{F}$, its Fourier transform $\cFT(\mathcal{F})$ also has a natural Weil structure.
Note that, if we let $f_{\cF}$ and $f_{\cFT(\cF)}$ denote the functions on $\mfg^{\ast}_{r}(\F_{q})$ and $\mfg_{r}(\F_{q})$ associated to the Weil sheaves $\cF$ and $\cFT(\cF)$ under the function-sheaf dictionary, then $f_{\cFT(\cF)}$ exactly realizes the Fourier transform $\FT \from C(\mfg_{r}^{\ast}(\F_{q})) \to C(\mfg_{r}(\F_{q}))$ of $f_{\cF}$ in the classical sense; that is,
\[
f_{\cFT(\cF)}(Y)
=\sum_{Y^{\ast}\in\mfg^{\ast}_{r}(\F_{q})} f_{\cF}(Y^{\ast})\cdot\psi(\langle Y^{\ast},Y\rangle) \eqqcolon \FT(f_{\cF})(Y).
\]

For any closed subset $V$ of $\mfg_{r}$ or $\mfg^{\ast}_{r}$, we let $\delta_{V}$ denote the extension-by-zero of the constant sheaf $\Qlb$ on $V$.
If $V$ is $\F_{q}$-rational, then $\delta_{V}$ has an obvious Weil structure.
Note that then the characteristic function $\mathbbm{1}_{V(\F_{q})}$ of $V(\F_{q})$ is associated to $\delta_{V}$ under the function-sheaf dictionary.
We later use this notation for the $\F_{q}$-rational closed subsets $\{X^{\ast}_{r}\}$ and $\bbG_{r}\cdot X^{\ast}_{r}$ of $\mfg^{\ast}_{r}$.

\subsubsection{Positive-depth Springer hypothesis}

We prove the positive-depth Springer hypothesis for $0$-toral characters $\theta \from T \to \C^\times$; let $r$ be the depth of $\theta$. This gives an interpretation of the associated positive-depth Green function $Q_{\bbT_r}^{\bbG_r}(\theta_+)$ (Definition \ref{def:pos depth Green}) as the Fourier transform of certain coadjoint orbits.

Fix a level-$1$ additive character $\psi_F$ of $F$ (i.e., $\psi_{F}$ is trivial on $\mfp_{F}$ but not on $\cO_{F}$). We can then choose an element $X^{\ast}\in\mft^{\ast}_{-r}\subset\mfg^{\ast}_{\x,-r}$ satisfying
\[
\theta(\exp(Y))=\psi_{F}(\langle X^{\ast},Y\rangle)
\]
for any $Y\in\mft_{0+}$.
(Here, we are identifying $\mft^{\ast}$ with the eigenspace of $\mfg^{\ast}$ with eigenvalue $1$ with respect to the co-adjoint action of $\bfT$ on $\mfg^{\ast}$; then $\mft^{\ast}_{-r}$ is contained in $\mfg^{\ast}_{\x,-r}$).
We remark that such an element $X^{\ast}$ always exists and is well-defined modulo $\mft^{\ast}_{0}$ since $\theta\circ\exp|_{\mft_{0+}}$ is an additive character of $\mft_{0+}$ trivial on $\mft_{r+}$.
Also, by the $0$-toral regularity assumption on $(\bfT,\theta)$, the image of $X^{\ast}$ under $\mft^{\ast}_{-r}/\mft^{\ast}_{0}\twoheadrightarrow\mft^{\ast}_{-r}/\mft^{\ast}_{(-r)+}$ is a $\bfG$-generic element of depth $r$.
But then it implies that any element of $X^{\ast}+\mft^{\ast}_{0}$ is $\bfG$-generic of depth $r$.
In particular, $X^{\ast}$ itself is $\bfG$-generic of depth $r$, which implies that $X^{\ast}$ is necessarily elliptic regular semisimple (in the sense that the connected centralizer of $X_{r}^{\ast}$ in $\bfG$ is $\bfT$, which is an elliptic maximal torus).

By fixing a uniformizer $\varpi$ of $F$, we define a character $\psi_{r}\colon F\rightarrow\C^{\times}$ by 
\[
\psi_{r}(x):=\psi_{F}(\varpi^{-r}x).
\]
Then $\psi_{r}$ is an additive character of level $r+1$, hence induces an additive character of $\cO_{F}/\mfp_{F}^{r+1}$ which is nontrivial on $\mfp_{F}^{r}/\mfp_{F}^{r+1}$; assume that this induced character is the additive character $\psi$ used in the definition of the geometric Fourier transform in Section \ref{subsec:geom FT}. 
Note that then 
\[
\theta(\exp(Y))
=\psi_{F}(\langle X^{\ast},Y\rangle)
=\psi(\langle \varpi^{r}X^{\ast},Y\rangle).
\]
We put $X_{r}^{\ast}:=\varpi^{r}X^{\ast}\in\mfg^{\ast}_{\x,0}$, which is an elliptic regular semisimple element.
As we have $\mfg^{\ast}_{r}(\F_{q})\cong \mfg^{\ast}_{\x,0}/\mfg^{\ast}_{\x,r+}$, it makes sense to regard $X_{r}^{\ast}$ as an element of $\mfg^{\ast}_{r}(\F_{q})$.
Note that then the coadjoint orbit $\bbG_{r}\cdot X_{r}^{\ast}$ of $X_{r}^{\ast}$ is closed in $\mfg^{\ast}_{r}$ (this follows from that all the root values of $X_{r}^{\ast}$ are distinct even modulo $\mfp_{F}^{r+}$, which is guaranteed by the $\bfG$-genericity of $X^{\ast}$).

Now we state and prove the positive-depth Springer hypothesis.

\begin{thm}[positive-depth Springer hypothesis]\label{thm:pos Springer}
Let $(\bfT,\theta)$ be an unramified elliptic $0$-toral regular pair of depth $r$ and assume $q$ is large enough so that $\bbT_0$ has a regular semisimple element. 
For any topologically unipotent $u \in G_{\x,0}$,
    \begin{equation*}
       q^{\frac{1}{2}\dim(\bbG_r/\bbT_r)} \cdot Q_{\bbT_r}^{\bbG_r}(\theta_{+})(u) = \FT(\mathbbm{1}_{\bbG_{r}(\F_{q})\cdot X_{r}^{\ast}})(\log(u)).
    \end{equation*}
\end{thm}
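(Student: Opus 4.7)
The plan is to adapt the sheaf-theoretic approach of Kazhdan--Varshavsky \cite[Appendix A]{KV06} to the positive-depth setting, invoking the character sheaves on parahoric subgroups constructed in \cite{BC24}. The strategy is to exhibit a $\bbG_r$-equivariant Weil complex on $(\mfg_r)_{\nilp}$ whose trace of Frobenius simultaneously realizes both sides of the claimed identity. By Lemma \ref{lem:log unipotent}, the map $\log \from (\bbG_r)_{\unip} \xrightarrow{\sim} (\mfg_r)_{\nilp}$ is a $\bbG_r$- and $\sigma$-equivariant isomorphism of $\F_q$-varieties, so pulling back along $\log$ translates everything to $(\mfg_r)_{\nilp}$ where the Fourier transform naturally lives.

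First, by \cite[Theorem 10.9]{BC24}, for $0$-toral $(\bfT,\theta)$ there exists a character sheaf $\cF_{\bfT,\theta,\x}$ on $\bbG_r$ whose trace-of-Frobenius function recovers (up to the cohomological shift producing the factor $q^{\frac{1}{2}\dim(\bbG_r/\bbT_r)}$) the character $\Theta_{R_{\bbT_r}^{\bbG_r}(\theta)}$. This is the only place where the $0$-toral hypothesis is used, since the explicit computation of the trace of Frobenius is at present available only in this setting. Combined with Theorem \ref{thm:geom-char-formula}(2), restriction to the topologically unipotent locus and transfer along $\log$ produces a Weil sheaf $\cF_{\mathrm{left}}$ on $(\mfg_r)_{\nilp}$ whose associated function is $q^{\frac{1}{2}\dim(\bbG_r/\bbT_r)} \cdot Q_{\bbT_r}^{\bbG_r}(\theta_+) \circ \exp$. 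On the other hand, by construction (Section \ref{subsec:geom FT}) the sheaf $\cF_{\mathrm{right}} \coloneqq \cFT(\delta_{\bbG_r \cdot X_r^{\ast}})|_{(\mfg_r)_{\nilp}}$ has trace-of-Frobenius function $\FT(\mathbbm{1}_{\bbG_r(\F_q) \cdot X_r^{\ast}})|_{(\mfg_r)_{\nilp}(\F_q)}$.

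The core of the argument is the sheaf-theoretic identification $\cF_{\mathrm{left}} \cong \cF_{\mathrm{right}}$, from which the theorem follows by the function-sheaf dictionary. To establish it, we exploit the explicit construction of $\cF_{\bfT,\theta,\x}$ in \cite{BC24} as the averaging of a Kummer-type local system on $\bbT_r$, built from $\theta$ via Moy--Prasad data, along a positive-depth Grothendieck-style resolution. The $\bfG$-genericity of $X_r^{\ast}$ (which follows from the $0$-toral assumption together with the identity $\theta(\exp(Y)) = \psi(\langle X_r^{\ast},Y\rangle)$ for $Y \in \mft_{0+}$) guarantees that linearizing the averaging via $\log$ and applying Fourier transform converts the Kummer local system on $\bbT_r$ into the delta sheaf on the closed coadjoint orbit $\bbG_r \cdot X_r^{\ast} \subset \mfg_r^{\ast}$. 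This is the positive-depth analogue of the interplay between Deligne--Lusztig induction and the Fourier transform underlying the classical ($r=0$) Springer hypothesis. The main obstacle is carrying out this sheaf-level matching cleanly by unwinding the \cite{BC24} construction in the $0$-toral case; once in hand, taking traces of Frobenius and invoking the $\sigma$-equivariance of $\log$ immediately yields the desired identity evaluated at $\log(u)$.
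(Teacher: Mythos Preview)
Your overall strategy matches the paper's: invoke the parabolic-induction character sheaf $\pInd_{\bbT_r}^{\bbG_r}(\cL_\theta)$ of \cite{BC24}, pass to $(\mfg_r)_{\nilp}$ via $\log$, and identify the result with $\cFT(\delta_{\bbG_r\cdot X_r^\ast})$ up to shift/twist, then read off functions. However, the step you label ``the main obstacle'' and leave unargued is exactly where the content lies, and your description of it (``linearizing the averaging via $\log$ and applying Fourier transform converts the Kummer local system on $\bbT_r$ into the delta sheaf on the orbit'') is too vague to constitute a proof and does not quite reflect the actual mechanism.

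The paper carries out the sheaf identification in four concrete moves you should supply. First, on the torus side one checks directly that $\cL_\theta|_{(\bbT_r)_{\unip}}\cong \log^\ast\cFT_{\bbT_r}(\delta_{X_r^\ast})|_{(\bbT_r)_{\unip}}$, since both are multiplicative local systems with the same Frobenius trace (this uses $\theta(\exp Y)=\psi(\langle X_r^\ast,Y\rangle)$). Second, a base-change argument shows $\pInd$ commutes with restriction to the unipotent locus and with $\log^\ast$, reducing to computing $\pInd_{\mft_r}^{\mfg_r}(\cFT(\delta_{X_r^\ast}))$. Third, one uses the Lie-algebra fact (as in \cite[Theorem 1.4(i)]{Mir04}) that parabolic induction intertwines Fourier transforms: $\pInd_{\mft_r}^{\mfg_r}\circ\cFT \cong \cFT\circ\pInd_{\mft_r^\ast}^{\mfg_r^\ast}$. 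Fourth, one computes $\pInd_{\mft_r^\ast}^{\mfg_r^\ast}(\delta_{X_r^\ast})\cong \delta_{\bbG_r\cdot X_r^\ast}[2d](d)$ by noting that the $\bfG$-genericity of $X_r^\ast$ makes the fibers of $\pi$ over the orbit isomorphic to the affine space $\bbU_r$, with $d=\dim\bbU_r=\tfrac{1}{2}\dim(\bbG_r/\bbT_r)$. In particular, the factor $q^{\frac{1}{2}\dim(\bbG_r/\bbT_r)}$ arises from this Tate twist $(d)$ on the orbit side, not from the normalization of the \cite{BC24} character sheaf as you suggest.
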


\begin{proof}
    By \cite{BC24}, there is a parabolic induction functor $\pInd_{\bbT_r}^{\bbG_r} \from D_{\bbT_r}(\bbT_r) \to D_{\bbG_r}(\bbG_r)$ which induces a $t$-exact equivalence of categories between so-called $(\bfT,\bfG)$-generic subcategories $D_{\bbT_r}^\psi(\bbT_r)$ and $D_{\bbG_r}^\psi(\bbG_r)$ (see \cite[Definition 4.5]{BC24}).
    This functor is defined by (a shift of)
    \begin{equation*}
        \pInd_{\bbT_r}^{\bbG_r} \coloneqq \pi_! f^*.
    \end{equation*}
    where $\widetilde \bbG_r \coloneqq \{(g,h\bbB_r) \in \bbG_r \times \bbG_r/\bbB_r \mid h^{-1} g h \in \bbB_r\}$ and
    \begin{equation*}
        \begin{tikzcd}
            & \widetilde \bbG_r \arrow[dl, "f" swap] \arrow[dr, "\pi"] \\
            \bbT_r && \bbG_r
        \end{tikzcd}
        \qquad
        \begin{aligned}
            f(g, h\bbB_r) &= \pr_{\bbT_r}(h^{-1}gh), \\
            \pi(g,h\bbB_r) &= g
        \end{aligned}
    \end{equation*} 
    ($\pr_{\bbT_{r}}$ denotes the natural projection $\bbB_{r}\twoheadrightarrow\bbT_{r}$).
    
    Let $\cL_\theta$ be the multiplicative local system on $\bbT_r$ associated to $\theta$ in the following sense: $\cL_{\theta}$ is a Weil sheaf whose associated function $f_{\cL_{\theta}}$ is equal to the character of $\bbT_{r}(\F_{q})\cong T_{0:r+}$ induced by $\theta$. 
    By our choice of $X_{r}^{\ast}$, then we have the equality of functions 
    \[
    f_{\cL_{\theta}}|_{(\bbT_{r})(\F_{q})_\unip}=\log^{\ast} \FT_{\bbT_{r}}(\mathbbm{1}_{X_{r}^{\ast}})|_{\bbT_{r}(\F_{q})_{\unip}}.
    \]
    Since both $\cL_{\theta}$ and $\log^{\ast}\cFT(\delta_{X_{r}^{\ast}})$ are multiplicative local systems on $(\bbT_{r})_{\unip}$, this implies that 
    \[
    \cL_{\theta}|_{(\bbT_r)_\unip} = \log^{\ast} \cFT(\delta_{X_{r}^{\ast}})|_{(\bbT_r)_{\unip}}.
    \]
    A direct application of the base-change theorem shows that we then have
    \begin{equation*}
        \pInd_{\bbT_r}^{\bbG_r}(\cL_\theta)|_{(\bbG_r)_\unip} \cong \pInd_{\bbT_r}^{\bbG_r}(\log^{\ast}\cFT(\delta_{X_{r}^{\ast}}))|_{(\bbG_r)_\unip}.
    \end{equation*}

    Analogously to the above, we may define parabolic induction from the Lie algebra $\mft_r$ to $\mfg_r$ as a push-pull along $f(X,h\bbB_r) = \pr_{\mft_r}(\Ad(h)(X))$ and $\pi(X,h\bbB_r)= X$ (similarly, from $\mft^{\ast}_r$ to $\mfg^{\ast}_r$).
    By Lemma \ref{lem:log unipotent}, the logarithm $\log \from (\bbG_r)_{\unip} \to (\mfg_r)_{\unip}$ is bijective and $\bbG_r$-equivariant, and therefore lifts to a map $\log \from \pi^{-1}((\bbG_r)_{\unip}) \to \pi^{-1}((\mfg_r)_{\nilp})$ such that $f \circ \log = \log \circ f$ and $\pi \circ \log = \log \circ \pi$. By base change again, we obtain
    \begin{equation}
        \pInd_{\bbT_r}^{\bbG_r}(\log^{\ast} \cFT(\delta_{X_{r}^{\ast}}))|_{(\bbG_r)_{\unip}}
        \cong \log^{\ast}(\pInd_{\mft_r}^{\mfg_r}(\cFT(\delta_{X_{r}^{\ast}}))|_{(\mfg_r)_{\nilp}}).
    \end{equation}
    It now follows from the same argument as in \cite[Theorem 1.4 (i)]{Mir04} that
    \begin{equation*}
        \pInd_{\mft_r}^{\mfg_r}(\cFT(\delta_{X_{r}^{\ast}})) = \cFT(\pInd_{\mft^{\ast}_r}^{\mfg^{\ast}_r}(\delta_{X_{r}^{\ast}})).
    \end{equation*}
    We now compute $\pInd_{\mft^{\ast}_r}^{\mfg^{\ast}_r}(\delta_{X_{r}^{\ast}})=\pi_{!}f^{\ast}(\delta_{X^{\ast}_{r}})$.
    By the base change theorem, the pull-back $f^{\ast}(\delta_{X^{\ast}_{r}})$ is nothing but the zero extension to $\tilde{\mfg}_{r}$ of the constant $\ell$-adic sheaf on $\{(Y^{\ast},h\bbB_r) \mid \Ad(h)^{-1}(Y^{\ast}) \in X_{r}^{\ast} + \mathfrak{u}^{\ast}_r\}$.
    The image of $\{(Y^{\ast},h\bbB_r) \mid \Ad(h)^{-1}(Y^{\ast}) \in X_{r}^{\ast} + \mathfrak{u}^{\ast}_r\}$ under the map $\pi$ is the closed subset $\bbG_{r}\cdot X_{r}^{\ast}$ of $\mfg_{r}$; each fiber of $\{(Y^{\ast},h\bbB_r) \mid \Ad(h)(Y^{\ast}) \in X^{\ast}_{r} + \mathfrak{u}^{\ast}_r\} \to \bbG_{r}\cdot X_{r}^{\ast}$ is isomorphic to $\bbU_r$.
    Since $\bbU_r$ is isomorphic to an affine space of dimension $d$ over $\F_{q}$, we conclude that
    \begin{equation*}
        \pInd_{\mft^{\ast}_r}^{\mfg^{\ast}_r}(\delta_{X_{r}^{\ast}}) \cong \delta_{\bbG_{r}\cdot X_{r}^{\ast}}[2d](d),
    \end{equation*}
    where $d=\dim\bbU_{r}$.
    We have now shown
    \begin{equation*}
        \pInd_{\bbT_r}^{\bbG_r}(\cL_\theta)|_{(\bbG_r)_{\unip}} \cong \log^{\ast}(\cFT(\delta_{\bbG_{r}\cdot X_{r}^{\ast}})|_{(\mathfrak g_r)_{\nilp}})[2d](d).
    \end{equation*}

    By \cite[Theorem 10.9]{BC24} and Theorem \ref{thm:geom-char-formula} (2), the function associated to $\pInd_{\bbT_r}^{\bbG_r}(\cL_\theta)$ is given by the character of $R_{\bbT_r}^{\bbG_r}(\theta)$. In particular, we see that the function associated to $\pInd_{\bbT_r}^{\bbG_r}(\cL_\theta)|_{\bbG_r(\F_q)_\unip}$ is exactly $R_{\bbT_r}^{\bbG_r}(\theta)|_{\bbG_r(\F_q)_\unip}$, which is the Green function $Q_{\bbT_r}^{\bbG_r}(\theta_+)$ by Theorem \ref{thm:geom-char-formula} (2).
     (Note that there is a constant $(-1)^{\dim \bbG_r}$ in \textit{op.\ cit.} This is due to fact that there, we consider the function associated to the perverse sheaf $\pi_! f^* \cL_\theta[\dim \bbG_r]$.) On the other hand, the function associated to $\log^{\ast}(\cFT(\delta_{\bbG_{r}\cdot X_{r}^{\ast}})|_{(\mathfrak g_r)_{\nilp}})[2d](d)$ is 
    \[
    (-1)^{2d}\cdot q^{d}\cdot
    \log^{\ast}(\FT(\mathbbm{1}_{\bbG_{r}\cdot X_{r}^{\ast}})|_{(\mfg_{r}(\F_{q}))_{\nilp}}).
    \]
    Noting that $\frac{1}{2}\dim(\bbG_r/\bbT_r)=\dim\bbU_{r}=d$, we get the assertion.
\end{proof}

Combining the positive-depth Springer hypothesis (Theorem \ref{thm:pos Springer}) with the character formula of $R_{\bbT_r}^{\bbG_r}(\theta)$ in terms of Green functions (Theorem \ref{thm:geom-char-formula}) we obtain:

\begin{cor}\label{cor:formula with orbits}
    Let $(\bfT,\theta)$ be an unramified elliptic $0$-toral regular pair of depth $r$ and assume $q$ is large enough so that $\bbT_0(\F_q)$ has a regular semisimple element.
    For $\gamma \in \bbG_r(\F_q)$ with  Jordan decomposition $\gamma = su$,
    \begin{align*}
        \Theta_{R_{\bbT_r}^{\bbG_r}(\theta)}(\gamma) 
        &= \frac{1}{|\bbG_{r,s}(\F_q)|} \sum_{\substack{x \in\bbG_r(\F_q) \\ {}^{x}s \in \bbT_r(\F_q)}} \theta^{x}(s) \cdot q^{-\frac{1}{2} \dim(\bbG_{r,s}/\bbT_r)} \cdot \FT(\mathbbm{1}_{\bbG_{r,s}(\F_q) \cdot X_r^*})(\log(u)).
    \end{align*}
\end{cor}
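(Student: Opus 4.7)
The plan is to combine Theorem \ref{thm:geom-char-formula}(1), which expresses $\Theta_{R_{\bbT_r}^{\bbG_r}(\theta)}(\gamma)$ as a sum of Green functions on the connected centralizer $\bbG_{r,s}$, with the positive-depth Springer hypothesis (Theorem \ref{thm:pos Springer}), which converts each such Green function into the Fourier transform of a coadjoint orbit.

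First, I would invoke Theorem \ref{thm:geom-char-formula}(1) to obtain
\[
\Theta_{R_{\bbT_r}^{\bbG_r}(\theta)}(\gamma) = \frac{1}{|\bbG_{r,s}(\F_q)|} \sum_{\substack{x \in \bbG_r(\F_q) \\ {}^x s \in \bbT_r(\F_q)}} \theta^x(s) \cdot Q_{\bbT^x_r}^{\bbG_{r,s}}(\theta^x_+)(u),
\]
using Theorem \ref{thm:geom-char-formula}(2) to drop the dependence on a choice of Borel. This is justified because $\bfT^x$ is elliptic in $\bfG_s$: the $F$-split part of $\bfT^x$ equals that of $\bfT$ and lies in $Z(\bfG) \subseteq Z(\bfG_s)$.

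Second, I would verify that for each $x$ in the sum, $(\bfT^x, \theta^x)$ is an unramified elliptic $0$-toral regular pair in $\bfG_s$ of depth $r$, so that Theorem \ref{thm:pos Springer} applies. The condition ${}^x s \in \bbT_r(\F_q)$ places $\bfT^x$ inside $\bfG_s$; the Howe-factorization sequence $(\bfT^x \subsetneq \bfG)$ restricts cleanly to $(\bfT^x \subsetneq \bfG_s)$, preserving $0$-torality and the $(\bfT^x,\bfG_s)$-genericity of the dual element (since the roots of $\bfT^x$ in $\bfG_s$ form a subset of those in $\bfG$); and the assumption that $\bbT_0(\F_q)$ admits a regular semisimple element transfers to $\bbT^x_0 \subseteq \bbG_{s,0}$. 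Applying Theorem \ref{thm:pos Springer} and using $\dim(\bbG_{r,s}/\bbT^x_r) = \dim(\bbG_{r,s}/\bbT_r)$ (conjugate tori have equal dimension), I obtain
\[
Q_{\bbT^x_r}^{\bbG_{r,s}}(\theta^x_+)(u) = q^{-\frac{1}{2}\dim(\bbG_{r,s}/\bbT_r)} \cdot \FT\bigl(\mathbbm{1}_{\bbG_{r,s}(\F_q)\cdot (X^*_r)^x}\bigr)(\log(u)),
\]
where $(X^*_r)^x \in (\mft^x)^*_{-r}$ is the element corresponding to $\theta^x$.

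Substituting back into the expression of the first step yields the formula of the corollary, once the coadjoint orbit $\bbG_{r,s}(\F_q) \cdot (X^*_r)^x$ is identified with $\bbG_{r,s}(\F_q) \cdot X^*_r$ as written. I expect this identification to be the main bookkeeping concern: I would track the coadjoint-action conventions carefully to show that for each $x$ in the sum, the orbit produced by Springer's hypothesis on the conjugated pair matches the single orbit $\bbG_{r,s}(\F_q)\cdot X^*_r$ featured in the corollary (possibly after an innocuous reindexing of the sum).
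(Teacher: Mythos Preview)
Your proposal is correct and follows exactly the paper's approach: the paper's entire argument is the sentence preceding the corollary, namely combining Theorem~\ref{thm:geom-char-formula} with Theorem~\ref{thm:pos Springer}. Your additional care in checking that $(\bfT^x,\theta^x)$ remains an unramified elliptic $0$-toral regular pair in $\bfG_s$, and your flagging of the orbit-identification bookkeeping, are appropriate (and more detailed than what the paper itself spells out).
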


\subsection{DeBacker--Reeder's character formula}\label{subsec:DR}

The aim of the remaining of this section is to establish a character formula of the regular supercuspidal representation $\pi_{(\bfT,\theta)}^{\FKS}$ as a consequence of our comparison result (Theorem \ref{thm:reg comparison}) and the positive-depth Springer hypothesis (Theorem \ref{thm:pos Springer}).
For this, we reproduce the discussion of \cite[Sections 10 and 12]{DR09} in the positive-depth setting.

Let $(\bfT,\theta)$ be a Howe-unramified elliptic $0$-toral regular pair of depth $r\in\R_{>0}$.
By Theorem \ref{thm:reg comparison}, 
\[
\pi_{(\bfT,\theta)}^{\FKS}
\cong
\cInd_{TG_{\x,0}}^{G}\bigl((-1)^{r(\bfT, \theta)} R_{\bbT_r}^{\bbG_r}(\theta)\bigr).
\]
In particular, $\cInd_{TG_{\x,0}}^{G}((-1)^{r(\bfT, \theta)} R_{\bbT_r}^{\bbG_r}(\theta))$ is an irreducible supercuspidal representation.

We let $\dot{\Theta}_{R_{\bbT_r}^{\bbG_r}(\theta)}$ denote the zero extension of the character of the representation $R_{\bbT_r}^{\bbG_r}(\theta)$ of $TG_{\x,0}$ to $G$.
We fix Haar measures $dg$ and $dz$ of $G$ and $Z_{\bfG}$, respectively.
We also write $dg$ for their quotient measure on $G/Z_{\bfG}$ by abuse of notation.

\begin{lem}\label{lem:HC-int}
For any regular semisimple element $\gamma\in G$, 
\[
\Theta_{\pi_{(\bfT,\theta)}^{\FKS}}(\gamma)
=
(-1)^{r(\bfT,\theta)}\cdot \frac{dz(Z_{\bfG,0})}{dg(G_{\x,0})}
\cdot\int_{G/Z_{\bfG}}\int_{K}\dot{\Theta}_{R_{\bbT_r}^{\bbG_r}(\theta)}({}^{gk}\gamma)\,dk\,dg.
\]
Here, $\deg(\pi_{(\bfT,\theta)}^{\FKS})$ is the formal degree of $\pi_{(\bfT,\theta)}^{\FKS}$ with respect to $dg$ and $K$ is any open compact subgroup of $G$ with Haar measure $dk$ normalized so that $dk(K)=1$.
\end{lem}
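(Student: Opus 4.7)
The plan is to apply the integral Frobenius--Harish-Chandra character formula for compactly-induced supercuspidal representations to the comparison isomorphism supplied by Theorem \ref{thm:reg comparison}. For $0$-toral $\theta$ we have $\bfG^{0}=\bfT$, hence $r(\bfG^{0})-r(\bfT)=0$, so Theorem \ref{thm:reg comparison} specializes to
\[
\pi_{(\bfT,\theta)}^{\FKS}
\cong
\cInd_{H}^{G}\bigl((-1)^{r(\bfT,\theta)}R_{\bbT_r}^{\bbG_r}(\theta)\bigr),\qquad
H\coloneqq TG_{\x,0}=Z_{\bfG}G_{\x,0}.
\]
Since $H$ is open and compact modulo $Z_{\bfG}$ and the induced representation is irreducible supercuspidal, I will invoke the standard character formula for compact induction from an open, compact-mod-center subgroup to obtain, for regular semisimple $\gamma\in G$,
\[
\Theta_{\pi_{(\bfT,\theta)}^{\FKS}}(\gamma)
=
\frac{(-1)^{r(\bfT,\theta)}}{\mathrm{vol}(H/Z_{\bfG})}
\int_{G/Z_{\bfG}}\dot{\Theta}_{R_{\bbT_r}^{\bbG_r}(\theta)}({}^{g}\gamma)\,dg,
\]
where the volume on $H/Z_{\bfG}$ is taken with respect to the quotient measure $dg/dz$. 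To justify invoking this formula I need to verify (i) that the integrand is left $H$-invariant---which follows because $R_{\bbT_r}^{\bbG_r}(\theta)$ was extended to $H$ so that $Z_{\bfG}$ acts by $\theta|_{Z_{\bfG}}$, making $\dot{\Theta}_{R}$ a class function on $H$ whose zero-extension to $G$ is $H$-conjugation-invariant---and (ii) that its support modulo $H$ is finite, which holds by regular semisimplicity of $\gamma$ together with compactness-mod-$Z_{\bfG}$ of $\supp(\dot\Theta_{R})$.

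Next I will compute the volume factor. Since $Z_{\bfG}\cap G_{\x,0}=Z_{\bfG,0}$, the natural map $G_{\x,0}\to H/Z_{\bfG}$ induces an isomorphism $G_{\x,0}/Z_{\bfG,0}\xrightarrow{\sim}H/Z_{\bfG}$, and comparing Haar measures across it yields
\[
\mathrm{vol}(H/Z_{\bfG})=\frac{dg(G_{\x,0})}{dz(Z_{\bfG,0})},
\]
which is exactly the reciprocal of the prefactor appearing in the claim.

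Finally, the inner $K$-average is cosmetic. For fixed $k\in K$ the substitution $g\mapsto gk^{-1}$ (which preserves $dg$ by unimodularity of $G$ and descends to $G/Z_{\bfG}$) gives
\[
\int_{G/Z_{\bfG}}\dot{\Theta}_{R_{\bbT_r}^{\bbG_r}(\theta)}({}^{gk}\gamma)\,dg
=
\int_{G/Z_{\bfG}}\dot{\Theta}_{R_{\bbT_r}^{\bbG_r}(\theta)}({}^{g}\gamma)\,dg,
\]
so integrating against $dk$ of total mass $1$ is the identity operation. Its inclusion in the statement is purely for later use in Section \ref{subsec:DR}, where one first $K$-averages to shrink the support of the integrand before evaluating. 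I do not expect a genuine obstacle here; the content is essentially Haar-measure bookkeeping plus the standard Frobenius formula for compactly-induced supercuspidals, both of which are well established.
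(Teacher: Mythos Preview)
Your proposal is correct and follows essentially the same route as the paper. The paper invokes Harish-Chandra's integration formula \cite[p.~94]{HC70} directly (which already contains the $K$-average and the prefactor $\deg(\pi)/\dim(R_{\bbT_r}^{\bbG_r}(\theta))$), then evaluates that prefactor via the formal-degree formula $\deg(\pi_{(\bfT,\theta)}^{\FKS})=(-1)^{r(\bfT,\theta)}\dim(R_{\bbT_r}^{\bbG_r}(\theta))\cdot dg(TG_{\x,0}/Z_{\bfG})^{-1}$ and the isomorphism $TG_{\x,0}/Z_{\bfG}\cong G_{\x,0}/Z_{\bfG,0}$---exactly your volume computation in different packaging.
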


\begin{proof}
By Harish-Chandra's integration formula (\cite[94 page]{HC70}), 
\[
\Theta_{\pi_{(\bfT,\theta)}^{\FKS}}(\gamma)
=
\frac{\deg(\pi_{(\bfT,\theta)}^{\FKS})}{\dim(R_{\bbT_r}^{\bbG_r}(\theta))}\cdot\int_{G/Z_{\bfG}}\int_{K}\dot{\Theta}_{R_{\bbT_r}^{\bbG_r}(\theta)}({}^{gk}\gamma)\,dk\,dg.
\]
By, e.g., \cite[Theorem A.14]{BH96}, we have $\deg(\pi_{(\bfT,\theta)}^{\FKS})=(-1)^{r(\bfT,\theta)} \dim(R_{\bbT_r}^{\bbG_r}(\theta))\cdot dg(TG_{\x,0}/Z_{\bfG})^{-1}$.
Since $TG_{\x,0}/Z_{\bfG}=Z_{\bfG}G_{\x,0}/Z_{\bfG}\cong G_{\x,0}/Z_{\bfG,0}$, we get the desired identity.
\end{proof}

In the following, we write
\[
\cR_{\bfT}^{\bfG}(\theta)(\gamma)
:=
\frac{dz(Z_{\bfG,0})}{dg(G_{\x,0})}
\cdot\int_{G/Z_{\bfG}}\int_{K}\dot{\Theta}_{R_{\bbT_r}^{\bbG_r}(\theta)}({}^{gk}\gamma)\,dk\,dg,
\]
so that $\Theta_{\pi_{(\bfT,\theta)}^{\FKS}}(\gamma)=(-1)^{r(\bfT,\theta)}\cdot\cR_{\bfT}^{\bfG}(\theta)(\gamma)$.

The following is the analogue of \cite[Lemma 10.0.7]{DR09}.
\begin{lem}\label{lem:localization}
Let $\gamma\in G_{\x,0}$ be a regular semisimple element with topological Jordan decomposition $\gamma=\gamma_{0}\gamma_{+}$.
We put $K_{\gamma_{0}}:=K\cap G_{\gamma_{0}}$ and choose the Haar measure $dl$ on $K_{\gamma_{0}}$ so that $dl(K_{\gamma_{0}})=1$.
Then we have
\[
\int_{G/Z_{\bfG}}\int_{K}\dot{\Theta}_{R_{\bbT_r}^{\bbG_r}(\theta)}({}^{gk}\gamma)\,dk\,dg
=
\int_{G/Z_{\bfG}}\int_{K_{\gamma_{0}}}\dot{\Theta}_{R_{\bbT_r}^{\bbG_r}(\theta)}({}^{gl}\gamma)\,dl\,dg.
\]
\end{lem}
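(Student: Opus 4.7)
The plan is to show that both sides of the claimed identity independently reduce, via Fubini and a translation-invariance argument, to the single quantity
\[
I(\gamma) := \int_{G/Z_{\bfG}} \dot{\Theta}_{R_{\bbT_r}^{\bbG_r}(\theta)}({}^{g}\gamma)\,dg,
\]
whence the lemma will follow immediately.

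First I would verify absolute integrability, so that Fubini applies. The virtual character $\dot{\Theta}_{R_{\bbT_r}^{\bbG_r}(\theta)}$ is uniformly bounded (it takes only finitely many values on $TG_{\x,0}$) and is supported on $TG_{\x,0} = Z_{\bfG} G_{\x,0}$, which is compact modulo $Z_{\bfG}$. Since $\gamma$ is regular semisimple, the orbital integral of the characteristic function of this support converges, ensuring that the integrand $\dot{\Theta}_{R_{\bbT_r}^{\bbG_r}(\theta)}({}^{gk}\gamma)$ has finite integral over $G/Z_{\bfG} \times K$ (with $K$ compact and $dk(K) = 1$).

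Next I would swap the order of integration. For each fixed $k \in K$, I substitute $g' = gk$ in the outer integral. Because $G$ is unimodular and $Z_{\bfG}$ is central, the quotient measure $dg$ on $G/Z_{\bfG}$ is invariant under right translation by any element of $G$, so this substitution preserves $dg$ and transforms ${}^{gk}\gamma = (gk)\gamma(gk)^{-1}$ into ${}^{g'}\gamma$. The inner integral therefore equals $I(\gamma)$, independently of $k$; pulling this constant out and using $dk(K) = 1$ shows that the LHS equals $I(\gamma)$. The identical argument applied to the RHS, with $K$ replaced by $K_{\gamma_0}$ and $dl(K_{\gamma_0}) = 1$, shows that the RHS also equals $I(\gamma)$.

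The only step requiring care is the convergence verification; once Fubini is legal, the rest is a one-line substitution with no real obstacle. I note that this argument does not depend on the specific definition of $K_{\gamma_0}$: any closed subgroup of $K$ of unit Haar measure would yield the same identity. The relevance of the topological Jordan decomposition is bookkeeping for the downstream application—writing $l \in K_{\gamma_0}$ as commuting with $\gamma_0$ permits one to rewrite ${}^{gl}\gamma = {}^{g}(\gamma_0 \cdot {}^{l}\gamma_+)$, which is what will enable the subsequent combination with Theorem \ref{thm:geom-char-formula} and the positive-depth Springer hypothesis to recover a DeBacker--Reeder-style character formula in terms of orbital integrals.
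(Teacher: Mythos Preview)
Your reduction of both sides to the single integral $I(\gamma)=\int_{G/Z_{\bfG}}\dot{\Theta}_{R_{\bbT_r}^{\bbG_r}(\theta)}({}^{g}\gamma)\,dg$ hinges on absolute integrability over $(G/Z_{\bfG})\times K$, and this is where the argument breaks. The convergence you invoke (``the orbital integral of the characteristic function of this support converges'') is convergence of $\int_{G/G_\gamma}$, not of $\int_{G/Z_{\bfG}}$. These differ by a factor of the volume of $G_\gamma/Z_{\bfG}$, and when $\gamma$ is not elliptic this quotient is non-compact. Concretely, for any $t\in G_\gamma$ one has ${}^{t}\gamma=\gamma$, so $g\mapsto\dot{\Theta}_{R_{\bbT_r}^{\bbG_r}(\theta)}({}^{g}\gamma)$ is right $G_\gamma$-invariant and equals the constant $\dot{\Theta}_{R_{\bbT_r}^{\bbG_r}(\theta)}(\gamma)$ on all of $G_\gamma/Z_{\bfG}$; whenever this value is nonzero, $I(\gamma)$ diverges and Fubini is illegitimate. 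Non-elliptic regular semisimple elements certainly occur in $G_{\x,0}$ (e.g.\ split regular elements of the parahoric), so the lemma as stated is not covered by your argument. Your own observation that the argument is insensitive to the choice of $K_{\gamma_0}$ is a warning sign here.

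The paper does not attempt to swap integrals. Instead it follows the scheme of \cite[Lemma 10.0.7]{DR09}, whose key input is the compact support of the function $g\mapsto\int_{K_{\gamma_0}}\dot{\Theta}_{R_{\bbT_r}^{\bbG_r}(\theta)}({}^{gl}\gamma)\,dl$ on $G/Z_{\bfG}$. This is deduced from the Adler--Spice truncation result \cite[Lemma 6.3]{AS09}, applied to the inducing representation $\sigma_{(\bfT,\theta)}^{\FKS}$ on $TG_{\x,0+}$ (here the $0$-torality hypothesis is genuinely used), together with the Frobenius formula relating $\Theta_{R_{\bbT_r}^{\bbG_r}(\theta)}$ to finitely many translates of $\Theta_{\sigma_{(\bfT,\theta)}^{\FKS}}$. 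The averaging over $K_{\gamma_0}$---specifically over elements commuting with $\gamma_0$---is what produces the cancellation needed for compact support; an arbitrary compact subgroup would not do. If you restrict to elliptic $\gamma$, your shortcut is valid, but the general statement requires the support argument.
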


\begin{proof}
The only ingredient of the proof of \cite[Lemma 10.0.7]{DR09} to be modified in the current setting is \cite[Lemma 10.0.6]{DR09}, which asserts that the function 
\[
G/Z_{\bfG}\rightarrow\C\colon g \mapsto
\int_{K_{\gamma_{0}}}\dot{\Theta}_{R_{\bbT_r}^{\bbG_r}(\theta)}({}^{gl}\gamma)\,dl
\]
is compactly supported.
By Theorem \ref{thm:reg comparison}, $(-1)^{r(\bfT,\theta)}\cdot R_{\bbT_{r}}^{\bbG_{r}}(\theta)\cong \cc\tau_{(\bfT,\theta)}^{\FKS}$.
By Yu's construction (modified by Fintzen--Kaletha--Spice), the representation $\cc\tau_{(\bfT,\theta)}^{\FKS}$ is given to be the induction of a representation $\sigma_{(\bfT,\theta)}^{\FKS}$ of $TG_{\x,0+}$ to $TG_{\x,0}$ (see Section \ref{subsec:ADS}).
Hence, by the Frobenius character formula, we have
\[
\Theta_{R_{\bbT_{r}}^{\bbG_{r}}(\theta)}(\gamma')
=
(-1)^{r(\bfT,\theta)}\cdot
\sum_{x\in TG_{\x,0}/TG_{\x,0+}} \dot{\Theta}_{\sigma_{(\bfT,\theta)}^{\FKS}}({}^{x}\gamma')
\]
for any $\gamma'\in TG_{\x,0}$, where $\dot{\Theta}_{\sigma_{(\bfT,\theta)}^{\FKS}}$ denotes the zero extension of $\Theta_{\sigma_{(\bfT,\theta)}^{\FKS}}$ from $TG_{\x,0+}$ to $TG_{\x,0}$.
By \cite[Lemma 6.3]{AS09} (note that the $0$-torality assumption is needed for this), the function 
\[
G/Z_{\bfG}\rightarrow\C\colon g \mapsto
\int_{K_{\gamma_{0}}}\dot{\Theta}_{\sigma_{(\bfT,\theta)}^{\FKS}}({}^{gl}\gamma)\,dl
\]
is compactly supported, where $\dot{\Theta}_{\sigma_{(\bfT,\theta)}^{\FKS}}$ denotes the zero extension of the character of $\sigma_{(\bfT,\theta)}^{\FKS}$ to $G$.
Hence the function 
\[
G/Z_{\bfG}\rightarrow\C\colon g \mapsto
\sum_{x\in TG_{\x,0}/TG_{\x,0+}}\int_{K_{\gamma_{0}}}\dot{\Theta}_{\sigma_{(\bfT,\theta)}^{\FKS}}({}^{xgl}\gamma)\,dl
\]
is also compactly supported since each summand is just a translation of the compactly supported function $g \mapsto
\int_{K_{\gamma_{0}}}\dot{\Theta}_{\sigma_{(\bfT,\theta)}^{\FKS}}({}^{gl}\gamma)\,dl$ and the sum is finite.
This completes the proof.
\end{proof}

Now we establish the following, which is an analogue of \cite[Lemma 10.0.4]{DR09}:

\begin{prop}\label{prop:CF-1st}
For any regular semisimple element $\gamma\in G$, we have 
\[
\Theta_{\pi_{(\bfT,\theta)}^{\FKS}}(\gamma)
=
(-1)^{r(\bfT,\theta)}\sum_{\begin{subarray}{c} x\in S\backslash G/G_{\gamma_{0}} \\ {}^{x}\gamma_{0}\in T \end{subarray}}
\theta({}^{x}\gamma_{0})\cdot\cR_{\bfT^{x}}^{\bfG_{\gamma_{0}}}(\theta^x)(\gamma_{+}).
\]
\end{prop}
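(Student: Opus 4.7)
The plan is to combine Lemmas \ref{lem:HC-int} and \ref{lem:localization} with Theorem \ref{thm:geom-char-formula} (in the form of Remark \ref{rem:geom-char-formula}) to expand the ``big'' orbital integral $\cR_{\bfT}^{\bfG}(\theta)(\gamma)$ into a sum of ``small'' orbital integrals on the centralizer $\bfG_{\gamma_{0}}$.

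First I would start from Lemma \ref{lem:HC-int}, apply Lemma \ref{lem:localization} to restrict the inner integration from $K$ to $K_{\gamma_{0}}$, and thereby reduce the computation to
\[
\cR_{\bfT}^{\bfG}(\theta)(\gamma)
=\frac{dz(Z_{\bfG,0})}{dg(G_{\x,0})}\int_{G/Z_{\bfG}}\int_{K_{\gamma_{0}}}\dot{\Theta}_{R_{\bbT_r}^{\bbG_r}(\theta)}({}^{gl}\gamma)\,dl\,dg.
\]
Because $l\in K_{\gamma_{0}}$ fixes $\gamma_{0}$, the topological Jordan decomposition of ${}^{gl}\gamma$ is $({}^{g}\gamma_{0})\cdot({}^{gl}\gamma_{+})$; in particular the semisimple part depends only on $g$, not on $l$.

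Next I would apply Remark \ref{rem:geom-char-formula} to rewrite the integrand in terms of positive-depth Green functions, provided that ${}^{gl}\gamma$ is supported where $\dot{\Theta}_{R_{\bbT_r}^{\bbG_r}(\theta)}$ is nonzero (so that ${}^{gl}\gamma\in TG_{\x,0}$; in particular ${}^{g}\gamma_{0}$ can be conjugated into $T$ inside $G_{\x,0}$). This gives
\[
\dot{\Theta}_{R_{\bbT_r}^{\bbG_r}(\theta)}({}^{gl}\gamma)
=\sum_{\substack{y\in \bbG_{r}(\F_{q})/\bbG_{{}^{g}\gamma_{0},r}(\F_{q})\\ {}^{yg}\gamma_{0}\in\bbT_{r}(\F_{q})}}\theta({}^{yg}\gamma_{0})\cdot Q_{\bbT_r}^{\bbG_{{}^{yg}\gamma_{0},r}}(\theta_{+})({}^{ygl}\gamma_{+}).
\]
Lifting the finite sum over $y$ to $G_{\x,0}$ and absorbing it into the $g$-integration then turns the combined summation over $(g,y)$ into an integration over the single coset space $\{g\in G\mid{}^{g}\gamma_{0}\in T\}/Z_{\bfG}$, at the cost of a combinatorial factor coming from the stabilizer.

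The third step is to group the resulting integration by right $G_{\gamma_{0}}$-cosets: any $x\in G$ with ${}^{x}\gamma_{0}\in T$ can be written as $x=s\cdot x_{0}\cdot h$ with $h\in G_{\gamma_{0}}$ and $x_{0}$ ranging over $S\backslash G/G_{\gamma_{0}}$-representatives satisfying ${}^{x_{0}}\gamma_{0}\in T$ (the left-translate $s$ is absorbed by the $TG_{\x,0}$-invariance of the integrand on its support). For each such $x_{0}$, the $G_{\gamma_{0}}$-piece of the remaining integration reproduces, by Theorem \ref{thm:geom-char-formula}(2) and the identification $\bfG_{{}^{x_{0}}\gamma_{0}}={}^{x_{0}}\bfG_{\gamma_{0}}$, precisely the definition of
\[
\cR_{\bfT^{x_{0}}}^{\bfG_{\gamma_{0}}}(\theta^{x_{0}})(\gamma_{+}),
\]
evaluated at the topologically unipotent element $\gamma_{+}\in G_{\gamma_{0}}$. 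Pairing with the scalar $\theta({}^{x_{0}}\gamma_{0})$ produced in the Green-function expansion and combining with the sign $(-1)^{r(\bfT,\theta)}$ from Lemma \ref{lem:HC-int} yields the claimed formula.

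The main obstacle I expect is the measure-theoretic bookkeeping in the third step: matching the normalizations of the Haar measures on $G$, $G_{\gamma_{0}}$, $Z_{\bfG}$, and $K_{\gamma_{0}}$ so that the $(dz(Z_{\bfG,0})/dg(G_{\x,0}))$-type prefactor on $\bfG$ becomes the analogous prefactor on $\bfG_{\gamma_{0}}$ appearing in $\cR_{\bfT^{x}}^{\bfG_{\gamma_{0}}}$. This is a routine but careful unwinding of the identifications used to define the two sides; the rest of the argument is essentially a Deligne--Lusztig-type character-formula reorganization of the finite sum on the right of Remark \ref{rem:geom-char-formula} into an orbital sum on $\bfG$, exactly paralleling \cite[Lemma 10.0.4]{DR09}.
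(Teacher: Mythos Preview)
Your proposal is correct and follows essentially the same approach as the paper: start from Lemmas \ref{lem:HC-int} and \ref{lem:localization}, expand via the Green-function character formula, absorb the finite sum into the $G$-integration, decompose into double cosets, and recognize each piece as the analogous $\cR$-integral on $\bfG_{\gamma_0}$. The paper carries out exactly the measure-theoretic bookkeeping you flag as the main obstacle, using \cite[Lemma 9.10]{AS08} to identify the double-coset index set and \cite[Lemmas 5.29, 5.33]{AS08} to compute the volume $d\bar g(TG_{\x,r+}xG_{\gamma_0}/G_{\gamma_0})$.
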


\begin{proof}
Recall that $\Theta_{\pi_{(\bfT,\theta)}^{\FKS}}(\gamma)=(-1)^{r(\bfT,\theta)}\cdot\cR_{\bfT}^{\bfG}(\theta)(\gamma)$.
By Lemma \ref{lem:localization}, we have
\begin{align*}
\cR_{\bfT}^{\bfG}(\theta)(\gamma)
&=
\frac{dz(Z_{\bfG,0})}{dg(G_{\x,0})}
\cdot\int_{G/Z_{\bfG}}\int_{K}\dot{\Theta}_{R_{\bbT_r}^{\bbG_r}(\theta)}({}^{gk}\gamma)\,dk\,dg\\
&=
\frac{dz(Z_{\bfG,0})}{dg(G_{\x,0})}
\cdot\int_{G/Z_{\bfG}}\int_{K_{\gamma_{0}}}\dot{\Theta}_{R_{\bbT_r}^{\bbG_r}(\theta)}({}^{gl}\gamma)\,dl\,dg.
\end{align*}

We utilize the positive-depth Deligne--Lusztig character formula (Theorem \ref{thm:geom-char-formula}):
\[
R_{\bbT_r}^{\bbG_r}(\theta)(\gamma')
= 
\frac{1}{|\bbG_{\gamma'_{0},r}(\F_q)|} \sum_{\substack{x \in \bbG_r(\F_q) \\ {}^{x}\gamma'_{0} \in \bbT_r(\F_q)}} \theta({}^{x}\gamma'_{0}) \cdot Q_{\bbT_r}^{\bbG_{{}^{x}\gamma'_{0},r}}(\theta_{+})({}^{x}\gamma^{\prime}_{+}),
\]
where $\gamma'$ is any element of $G_{\x,0}$ with topological Jordan decomposition $\gamma'=\gamma'_{0}\gamma'_{+}$.
With the obvious usage of the symbol $\dot{(-)}$ denoting the zero extension, we have
\[
\dot{\Theta}_{R_{\bbT_r}^{\bbG_r}(\theta)}(\gamma')
= 
\frac{1}{|\bbG_{\gamma'_{0},r}(\F_q)|} \sum_{x \in \bbG_r(\F_q)} \dot{\theta}({}^{x}\gamma'_{0}) \cdot \dot{Q}_{\bbT_r}^{\bbG_{{}^{x}\gamma'_{0},r}}(\theta_{+})({}^{x}\gamma^{\prime}_{+}).
\]
Hence $\cR_{\bfT}^{\bfG}(\theta)(\gamma)$ equals
\begin{multline*}
\frac{dz(Z_{\bfG,0})}{dg(G_{\x,0})}
\cdot\int_{G/Z_{\bfG}}\int_{K_{\gamma_{0}}}\frac{1}{|\bbG_{{}^{gl}\gamma_{0},r}(\F_q)|}\sum_{x \in \bbG_r(\F_q)} \dot{\theta}({}^{xgl}\gamma_{0}) \cdot \dot{Q}_{\bbT_r}^{\bbG_{{}^{xgl}\gamma_{0},r}}(\theta_{+})({}^{xgl}\gamma_{+})\,dl\,dg\\
=
\frac{dz(Z_{\bfG,0})}{dg(G_{\x,0})}\cdot\frac{|\bbG_r(\F_q)|}{|\bbG_{\gamma_{0},r}(\F_q)|}
\cdot\int_{G/Z_{\bfG}}\dot{\theta}({}^{g}\gamma_{0})\int_{K_{\gamma_{0}}}\dot{Q}_{\bbT_r}^{\bbG_{{}^{g}\gamma_{0},r}}(\theta_{+})({}^{gl}\gamma_{+})\,dl\,dg
\end{multline*}
by letting $x$ be absorbed into the integral over $g$ and also noting that any $l$ commutes with $\gamma_{0}$.

By dividing $G/Z_{\bfG}$ into double cosets $TG_{\x,r+}\backslash G/G_{\gamma_{0}}$, we have
\begin{multline*}
\int_{G/Z_{\bfG}}\dot{\theta}({}^{g}\gamma_{0})\int_{K_{\gamma_{0}}}\dot{Q}_{\bbT_r}^{\bbG_{{}^{g}\gamma_{0},r}}(\theta_{+})({}^{gl}\gamma_{+})\,dl\,dg\\
=
\sum_{\begin{subarray}{c} x\in TG_{\x,r+}\backslash G/G_{\gamma_{0}} \\ {}^{x}\gamma_{0}\in {}^{G_{\x,r+}}T \end{subarray}}
\int_{TG_{\x,r+}xG_{\gamma_{0}}/Z_{\bfG}}\dot{\theta}({}^{g}\gamma_{0}) \int_{K_{\gamma_{0}}}\dot{Q}_{\bbT_r}^{\bbG_{{}^{g}\gamma_{0},r}}(\theta_{+})({}^{gl}\gamma_{+})\,dl\,dg.
\end{multline*}
We fix a Haar measure $dh$ on $G_{\gamma_{0}}$ and consider the quotient measure $d\bar{g}$ on $G/G_{\gamma_{0}}$ of $dg$ by $dh$.
Then each summand of the above sum equals
\[
\int_{TG_{\x,r+}xG_{\gamma_{0}}/G_{\gamma_{0}}}
\int_{G_{\gamma_{0}}/Z_{\bfG}}
\dot{\theta}({}^{\bar{g}h}\gamma_{0}) \int_{K_{\gamma_{0}}}\dot{Q}_{\bbT_r}^{\bbG_{{}^{\bar{g}h}\gamma_{0},r}}(\theta_{+})({}^{\bar{g}hl}\gamma_{+})\,dl\,dh\,d\bar{g}.
\]
Here, in the middle integral, we consider the quotient measure of $dh$ by $dz$ and use the same symbol $dh$ for it.
Since $h$ commutes with $\gamma_{0}$ and the integrand of the outer integral is invariant under the left $TG_{\x,r+}$-translation, this equals
\begin{align*}
&d\bar{g}(TG_{\x,r+}xG_{\gamma_{0}}/G_{\gamma_{0}})
\int_{G_{\gamma_{0}}/Z_{\bfG}}
\dot{\theta}({}^{x}\gamma_{0}) \int_{K_{\gamma_{0}}}\dot{Q}_{\bbT_r}^{\bbG_{{}^{x}\gamma_{0},r}}(\theta_{+})({}^{xhl}\gamma_{+})\,dl\,dh\\
&=d\bar{g}(TG_{\x,r+}xG_{\gamma_{0}}/G_{\gamma_{0}})
\cdot\dot{\theta}({}^{x}\gamma_{0}) 
\int_{G_{\gamma_{0}}/Z_{\bfG}}
\int_{K_{\gamma_{0}}}\dot{Q}_{\bbT^{x}_r}^{\bbG_{\gamma_{0},r}}(\theta^x_{+})({}^{hl}\gamma_{+})\,dl\,dh.
\end{align*}

We note that the following natural surjective map is in fact bijective (this is a simple application of \cite[Lemma 9.10]{AS08}; cf.\ \cite[Lemma 5.13]{Oi23-TECR}):
\[
T\backslash\{x\in G \mid {}^{x}\gamma_{0}\in T\}/G_{\gamma_{0}}
\twoheadrightarrow
TG_{\x,r+}\backslash\{x\in G \mid {}^{x}\gamma_{0}\in {}^{G_{\x,r+}}T\}/G_{\gamma_{0}}.
\]
Hence the above sum can be rewritten as
\[
\sum_{\begin{subarray}{c} x\in T\backslash G/G_{\gamma_{0}} \\ {}^{x}\gamma_{0}\in T \end{subarray}}
d\bar{g}(TG_{\x,r+}xG_{\gamma_{0}}/G_{\gamma_{0}})
\cdot\theta({}^{x}\gamma_{0}) 
\int_{G_{\gamma_{0}}/Z_{\bfG}}
\int_{K_{\gamma_{0}}}\dot{Q}_{\bbT^{x}_r}^{\bbG_{\gamma_{0},r}}(\theta^x_{+})({}^{hl}\gamma_{+})\,dl\,dh.
\]

We also note that
\[
TG_{\x,r+}xG_{\gamma_{0}}
=G_{\x,r+}TxG_{\gamma_{0}}
=G_{\x,r+}xT^{x}G_{\gamma_{0}}
=G_{\x,r+}xG_{\gamma_{0}}
=G_{\x,r+}G_{{}^{x}\gamma_{0}}x
\]
for any $x$ satisfying ${}^{x}\gamma_{0}\in T$.
Hence, the map $g\mapsto gx^{-1}$ induces a bijection 
\[
TG_{\x,r+}xG_{\gamma_{0}}/G_{\gamma_{0}}
\xrightarrow{1:1}
G_{\x,r+}G_{{}^{x}\gamma_{0}}/G_{{}^{x}\gamma_{0}}.
\]
As the latter is bijective to $G_{\x,r+}/G_{{}^{x}\gamma_{0},\x,r+}$ (this follows from \cite[Lemmas 5.29 and 5.33]{AS08}; cf.\ the proof of \cite[Lemma 2.4]{AS09}), we get
\[
d\bar{g}(TG_{\x,r+}xG_{\gamma_{0}}/G_{\gamma_{0}})
=\frac{dg(G_{\x,r+})}{dh(G_{{}^{x}\gamma_{0},\x,r+})}
=\frac{dg(G_{\x,r+})}{dh(G_{\gamma_{0},\x,r+})}
\]
(here, in the middle, we introduce a Haar measure on $G_{{}^{x}\gamma_{0}}$ naturally induced from that on $G_{\gamma_{0}}$ and use the same symbol $dh$ for denoting it).

In summary, so far we have obtained
\begin{multline*}
\cR_{\bfT}^{\bfG}(\theta)(\gamma)
=
\frac{dz(Z_{\bfG,0})}{dg(G_{\x,0})}\cdot\frac{|\bbG_r(\F_q)|}{|\bbG_{\gamma_{0},r}(\F_q)|}
\cdot\frac{dg(G_{\x,r+})}{dh(G_{\gamma_{0},\x,r+})}\\
\cdot\sum_{\begin{subarray}{c} x\in T\backslash G/G_{\gamma_{0}} \\ {}^{x}\gamma_{0}\in T \end{subarray}}
\theta({}^{x}\gamma_{0}) 
\int_{G_{\gamma_{0}}/Z_{\bfG}}
\int_{K_{\gamma_{0}}}\dot{Q}_{\bbT^{x}_r}^{\bbG_{\gamma_{0},r}}(\theta^x_{+})({}^{hl}\gamma_{+})\,dl\,dh.
\end{multline*}
Here recall that $\bbG_{r}(\F_{q})\cong G_{\x,0}/G_{\x,r+}$, hence we have $|\bbG_r(\F_q)|\cdot dg(G_{\x,r+})=dg(G_{\x,0})$.
Similarly, we have $|\bbG_{\gamma_{0},r}(\F_q)|\cdot dh(G_{\gamma_{0}\x,r+})=dh(G_{\gamma_{0},\x,0})$.
Thus, we obtain
\begin{align*}
\cR_{\bfT}^{\bfG}(\theta)(\gamma)
=
\frac{dz(Z_{\bfG,0})}{dh(G_{\gamma_{0},\x,0})}\sum_{\begin{subarray}{c} x\in T\backslash G/G_{\gamma_{0}} \\ {}^{x}\gamma_{0}\in T \end{subarray}}
\theta({}^{x}\gamma_{0}) 
\int_{G_{\gamma_{0}}/Z_{\bfG}}
\int_{K_{\gamma_{0}}}\dot{Q}_{\bbT^{x}_r}^{\bbG_{\gamma_{0},r}}(\theta^x_{+})({}^{hl}\gamma_{+})\,dl\,dh.
\end{align*}

If we apply this formula to $(\bfG_{\gamma_{0}},\bfT^{x},\theta^x,\gamma_{+})$ instead of $(\bfG,\bfT,\theta,\gamma)$, we also obtain
\begin{align*}
\cR_{\bfT^{x}}^{\bfG_{\gamma_{0}}}(\theta^x)(\gamma_{+})
=
\frac{dz'(Z_{\bfG_{\gamma_{0}},0})}{dh(G_{\gamma_{0},\x,0})}
\int_{G_{\gamma_{0}}/Z_{\bfG_{\gamma_{0}}}}
\int_{K_{\gamma_{0}}}\dot{Q}_{\bbT^{x}_r}^{\bbG_{\gamma_{0},r}}(\theta^x_{+})({}^{hl}\gamma_{+})\,dl\,dh',
\end{align*}
where we fix a Haar measure $dz'$ on $Z_{\bfG_{\gamma_{0}}}$ and consider the quotient measure $dh'$ of $dh$ by $dz'$.
Here, note that we have a chain 
\[
\bfZ_{\bfG}
\subset \bfZ_{\bfG_{\gamma_{0}}}
\subset \bfT^{x}
\subset \bfG_{\gamma_{0}}
\subset \bfG.
\]
Since $\bfT^{x}$ is elliptic in $\bfG$, which means that $\bfT^{x}$ is anisotropic modulo $\bfZ_{\bfG}$, we see that $Z_{\bfG_{\gamma_{0}}}$ is compact modulo $Z_{\bfG}$.
Thus it makes sense to choose $dz'$ so that its restriction to $Z_{\bfG}$ coincides with $dz$.
With these choices of Haar measures, we obtain
\begin{align*}
\cR_{\bfT}^{\bfG}(\theta)(\gamma)
&=
\frac{dz(Z_{\bfG,0})}{dh(G_{\gamma_{0},\x,0})}\sum_{\begin{subarray}{c} x\in T\backslash G/G_{\gamma_{0}} \\ {}^{x}\gamma_{0}\in T \end{subarray}}
\theta({}^{x}\gamma_{0}) 
\int_{G_{\gamma_{0}}/Z_{\bfG}}
\int_{K_{\gamma_{0}}}\dot{Q}_{\bbT^{x}_r}^{\bbG_{\gamma_{0},r}}(\theta^x_{+})({}^{hl}\gamma_{+})\,dl\,dh\\
&=
\frac{dz'(Z_{\bfG_{\gamma_{0}},0})}{dh(G_{\gamma_{0},\x,0})}\sum_{\begin{subarray}{c} x\in T\backslash G/G_{\gamma_{0}} \\ {}^{x}\gamma_{0}\in T \end{subarray}}
\theta({}^{x}\gamma_{0}) 
\int_{G_{\gamma_{0}}/Z_{\bfG_{\gamma_{0}}}}
\int_{K_{\gamma_{0}}}\dot{Q}_{\bbT^{x}_r}^{\bbG_{\gamma_{0},r}}(\theta^x_{+})({}^{hl}\gamma_{+})\,dl\,dh'\\
&=
\sum_{\begin{subarray}{c} x\in T\backslash G/G_{\gamma_{0}} \\ {}^{x}\gamma_{0}\in T \end{subarray}}
\theta({}^{x}\gamma_{0})\cdot\cR_{\bfT^{x}}^{\bfG_{\gamma_{0}}}(\theta^x)(\gamma_{+}).
\end{align*}
This completes the proof.
\end{proof}

We finally rewrite the topologically unipotent contribution $\cR_{\bfT^{x}}^{\bfG_{\gamma_{0}}}(\theta^x)(\gamma_{+})$ following the argument of \cite[Lemma 12.4.3]{DR09}.
Recall that we have fixed an element $X^{\ast}\in\mft^{\ast}(F)_{-r}\subset\mfg^{\ast}(F)$ satisfying $\theta(\exp(Y))=\psi_{F}(\langle X^{\ast},Y\rangle)$ for any $Y\in\mft(F)_{0+}$.
As $X_{r}^{\ast}$ is elliptic regular semisimple, its centralizer group $G_{X^{\ast}}$ in $G$ is a subgroup of $N_{G}(\bfT)$ containing $T$.

We let $\widehat{\mu}_{X^{\ast}}^{G}(-)\colon \mfg(F)\rightarrow\C$ denote the locally constant function on $\mfg(F)$ representing the Fourier transform of the orbital integral with respect to $X^{\ast}$.
To be more precise, we consider the distribution
\[
\widehat{\mu}_{X^{\ast}}^{G}\colon C_{c}^{\infty}(\mfg(F))\rightarrow\C;\quad
f\mapsto \int_{G/G_{X^{\ast}}}\widehat{f}({}^{g}X^{\ast})\,dg.
\]
Here,
\begin{itemize}
\item
for $f\in C_{c}^{\infty}(\mfg(F))$, we let $\widehat{f}\in C_{c}^{\infty}(\mfg(F))$ denotes the Fourier transform of $f$, i.e., 
\[
\widehat{f}(Y^{\ast}):=\int_{\mfg(F)} f(X)\cdot\psi_{F}(\langle Y^{\ast},X\rangle)\,dX,
\]
where $dX$ is any Haar measure on $\mfg(F)$,
\item
we fix the unique Haar measure $dt$ on $G_{X^{\ast}}$ which extends $dz$ (note that $Z_{\bfG}$ is of finite index in $T$, hence also in $G_{X^{\ast}}$) and use the induced (quotient) measure on $G/G_{X^{\ast}}$ in the integration over $G/G_{X^{\ast}}$.
\end{itemize}
Then there exists a unique locally constant function on $\mfg(F)$, which is written by the same symbol $\widehat{\mu}_{X^{\ast}}^{G}(-)$, satisfying
\[
\widehat{\mu}_{X^{\ast}}^{G}(f)
=
\int_{\mfg(F)} f(X)\cdot\widehat{\mu}_{X^{\ast}}^{G}(X)\,dX
\]
for any $f\in C_{c}^{\infty}(\mfg(F))$.
Note that the function $\widehat{\mu}_{X^{\ast}}^{G}(-)$ does not depend on the choice of $dX$ but depends on the choice of a Haar measure on $G_{X^{\ast}}$.

Recall that we have fixed a Haar measure $dg$ without any prescription so far.
In the following, we normalize $dg$ such that $dg(G_{\x,0})=|\bbG_{r}(\F_{q})|\cdot q^{-\frac{1}{2}\dim\bbG_{r}}$.
Also, we normalize $dz$ (and $dt$ simultaneously) so that $dt(T_{0})=|\bbT_{r}(\F_{q})|\cdot q^{-\frac{1}{2}\dim\bbT_{r}}$.

\begin{prop}\label{prop:p-adic Springer}
For any topologically unipotent element $\gamma_{+}$ of $G_{\x,0}$, we have
\[
\cR_{\bfT}^{\bfG}(\theta)(\gamma_{+})
=
\widehat{\mu}_{X^{\ast}}^{G}(\log(\gamma_{+})).
\]
\end{prop}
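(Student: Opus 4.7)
The plan is to mirror the strategy of DeBacker--Reeder's depth-zero proof of \cite[Lemma 12.4.3]{DR09}, with the positive-depth Springer hypothesis (Theorem \ref{thm:pos Springer}) as the key geometric input that replaces the classical depth-zero Springer hypothesis. My proof will proceed in four main stages.

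First, I will simplify $\cR_{\bfT}^{\bfG}(\theta)(\gamma_{+})$ by selecting $K=G_{\x,0}$ in its definition. Since $\Theta_{R_{\bbT_{r}}^{\bbG_{r}}(\theta)}$ is a class function on $G_{\x,0}$ and its zero-extension $\dot\Theta_{R}$ has support $G_{\x,0+}$ that is stable under $G_{\x,0}$-conjugation, the inner averaging over $K$ collapses, yielding
\[
\cR_{\bfT}^{\bfG}(\theta)(\gamma_{+}) \;=\; \frac{dz(Z_{\bfG,0})}{dg(G_{\x,0})}\int_{G/Z_{\bfG}}\dot\Theta_{R_{\bbT_{r}}^{\bbG_{r}}(\theta)}({}^{g}\gamma_{+})\,dg.
\]
On the support $\{g:{}^{g}\gamma_{+}\in G_{\x,0+}\}$, Theorem \ref{thm:geom-char-formula}(2) identifies $\dot\Theta_{R}({}^{g}\gamma_{+})$ with $Q_{\bbT_{r}}^{\bbG_{r}}(\theta_{+})(\overline{{}^{g}\gamma_{+}})$, and Theorem \ref{thm:pos Springer} rewrites this as $q^{-d_{G}}\FT(\mathbbm{1}_{\bbG_{r}(\F_{q})\cdot X_{r}^{*}})(\log\overline{{}^{g}\gamma_{+}})$, where $d_{G}=\tfrac{1}{2}\dim(\bbG_{r}/\bbT_{r})$.

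Next, using the genericity of $X^{*}$ (so that the stabilizer of $X_{r}^{*}$ in $\bbG_{r}(\F_{q})$ is $\bbT_{r}(\F_{q})$), I will parametrize the coadjoint orbit $\bbG_{r}(\F_{q})\cdot X_{r}^{*}$ by $\bbG_{r}(\F_{q})/\bbT_{r}(\F_{q})$, lift this to $G_{\x,0}/T_{0}G_{\x,r+}$, and use the equivariance $\log({}^{g}\gamma_{+})=\Ad(g)\log\gamma_{+}$ together with the pairing identity $\langle\Ad^{*}(\tilde h)X^{*},\Ad(g)Y\rangle=\langle\Ad^{*}(g^{-1}\tilde h)X^{*},Y\rangle$ to expand
\[
\dot\Theta_{R}({}^{g}\gamma_{+}) \;=\; q^{-d_{G}}\sum_{\tilde h\in G_{\x,0}/T_{0}G_{\x,r+}}\psi_{F}(\langle\Ad^{*}(g^{-1}\tilde h)X^{*},\log\gamma_{+}\rangle).
\]
The substitution $g\mapsto\tilde h g$ then decouples the $\tilde h$-sum from the $g$-integral: the phase becomes $\tilde h$-independent, and the indicator is preserved since $\tilde h\in G_{\x,0}$ stabilizes $\x$. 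The sum contributes $|G_{\x,0}/T_{0}G_{\x,r+}|=\frac{dg(G_{\x,0})}{dt(T_{0})}q^{d_{G}}$ by the Haar normalizations. Combining prefactors produces
\[
\cR_{\bfT}^{\bfG}(\theta)(\gamma_{+}) \;=\; \frac{dz(Z_{\bfG,0})}{dt(T_{0})}\int_{G/Z_{\bfG}}\psi_{F}(\langle\Ad^{*}(g^{-1})X^{*},\log\gamma_{+}\rangle)\,\mathbbm{1}_{{}^{g}\gamma_{+}\in G_{\x,0+}}\,dg.
\]

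Finally, I will identify the right-hand side with $\widehat\mu_{X^{*}}^{G}(\log\gamma_{+})$. By Plancherel duality applied to the bump function $f=\mathbbm{1}_{\log\gamma_{+}+\mfg_{\x,r+}}$ (whose Fourier transform equals $dX(\mfg_{\x,r+})\cdot\psi_{F}(\langle\cdot,\log\gamma_{+}\rangle)\cdot\mathbbm{1}_{\mfg^{*}_{\x,-r}}$), we get
\[
\widehat\mu_{X^{*}}^{G}(\log\gamma_{+})\;=\;\int_{G/G_{X^{*}}}\mathbbm{1}_{\Ad^{*}(g^{-1})X^{*}\in\mfg^{*}_{\x,-r}}\,\psi_{F}(\langle\Ad^{*}(g^{-1})X^{*},\log\gamma_{+}\rangle)\,d\bar g.
\]
Using the ellipticity of $\bfT$ (which makes $G_{X^{*}}/Z_{\bfG}$ compact), the $G_{X^{*}}$-invariance of the phase, and the volume relation arising from our chosen normalizations of $dg$, $dt$, $dz$, the factor $\frac{dz(Z_{\bfG,0})}{dt(T_{0})}\cdot\mathrm{vol}(G_{X^{*}}/Z_{\bfG})$ should absorb the change of integration domain from $G/Z_{\bfG}$ to $G/G_{X^{*}}$.

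The main obstacle is the last step: reconciling the two support conditions $\mathbbm{1}_{{}^{g}\gamma_{+}\in G_{\x,0+}}$ and $\mathbbm{1}_{\Ad^{*}(g^{-1})X^{*}\in\mfg^{*}_{\x,-r}}$, which encode the ``adjoint-side'' constraint on $\log\gamma_{+}$ and the ``coadjoint-side'' constraint on $X^{*}$ respectively. These sets are not pointwise equal, but their contributions to the $\psi_{F}$-integral agree once one correctly accounts for the $T_{0}$-averaging and the building-theoretic genericity of both $X^{*}$ (as a $\bfG$-generic element of depth $r$) and $\gamma_{+}$ (as a topologically unipotent element associated with $\x$). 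This matching is analogous to the depth-zero argument in \cite[\S12]{DR09}, where the corresponding reconciliation is performed using the structure of the Moy--Prasad filtration at depth zero; for us, the key technical work lies in carrying out this reconciliation at depth $r$ using the pairing between $\mfg_{\x,0+}$ and $\mfg^{*}_{\x,-r}$.
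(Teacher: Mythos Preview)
Your first three stages are sound and parallel the paper's opening: the paper also starts from
\[
\cR_{\bfT}^{\bfG}(\theta)(\gamma_{+})=\frac{dz(Z_{\bfG,0})}{dg(G_{\x,0})}\int_{G/Z_{\bfG}}\int_{K}\dot Q_{\bbT_{r}}^{\bbG_{r}}(\theta_{+})({}^{gk}\gamma_{+})\,dk\,dg
\]
and feeds in Theorem \ref{thm:pos Springer}. The gap is exactly where you flag it. The two indicators $\mathbbm{1}_{{}^{g}\gamma_{+}\in G_{\x,0+}}$ and $\mathbbm{1}_{\Ad^{*}(g^{-1})X^{*}\in\mfg^{*}_{\x,-r}}$ depend on different data ($\gamma_{+}$ versus $X^{*}$) and cut out genuinely different subsets of $G$; showing that their contributions to the oscillatory integral agree is the entire difficulty, and you have supplied no argument. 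Your Plancherel computation for $\widehat\mu_{X^{*}}^{G}(\log\gamma_{+})$ via the bump function $\mathbbm{1}_{\log\gamma_{+}+\mfg_{\x,r+}}$ also tacitly assumes that $\widehat\mu_{X^{*}}^{G}$ is constant on that coset, which is not justified.

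The paper avoids both issues by taking a different route on the $\widehat\mu$ side. Rather than a bump-function computation, it quotes \cite[Proposition 3.3.1(a)]{AD04}, which already expresses $\widehat\mu_{X^{*}}^{G}(Y)$ as an integral of $\psi_{F}(\langle X^{*},{}^{k'gk}Y\rangle)$ over $G/Z_{\bfG}\times K\times G_{\x,0}$. Since both sides are now integrals over the \emph{same} outer domain $G/Z_{\bfG}\times K$, the comparison collapses to a pointwise identity between $\dot Q_{\bbT_{r}}^{\bbG_{r}}(\theta_{+})(\exp Y)$ and a $G_{\x,0}$-average of $\psi_{F}(\langle X^{*},{}^{k}Y\rangle)$, valid for \emph{every} topologically nilpotent $Y$ (not only those in $\mfg_{\x,0}$). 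For $Y\in\mfg_{\x,0}$ this is exactly Theorem \ref{thm:pos Springer} after unwinding the Haar normalizations; for $Y\notin\mfg_{\x,0}$ both sides vanish---the left trivially by the zero-extension, the right by \cite[Lemma 6.1.1]{AD04} applied with $(\bfH,\bfM,y,r)=(\bfT,\bfG,\x,r)$. In short, the support reconciliation you defer is not carried out from scratch in the paper either: it is entirely absorbed into these two Adler--DeBacker results, and that is the missing idea in your proposal.
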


\begin{proof}
As observed in the proof of Proposition \ref{prop:CF-1st}, we have
\[
\cR_{\bfT}^{\bfG}(\theta)(\gamma_{+})
=
\frac{dz(Z_{\bfG,0})}{dg(G_{\x,0})}
\int_{G/Z_{\bfG}}
\int_{K}\dot{Q}_{\bbT_r}^{\bbG_{r}}(\theta_{+})({}^{gk}\gamma_{+})\,dk\,dg.
\]
On the other hand, by \cite[Proposition 3.3.1 (a)]{AD04}, we have
\[
\widehat{\mu}_{X^{\ast}}^{G}(Y)
=
\frac{|\bbT_{r}(\F_{q})|}{|\bbG_{r}(\F_{q})_{X^{\ast}}|}\cdot\frac{dz(Z_{\bfG,0})}{dt(T_{0})}
\int_{G/Z_{\bfG}}\int_{K}\int_{G_{\x,0}} \psi_{F}(\langle X^{\ast},{}^{k'gk}Y\rangle)\,dk'\,dk\,dg
\]
for any regular semisimple element $Y\in\mfg(F)$, where we choose a Haar measure $dk'$ on $G_{\x,0}$ so that $dk'(G_{\x,0})=1$.
We give a remark about the measures used here.
In \cite[Section 3.2]{AD04}, the orbital integral with respect to $X^{\ast}$ is defined to be the integral over $G/A_{\bfG}$ (not $G/G_{X^{\ast}}$), where $\mathbf{A}_{\bfG}$ denotes the maximal split central torus of $\bfG$.
Also, the right-hand side of the formula of \cite[Proposition 3.3.1 (a)]{AD04} is given by the integration over $G/A_{\bfG}$ and does not have the factor $dz(Z_{\bfG,0})/dt(T_{0})$.
With our choices of Haar measures, we have $\int_{G/A_{\bfG}}=[Z_{\bfG}:A_{\bfG}]\int_{G/Z_{\bfG}}$ and $\int_{G/A_{\bfG}}=[G_{X^{\ast}}:A_{\bfG}]\int_{G/G_{X^{\ast}}}$.
By also noting that
\[
\frac{[G_{X^{\ast}}:A_{\bfG}]}{[Z_{\bfG}:A_{\bfG}]}
=|\bbG_{r}(\F_{q})_{X^{\ast}}/\bbT_{r}(\F_{q})|\cdot\frac{dt(T_{0})}{dz(Z_{\bfG,0})},
\]
we see that the formula \cite[Proposition 3.3.1 (a)]{AD04} is equivalent to the above one.

Thus, to show the asserted identity, it is enough to show that
\[
\dot{Q}_{\bbT_r}^{\bbG_{r}}(\theta_{+})(\gamma_{+})
=
\frac{|\bbT_{r}(\F_{q})|}{|\bbG_{r}(\F_{q})_{X^{\ast}}|}\cdot
\frac{dg(G_{\x,0})}{dt(T_{0})}\cdot\int_{G_{\x,0}} \psi_{F}(\langle X^{\ast},{}^{k'}Y\rangle)\,dk'
\]
for any topologically unipotent element $\gamma_{+}=\exp(Y)$.
(In the following, we write $k$ for $k'$ to make the notation lighter.)

We first show that we may assume that $Y\in \mfg(F)_{\x,0}$.
In other words, let us check that 
\[
\int_{G_{\x,0}} \psi_{F}(\langle X^{\ast},{}^{k}Y\rangle)\,dk=0
\]
whenever $Y\notin\mfg(F)_{\x,0}$.
For this, we utilize \cite[Lemma 6.1.1]{AD04}; we choose the data $(\bfH,\bfM,y,r,A,t)$ in \textit{loc.\ cit.}\ to be $(\bfT,\bfG,\x,r,X^{\ast},t)$, where $t$ is any real number such that $2t<0<\frac{r}{2}$.
Then the consequence of \textit{loc.\ cit.}\ is that, for any $Y'\in\mfg(F)_{\x,2t}\smallsetminus\mfg(F)_{\x,2t+}$ satisfying $Y'\notin \mft(F)+\mfg(F)_{\x,\frac{r}{2}+t}$, we have
\[
\int_{G_{\x,(\frac{r}{2}-t)+}}\psi_{F}(\langle X^{\ast},{}^{k}Y'\rangle)\,dk=0.
\]
Here, note that $Y'\in \mft(F)+\mfg(F)_{\x,\frac{r}{2}+t}\iff Y'\in\mfg(F)_{\x,\frac{r}{2}+t}$ if $Y'$ is topologically nilpotent.
Thus, since $2t+<\frac{r}{2}+t$, any topologically nilpotent $Y'$ satisfying $Y'\in\mfg(F)_{\x,2t}\smallsetminus\mfg(F)_{\x,2t+}$ necessarily satisfies the condition that $Y'\notin \mft(F)+\mfg(F)_{\x,\frac{r}{2}+t}$.
Consequently, for any topologically nilpotent element $Y'\in\mfg(F)_{\x,2t}\smallsetminus\mfg(F)_{\x,2t+}$, we have 
\[
\int_{G_{\x,(\frac{r}{2}-t)+}}\psi_{F}(\langle X^{\ast},{}^{k}Y'\rangle)\,dk=0.
\]
Now let us suppose that $Y$ is a topologically nilpotent element not belonging to $\mfg(F)_{\x,0}$; let $t\in\R_{<0}$ be the unique negative number such that $Y\in\mfg(F)_{\x,2t}\smallsetminus\mfg(F)_{\x,2t+}$.
Then we have 
\[
\int_{G_{\x,0}}\psi_{F}(\langle X^{\ast},{}^{k}Y\rangle)\,dk
=
\sum_{g\in G_{\x,0}/G_{\x,(\frac{r}{2}-t)+}} \int_{G_{\x,(\frac{r}{2}-t)+}}\psi_{F}(\langle X^{\ast},{}^{kg}Y\rangle)\,dk.
\]
Since ${}^{g}Y$ is a topologically nilpotent element of $Y\in\mfg(F)_{\x,2t}\smallsetminus\mfg(F)_{\x,2t+}$ for any $g\in G_{\x,0}$, this integral is $0$ (apply the previous discussion to $Y'={}^{g}Y$).

We suppose that $Y$ is a topologically nilpotent element of $\mfg(F)_{\x,0}$ in the following.
By our normalizations of $dg$ and $dt$, we have
\begin{align*}
\frac{|\bbT_{r}(\F_{q})|}{|\bbG_{r}(\F_{q})_{X^{\ast}}|}\cdot
\frac{dg(G_{\x,0})}{dt(T_{0})}
=\frac{|\bbT_{r}(\F_{q})|}{|\bbG_{r}(\F_{q})_{X^{\ast}}|}\cdot\frac{|\bbG_{r}(\F_{q})|\cdot q^{-\frac{1}{2}\dim\bbG_{r}}}{|\bbT_{r}(\F_{q})|\cdot q^{-\frac{1}{2}\dim\bbT_{r}}}
=\frac{|\bbG_{r}(\F_{q})|}{|\bbG_{r}(\F_{q})_{X^{\ast}}|}\cdot q^{-\frac{1}{2}\dim(\bbG_{r}/\bbT_{r})}.
\end{align*}
On the other hand, we have 
\begin{align*}
\int_{G_{\x,0}} \psi_{F}(\langle X^{\ast},{}^{k'}Y\rangle)\,dk'
&=\sum_{k'\in G_{\x,0}/G_{\x,r+}} \psi_{F}(\langle {}^{k'}X^{\ast},Y\rangle)\cdot dk'(G_{\x,r+})\\
&=\sum_{k'\in G_{\x,0}/(G_{\x,0})_{X^{\ast}}G_{\x,r+}} \psi_{F}(\langle {}^{k'}X^{\ast},Y\rangle)\cdot |\bbG_{r}(\F_{q})|^{-1}\cdot |\bbG_{r}(\F_{q})_{X^{\ast}}|\\
&=\sum_{Y^{\ast}\in \bbG_{r}(\F_{q})\cdot X^{\ast}} \psi_{F}(\langle Y^{\ast},Y\rangle)\cdot |\bbG_{r}(\F_{q})|^{-1}\cdot |\bbG_{r}(\F_{q})_{X^{\ast}}|.
\end{align*}
With the notation in Section \ref{subsec:pos-Springer},
\[
\sum_{Y^{\ast}\in \bbG_{r}(\F_{q})\cdot X^{\ast}} \psi_{F}(\langle Y^{\ast},Y\rangle)
=\sum_{Y^{\ast}\in \bbG_{r}(\F_{q})\cdot X_{r}^{\ast}} \psi_{r}(\langle Y^{\ast},Y\rangle)
=\FT(\mathbbm{1}_{X_{r}^{\ast}})(Y).
\]
Since $\FT(\mathbbm{1}_{X_{r}^{\ast}})(Y)$ equals $Q_{\bbT_r}^{\bbG_{r}}(\theta_{+})(\log(\gamma_{+}))\cdot q^{\frac{1}{2}\dim(\bbG_{r}/\bbT_{r})}$ by the positive-depth Springer hypothesis (Theorem \ref{thm:pos Springer}), this completes the proof.
\end{proof}

\begin{cor}[cf.\ {\cite[Theorem 4.3.5]{FKS23}}]
For any regular semisimple element $\gamma\in G$, we have 
\[
\Theta_{\pi_{(\bfT,\theta)}^{\FKS}}(\gamma)
=
(-1)^{r(\bfT,\theta)}\sum_{\begin{subarray}{c} x\in T\backslash G/G_{\gamma_{0}} \\ {}^{x}\gamma_{0}\in T \end{subarray}}
\theta({}^{x}\gamma_{0})\cdot
\widehat{\mu}_{X^{\ast}}^{G_{{}^{x}\gamma_{0}}}(\log({}^{x}\gamma_{+})).
\]
\end{cor}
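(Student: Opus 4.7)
The plan is to combine the two preceding results in a straightforward way: Proposition \ref{prop:CF-1st} expresses $\Theta_{\pi_{(\bfT,\theta)}^{\FKS}}(\gamma)$ as a sum of ``localized'' contributions $\cR_{\bfT^x}^{\bfG_{\gamma_0}}(\theta^x)(\gamma_+)$ indexed by $x \in T\backslash G/G_{\gamma_0}$ with ${}^x\gamma_0 \in T$, and Proposition \ref{prop:p-adic Springer} identifies each such contribution with the Fourier transform of an orbital integral. The only real work is to match these two formulas correctly.

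First, I would invoke Proposition \ref{prop:CF-1st} to obtain
\[
\Theta_{\pi_{(\bfT,\theta)}^{\FKS}}(\gamma)
=
(-1)^{r(\bfT,\theta)}\sum_{\begin{subarray}{c} x\in T\backslash G/G_{\gamma_{0}} \\ {}^{x}\gamma_{0}\in T \end{subarray}}
\theta({}^{x}\gamma_{0})\cdot\cR_{\bfT^{x}}^{\bfG_{\gamma_{0}}}(\theta^x)(\gamma_{+}).
\]
Next, I would use the $G$-conjugation equivariance of the character $\Theta_{R^{\bbG_r}_{\bbT_r}(\theta)}$ (under the isomorphism $R_{\bbT^x_r}^{\bbG_{r,\gamma_0}}(\theta^x) \cong R_{\bbT_r}^{\bbG_{r,{}^x\gamma_0}}(\theta)^{x^{-1}}$ coming from Frobenius-equivariant conjugation by $x$) to rewrite each summand as
\[
\cR_{\bfT^{x}}^{\bfG_{\gamma_{0}}}(\theta^x)(\gamma_{+})
=
\cR_{\bfT}^{\bfG_{{}^x\gamma_{0}}}(\theta)({}^x\gamma_{+}).
\]
Since ${}^x\gamma_+$ is a topologically unipotent element of $G_{{}^x\gamma_0}$ (conjugation preserves the topological Jordan decomposition), I can then apply Proposition \ref{prop:p-adic Springer} with $\bfG$ replaced by $\bfG_{{}^x\gamma_0}$ and $(\bfT,\theta)$ viewed as an unramified elliptic $0$-toral regular pair inside $\bfG_{{}^x\gamma_0}$, obtaining
\[
\cR_{\bfT}^{\bfG_{{}^x\gamma_{0}}}(\theta)({}^x\gamma_{+})
=
\widehat{\mu}_{X^{\ast}}^{G_{{}^x\gamma_{0}}}(\log({}^{x}\gamma_{+})).
\]
Plugging this in yields the claimed formula.

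The only non-routine step is checking that the hypotheses needed to apply Proposition \ref{prop:p-adic Springer} in the Levi subgroup $\bfG_{{}^x\gamma_0}$ genuinely hold. Namely, I need: (i) $\bfT$ remains an unramified elliptic $0$-toral regular maximal torus in $\bfG_{{}^x\gamma_0}$, and (ii) the \emph{same} element $X^{\ast} \in \mft^{\ast}_{-r}$ still represents the character $\theta$ on $\mft(F)_{0+}$ for the construction relative to $\bfG_{{}^x\gamma_0}$. For (i), ellipticity is inherited because $Z_{\bfG} \subseteq Z_{\bfG_{{}^x\gamma_0}} \subseteq \bfT$ forces $\bfT/Z_{\bfG_{{}^x\gamma_0}}$ to be a quotient of the anisotropic torus $\bfT/Z_{\bfG}$; Howe-unramifiedness is obvious; and the $0$-toral regularity condition on $(\bfT,\theta)$ inside $\bfG_{{}^x\gamma_0}$ follows from the fact that its Howe-factorization datum in $\bfG_{{}^x\gamma_0}$ is obtained from the one in $\bfG$ simply by intersecting each twisted Levi with $\bfG_{{}^x\gamma_0}$ (as in the setup preceding Theorem \ref{thm:green Yu}). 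For (ii), $X^{\ast}$ is characterized purely by its pairing with $\mft(F)_{0+}$, so it can be used unchanged when the ambient group is shrunk; note also the required largeness of $q$ for $\bbT_0$ to carry a regular semisimple element in $\bbG_{{}^x\gamma_0,0}$ is weaker than in $\bbG_0$ and so is automatic. The hard part, if anything, is keeping the measure normalizations consistent when passing from $\bfG$ to $\bfG_{{}^x\gamma_0}$, but this is exactly the same bookkeeping already carried out in the proof of Proposition \ref{prop:CF-1st}, where Haar measures on nested centralizers are chosen compatibly; no new normalization issue arises.
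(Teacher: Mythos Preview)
Your proof is correct and follows essentially the same approach as the paper: invoke Proposition~\ref{prop:CF-1st}, rewrite each summand via conjugation as $\cR_{\bfT}^{\bfG_{{}^x\gamma_0}}(\theta)({}^x\gamma_+)$, and then apply Proposition~\ref{prop:p-adic Springer} in the centralizer $\bfG_{{}^x\gamma_0}$. Your additional verification of the hypotheses (ellipticity, $0$-torality, and that the same $X^\ast$ works) is more detailed than what the paper records, but the argument is the same.
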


\begin{proof}
By Proposition \ref{prop:CF-1st},
\[
\Theta_{\pi_{(\bfT,\theta)}^{\FKS}}(\gamma)
=
(-1)^{r(\bfT,\theta)}\sum_{\begin{subarray}{c} x\in T\backslash G/G_{\gamma_{0}} \\ {}^{x}\gamma_{0}\in T \end{subarray}}
\theta({}^{x}\gamma_{0})\cdot\cR_{\bfT^{x}}^{\bfG_{\gamma_{0}}}(\theta^x)(\gamma_{+}).
\]
The summand equals
\[
\theta({}^{x}\gamma_{0})\cdot\cR_{\bfT}^{\bfG_{{}^{x}\gamma_{0}}}(\theta)({}^{x}\gamma_{+})
=\theta({}^{x}\gamma_{0})\cdot\widehat{\mu}^{G_{{}^{x}\gamma_{0}}}_{X^{\ast}}(\log({}^{x}\gamma_{+}))
\]
by Proposition \ref{prop:p-adic Springer}.
\end{proof}

\section{Discussion of small $q$}\label{sec:small q}

Let $(\bfT,\theta)$ be a Howe-unramified elliptic regular pair.
In our preceding work \cite{CO25}, we established a comparison between the regular supercuspidal representation and positive-depth Deligne--Lusztig representations in the $0$-toral setting by assuming the inequality
\[
\frac{|\bbT_{0}(\F_{q})|}{|\bbT_{0}(\F_{q})_{\nvreg}|}>2.
\]
On the other hand, in this paper, we established a comparison result for $(\bfT,\theta)$ not necessarily toral (Theorem \ref{thm:reg comparison}) by assuming the following stronger inequality
\[
\frac{|\bbT_{0}(\F_{q})|}{|\bbT_{0}(\F_{q})_{\nvreg}|}>2\cdot|W_{\bbG_{0}}(\bbT_{0})(\F_{q})|.
\]
We refer to the former and the latter inequalities as the \textit{weak Henniart inequality} and the \textit{strong Henniart inequality}, respectively.

In principle, it is not very difficult to explicate these inequalities as long as the input data $(\bfG,\bfT)$ is given explicitly.
For example, if $\bfG$ is an exceptional simple group of adjoint type and $\bfT$ is an unramified elliptic maximal torus of $\bfG$ whose $\bbT_{0}$ is an elliptic maximal torus of $\bbG_{0}$ of ``Coxeter type'', the Henniart inequalities are described as in Table \ref{table:Henniart}.
(The very regularity of any semisimple element of $\bbT_{0}(\F_{q})$ coincides with the usual regularity in this case).
See \cite[Appendix A.1]{CO23-sc} for details.

\begin{table}[hbtp]
\caption{Henniart inequalities for Coxeter tori of exceptional groups}\label{table:Henniart}
\begin{tabular}{|c|c|c|c|c|} \hline
$G$ & $|\bbT_{0}(\F_{q})|$ & $|\bbT_{0}(\F_{q})_{\nreg}|$ & weak & strong \\ \hline
$E_{6}$ & $(q^{4}-q^{2}+1)(q^{2}+q+1)$ & $q^{2}+q+1$ & $q$: any & $q>2$ \\\hline
$E_{7}$ & $(q^{6}-q^{3}+1)(q+1)$ & $\begin{cases}3(q+1)&\text{$q\equiv-1\mod3$}\\q+1&\text{$q\not\equiv-1\mod3$}\end{cases}$ &  $q$: any & $\begin{cases}q>2\\ \text{$q$: any}\end{cases}$\\ \hline
$E_{8}$ & $q^{8}+q^{7}-q^{5}-q^{4}-q^{3}+q+1$ & 1  & $q$: any & $q$: any \\ \hline
$F_{4}$ & $q^{4}-q^{2}+1$ & $1$ & $q$: any & $q>2$ \\ \hline
$G_{2}$ & $q^{2}-q+1$ & $\begin{cases}3&\text{$q\equiv-1\mod3$}\\1&\text{$q\not\equiv-1\mod3$}\end{cases}$ & $\begin{cases}q>2\\ \text{$q$: any}\end{cases}$ & $\begin{cases}q>6\\ q>3 \end{cases}$ \\ \hline
\end{tabular}
\end{table}

Therefore, only the cases which cannot be covered by our results are 
\begin{itemize}
\item
$\bfG$ is of type $E_{6}$, $q=2$;
\item
$\bfG$ is of type $F_{4}$, $q=2$;
\item
$\bfG$ is of type $G_{2}$, $q=2, 3, 5$.
\end{itemize}

The aim of this section is to examine if the characterization result (Theorem \ref{thm:vreg characterization}) is still valid in the setting that $\theta$ has depth zero and $q$ is very small.
More precisely, we investigate the following 
\begin{quest}\label{Q:recover}
Suppose that $\rho$ is an irreducible representation of $\bbG_{0}(\F_{q})$ equipped with a sign $c$ such that, for any regular semisimple element $g\in\bbG_{0}(\F_{q})$, 
\[
\Theta_{\rho}(g)
=
c \cdot \Theta_{R_{\bbT_{0}}^{\bbG_{0}}(\theta)} (g).
\]
Then do we necessarily have $\rho\cong c R_{\bbT_{0}}^{\bbG_{0}}(\theta)$?
\end{quest}

\subsection{Example: $G_{2}$}\label{subsec:G2}

In this subsection, we assume that $\bfG=G_{2}$ and $\bfT$ is its unramified elliptic maximal torus whose $\bbT_{0}$ is of Coxeter type.
Because we only consider the depth-zero case, we drop the subscript ``$0$'' from the symbols $\bbG_{0}$ and $\bbT_{0}$.
Let $\theta$ be a regular character of $T$ of depth zero and we again write $\theta$ for the induced regular character of $\bbT(\F_{q})$.

In fact, as we will see, the answer to the above question in this case is as follows:
\begin{itemize}
\item
When $q=2$, there is no regular character of $\bbT(\F_{q})$ (thus the assumption of this problem is empty).
\item
When $q=3$, there is a counterexample, i.e., there exists a $\rho$ satisfying the assumption of Question \ref{Q:recover} but not isomorphic to $c \cdot \Theta_{R_{\bbT}^{\bbG}(\theta)}$.
\item
When $q=5$, this problem depends on $\theta$;
\begin{itemize}
\item
if $\theta|_{\bbT(\F_{5})_{\nreg}}\not\equiv\mathbbm{1}$, then the answer is affirmative, but
\item
if $\theta|_{\bbT(\F_{5})_{\nreg}}\equiv\mathbbm{1}$, then there is a counterexample.
\end{itemize}
\end{itemize}

\subsubsection{Structures of maximal tori of $G_{2}(\F_{q})$}
For any connected reductive group $\bbG$ over $\F_{q}$, the rational conjugacy classes of $\F_{q}$-rational maximal tori are bijectively parametrized by the Frobenius-twisted conjugacy classes in the Weyl group of $\bbG$.
The latter is classified by Carter \cite{Car72} in the case where $\bbG$ is split.

The group $G_{2}$ has $6$ conjugacy classes; they are named ``$\varnothing$'', ``$A_{1}$'', ``$\tilde{A}_{1}$'', ``$A_{1}\times\tilde{A}_{1}$'', ``$A_{2}$'', and ``$G_{2}$'' (see \cite[Table 7]{Car72}).
For any such conjugacy class $\Gamma$, let us write $\bbT_{\Gamma}$ for an $\F_{q}$-rational maximal torus corresponding to $\Gamma$.
Then the orders of $\bbT_{\Gamma}(\F_{q})$ and $W_{\bbT_{\Gamma}}:=W_{\bbG}(\bbT_{\Gamma})(\F_{q})$ are given as follows (see also \cite[Table 3 and Lemma 26]{Car72}):
\begin{table}[hbtp]
\caption{Maximal tori of $G_{2}(\F_{q})$}\label{table:tori}
\begin{tabular}{|c|c|c|c|c|} \hline
$\Gamma$ & $|\bbT_{\Gamma}(\F_{q})|$ & $|\bbT_{\Gamma}(\F_{q})|$ ($q=3$) & $|W_{\bbT_{\Gamma}}|$ & split rank \\ \hline
$\varnothing$ & $(q-1)^{2}$ & $4$ & $12$ & $2$ (split) \\ \hline
$A_{1}$ & $(q-1)(q+1)$ & $8$  & $4$ & $1$ \\ \hline
$\tilde{A}_{1}$ & $(q-1)(q+1)$ & $8$  & $4$ & $1$  \\ \hline
$A_{1}\times\tilde{A}_{1}$ & $(q+1)^{2}$ & $16$  & $12$ & $0$ (elliptic) \\ \hline
$A_{2}$ & $q^{2}+q+1$ & $13$  & 6 & $0$ (elliptic) \\ \hline
$G_{2}$ & $q^{2}-q+1$ & $7$  & 6 & $0$ (elliptic, Coxeter) \\ \hline
\end{tabular}
\end{table}

Note that ``$G_{2}$'' is the conjugacy class of Coxeter elements.
Hence $\bbT_{G_{2}}$ is our maximal torus $\bbT$.
We can check that $\bbT(\F_{q})$ has a non-regular semisimple element other than unit if and only if $q\equiv-1\pmod{3}$; in this case, the number of non-regular semisimple elements is $3$.
Also note that the rational Weyl group $W_{\bbT}$ is cyclic of order $6$.

\subsubsection{The case of $G_{2}(\F_{2})$}
We first consider the case where $q=2$.
In this case, we have $\bbT(\F_{2})=\bbT(\F_{2})_{\nreg}\cong\Z/3\Z$ while $W_{\bbT}\cong\Z/6\Z$.
In particular, the action of $W_{\bbT}$ on $\bbT(\F_{2})$ necessarily factors through a nontrivial quotient of $W_{\bbT}$.
This means that there cannot exist a regular character of $\bbT(\F_{2})$.

\subsubsection{The case of $G_{2}(\F_{3})$}
We next consider the case where $q=3$.

The group $G_{2}(\F_{3})$ has $23$ conjugacy classes, hence has $23$ irreducible representations.
Table \ref{table:conjugacy} is the list of $23$ conjugacy classes; if a conjugacy class has name ``$n$x'', then it means that the order of any representative of the class is given by $n$.
The last column of Table \ref{table:conjugacy} expresses which tori contain the (semisimple) conjugacy classes.

\begin{table}[hbtp]
\caption{Conjugacy classes of $G_{2}(\F_{3})$}\label{table:conjugacy}
\begin{tabular}{|c|c|c|c|c|} \hline
conjugacy class & order & order of centralizer & type & tori \\ \hline
1a & 1 & 4245696 & unit & all \\ \hline
2a & 2 & 576 & ss. & $\varnothing, A_{1}, \tilde{A}_{1}, A_{1}\times\tilde{A}_{1}$ \\ \hline
3a & 3 & 5832 & unip. & -- \\ \hline
3b & 3 & 5832 & unip. & -- \\ \hline
3c & 3 & 729 & unip. & -- \\ \hline
3d & 3 & 162 & unip. & -- \\ \hline
3e & 3 & 162 & unip. & -- \\ \hline
4a & 4 & 96 & ss. & $\tilde{A}_{1}, A_{1}\times\tilde{A}_{1}$ \\ \hline
4b & 4 & 96 & ss. & $A_{1}, A_{1}\times\tilde{A}_{1}$  \\ \hline
6a & 6 & 72 & -- & -- \\ \hline
6b & 6 & 72 & -- & -- \\ \hline
6c & 6 & 18 & -- & -- \\ \hline
6d & 6 & 18 & -- & -- \\ \hline
7a & 7 & 7 & reg.\ ss. & $G_{2}$ \\ \hline
8a & 8 & 8 & reg.\ ss. & $\tilde{A}_{1}$ \\ \hline
8b & 8 & 8 & reg.\ ss. & $A_{1}$ \\ \hline
9a & 9 & 27 & unip. & -- \\ \hline
9b & 9 & 27 & unip. & -- \\ \hline
9c & 9 & 27 & unip. & -- \\ \hline
12a & 12 & 12 & -- & -- \\ \hline
12b & 12 & 12 & -- & -- \\ \hline
13a & 13 & 13 & reg.\ ss. & $A_{2}$ \\ \hline
13b & 13 & 13 & reg.\ ss. & $A_{2}$ \\ \hline
\end{tabular}
\end{table}

The character table of $G_{2}(\F_{3})$ is as in Table \ref{table:characters}; the 23 irreducible representations are named ``$X_{n}$'' in the decreasing order by their dimensions.
(This table is cited from GAP3 (\cite{GAP3}); if the reader is familiar with GAP3, Table \ref{table:characters} can be output just by typing:
\[
\verb|>gap DisplayCharTable( CharTable( "G2(3)" ) );|
\]

\begin{landscape}
\begin{table}[hbtp]
\caption{Character table of $G_{2}(\F_{3})$}\label{table:characters}
\begin{tabular}{|c|c|c|c|c|c|c|c|c|c|c|c|c|c|c|c|c|c|c|c|c|c|c|c|} \hline
&1a&2a&3a&3b&3c&3d&3e&4a&4b&6a&6b&6c&6d&7a&8a&8b&9a&9b &9c&12a&12b&13a&13b\\ \hline
$X_{1}$&$1$&$1$&$1$&$1$&$1$&$1$&$1$&$1$&$1$&$1$&$1$&$1$&$1$&$1$&$1$&$1$&$1$&$1$&$1$&$1$&$1$&$1$&$1$ \\ \hline
$X_{2}$&$14$&$-2$&$5$&$5$&$-4$&$2$&$-1$&$2$&$2$&$1$&$1$&$-2$&$1$&$\cdot$&$\cdot$&$\cdot$&$2$&$-1$&$-1$&$-1$&$-1$&$1$&$1$ \\ \hline
$X_{3}$&$64$&$\cdot$&$-8$&$-8$&$1$&$4$&$-2$&$\cdot$&$\cdot$&$\cdot$&$\cdot$&$\cdot$&$\cdot$&$1$&$\cdot$&$\cdot$&$1$&$\alpha$&$\beta$&$\cdot$&$\cdot$&$-1$&$-1$ \\ \hline
$X_{4}$&$64$&$\cdot$&$-8$&$-8$&$1$&$4$&$-2$&$\cdot$&$\cdot$&$\cdot$&$\cdot$&$\cdot$&$\cdot$&$1$&$\cdot$&$\cdot$&$1$&$\beta$&$\alpha$&$\cdot$&$\cdot$&$-1$&$-1$ \\ \hline
$X_{5}$&$78$&$-2$&$-3$&$-3$&$-3$&$-3$&$6$&$2$&$2$&$1$&$1$&$1$&$-2$&$1$&$\cdot$&$\cdot$&$\cdot$&$\cdot$&$\cdot$&$-1$&$-1$&$\cdot$&$\cdot$ \\ \hline
$X_{6}$&$91$&$-5$&$10$&$10$&$10$&$1$&$1$&$3$&$3$&$-2$&$-2$&$1$&$1$&$\cdot$&$-1$&$-1$&$1$&$1$&$1$&$\cdot$&$\cdot$&$\cdot$&$\cdot$ \\ \hline
$X_{7}$&$91$&$3$&$-8$&$19$&$1$&$4$&$-2$&$3$&$-1$&$\cdot$&$3$&$\cdot$&$\cdot$&$\cdot$&$1$&$-1$&$-2$&$1$&$1$&$\cdot$&$-1$&$\cdot$&$\cdot$ \\ \hline
$X_{8}$&$91$&$3$&$19$&$-8$&$1$&$4$&$-2$&$-1$&$3$&$3$&$\cdot$&$\cdot$&$\cdot$&$\cdot$&$-1$&$1$&$-2$&$1$&$1$&$-1$&$\cdot$&$\cdot$&$\cdot$ \\ \hline
$X_{9}$&$104$&$8$&$14$&$14$&$5$&$2$&$-1$&$\cdot$&$\cdot$&$2$&$2$&$2$&$-1$&$-1$&$\cdot$&$\cdot$&$2$&$-1$&$-1$&$\cdot$&$\cdot$&$\cdot$&$\cdot$ \\ \hline
$X_{10}$&$168$&$8$&$6$&$6$&$6$&$-3$&$6$&$\cdot$&$\cdot$&$2$&$2$&$-1$&$2$&$\cdot$&$\cdot$&$\cdot$&$\cdot$&$\cdot$&$\cdot$&$\cdot$&$\cdot$&$-1$&$-1$ \\ \hline
$X_{11}$&$182$&$6$&$20$&$-7$&$-7$&$2$&$2$&$2$&$2$&$\cdot$&$-3$&$\cdot$&$\cdot$&$\cdot$&$\cdot$&$\cdot$&$-1$&$-1$&$-1$&$2$&$-1$&$\cdot$&$\cdot$ \\ \hline
$X_{12}$&$182$&$6$&$-7$&$20$&$-7$&$2$&$2$&$2$&$2$&$-3$&$\cdot$&$\cdot$&$\cdot$&$\cdot$&$\cdot$&$\cdot$&$-1$&$-1$&$-1$&$-1$&$2$&$\cdot$&$\cdot$ \\ \hline
$X_{13}$&$273$&$-7$&$30$&$3$&$3$&$3$&$3$&$-3$&$1$&$2$&$-1$&$-1$&$-1$&$\cdot$&$1$&$-1$&$\cdot$&$\cdot$&$\cdot$&$\cdot$&$1$&$\cdot$&$\cdot$ \\ \hline
$X_{14}$&$273$&$-7$&$3$&$30$&$3$&$3$&$3$&$1$&$-3$&$-1$&$2$&$-1$&$-1$&$\cdot$&$-1$&$1$&$\cdot$&$\cdot$&$\cdot$&$1$&$\cdot$&$\cdot$&$\cdot$ \\ \hline
$X_{15}$&$448$&$\cdot$&$16$&$16$&$-11$&$-2$&$-2$&$\cdot$&$\cdot$&$\cdot$&$\cdot$&$\cdot$&$\cdot$&$\cdot$&$\cdot$&$\cdot$&$1$&$1$&$1$&$\cdot$&$\cdot$&$\gamma$&$\delta$ \\ \hline
$X_{16}$&$448$&$\cdot$&$16$&$16$&$-11$&$-2$&$-2$&$\cdot$&$\cdot$&$\cdot$&$\cdot$&$\cdot$&$\cdot$&$\cdot$&$\cdot$&$\cdot$&$1$&$1$&$1$&$\cdot$&$\cdot$&$\delta$&$\gamma$ \\ \hline
$X_{17}$&$546$&$2$&$-21$&$6$&$6$&$-3$&$-3$&$-2$&$6$&$-1$&$2$&$-1$&$-1$&$\cdot$&$\cdot$&$\cdot$&$\cdot$&$\cdot$&$\cdot$&$1$&$\cdot$&$\cdot$&$\cdot$ \\ \hline
$X_{18}$&$546$&$2$&$6$&$-21$&$6$&$-3$&$-3$&$6$&$-2$&$2$&$-1$&$-1$&$-1$&$\cdot$&$\cdot$&$\cdot$&$\cdot$&$\cdot$&$\cdot$&$\cdot$&$1$&$\cdot$&$\cdot$ \\ \hline
$X_{19}$&$728$&$-8$&$26$&$-28$&$-1$&$-1$&$-1$&$\cdot$&$\cdot$&$-2$&$4$&$1$&$1$&$\cdot$&$\cdot$&$\cdot$&$-1$&$-1$&$-1$&$\cdot$&$\cdot$&$\cdot$&$\cdot$ \\ \hline
$X_{20}$&$728$&$-8$&$-28$&$26$&$-1$&$-1$&$-1$&$\cdot$&$\cdot$&$4$&$-2$&$1$&$1$&$\cdot$&$\cdot$&$\cdot$&$-1$&$-1$&$-1$&$\cdot$&$\cdot$&$\cdot$&$\cdot$ \\ \hline
$X_{21}$&$729$&$9$&$\cdot$&$\cdot$&$\cdot$&$\cdot$&$\cdot$&$-3$&$-3$&$\cdot$&$\cdot$&$\cdot$&$\cdot$&$1$&$-1$&$-1$&$\cdot$&$\cdot$&$\cdot$&$\cdot$&$\cdot$&$1$&$1$ \\ \hline
$X_{22}$&$819$&$3$&$9$&$9$&$9$&$\cdot$&$\cdot$&$-1$&$-1$&$-3$&$-3$&$\cdot$&$\cdot$&$\cdot$&$1$&$1$&$\cdot$&$\cdot$&$\cdot$&$-1$&$-1$&$\cdot$&$\cdot$ \\ \hline
$X_{23}$&$832$&$\cdot$&$-32$&$-32$&$-5$&$4$&$4$&$\cdot$&$\cdot$&$\cdot$&$\cdot$&$\cdot$&$\cdot$&$-1$&$\cdot$&$\cdot$&$1$&$1$&$1$&$\cdot$&$\cdot$&$\cdot$&$\cdot$ \\ \hline
\end{tabular}
every dot denotes $0$, $\alpha:=\frac{-1+3\sqrt{-3}}{2}$, 
$\beta:=\frac{-1-3\sqrt{-3}}{2}$, 
$\gamma:=\frac{-1+\sqrt{13}}{2}$, 
$\delta:=\frac{-1-\sqrt{13}}{2}$.
\end{table}
\end{landscape}

We remark that among the 23 irreducible representations, the unipotent representations are
\[
X_{1}, X_{2}, X_{3}, X_{4}, X_{5}, X_{7}, X_{8}, X_{9}, X_{10}, X_{21}
\]
($X_{2}$, $X_{3}$, $X_{4}$, and $X_{5}$ are cuspidal unipotent representations).
This can be also seen by using GAP3: 
\[
\verb|gap> Display(UnipotentCharacters(CoxeterGroup("G",2)));|
\]
In the GAP3 output, the above unipotent representations are expressed as
\verb|phi{1,0}|, \verb|G2[1]|, \verb|G2[E3]|, \verb|G2[E3^2]|, \verb|G2[-1]|, \verb|phi{1,3}'|, \verb|phi{1,3}''|, \verb|phi{2,1}|, \verb|phi{2,2}|, \verb|phi{1,6}|.
(See \url{https://webusers.imj-prg.fr/~jean.michel/gap3/htm/chap098.htm} and also \cite[372 page]{Lus84}.)

\begin{table}[hbtp]
\caption{Unipotent representations of $G_{2}(\F_{q})$}\label{table:unipotent}
\begin{tabular}{|c|c|c|c|c|} \hline
GAP3 label & dimension & label ($q=3$) & dim ($q=3$) & label (Lusztig) \\ \hline
\verb|phi{1,0}| & $1$ & $X_{1}$ & $1$ & -- \\ \hline
\verb|phi{1,6}| & $q^{6}$ & $X_{21}$ & $729$ & -- \\ \hline
\verb|phi{1,3}'| & $q\phi_{3}(q)\phi_{6}(q)/3$ & $X_{7}$ & $91$ & $(1,r)$ \\ \hline
\verb|phi{1,3}''| & $q\phi_{3}(q)\phi_{6}(q)/3$ & $X_{8}$ & $91$ & $(g_{3},1)$ \\ \hline
\verb|phi{2,1}| & $q\phi_{2}^{2}(q)\phi_{3}(q)/6$ & $X_{9}$ & $104$ & $(1,1)$ \\ \hline
\verb|phi{2,2}| & $q\phi_{2}^{2}(q)\phi_{6}(q)/2$ & $X_{10}$ & $168$ & $(g_{2},1)$ \\ \hline
\verb|G2[-1]| & $q\phi_{1}^{2}(q)\phi_{3}(q)/2$ & $X_{5}$ & $78$ & $(g_{2},c)$ \\ \hline
\verb|G2[1]| & $q\phi_{1}^{2}(q)\phi_{6}(q)/6$ & $X_{2}$ & $14$ & $(1,c)$  \\ \hline
\verb|G2[E3]| & $q\phi_{1}^{2}(q)\phi_{2}^{2}(q)/3$ & $X_{3}$ & $64$ & $(g_{3},\theta)$  \\ \hline
\verb|G2[E3^2]| & $q\phi_{1}^{2}(q)\phi_{2}^{2}(q)/3$ & $X_{4}$ & $64$ & $(g_{3},\theta^{2})$ \\ \hline
\end{tabular}

$\phi_{1}(q)=q-1$, 
$\phi_{2}(q)=q+1$, 
$\phi_{3}(q)=q^{2}+q+1$, 
$\phi_{6}(q)=q^{2}-q+1$.
\end{table}

%

Now let us discuss a counterexample to Question \ref{Q:recover}.
We have $\bbT(\F_{3})\cong\Z/7\Z$ and $\bbT(\F_{3})_{\nreg}=\{1\}$.
Moreover, we can check that $W_{\bbT}$ acts on the set of regular semisimple elements of $\bbT(\F_{3})$ simply-transitively.
Thus we see that there exists only one regular character $\theta$ of $\bbT(\F_{3})$ up to conjugation.

We use the Deligne--Lusztig character formula at a semisimple element (see \cite[Corollary 7.2]{DL76}):
\[
\Theta_{R_{\bbT}^{\bbG}(\theta)}(s)
=
(-1)^{r(\bbG_{s})-r(\bbT)}\frac{1}{\dim\mathrm{St}_{\bbG_{s}} \cdot|\bbT(\F_{q})|}\sum_{\begin{subarray}{c} g \in \bbG(\F_{q}) \\ g^{-1}sg\in\bbT \end{subarray}} \theta(g^{-1}sg).
\]
Here, $s\in \bbG(\F_{q})$ is any semisimple element, $r(-)$ is the split rank, and $\mathrm{St}_{\bbG_{s}}$ denotes the Steinberg representation of $\bbG_{s}(\F_{q})$.
Especially, by choosing $s=1$, we get
\[
\dim R_{\bbT}^{\bbG}(\theta)
=
\frac{|\bbG(\F_{q})|}{\dim\mathrm{St}_{\bbG}\cdot|\bbT(\F_{q})|}
\]
(note that $r(\bbG)=2$ and $r(\bbT)=0$, hence $R_{\bbT}^{\bbG}(\theta)$ is a genuine representation).
Since we have 
\begin{itemize}
\item
$|\bbG(\F_{q})|=q^{6}\cdot (q^{2}-1)\cdot (q^{6}-1)$ (see \cite[Section 2.9]{Car85}), 
\item
$\dim\mathrm{St}_{\bbG}=q^{6}$ (see \cite[Proposition 6.4.4]{Car85}), 
\item
$|\bbT(\F_{q})|=q^{2}-q+1$, 
\end{itemize}
we have 
\[
\dim R_{\bbT}^{\bbG}(\theta)
=(q-1)^{2}\cdot(q+1)^{2}\cdot(q^{2}+q+1)
=832.
\]
Thus we conclude that $R_{\bbT}^{\bbG}(\theta)$ is the irreducible representation $X_{23}$.

By the above description of the group $\bbT(\F_{3})$ and the action of $W_{\bbT}$ on $\bbT(\F_{3})$, we see that 
\[
\sum_{w\in W_{\bbT}}\theta^{w}(t)=\sum_{i=1}^{6}\zeta_{7}^{i}=-1,
\]
where $\zeta_{7}$ is a primitive $7$th root of unity.
This means that the assumption in Question \ref{Q:recover} only says that
\begin{itemize}
\item
we have $\Theta_{\rho}(s)=0$ if the conjugacy class of $s$ is one of ``8a'', ``8b'', ``13a'', and ``13b'' (see Table \ref{table:conjugacy}) and
\item
we have $\Theta_{\rho}(s)=\pm1$ if the conjugacy class of $s$ is ``7a'' (see Table \ref{table:conjugacy}).
\end{itemize}

By looking at the character table (Table \ref{table:characters}), we can easily find that $X_{5}$ and $X_{9}$ satisfy these assumptions!

\subsubsection{The case of $G_{2}(\F_{5})$}
We finally consider the case of $G_{2}(\F_{5})$.
Here we go back to the proof of Theorem \ref{thm:vreg characterization}.
The key in the final step of our proof is the inequality
\[
\left|\sum_{s\in\bbT(\F_{q})_{\nreg}}\sum_{w\in W_{\bbT}}\theta(s)\overline{\theta^{w}(s)}\right|
<
\frac{|\bbT(\F_{q})|}{2}.
\]
To obtain this inequality, we applied the triangle inequality to the left-hand side and then used the strong Henniart inequality.
The idea is to check that this inequality holds also in the current setting without appealing to the triangle inequality but by computing the sum inside the absolute value directly.

We have $\bbT(\F_{5})\cong\Z/3\Z\times\Z/7\Z$ and $\bbT(\F_{5})_{\nreg}\cong\Z/3\Z$.
Moreover, we can check that the action of $W_{\bbT}$ on $\bbT(\F_{5})$ induces a bijective map $\Z/6\Z\xrightarrow{1:1}\mathrm{Aut}(\Z/7\Z)$ and a surjective map $\Z/6\Z\twoheadrightarrow\mathrm{Aut}(\Z/3\Z)$.
Thus, a character $\theta$ of $\bbT(\F_{5})\cong\Z/3\Z\times\Z/7\Z$ is regular if and only if $\theta|_{\Z/7\Z}$ is nontrivial.

We first suppose that a regular character $\theta$ has nontrivial restriction to $\bbT(\F_{5})_{\nreg}\cong\Z/3\Z$.
Let $\zeta_{3}$ be the primitive cube root of unity which is the image of $1\in \Z/3\Z$ under $\theta$.
Then, we have 
\[
\sum_{s\in\bbT(\F_{5})_{\nreg}}\sum_{w\in W_{\bbT}}\theta(s)\overline{\theta^{w}(s)}
=
\sum_{i=1}^{6}1
+\sum_{i=1}^{6}\zeta_{3}\overline{\zeta_{3}^{i}}
+\sum_{i=1}^{6}\zeta_{3}^{2}\overline{\zeta_{3}^{2i}}
=
6.
\]
Since $6$ is smaller than $|\bbT(\F_{5})|/2=21/2=10.5$, our proof still works.

We next suppose that a regular character $\theta$ has trivial restriction to $\bbT(\F_{5})_{\nreg}\cong\Z/3\Z$.
Then, we have 
\[
\sum_{s\in\bbT(\F_{5})_{\nreg}}\sum_{w\in W_{\bbT}}\theta(s)\overline{\theta^{w}(s)}
=|\bbT(\F_{5})_{\nreg}|\cdot|W_{\bbG}(\bbT)|
=3\cdot6
=18.
\]
Since $18$ is greater than $|\bbT(\F_{5})|/2=21/2=10.5$, our proof no longer works!
Indeed, we can find a counterexample Question \ref{Q:recover} in a similar way to the case of $G_{2}(\F_{3})$.
Because the character table of $G_{2}(\F_{5})$ is too large (although it can be easily displayed by using GAP3), we give up describing the details here; we only explain some points.

There exist $44$ conjugacy classes, hence irreducible representations, of $G_{2}(\F_{5})$.
They are labeled in the same way to the case of $G_{2}(\F_{3})$; it starts with ``1a'' and ends with ``31e''.
The regular semisimple conjugacy classes are 6c, 7a, 8a, 8b, 12a, 12b, 21a, 21b, 24a, 24b, 24c, 24d, 31a, 31b, 31c, 31d, and 31e.
Among these, only 7a, 21a, and 21b belong to the Coxeter maximal torus $\bbT(\F_{q})$.
By the same computation as in the $G_{2}(\F_{5})$ case, we see that 
\[
\dim R_{\bbT}^{\bbG}(\theta)
=(q-1)^{2}\cdot(q+1)^{2}\cdot(q^{2}+q+1)
=17856.
\]
There are three irreducible representations whose dimension is $17586$; $X_{40}$, $X_{41}$, and $X_{42}$.
By looking at the character value at 7a (note that this is a regular semisimple element of $\bbT(\F_{5})\cong\Z/3\Z\times\Z/7\Z$ whose $\Z/3\Z$-entry is trivial), we see that $X_{40}$ is our irreducible representation $R_{\bbT}^{\bbG}(\theta)$.
By looking at the character table of $G_{2}(\F_{5})$, we notice that $X_{5}$ (this is the unipotent representation \verb|phi{2,1}|, whose dimension is $930$) cannot be distinguished from $X_{40}$ only by its character values at regular semisimple elements.

\subsection{Unipotent representations}\label{sec:rss and unip}

One may notice that, in both cases of $G_{2}(\F_{3})$ and $G_{2}(\F_{5})$, a counterexample to Question \ref{Q:recover} is given by a unipotent representation.
In fact, this is not an accident.

Let $\bbG$ be a connected reductive group over $\F_{q}$, $\bbT$ an $\F_{q}$-rational maximal torus of $\bbG$, and $\theta$ a regular character of $\bbT(\F_{q})$.
We suppose that $\rho$ is an irreducible representation of $\bbG(\F_{q})$ satisfying the assumption of Question \ref{Q:recover}.

\begin{lem}\label{lem:theta-theta'}
Suppose that there exists a character $\theta'\colon\bbT(\F_{q})\rightarrow\C^{\times}$ such that
\begin{enumerate}
\item[(i)]
$\theta$ and $\theta'$ are not $W_{\bbT}$-conjugate, and
\item[(ii)]
$\theta|_{\bbT(\F_{q})_{\nreg}}\equiv\theta'|_{\bbT(\F_{q})_{\nreg}}$.
\end{enumerate}
Then we have either $\langle\rho,R_{\bbT}^{\bbG}(\theta)\rangle\neq0$ or $\langle\rho,R_{\bbT}^{\bbG}(\theta')\rangle\neq0$.
\end{lem}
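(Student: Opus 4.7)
The plan is to prove the lemma by directly computing the single quantity $\langle\rho, R_{\bbT}^{\bbG}(\theta) - R_{\bbT}^{\bbG}(\theta')\rangle$ and showing that it equals the sign $c\in\{\pm1\}$; since $c\neq 0$, this difference is nonzero, and hence at least one of the two inner products must itself be nonzero. This sidesteps the need to estimate the two summands separately, and notably avoids any Cauchy--Schwarz maneuver of the kind used in Theorem \ref{thm:vreg characterization}.

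The key preliminary step, and the main point requiring care, will be to show that the class function $\Theta_{R_{\bbT}^{\bbG}(\theta)} - \Theta_{R_{\bbT}^{\bbG}(\theta')}$ vanishes identically on $\bbG(\F_{q})\smallsetminus\bbG(\F_{q})_{\rss}$. I would invoke the classical Deligne--Lusztig character formula (the $r=0$ case of Theorem \ref{thm:geom-char-formula}(1)): for $\gamma\in\bbG(\F_{q})$ with Jordan decomposition $\gamma=su$, the value $\Theta_{R_{\bbT}^{\bbG}(\theta)}(\gamma)$ is a weighted sum of terms of the form $\theta(s^{x})$ over those $x\in\bbG(\F_{q})$ with $s^{x}\in\bbT(\F_{q})$. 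If $\gamma$ is not regular semisimple, then either $u\neq 1$---which forces $s$ to be non-regular, since the connected centralizer of a regular semisimple element is a torus and therefore contains no non-trivial unipotent element---or $s$ itself is already non-regular. Either way, every $\bbG(\F_{q})$-conjugate $s^{x}$ of $s$ that lies in $\bbT(\F_{q})$ must lie in $\bbT(\F_{q})_{\nreg}$, and on this set hypothesis (ii) gives $\theta=\theta'$. Hence the two character formulae coincide term-by-term.

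Given this vanishing, the computation is short. The contribution from $\bbG(\F_{q})\smallsetminus\bbG(\F_{q})_{\rss}$ to $\langle\rho, R_{\bbT}^{\bbG}(\theta) - R_{\bbT}^{\bbG}(\theta')\rangle$ is zero, while on the regular-semisimple locus the hypothesis on $\rho$ permits us to replace $\Theta_{\rho}$ by $c\cdot\Theta_{R_{\bbT}^{\bbG}(\theta)}$. Applying the vanishing a second time upgrades the restricted inner product back to a full one:
\[
\langle\rho,\,R_{\bbT}^{\bbG}(\theta)-R_{\bbT}^{\bbG}(\theta')\rangle
= c\cdot\bigl(\langle R_{\bbT}^{\bbG}(\theta),R_{\bbT}^{\bbG}(\theta)\rangle - \langle R_{\bbT}^{\bbG}(\theta),R_{\bbT}^{\bbG}(\theta')\rangle\bigr).
\]
Classical Deligne--Lusztig orthogonality (the $r=0$ case of Theorem \ref{thm:Cha24}) then delivers $1$ for the first inner product, by the regularity of $\theta$, and $0$ for the second, by hypothesis (i). The right-hand side therefore equals $c$, and the lemma follows.
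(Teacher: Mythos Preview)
Your proof is correct and follows essentially the same approach as the paper's: both use the Deligne--Lusztig character formula together with hypothesis (ii) to show that $\Theta_{R_{\bbT}^{\bbG}(\theta)}$ and $\Theta_{R_{\bbT}^{\bbG}(\theta')}$ agree off the regular semisimple locus, then use the hypothesis on $\rho$ to reduce to the Deligne--Lusztig inner product formula, where regularity of $\theta$ and hypothesis (i) finish the job. Your packaging---working directly with the single quantity $\langle\rho,\,R_{\bbT}^{\bbG}(\theta)-R_{\bbT}^{\bbG}(\theta')\rangle$ and evaluating it as $c$---is a mild streamlining of the paper's argument (which splits into four separate reg/nreg equations and concludes only that the two inner products differ), but the ingredients and logic are the same.
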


\begin{proof}
By the regularity of $\theta$, we have
\begin{align}\label{eq:theta-theta'-1}
1
=\langle R_{\bbT}^{\bbG}(\theta),R_{\bbT}^{\bbG}(\theta)\rangle
=\langle R_{\bbT}^{\bbG}(\theta),R_{\bbT}^{\bbG}(\theta)\rangle_{\reg}+\langle R_{\bbT}^{\bbG}(\theta),R_{\bbT}^{\bbG}(\theta)\rangle_{\nreg}.
\end{align}
On the other hand, by the assumption (i) and Deligne--Lusztig's intertwining number formula \cite[Theorem 6.8]{DL76}, we have
\begin{align}\label{eq:theta-theta'-2}
0
=\langle R_{\bbT}^{\bbG}(\theta),R_{\bbT}^{\bbG}(\theta')\rangle
=\langle R_{\bbT}^{\bbG}(\theta),R_{\bbT}^{\bbG}(\theta')\rangle_{\reg}+\langle R_{\bbT}^{\bbG}(\theta),R_{\bbT}^{\bbG}(\theta')\rangle_{\nreg}.
\end{align}
By Deligne--Lusztig's character formula \cite[Theorem 4.2]{DL76}
\[
\Theta_{R_{\bbT}^{\bbG}(\theta)}(g)
=
\frac{1}{|\bbG_{s}(\F_{q})|}\cdot 
\sum_{\begin{subarray}{c}x\in \bbG(\F_{q}) \\ {}^{x}\bbT\subset\bbG_{s} \end{subarray}} Q_{{}^{x}\bbT}^{\bbG_{s}}(u)\cdot \theta(s^{x})
\]
(where $g=su$ denotes the Jordan decomposition), we see that the character $\Theta_{R_{\bbT}^{\bbG}(\theta)}$ on $\bbG(\F_{q})_{\nreg}$ depends only on $\theta|_{\bbT(\F_{q})_{\nreg}}$.
Since the same is true also for $\Theta_{R_{\bbT}^{\bbG}(\theta')}$, the assumption (ii) implies that $\Theta_{R_{\bbT}^{\bbG}(\theta)}$ equals $\Theta_{R_{\bbT}^{\bbG}(\theta')}$ on $\bbG(\F_{q})_{\nreg}$.
In particular, we have $\langle R_{\bbT}^{\bbG}(\theta),R_{\bbT}^{\bbG}(\theta)\rangle_{\nreg}=\langle R_{\bbT}^{\bbG}(\theta),R_{\bbT}^{\bbG}(\theta')\rangle_{\nreg}$.
Thus, by the equalities \eqref{eq:theta-theta'-1} and \eqref{eq:theta-theta'-2}, we get $\langle R_{\bbT}^{\bbG}(\theta),R_{\bbT}^{\bbG}(\theta)\rangle_{\reg}\neq\langle R_{\bbT}^{\bbG}(\theta),R_{\bbT}^{\bbG}(\theta')\rangle_{\reg}$.

We next look at the following two equalities:
\begin{align}\label{eq:theta-theta'-3}
\langle \rho,R_{\bbT}^{\bbG}(\theta)\rangle
=\langle \rho,R_{\bbT}^{\bbG}(\theta)\rangle_{\reg}+\langle \rho,R_{\bbT}^{\bbG}(\theta)\rangle_{\nreg},
\end{align}
\begin{align}\label{eq:theta-theta'-4}
\langle \rho,R_{\bbT}^{\bbG}(\theta')\rangle
=\langle \rho,R_{\bbT}^{\bbG}(\theta')\rangle_{\reg}+\langle \rho,R_{\bbT}^{\bbG}(\theta')\rangle_{\nreg}.
\end{align}
Again by the same observation as above, we have $\langle \rho,R_{\bbT}^{\bbG}(\theta)\rangle_{\nreg}=\langle \rho,R_{\bbT}^{\bbG}(\theta')\rangle_{\nreg}$.
Moreover, by the assumption on $\rho$, we have $\langle \rho,R_{\bbT}^{\bbG}(\theta)\rangle_{\reg}=\langle R_{\bbT}^{\bbG}(\theta),R_{\bbT}^{\bbG}(\theta)\rangle_{\reg}$ and $\langle \rho,R_{\bbT}^{\bbG}(\theta')\rangle_{\reg}=\langle R_{\bbT}^{\bbG}(\theta),R_{\bbT}^{\bbG}(\theta')\rangle_{\reg}$.
Since we have obtained $\langle R_{\bbT}^{\bbG}(\theta),R_{\bbT}^{\bbG}(\theta)\rangle_{\reg}\neq\langle R_{\bbT}^{\bbG}(\theta),R_{\bbT}^{\bbG}(\theta')\rangle_{\reg}$ in the previous paragraph, we have $\langle \rho,R_{\bbT}^{\bbG}(\theta)\rangle_{\reg}\neq\langle \rho,R_{\bbT}^{\bbG}(\theta')\rangle_{\reg}$.
Therefore, by combining these equalities with \eqref{eq:theta-theta'-3} and \eqref{eq:theta-theta'-4}, we get $\langle \rho,R_{\bbT}^{\bbG}(\theta)\rangle\neq\langle \rho,R_{\bbT}^{\bbG}(\theta')\rangle$.
In particular, at least one of $\langle \rho,R_{\bbT}^{\bbG}(\theta)\rangle$ and $\langle \rho,R_{\bbT}^{\bbG}(\theta')\rangle$ is not zero.
\end{proof}

Note that Lemma \ref{lem:theta-theta'} has the following immediate consequence (choose $\theta'$ to be the trivial character $\mathbbm{1}$ of $\bbT(\F_{q})$):

\begin{lem}\label{lem:theta-unip}
If $\theta|_{\bbT(\F_{q})_{\nreg}}\equiv\mathbbm{1}$, then we have either $\langle\rho,R_{\bbT}^{\bbG}(\theta)\rangle\neq0$ or $\langle\rho,R_{\bbT}^{\bbG}(\mathbbm{1})\rangle\neq0$.
\end{lem}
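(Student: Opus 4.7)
\emph{Plan.} The statement is a direct specialization of Lemma \ref{lem:theta-theta'} obtained by taking $\theta' = \mathbbm{1}$, as hinted in the remark preceding it. The plan is therefore to verify the two hypotheses (i) and (ii) of that lemma for this choice of $\theta'$ and then invoke it.

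First, I would dispatch hypothesis (ii) of Lemma \ref{lem:theta-theta'}: that $\theta$ and $\mathbbm{1}$ agree on $\bbT(\F_q)_{\nreg}$. Since $\mathbbm{1}|_{\bbT(\F_q)_{\nreg}} \equiv 1$, this is literally the standing assumption of Lemma \ref{lem:theta-unip}. Next, I would verify hypothesis (i): that $\theta$ and $\mathbbm{1}$ are not $W_\bbT$-conjugate. The trivial character is fixed by all of $W_\bbT$, so its $W_\bbT$-orbit is the singleton $\{\mathbbm{1}\}$, and the condition reduces to $\theta \neq \mathbbm{1}$. This in turn follows from the regularity of $\theta$: a regular character has trivial $W_\bbT$-stabilizer, whereas $\mathbbm{1}$ has stabilizer all of $W_\bbT$, so the two must be distinct as soon as $W_\bbT$ is nontrivial. (If $W_\bbT$ is trivial then every character is regular and the statement of the lemma becomes the uninteresting tautology $\theta = \mathbbm{1}$, for which the second inner product is trivially nonzero.)

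With both hypotheses in hand, Lemma \ref{lem:theta-theta'} applied to the pair $(\theta,\mathbbm{1})$ delivers exactly the desired disjunction. I do not anticipate any genuine obstacle; the entire technical content has already been packaged into Lemma \ref{lem:theta-theta'}, and the present lemma simply records the conceptually useful specialization. The point of isolating it is to combine with the small-$q$ failures of Theorem \ref{thm:vreg characterization}: under the hypothesis $\theta|_{\bbT(\F_q)_{\nreg}} \equiv \mathbbm{1}$, any $\rho$ witnessing a failure of uniqueness in Question \ref{Q:recover} must have nontrivial inner product with $R_\bbT^\bbG(\mathbbm{1})$, and hence must be unipotent---explaining the phenomenon observed in the $G_2$ case-by-case analysis of Section \ref{subsec:G2}.
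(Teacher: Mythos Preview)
Your proposal is correct and matches the paper's approach exactly: the paper simply records this lemma as an immediate consequence of Lemma \ref{lem:theta-theta'} upon choosing $\theta' = \mathbbm{1}$, and you have spelled out the verification of hypotheses (i) and (ii) for that choice. Your parenthetical about the degenerate case $W_\bbT = \{1\}$ is slightly garbled (regularity alone does not force $\theta = \mathbbm{1}$ there), but this edge case is irrelevant in the paper's intended context and is not addressed there either.
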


Hence we get the following theorem (note that this result requires \textit{no assumption on $q$}!):

\begin{thm}\label{thm:litmus with unip}
Let $\bbG$ be a connected reductive group over $\F_{q}$, $\bbT$ an $\F_{q}$-rational maximal torus of $\bbG$, and $\theta$ a regular character of $\bbT(\F_{q})$ whose restriction to $\bbT(\F_{q})_{\nreg}$ is trivial.
Suppose that $\rho$ is an irreducible representation of $\bbG(\F_{q})$ equipped with a sign $c$ such that, for any regular semisimple element $g\in\bbG(\F_{q})$, 
\[
\Theta_{\rho}(g)
=
c\cdot \Theta_{R_{\bbT}^{\bbG}(\theta)} (g).
\]
If $\rho$ is not unipotent, then we necessarily have $\rho\cong c R_{\bbT}^{\bbG}(\theta)$.
\end{thm}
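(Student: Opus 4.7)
The plan is to obtain Theorem \ref{thm:litmus with unip} as an essentially formal consequence of Lemma \ref{lem:theta-unip} combined with the definition of unipotent representations; the conceptual work has already been done in establishing the lemma.

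First, I would apply Lemma \ref{lem:theta-unip} using the hypothesis $\theta|_{\bbT(\F_{q})_{\nreg}}\equiv\mathbbm{1}$. The conclusion asserts the dichotomy
\[
\langle\rho,R_{\bbT}^{\bbG}(\theta)\rangle\neq0 \quad \text{or} \quad \langle\rho,R_{\bbT}^{\bbG}(\mathbbm{1})\rangle\neq0.
\]
The second alternative is ruled out by the non-unipotency hypothesis on $\rho$: by definition every irreducible constituent of $R_{\bbT}^{\bbG}(\mathbbm{1})$ is a unipotent representation, so $\langle\rho,R_{\bbT}^{\bbG}(\mathbbm{1})\rangle\neq0$ would force $\rho$ to be unipotent. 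Thus $\langle\rho,R_{\bbT}^{\bbG}(\theta)\rangle\neq0$.

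Next, the regularity of $\theta$ combined with the Deligne--Lusztig inner product formula \cite[Theorem 6.8]{DL76} gives $\langle R_{\bbT}^{\bbG}(\theta),R_{\bbT}^{\bbG}(\theta)\rangle=1$, so there exists $\varepsilon\in\{\pm1\}$ such that $|R_{\bbT}^{\bbG}(\theta)|\coloneqq\varepsilon R_{\bbT}^{\bbG}(\theta)$ is a genuine irreducible representation of $\bbG(\F_{q})$. Since $\rho$ is also irreducible and has nonzero inner product with $R_{\bbT}^{\bbG}(\theta)$, it follows that $\rho\cong|R_{\bbT}^{\bbG}(\theta)|=\varepsilon R_{\bbT}^{\bbG}(\theta)$.

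Finally, to match the sign $c$, I would invoke positivity: the hypothesis is that $\rho$ is an honest irreducible representation, and the desired conclusion $\rho\cong cR_{\bbT}^{\bbG}(\theta)$ forces $cR_{\bbT}^{\bbG}(\theta)$ to be a genuine representation, whence $c=\varepsilon$. (Alternatively, comparing the two character identities $\Theta_{\rho}=\varepsilon\Theta_{R_{\bbT}^{\bbG}(\theta)}$ globally and $\Theta_{\rho}=c\Theta_{R_{\bbT}^{\bbG}(\theta)}$ on regular semisimple elements forces $c=\varepsilon$ provided $\Theta_{R_{\bbT}^{\bbG}(\theta)}$ is nonzero at some regular semisimple element.) No real obstacle arises at any step; the substantive ingredient is Lemma \ref{lem:theta-unip}, whose proof in turn rests on the key observation that the character of $R_{\bbT}^{\bbG}(\theta)$ on non-regular-semisimple elements depends only on $\theta|_{\bbT(\F_{q})_{\nreg}}$, so that subtracting the analogous virtual representation for the trivial character cleanly isolates the failure mode to the unipotent series.
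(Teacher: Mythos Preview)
Your proof is correct and follows exactly the approach the paper intends: the paper presents Theorem \ref{thm:litmus with unip} as an immediate consequence of Lemma \ref{lem:theta-unip} (writing only ``Hence we get the following theorem''), and you have correctly supplied the short deduction via the definition of unipotent representations and the irreducibility of $R_{\bbT}^{\bbG}(\theta)$ for regular $\theta$. One small refinement on the sign: tracing the proof of Lemma \ref{lem:theta-theta'} with the sign $c$ kept explicit gives $\langle\rho,R_{\bbT}^{\bbG}(\theta)\rangle-\langle\rho,R_{\bbT}^{\bbG}(\mathbbm{1})\rangle=c$, so under the non-unipotent hypothesis $\langle\rho,R_{\bbT}^{\bbG}(\theta)\rangle=c$, which forces $c=\varepsilon$ directly without needing to locate a regular semisimple element where the character is nonzero.
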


\newcommand{\etalchar}[1]{$^{#1}$}
\providecommand{\bysame}{\leavevmode\hbox to3em{\hrulefill}\thinspace}
\providecommand{\MR}{\relax\ifhmode\unskip\space\fi MR }
\providecommand{\MRhref}[2]{%
  \href{http://www.ams.org/mathscinet-getitem?mr=#1}{#2}
}
\providecommand{\href}[2]{#2}


\begin{thebibliography}{AFMO24b}

\bibitem[AD04]{AD04}
J.~D. Adler and S.~DeBacker, \emph{Murnaghan-{K}irillov theory for
  supercuspidal representations of tame general linear groups}, J. Reine Angew.
  Math. \textbf{575} (2004), 1--35.

\bibitem[Adl98]{Adl98}
J.~D. Adler, \emph{Refined anisotropic {$K$}-types and supercuspidal
  representations}, Pacific J. Math. \textbf{185} (1998), no.~1, 1--32.

\bibitem[AFMO24a]{AFMO24-depth0}
J.~D. Adler, J.~Fintzen, M.~Mishra, and K.~Ohara, \emph{Reduction to depth zero
  for tame $p$-adic via {H}ecke algebra isomorphisms}, preprint,
  \url{arXiv:2408.07805v1}, 2024.

\bibitem[AFMO24b]{AFMO24-Hecke}
\bysame, \emph{Structure of {H}ecke algebras arising from types}, preprint,
  \url{arXiv:2408.07801v1}, 2024.

\bibitem[AS08]{AS08}
J.~D. Adler and L.~Spice, \emph{Good product expansions for tame elements of
  {$p$}-adic groups}, Int. Math. Res. Pap. IMRP (2008), no.~1, Art. ID rp. 003,
  95.

\bibitem[AS09]{AS09}
\bysame, \emph{Supercuspidal characters of reductive {$p$}-adic groups}, Amer.
  J. Math. \textbf{131} (2009), no.~4, 1137--1210.

\bibitem[BC24]{BC24}
R.~Bezrukavnikov and C.~Chan, \emph{Generic character sheaves on parahoric
  subgroups}, preprint, \url{arXiv:2401.07189}, 2024.

\bibitem[BH96]{BH96}
C.~J. Bushnell and G.~Henniart, \emph{Local tame lifting for {${\rm GL}(N)$}.
  {I}. {S}imple characters}, Inst. Hautes {\'E}tudes Sci. Publ. Math. (1996),
  no.~83, 105--233.

\bibitem[BK98]{BK98}
C.~J. Bushnell and P.~C. Kutzko, \emph{Smooth representations of reductive
  {$p$}-adic groups: structure theory via types}, Proc. London Math. Soc. (3)
  \textbf{77} (1998), no.~3, 582--634.

\bibitem[Car72]{Car72}
R.~W. Carter, \emph{Conjugacy classes in the {W}eyl group}, Compositio Math.
  \textbf{25} (1972), 1--59.

\bibitem[Car85]{Car85}
\bysame, \emph{Finite groups of {L}ie type}, Pure and Applied Mathematics (New
  York), John Wiley \& Sons, Inc., New York, 1985, Conjugacy classes and
  complex characters, A Wiley-Interscience Publication.

\bibitem[Cha20]{Cha20}
C.~Chan, \emph{The cohomology of semi-infinite {D}eligne-{L}usztig varieties},
  J. Reine Angew. Math. \textbf{768} (2020), 93--147.

\bibitem[Cha24]{Cha24}
\bysame, \emph{The scalar product formula for parahoric {D}eligne--{L}usztig
  induction}, preprint, \url{arXiv:2405.00671}, 2024.

\bibitem[CI21]{CI21-RT}
C.~Chan and A.~Ivanov, \emph{Cohomological representations of parahoric
  subgroups}, Represent. Theory \textbf{25} (2021), 1--26.

\bibitem[CO23]{CO23-sc}
C.~Chan and M.~Oi, \emph{Characterization of supercuspidal representations and
  very regular elements}, preprint, \url{arXiv:2301.09812}, 2023.

\bibitem[CO25]{CO25}
\bysame, \emph{Geometric {$L$}-packets of {H}owe-unramified toral supercuspidal
  representations}, J. Eur. Math. Soc. (JEMS) \textbf{27} (2025), no.~1,
  1465--1526.

\bibitem[CS17]{CS17}
Z.~Chen and A.~Stasinski, \emph{The algebraisation of higher
  {D}eligne-{L}usztig representations}, Selecta Math. (N.S.) \textbf{23}
  (2017), no.~4, 2907--2926.

\bibitem[CS23]{CS23}
\bysame, \emph{The algebraisation of higher level {D}eligne-{L}usztig
  representations {II}, odd levels}, preprint, \url{arXiv:2311.05354}, 2023.

\bibitem[DI24]{DI24}
O.~Dudas and A.~B. Ivanov, \emph{Orthogonality relations for deep level
  {D}eligne-{L}usztig schemes of {C}oxeter type}, Forum Math. Sigma \textbf{12}
  (2024), Paper No. e66, 27.

\bibitem[DL76]{DL76}
P.~Deligne and G.~Lusztig, \emph{Representations of reductive groups over
  finite fields}, Ann. of Math. (2) \textbf{103} (1976), no.~1, 103--161.

\bibitem[DR09]{DR09}
S.~DeBacker and M.~Reeder, \emph{Depth-zero supercuspidal {$L$}-packets and
  their stability}, Ann. of Math. (2) \textbf{169} (2009), no.~3, 795--901.

\bibitem[DS18]{DS18}
S.~DeBacker and L.~Spice, \emph{Stability of character sums for positive-depth,
  supercuspidal representations}, J. Reine Angew. Math. \textbf{742} (2018),
  47--78.

\bibitem[FKS23]{FKS23}
J.~Fintzen, T.~Kaletha, and L.~Spice, \emph{A twisted {Y}u construction,
  {H}arish-{C}handra characters, and endoscopy}, Duke Math. J. \textbf{172}
  (2023), no.~12, 2241--2301.

\bibitem[FS25]{FS25}
J.~Fintzen and D.~Schwein, \emph{Construction of tame supercuspidal
  representations in arbitrary residue characteristic}, preprint,
  \url{arXiv:2501.18553}, 2025.

\bibitem[G{\'{e}}r75]{Ger75-LNM}
P.~G{\'{e}}rardin, \emph{Construction de s\'{e}ries discr\`etes {$p$}-adiques},
  Lecture Notes in Mathematics, Vol. 462, Springer-Verlag, Berlin-New York,
  1975, Sur les s\'{e}ries discr\`etes non ramifi\'{e}es des groupes
  r\'{e}ductifs d\'{e}ploy\'{e}s $p$-adiques.

\bibitem[G{\'{e}}r77]{Ger77}
\bysame, \emph{Weil representations associated to finite fields}, J. Algebra
  \textbf{46} (1977), no.~1, 54--101.

\bibitem[GM20]{GM20}
M.~Geck and G.~Malle, \emph{The character theory of finite groups of {L}ie
  type: a guided tour}, Cambridge Studies in Advanced Mathematics, vol. 187,
  Cambridge University Press, Cambridge, 2020.

\bibitem[HC70]{HC70}
Harish-Chandra, \emph{Harmonic analysis on reductive {$p$}-adic groups},
  Lecture Notes in Mathematics, Vol. 162, Springer-Verlag, Berlin-New York,
  1970, Notes by G. van Dijk.

\bibitem[Hen92]{Hen92}
G.~Henniart, \emph{Correspondance de {L}anglands-{K}azhdan explicite dans le
  cas non ramifi\'{e}}, Math. Nachr. \textbf{158} (1992), 7--26.

\bibitem[Hen93]{Hen93}
\bysame, \emph{Correspondance de {J}acquet-{L}anglands explicite. {I}. {L}e cas
  mod\'{e}r\'{e} de degr\'{e} premier}, S\'{e}minaire de {T}h\'{e}orie des
  {N}ombres, {P}aris, 1990--91, Progr. Math., vol. 108, Birkh\"{a}user Boston,
  Boston, MA, 1993, pp.~85--114.

\bibitem[HM08]{HM08}
J.~Hakim and F.~Murnaghan, \emph{Distinguished tame supercuspidal
  representations}, Int. Math. Res. Pap. IMRP (2008), no.~2, Art. ID rpn005,
  166.

\bibitem[IN25]{IN25}
A.~B. Ivanov and S.~Nie, \emph{Convex elements and deep level
  {D}eligne-{L}usztig varieties}, preprint, \url{arXiv:2503.13412v1}, 2025.

\bibitem[INT24]{INT24}
A.~B. Ivanov, S.~Nie, and P.~Tan, \emph{Deep level {D}eligne-{L}usztig
  representations of {C}oxeter type}, preprint, \url{arXiv:2407.18694v2}, 2024.

\bibitem[Kal11]{Kal11-0}
T.~Kaletha, \emph{Endoscopic character identities for depth-zero supercuspidal
  {$L$}-packets}, Duke Math. J. \textbf{158} (2011), no.~2, 161--224.

\bibitem[Kal19]{Kal19}
\bysame, \emph{Regular supercuspidal representations}, J. Amer. Math. Soc.
  \textbf{32} (2019), no.~4, 1071--1170.

\bibitem[Kaz77]{Kaz77}
D.~Kazhdan, \emph{Proof of {S}pringer's hypothesis}, Israel J. Math.
  \textbf{28} (1977), no.~4, 272--286.

\bibitem[KP23]{KP23}
T.~Kaletha and G.~Prasad, \emph{Bruhat-{T}its theory---a new approach}, New
  Mathematical Monographs, vol.~44, Cambridge University Press, Cambridge,
  2023.

\bibitem[KV06]{KV06}
D.~Kazhdan and Y.~Varshavsky, \emph{Endoscopic decomposition of certain depth
  zero representations}, Studies in {L}ie theory, Progr. Math., vol. 243,
  Birkh\"{a}user Boston, Boston, MA, 2006, pp.~223--301.

\bibitem[KW01]{KW01}
R.~Kiehl and R.~Weissauer, \emph{Weil conjectures, perverse sheaves and
  {$l$}'adic {F}ourier transform}, Ergebnisse der Mathematik und ihrer
  Grenzgebiete. 3. Folge. A Series of Modern Surveys in Mathematics [Results in
  Mathematics and Related Areas. 3rd Series. A Series of Modern Surveys in
  Mathematics], vol.~42, Springer-Verlag, Berlin, 2001.

\bibitem[KY17]{KY17}
J.-L. Kim and J.-K. Yu, \emph{Construction of tame types}, Representation
  theory, number theory, and invariant theory, Progr. Math., vol. 323,
  Birkh\"{a}user/Springer, Cham, 2017, pp.~337--357.

\bibitem[L{\"{u}}b19]{Lub19}
F.~L{\"{u}}beck, \emph{Centralizers and numbers of semisimple classes in
  exceptional groups of {L}ie type},
  \url{https://www.math.rwth-aachen.de/~Frank.Luebeck/chev/CentSSClasses},
  2019.

\bibitem[Lus84]{Lus84}
G.~Lusztig, \emph{Characters of reductive groups over a finite field}, Annals
  of Mathematics Studies, vol. 107, Princeton University Press, Princeton, NJ,
  1984.

\bibitem[Lus90]{Lus90}
\bysame, \emph{Green functions and character sheaves}, Ann. of Math. (2)
  \textbf{131} (1990), no.~2, 355--408.

\bibitem[Lus04]{Lus04}
\bysame, \emph{Representations of reductive groups over finite rings},
  Represent. Theory \textbf{8} (2004), 1--14.

\bibitem[Mir04]{Mir04}
I.~Mirkovi\'c, \emph{Character sheaves on reductive {L}ie algebras}, Mosc.
  Math. J. \textbf{4} (2004), no.~4, 897--910, 981.

\bibitem[MP96]{MP96}
A.~Moy and G.~Prasad, \emph{Jacquet functors and unrefined minimal
  {$K$}-types}, Comment. Math. Helv. \textbf{71} (1996), no.~1, 98--121.

\bibitem[Nie24]{Nie24}
S.~Nie, \emph{Decomposition of higher {D}eligne-{L}usztig representations},
  preprint, \url{arXiv:2406.06430v1}, 2024.

\bibitem[Oi23]{Oi23-TECR}
M.~Oi, \emph{Twisted endoscopic character relation for toral supercuspidal
  {$L$}-packets of classical groups}, preprint, available at
  \url{https://masaooi.github.io/TECR.pdf}, 2023.

\bibitem[S{\etalchar{+}}97]{GAP3}
M.~Sch{\"o}nert et~al., \emph{{GAP} -- {G}roups, {A}lgorithms, and
  {P}rogramming -- version 3 release 4 patchlevel 4},
  \url{https://www.gap-system.org}, 1997.

\bibitem[Spi08]{Spi08}
L.~Spice, \emph{Topological {J}ordan decompositions}, J. Algebra \textbf{319}
  (2008), no.~8, 3141--3163.

\bibitem[Spi18]{Spi18}
\bysame, \emph{Explicit asymptotic expansions for tame supercuspidal
  characters}, Compos. Math. \textbf{154} (2018), no.~11, 2305--2378.

\bibitem[Spi21]{Spi21}
\bysame, \emph{Explicit asymptotic expansions in {$p$}-adic harmonic analysis
  {II}}, preprint, \url{arXiv:2108.12935}, 2021.

\bibitem[Sta09]{Sta09}
A.~Stasinski, \emph{Unramified representations of reductive groups over finite
  rings}, Represent. Theory \textbf{13} (2009), 636--656.

\bibitem[Ste68]{Ste68}
R.~Steinberg, \emph{Endomorphisms of linear algebraic groups}, Memoirs of the
  American Mathematical Society, No. 80, American Mathematical Society,
  Providence, R.I., 1968.

\bibitem[Yu01]{Yu01}
J.-K. Yu, \emph{Construction of tame supercuspidal representations}, J. Amer.
  Math. Soc. \textbf{14} (2001), no.~3, 579--622.

\end{thebibliography}
\end{document}